\theoremstyle{definition}
\newtheorem{dfn}{Definition}[section]}
\newtheorem{prop}[dfn]{Proposition}
\newtheorem{thm}[dfn]{Theorem}
{\theoremstyle{definition}
\newtheorem{rem}[dfn]{Remark}}
\newtheorem{lem}[dfn]{Lemma}
\newtheorem{cor}[dfn]{Corollary}
\newtheorem{conj}[dfn]{Conjecture}
\theoremstyle{definition}
\newcommand*\linenomathpatch[1]{%
  \cspreto{#1}{\linenomath}%
  \cspreto{#1*}{\linenomath}%
  \csappto{end#1}{\endlinenomath}%
  \csappto{end#1*}{\endlinenomath}%
}
\newcommand*\linenomathpatchAMS[1]{%
  \cspreto{#1}{\linenomathAMS}%
  \cspreto{#1*}{\linenomathAMS}%
  \csappto{end#1}{\endlinenomath}%
  \csappto{end#1*}{\endlinenomath}%
}
  \let\linenomathAMS\linenomathWithnumbers
  \patchcmd\linenomathAMS{\advance\postdisplaypenalty\linenopenalty}{}{}{}
  \let\linenomathAMS\linenomathNonumbers
\definecolor{alizarin}{rgb}{0.82, 0.1, 0.26}
\definecolor{azure(colorwheel)}{rgb}{0.0, 0.5, 1.0}
\definecolor{blue(pigment)}{rgb}{0.2, 0.2, 0.6}
\definecolor{denim}{rgb}{0.08, 0.38, 0.74}
\definecolor{mint}{rgb}{0.24, 0.71, 0.54}
\definecolor{parisgreen}{rgb}{0.31, 0.78, 0.47}
\definecolor{persiangreen}{rgb}{0.0, 0.65, 0.58}
\definecolor{seagreen}{rgb}{0.18, 0.55, 0.34}
\definecolor{shamrockgreen}{rgb}{0.0, 0.62, 0.38}
\definecolor{green(pigment)}{rgb}{0.0, 0.65, 0.31}
\definecolor{cadmiumgreen}{rgb}{0.0, 0.42, 0.24}
\definecolor{lightseagreen}{rgb}{0.13, 0.7, 0.67}
\definecolor{mediumseagreen}{rgb}{0.24, 0.7, 0.44}
\definecolor{pinegreen}{rgb}{0.0, 0.47, 0.44}
\definecolor{tealgreen}{rgb}{0.0, 0.51, 0.5}
\definecolor{darkpowderblue}{rgb}{0.0, 0.2, 0.6}
\definecolor{darkspringgreen}{rgb}{0.09, 0.45, 0.27}
\definecolor{deepjunglegreen}{rgb}{0.0, 0.29, 0.29}
\definecolor{dartmouthgreen}{rgb}{0.05, 0.5, 0.06}
\def\ang<#1>{\langle #1\rangle}
\def\bigang<#1>{\left\langle #1\right\rangle}
\newcommand*{\defeq}{\mathrel{\rlap{%
                    \raisebox{0.3ex}{$\m@th\cdot$}}%
                    \raisebox{-0.3ex}{$\m@th\cdot$}}%
                     =}
\tikzset{
    labl/.style={anchor=south, rotate=90, inner sep=.5mm}
}
\numberwithin{equation}{section}
\newcommand{\Dmod}{\operatorname{Dmod}}
\newcommand{\Dcoh}{\operatorname{Dcoh}}
\newcommand{\Kcoh}{\operatorname{Kcoh}}
\newcommand{\Acoh}{\operatorname{Acoh}}
\newcommand{\Sing}{\operatorname{Sing}}
\newcommand{\Spec}{\operatorname{Spec}}
\newcommand{\Supp}{\operatorname{Supp}}
\newcommand{\Hom}{\operatorname{Hom}}
\newcommand{\Ext}{\operatorname{Ext}}
\newcommand{\Tot}{\operatorname{Tot}}
\newcommand{\RHom}{\operatorname{{\bf R}Hom}}
\newcommand{\End}{\operatorname{End}}
\newcommand{\Pic}{\operatorname{Pic}}
\newcommand{\Auteq}{\operatorname{Auteq}}
\newcommand{\Max}{\operatorname{Max}}
\newcommand{\coh}{\operatorname{coh}}
\newcommand{\Qcoh}{\operatorname{Qcoh}}
\newcommand{\Mod}{\operatorname{Mod}}
\newcommand{\fmod}{\operatorname{mod}}
\newcommand{\refl}{\operatorname{ref}}
\newcommand{\add}{\operatorname{add}}
\newcommand{\Db}{\operatorname{D^b}}
\newcommand{\D}{\operatorname{D}}
\newcommand{\CM}{\operatorname{CM}}
\newcommand{\codim}{\operatorname{codim}}
\newcommand{\id}{\operatorname{id}}
\newcommand{\semicolon}{;\hspace{1.1mm}}
\newcommand{\Ker}{\operatorname{Ker}}
\newcommand{\Cok}{\operatorname{Cok}}
\newcommand{\Res}{\operatorname{\sf Res}}
\newcommand{\fl}{\operatorname{\sf fl}}
\newcommand{\rep}{\operatorname{rep}}
\newcommand{\cA}{\mathcal{A}}
\newcommand{\cC}{\mathcal{C}}
\newcommand{\cD}{\mathcal{D}}
\newcommand{\cE}{\mathcal{E}}
\newcommand{\cF}{\mathcal{F}}
\newcommand{\cG}{\mathcal{G}}
\newcommand{\cK}{\mathcal{K}}
\newcommand{\cL}{\mathcal{L}}
\newcommand{\cN}{\mathcal{N}}
\newcommand{\cO}{\mathcal{O}}
\newcommand{\cS}{\mathcal{S}}
\newcommand{\cT}{\mathcal{T}}
\newcommand{\cV}{\mathcal{V}}
\newcommand{\bA}{\mathbb{A}}
\newcommand{\bC}{\mathbb{C}}
\newcommand{\bF}{\mathbb{F}}
\newcommand{\bG}{\mathbb{G}}
\newcommand{\bL}{\mathbb{L}}
\newcommand{\bP}{\mathbb{P}}
\newcommand{\bR}{\mathbb{R}}
\newcommand{\bZ}{\mathbb{Z}}
\newcommand{\bfP}{{\bf P}}
\newcommand{\g}{\mathfrak{g}}
\newcommand{\m}{\mathfrak{m}}
\newcommand{\X}{\mathfrak{X}}
\newcommand{\Y}{\mathfrak{Y}}
\newcommand{\scrA}{\EuScript{A}}
\newcommand{\scrB}{\EuScript{B}}
\newcommand{\scrC}{\EuScript{C}}
\newcommand{\scrF}{\EuScript{F}}
\newcommand{\scrH}{\EuScript{H}}
\newcommand{\scrK}{\EuScript{K}}
\newcommand{\scrL}{\EuScript{L}}
\newcommand{\scrM}{\EuScript{M}}
\newcommand{\scrS}{\EuScript{S}}
\newcommand{\scrT}{\EuScript{T}}
\newcommand{\scrV}{\EuScript{V}}
\newcommand{\scrW}{\EuScript{W}}
\newcommand{\dsG}{\mathds{G}}
\def\res{\mathop{\sf res}\nolimits}
\newcommand{\ST}{\operatorname{\sf{ST}}}
\newcommand{\sfk}{\Bbbk} 
\newcommand{\M}{\operatorname{\sf{M}}}
\newcommand{\N}{\operatorname{\sf{N}}}
\newcommand{\dR}{\operatorname{\bf{R}}\!}
\newcommand{\dL}{\operatorname{\bf{L}\!}}
\newcommand{\bsigma}{\boldsymbol{\Sigma}}
\newcommand{\bnabla}{\boldsymbol{\nabla}}
\newcommand{\simto}{\,\,\,\,\raisebox{4pt}{$\sim$}{\kern -1.1em\longrightarrow}\,}
\newcommand{\hookto}{\hookrightarrow}
\newcommand{\gwedge}{\raisebox{1pt}[0pt][0pt]{$\textstyle\bigwedge$}}
\renewcommand{\l}{\langle}
\renewcommand{\r}{\rangle}
\newcommand{\marginparstretch}{0.6}
\let\oldmarginpar\marginpar
\renewcommand\marginpar[1]{\-\oldmarginpar[\framebox{\setstretch{\marginparstretch}\begin{minipage}{\marginparwidth}{\raggedleft\tiny #1}\end{minipage}}]{\framebox{\setstretch{\marginparstretch}\begin{minipage}{\marginparwidth}{\raggedright\tiny #1}\end{minipage}}}}
\newcommand{\relmiddle}[1]{\mathrel{}\middle#1\mathrel{}}
\newcommand{\LatticePoint}[1][]{%
\begin{tikzpicture}
\filldraw (0,0) circle (2pt);
\end{tikzpicture}
}
\newcommand{\EeightFour}[1][]{%
\begin{tikzpicture}[xscale=\EeightFourScalex,yscale=\EeightFourScaley]
\coordinate (A) at (0,0);
\node at (A) {\LatticePoint};
\coordinate (B) at (0,2);
\coordinate (C) at (0,4);
\coordinate (Ap) at (4,0);
\coordinate (Bp) at (4,2);
\coordinate (Cp) at (4,4);
\coordinate (b) at (2,4);
\coordinate (bp) at (2,0);
\draw (A)--(C);
\draw (Ap)--(Cp);
\draw (A)--(Ap);
\draw (B)--(Bp);
\draw (C)--(Cp);
\draw (b)--(bp);
\draw (A)--(b);
\draw (A)--(Cp);
\draw (C)--(bp);
\draw (C)--(Ap);
\draw (Ap)--(b);
\draw (Cp)--(bp);
\end{tikzpicture}
}
\renewcommand{\p@enumi}{C}
\begin{document}

\pagewiselinenumbers

\title[Mutations of NCCR in GIT]{Mutations of noncommutative crepant resolutions in geometric invariant theory}

\author[W.~Hara]{Wahei Hara}
\address{ Kavli Institute for the Physics and Mathematics of the Universe (WPI), University of Tokyo, 5-1-5 Kashiwanoha, Kashiwa, 277-8583, Japan}
\email{wahei.hara@ipmu.jp}

\author[Y.~Hirano]{Yuki Hirano}
\address{Tokyo University of Agriculture and Technology, 2-24-16 Nakacho, Koganei, Tokyo 184-8588, Japan}\email{hirano@go.tuat.ac.jp}

\date{}

\begin{abstract} 
Let $X$ be a generic quasi-symmetric representation of a connected reductive group $G$. 
The GIT  quotient stack $\X=[X^{\rm ss}(\ell)/G]$ with respect to a generic $\ell$ is a (stacky) crepant resolution of the affine quotient $X/G$, and it is derived equivalent to a noncommutative crepant resolution (=NCCR)  of $X/G$. Halpern-Leistner and Sam showed that the derived category $\Db(\coh \X)$ is equivalent to  certain subcategories of $\Db(\coh [X/G])$, which are called  {magic windows}.
This paper studies equivalences between magic windows
that correspond to wall-crossings in a hyperplane arrangement
in terms of NCCRs.
We show that those equivalences coincide with derived equivalences between NCCRs induced by tilting modules, 
and that those tilting modules are obtained by certain operations of modules, which is called exchanges of modules. 
When $G$ is a torus, it turns out that the exchanges are nothing but  iterated Iyama--Wemyss mutations. 
Although we mainly discuss resolutions of affine varieties, 
our theorems also yield a result for projective Calabi-Yau varieties. 
Using techniques from the theory of noncommutative matrix factorizations, we show that Iyama--Wemyss mutations induce a group action of the fundamental group $\pi_1(\bP^1\,\backslash\{0,1,\infty\})$ on the derived category  of a Calabi-Yau complete intersection in a weighted projective space.
\end{abstract}

\subjclass[2020]{Primary~14F08; Secondary~18G80, 16E35}
\keywords{}
\maketitle{}

\section{Introduction}
\subsection{Backgrounds}
A crepant resolution is one of the best modifications of singularities.
This can be regarded as a higher dimensional analog of minimal resolutions of surface singularities, and in the terminology from minimal model theory, a crepant resolution can be paraphrased as a smooth minimal model of the singularity.

As a noncommutative analog of the notion of crepant resolutions,  Van den Bergh introduced noncommutative crepant resolutions (= NCCRs) \cite{vdbnccr, vdb}. 
In both commutative and noncommutative cases, the existence of such a resolution is not always true.
There are two large classes of singularities for which the study of NCCRs (and crepant resolutions) is well established.
One is the class of quotient singularities arising from quasi-symmetric representations of reductive groups, which was first studied in \cite{svdb}, and the other is the class of ($3$-fold) compound du Val singularities, studied in \cite{vdb3dflop,HomMMP}.
To investigate the latter class, Iyama and Wemyss \cite{IW} introduced an operation called \textit{mutation}, which produce a new NCCR from the original one.
By Kawamata \cite{kaw}, it is known that all minimal models (and hence all crepant resolutions) are connected by iterated flops, and mutations can be regarded as a noncommutative counterpart of flops. Indeed, it is proved in \cite{HomMMP} that the derived equivalences associated to 3-fold flops, which are established in \cite{bri, che}, correspond to derived equivalences associated to mutations of NCCRs. This interpretation and the technique of mutations of NCCRs provides the main ingredients for the study of  Bridgeland stability conditions for 3-fold flops \cite{HW,HW2}. 

The main purpose of this paper is to import such a technology established by \cite{IW} to deepen the study of NCCRs for quotient singularities arising from quasi-symmetric representations,
by studying the combinatorics associated to the representation,
and by accessing to the ideas from \cite{hl-s, svdb}.

\subsection{Exchanges and mutations of modifying modules} \label{intro: NCCR}
The present section, Section \ref{sect: qsrep}, and Section \ref{sect: wcfga} explain the setting of this paper, and recall some terminologies, notations, and known results that are needed to state our results.
The precise statements of main results are given in Section \ref{sect: intro main theorems}.

Let $R$ be a normal equidimensional Gorenstein ring. A finitely generated reflexive $R$-module $M$ is said to be {\it modifying} if the endomorphism ring $\End_R(M)$ is Cohen-Macaulay as an $R$-module. A {\it noncommutative crepant resolution} (=NCCR) of $R$ is the endomorphism ring $\Lambda=\End_R(M)$ of some modifying $R$-module $M$ such that the global dimension of $\Lambda$ is finite. If $\End_R(M)$ is an NCCR, we say that $M$ gives an NCCR.
The following is one of the central problems about NCCRs.

\begin{conj}[\cite{vdbnccr}]
Let $R$ be an equidimensional normal Gorenstein ring.
Then all crepant resolutions and all NCCRs of $R$ are derived equivalent.
\end{conj}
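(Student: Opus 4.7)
The statement is Van den Bergh's longstanding conjecture \cite{vdbnccr}, so my proposal should be read as a roadmap toward the partial results this paper targets (and that are known in special cases), not a realistic route to the full conjecture. The plan is to split the problem into three more tractable sub-problems: (i) any two NCCRs of $R$ are derived equivalent; (ii) any crepant resolution $Y \to \Spec R$ is derived equivalent to some NCCR; and (iii) any two crepant resolutions of $R$ are derived equivalent. One hopes that the triangle closes up, so that it suffices to prove any one side for each pair.

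For (i), the natural tool is tilting theory. Given modifying modules $M_1$ and $M_2$ with $\Lambda_i = \End_R(M_i)$ of finite global dimension, the candidate bimodule is $T = \Hom_R(M_1,M_2)$, and one would try to show that $T$ is reflexive, has finite projective dimension over $\Lambda_1$, satisfies $\Ext^{>0}_{\Lambda_1}(T,T)=0$, and has endomorphism ring $\Lambda_2$. Proving this for arbitrary pairs is essentially equivalent to the conjecture, so the realistic strategy is to introduce a set of local moves --- the Iyama--Wemyss mutations $\nu_i(M)$ and, more generally, the exchanges of modules studied in this paper --- each of which produces a neighboring NCCR together with an explicit tilting module realizing the derived equivalence; then show that the graph of modifying modules is connected under these moves within the relevant class of $R$.

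For (ii) one would produce, whenever possible, a tilting bundle $\mathcal{T}$ on $Y$ whose endomorphism ring $\End(\mathcal{T})$ is an NCCR of $R$, reducing the derived equivalence to Rickard--Keller tilting. In the GIT setting that frames this paper, such a tilting bundle is delivered by the magic windows of Halpern-Leistner and Sam, giving $\Db(\coh \mathcal{X}) \simeq \Db(\Lambda)$ for an NCCR $\Lambda$. For (iii), in dimension three one combines Kawamata's theorem that minimal models are connected by flops with the Bridgeland--Chen--Van den Bergh equivalences under three-fold flops; the interpretation of each flop as a mutation of the associated NCCR (as in \cite{HomMMP}) then glues (i), (ii) and (iii) into one coherent picture.

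The main obstacle is already in (i): without a hypothesis such as quasi-symmetry or finiteness of the relevant exchange graph, there is no known set of local moves guaranteed to connect arbitrary modifying modules, and in general two NCCRs can fail to be derived equivalent without further structure. Consequently any realistic attempt must restrict to a tractable class --- compound du Val in dimension three, or quotients of quasi-symmetric representations as in this paper --- and the technical heart of the work is the comparison between the combinatorial wall-crossing data (here a hyperplane arrangement) and the algebraic operation of exchange/mutation on modifying modules. Establishing that wall-crossing magic-window equivalences are implemented by tilting over NCCRs, and that these tilts are built from the mutations of \cite{IW}, is precisely what upgrades the GIT picture to an instance of Van den Bergh's conjecture.
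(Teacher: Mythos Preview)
The statement is labelled as a \emph{Conjecture} in the paper and is presented without proof; it is Van den Bergh's open conjecture, quoted purely as motivation for introducing mutations and for the results that follow. There is no proof in the paper to compare your attempt against.

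You have correctly recognised this: your proposal explicitly disclaims being a proof and instead sketches the standard strategy (tilting bundles on crepant resolutions, Iyama--Wemyss mutations connecting NCCRs, flops connecting minimal models) by which partial cases of the conjecture are attacked. That framing is accurate and aligns with how the paper itself uses the conjecture --- as a backdrop against which the main theorems (Theorems \ref{wall crossing}, \ref{main thm}, \ref{mutation MMA toric}) establish the mutation/wall-crossing correspondence in the quasi-symmetric GIT setting. One minor correction: you write that ``in general two NCCRs can fail to be derived equivalent without further structure''; this is not known --- the conjecture is open, not disproved --- so the honest statement is that no general mechanism is known to connect them, not that counterexamples exist.
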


In relation to this derived equivalence problem, Iyama and Wemyss introduced the notion of mutation of NCCRs, which is defined in the following way.
Let $M,N$ and $L$ be finitely generated reflexive  $R$-modules. A {\it right $(\add L)_N$-approximation} of $M$ is a morphism
$
\alpha\colon K\to M
$
from $K\in \add L$ such that the induced morphism $\alpha\circ(-)\colon \Hom(N,K)\to \Hom(N,M)$ is surjective. A  {\it right $(\add L)_N$-exchange} of $M$ is the kernel of a  right  $(\add L)_N$-approximation of $M$.  Although right exchanges are not unique, we denote by $\varepsilon_{(L,N)}(M)$ one of right $(\add L)_N$-exchanges of $M$ by abuse of notation. By definition, there is an exact sequence
\[
0\to \varepsilon_{(L,N)}(M)\hookto K\xrightarrow{\alpha}M,
\]
where $\alpha$ is a right $(\add L)_N$-approximation of $M$. Dually  a {\it left $(\add L)_N$-exchange} of $M$ is defined to be the $R$-dual of a right $(\add L^*)_{N^*}$-exchange of $M^*$, and one of them is denoted by $\varepsilon^-_{(L,N)}(M)$. 
If $L=N$,  the right and left exchanges are simply written by $\varepsilon_N(M)$ and $\varepsilon^-_N(M)$, respectively. Suppose that $M=N \oplus N^c$ for some $R$-module $N^c$. The {\it right mutation} $\mu_{N}(M)$ and {\it left mutation} $\mu_N^-(M)$ of $M$ at $N$ are defined by
\begin{align*}
\mu_{N}(M)&\defeq N\oplus \varepsilon_N(N^c)\\
\mu_{N}^-(M)&\defeq N\oplus \varepsilon^-_N(N^c).
\end{align*}
For a positive integer $m$, an $m$-times iterated exchange is written by
\begin{align*}
\varepsilon_{(L,N)}^{\pm m}(M)&\defeq \underbrace{\varepsilon^{\pm}_{(L,N)}\bigl(\cdots\varepsilon^{\pm}_{(L,N)}(\varepsilon^{\pm}_{(L,N)}}_{m}(M))\cdots\bigr)
\end{align*}
and similarly
$\mu_N^{\pm m}(M)\defeq N\oplus \varepsilon_N^{\pm m}(N^c)$.
By \cite{IW}, if $M$ gives an NCCR of $R$, so do $\mu_N(M)$ and $\mu_N^-(M)$. Furthermore, $\End_R(M)$, $\End_R(\mu_N(M))$ and $\End(\mu^-_N(M))$ are all derived equivalent.  Thus mutation enables us to obtain a new NCCR from the original one, and it produces a canonical derived equivalence
\[
\Phi_N\colon \Db(\fmod \End_R(M))\simto \Db(\fmod \End_R(\mu^-_N(M))),
\]
which is called a {\it mutation functor} at $N$.

It is natural to ask if two given NCCRs are connected by (iterated) mutations or not.
It is known that, for many types of singularities, their natural NCCRs are actually connected by mutations \cite{hara17, hara22, HigNak, Nak, svdbNCBO, HomMMP}.
One of the main aims of this paper is to present a similar result for NCCRs associated to the quotient of quasi-symmetric representations, which are recalled in the next section.

\subsection{Quasi-symmetric representation} \label{sect: qsrep}
Let $G$ be a connected reductive group,  
$\M$  the character lattice of a maximal torus $T\subseteq G$, 
and $W$ the Weyl group of $G$. 
The Weyl group $W$ naturally acts on the lattice $\M$. 
Let $X$ be a $G$-representation with $T$-weights $\upbeta_1,\hdots,\upbeta_d$,  and assume that $X$ is {\it quasi-symmetric}, i.e. the  equality $\sum_{\upbeta_i\in L}\upbeta_i=0$ holds  for every 1-dimensional subspace $L$ in the real space $\M_{\bR} \defeq \M \otimes \bR$. 
Consider a $W$-invariant convex polytope 
\[
\bsigma\defeq\left\{\sum_{i=1}^d a_i\upbeta_i \middle| a_i\in[0,1]\right\}\subset \M_{\bR}.
\]
We always assume that $\bsigma$ linearly spans $\M_{\bR}$. 

For a generic element $\ell\in\M_{\bR}^W$, and consider the GIT quotient stack \[\X\defeq[X^{\rm ss}(\ell)/G].\]
To study the derived category of the stack $\X$, Halpern-Leistner and Sam \cite{hl-s} introduced \textit{magic window categories},
which are defined using a certain $W$-invariant convex polytope $\bnabla\subseteq\M_{\bR}$ and
a periodic locally finite hyperplane arrangement $\scrH^W$ in $\M_{\bR}^W$ (Definition \ref{def bnabla}).
More precisely, the magic window category $\scrM(\delta+\bnabla)$ is associated  to each element $\delta\in \M_{\bR}^W\backslash \scrH^W$, and it is defined as the thick subcategory of $\Db(\coh [X/G])$ generated by $G$-equivariant vector bundles of the form $V(\chi)\otimes \cO_X$ with $\chi\in (\delta+\bnabla)\cap \M^+$, where $V(\chi)$ denotes the irreducible $G$-representation with highest weight $\chi$, and $\M^+$ denotes the set of dominant weights. Furthermore,  \cite{hl-s} proved that the restriction functor 
  \[
  \res\colon \scrM(\delta+\bnabla)\simto\Db(\coh \X)
  \]
is an equivalence for all $\delta\in \M_{\bR}^W\backslash \scrH^W$.

Recall that $G$ acts on the ring $\sfk[X]$ of regular functions on $X$ by $(g\cdot \varphi)(x)\defeq \varphi(g^{-1}x)$, and the ring $R\defeq \sfk[X]^G$ of $G$-invariant regular functions defines a quotient singularity $X/G\defeq \Spec R$.
A quasi-symmetric representation $X$ is said to be {\it generic} if  ${\rm codim}(X\backslash X^{\rm ts},X)\geq 2$, where $X^{\rm ts}$ is a $G$-invariant open subspace of $X$ defined by 
\[
X^{\rm ts}\defeq\{x\in X\mid \mbox{the orbit $G\cdot x$ is closed and the stabilizer $G_x$ is trivial}\}.
\]
If $X$  is generic, then 
 the natural map 
 \[\varphi\colon \X\to \Spec R\] 
 is a (stacky) crepant resolution of $\Spec R$.
 
For each $\delta\in \M_{\bR}^W$, 
put $\scrC_{\delta}\defeq (\delta+\bnabla)\cap\M^+$. 
If $\delta\in \M_{\bR}^W\backslash \scrH^W$, 
the magic window $\scrM(\delta+\bnabla)$ has a canonical tilting object $\scrT_{\delta}\defeq \bigoplus_{\chi\in \scrC_{\delta}} V(\chi)\otimes \cO_X$, 
and hence $\scrV_{\delta}\defeq \res(\scrT_{\delta})$ is a tilting bundle on $\X$. 
In particular, the bundle $\scrT_{\delta}$ produces an equivalence
\[
\RHom(\scrT_{\delta},-)\colon \scrM(\delta+\bnabla)\simto \Db(\fmod \Lambda_{\delta}).
\]
between the magic window and the derived category of finitely generated right modules over $\Lambda_{\delta}\defeq\End(\scrT_{\delta})$.
Assume that  $X$ is generic. 
For a character $\chi\in\M$, the corresponding reflexive $R$-module $M(\chi)$ is defined by $M(\chi)\defeq \left(V(\chi)\otimes\cO_X\right)^G$. Then 
 $M_{\delta}\defeq \bigoplus_{\chi\in \scrC_{\delta}}M(\chi)$ is a modifying $R$-module, and  its endomorphism algebra  $\End_R(M_{\delta})\cong\Lambda_{\delta}$ is an NCCR of $R$ for every $\delta\in \M_{\bR}^W\backslash \scrH^W$.
 The next section explains what happens when the parameter $\delta$ is varied.

\subsection{Wall crossing and fundamental group action} \label{sect: wcfga} 
For each $\delta \in \M_{\bR}^W\backslash \scrH^W$, let $C(\delta)$ be the connected component of $\M_{\bR}^W\backslash \scrH^W$ that contains $\delta$.
We say that an ordered pair $(\delta,\delta')$ of elements $\delta,\delta'\in \M_{\bR}^W\backslash \scrH^W$ is an {\it adjacent pair} if $C(\delta)$ and $ C(\delta')$ are not equal and they share a codimension one wall.
Let $(\delta,\delta')$ be an adjacent pair, and $\X = [X^{\rm ss}(\ell)/G]$ the GIT quotient stack, where $\ell\defeq \delta'-\delta\in \M^W$. 
The ordered pair $(\delta,\delta')$ corresponds to a wall crossing in $\M_{\bR}^W\backslash \scrH^W$, and it induces an equivalence
 \begin{equation}\label{magic equiv intro}
 \scrM(\delta+\bnabla)\xrightarrow{\res} \Db(\coh \X)\xrightarrow{\res^{-1}}\scrM(\delta'+\bnabla)
 \end{equation}
of magic windows. In \cite{hl-s}, it is proved that the equivalences of magic windows of the form \eqref{magic equiv intro} defines a group action
\[
\uprho\colon \pi_1\bigl(\M_{\bR}^W\backslash \scrH^W\bigr)\to \Auteq \scrM(\delta+\bnabla).
\]
 This action is an analog of the group action on the derived category of 3-fold flops \cite{{IW9}}, which acts on  Bridgeland stability conditions for 3-fold flops as a deck transformation  of some regular covering map \cite{HW2}. Furthermore, the action $\uprho$ can be extended to a group action
\[
\widetilde{\uprho}\colon \pi_1(\scrK_X)\to\Auteq\scrM(\delta+\bnabla),
\]
where $\scrK_X\defeq \bigl(\M_{\bC}^W\backslash \scrH^W_{\bC}\bigr)/\M^W$ is the {\it stringy K\"ahler moduli space}  (= SKMS) of $X$.

\subsection{Main results} \label{sect: intro main theorems}

The main results in this paper interpret the equivalences in Section \ref{sect: wcfga} in terms of representation theory of NCCRs in Section \ref{intro: NCCR}.
Let $(\delta,\delta')$ be an adjacent pair in $\M_{\bR}^W\backslash \scrH^W$, and consider the GIT quotient stack $\X=[X^{\rm ss}(\ell)/G]$ with respect to  $\ell\defeq \delta'-\delta$. 
By the definition of adjacent pair, chambers $C(\delta)$ and $C(\delta')$ share the separating hyperplane $H$, and let $\delta_0\in H$ be the point where the line segment with endpoints $\delta$ and $\delta'$ meets $H$.    
Under this setting,
a set $\scrF_{(\delta,\delta')}$ of certain faces of the polytope $\delta_0+(1/2)\bsigma\subset \M_{\bR}$ and
direct summands $M_{\delta}^F \subset M_{\delta}$ and $L_{\delta}^F \subset M_{\delta_0}$
for each face $F\in \scrF_{(\delta,\delta')}$
are naturally defined.
Using them gives a direct sum decomposition of $M_{\delta}$:
\[
M_{\delta}=M_{\delta\cap \delta'}\oplus \bigoplus_{F\in \scrF_{(\delta,\delta')}}M_{\delta}^F, 
\]
where $M_{\delta\cap\delta'}\defeq \bigoplus_{\chi\in \scrC_{\delta}\cap\scrC_{\delta'}}M(\chi)$. 
Swapping $\delta$ and $\delta'$ in the decomposition above gives 
the decomposition of $M_{\delta'}$ with the index set $\scrF_{(\delta',\delta)}$.
In order to describe this decomposition in terms of $\scrF_{(\delta,\delta')}$,
 a bijection
\[
(-)^{\dagger}\colon \scrF_{(\delta,\delta')}\simto \scrF_{(\delta',\delta)}\semicolon F\mapsto F^{\dagger},
\]
will be constructed, and using this yields
\[
M_{\delta'}=M_{\delta\cap \delta'}\oplus \bigoplus_{F\in \scrF_{(\delta,\delta')}}M_{\delta}^{F^{\dagger}}.
\]
The following is our main result.

\begin{thm}[\ref{wall crossing}, \ref{main thm}]
Let $(\delta,\delta')$ be an adjacent pair in $\M_{\bR}^W\backslash \scrH^W$. 
Then the following hold.
\begin{itemize}
\item[(1)]  $T_{(\delta,\delta')}\defeq \Hom_R(M_{\delta},M_{\delta'})$ is a tilting $\Lambda_{\delta}$-module.
\item[(2)] The  diagram 
\[
\begin{tikzpicture}[xscale=1.3]
\node (A0) at (2,0) {$\Db(\fmod\Lambda_{\delta})$};
\node (A5) at (8,0) {$\Db(\fmod\Lambda_{\delta'})$};
\node (B0) at (2,1.5) {$\scrM(\delta+\bnabla)$};
\node (B4) at (4.75,1.5) {$\Db(\coh\X)$};
\node (B5) at (8,1.5) {$\scrM(\delta'+\bnabla)$};
\draw[->] (A0) -- node[above] {$\scriptstyle\RHom_{}(T_{(\delta,\delta')},-)$}(A5);
\draw[->] (B0) -- node[above] {$\scriptstyle\res$}(B4);
\draw[->] (B4) -- node[above] {$\scriptstyle\res^{-1}$}(B5);
\draw[->] (B0) -- node[left] {$\scriptstyle\RHom_{}(\scrT_{\delta},-)$}(A0);
\draw[->] (B5) -- node[right] {$\scriptstyle\RHom_{}(\scrT_{\delta'},-)$}(A5);
\end{tikzpicture}
\]
of equivalences is commutative.
\item[(3)] The module $M_{\delta}$ is obtained from $M_{\delta'}$ by applying  iterated exchanges of summands of $M_{\delta'}$. More precisely, $M_{\delta}$ can be represented by 
\[
M_{\delta\cap\delta'}\oplus \left(\bigoplus_{F\in \scrF_{(\delta,\delta')}} \varepsilon_{(L^F_{\delta},N^F_{\delta})}^{d_{F}^++\ell(w_0)-1}(M_{\delta'}^{F^{\dag}})\right),
\]
where $N_{\delta}^F\defeq M_{\delta}/M_{\delta}^F$, $d_F^+$ is some positive number associated to $F$, and $\ell(w_0)$ is the length of the longest element $w_0\in W$.
\end{itemize}
\end{thm}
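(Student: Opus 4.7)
The plan is to prove parts (1) and (2) simultaneously by transporting the tilting bundle $\scrT_{\delta'}$ through the magic window equivalence and identifying the resulting tilting complex of $\Lambda_{\delta}$-modules with $T_{(\delta,\delta')}$ concentrated in degree zero. Part (3) will then be established face-by-face, by constructing each summand $M_{\delta}^F$ explicitly as an iterated kernel obtained from $M_{\delta'}^{F^{\dag}}$ via $G$-equivariant Koszul and BGG-type resolutions pushed down by the functor of $G$-invariants.

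For (1) and (2), since $\scrT_{\delta}$ is a tilting bundle on $\X$, the functor $\RHom_{\X}(\scrT_{\delta},-)$ is a derived equivalence, and hence automatically sends the tilting bundle $\scrT_{\delta'}$ to a tilting complex of $\Lambda_{\delta}$-modules. To identify this complex with $T_{(\delta,\delta')}$ placed in degree zero, I would verify that (a) $\Ext^i_{\X}(\scrT_{\delta},\scrT_{\delta'})=0$ for all $i>0$, and (b) $\Hom_{\X}(\scrT_{\delta},\scrT_{\delta'})\cong \Hom_R(M_{\delta},M_{\delta'})$. For (b), combine the canonical isomorphism $\Hom_{\X}(V(\chi)\otimes\cO_X,V(\chi')\otimes\cO_X)\cong\bigl(V(\chi)^*\otimes V(\chi')\otimes\sfk[X^{\rm ss}(\ell)]\bigr)^G$ with the Hartogs-type extension $\sfk[X^{\rm ss}(\ell)]=\sfk[X]$ (valid because genericity gives $\codim(X\setminus X^{\rm ss}(\ell),X)\geq 2$), and recognize the result as $\Hom_R(M(\chi),M(\chi'))$ via reflexivity of $M(\chi)$. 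For (a), the vanishing should follow from a Borel--Weil--Bott analysis applied to each irreducible summand of $V(\chi)^*\otimes V(\chi')$, the point being that all such weight differences remain in the controlled region dictated by the magic window condition. Commutativity of the diagram in (2) is then automatic from the naturality of $\RHom$ under derived equivalences induced by tilting objects.

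For (3), fix a face $F\in\scrF_{(\delta,\delta')}$. Since the face decompositions of $M_{\delta}$ and $M_{\delta'}$ share the common summand $M_{\delta\cap\delta'}$, it suffices to construct
\[
M_{\delta}^F=\varepsilon_{(L^F_{\delta},N^F_{\delta})}^{d_F^++\ell(w_0)-1}(M_{\delta'}^{F^{\dag}})
\]
by induction on the iteration count. The key tool is an explicit exact complex of $G$-equivariant vector bundles on $X$, built by combining a Koszul complex on the $T$-weights $\upbeta_i$ whose rays lie in the hyperplane spanned by $F$ with a BGG-type resolution of the Weyl module $V(\chi)$. The intermediate terms should factor through $\add(L^F_{\delta}\otimes\cO_X)$, and genericity guarantees that applying $(-)^G$ preserves exactness at the level of reflexive $R$-modules, so the $G$-invariant complex yields precisely the successive exchange kernels. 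The contribution $\ell(w_0)$ then arises from the length of the BGG resolution, and $d_F^+$ from the Koszul length associated to the $\upbeta_i$'s adjacent to $F$; their sum minus one matches the exchange iteration count, the $-1$ reflecting that the two resolutions share an endpoint.

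The main obstacle will be the bookkeeping in (3): at each iteration one must verify that the chosen morphism is a genuine right $(\add L^F_{\delta})_{N^F_{\delta}}$-approximation (surjectivity on $\Hom(N^F_{\delta},-)$ at every stage), that the successive kernels remain inside $\add L^F_{\delta}$, and that the total iteration count lands exactly at $d_F^++\ell(w_0)-1$. Aligning the combinatorics of $\delta_0+(1/2)\bsigma$ and the bijection $(-)^{\dag}\colon\scrF_{(\delta,\delta')}\simto\scrF_{(\delta',\delta)}$ with the order in which the Koszul and BGG steps are peeled off will require careful case analysis, particularly when several walls of $\bnabla$ are crossed simultaneously on a given face. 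The higher-Ext vanishing in (1)(a) may also require nontrivial input, but should reduce to standard Borel--Weil--Bott-type vanishing combined with the geometry of the crepant resolution $\varphi\colon\X\to\Spec R$.
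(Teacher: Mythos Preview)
Your outline for (1) and (2) has the right shape but misidentifies the vanishing mechanism. The cohomology you need to kill is $\Ext^k_{\X}(V_{\X}(\chi),V_{\X}(\chi'))$ on the \emph{GIT quotient stack} $\X=[X^{\rm ss}(\ell)/G]$, not on $[X/G]$ or on $G/B$. Borel--Weil--Bott computes cohomology on $G/B$ and says nothing about the open substack $\X$. The paper's argument instead uses Teleman's quantization theorem: one checks that for every one-parameter subgroup $\lambda_i$ appearing in the Kirwan--Ness stratification of $X^{\rm us}(\ell)$, the $\lambda_i$-weights of $V(\chi)^*\otimes V(\chi')$ are strictly below the threshold $\eta_i$, which forces $H^k(\X,V(\chi)^*\otimes V(\chi')\otimes\cO_{\X})=0$ for $k\neq 0$. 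This weight inequality is where the hypothesis that $\delta'-\delta$ has the same orientation as $\ell$ with respect to the separating hyperplane enters; without it the bound fails. You do not mention this orientation condition, and your proposed BWB argument would not see it.

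For (3) your picture is roughly correct---the complex is indeed a Koszul-plus-BWB hybrid---but two points are off. First, the Koszul part is built from the weights in $\scrW_F^+=\{\upbeta_i:\langle\upbeta_i,\lambda\rangle>0\text{ for some }\lambda\in\scrL_F\}$, not the weights lying \emph{in} the hyperplane of $F$; the length $d_F^+$ counts exactly these. Second, and more importantly, the complex $A_{F,\chi}^{\bullet}$ is not exact on $X$: it becomes acyclic only after restriction to $\X$, because its support lies in the unstable locus (this is where the orientation of $\ell$ is used again). One then splits the restricted complex into short exact sequences on $\X$, applies $\varphi_*$, and verifies at each stage that the resulting map is an $(\add L_{\delta}^F)_{N_{\delta}^F}$-approximation by showing $\Ext^1_{\X}(\cN,\cE^i)=0$---again via the Teleman-type vanishing, not by any exactness-preservation of $(-)^G$. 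Your claim that ``genericity guarantees that applying $(-)^G$ preserves exactness'' is not how the argument runs and would not suffice: the approximation condition is a surjectivity of $\Hom$ that requires the $\Ext^1$ vanishing on $\X$, which is genuinely a GIT/quantization input.
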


If $G$ is an algebraic torus, the sets $\scrF_{(\delta,\delta')}$ and $\scrF_{(\delta',\delta)}$ have the unique elements $F$ and $F^*$ respectively. 
Set $N\defeq M_{\delta\cap\delta'}$, $d\defeq d_F^+$ and $d^*\defeq d_{F^*}^+$.  The following shows that $M_{\delta}$ can be obtained by iterated mutations of $M_{\delta'}$ at $N$.

\begin{thm}[\ref{mutation MMA toric}] \label{main 1.3}
Assume that $G$ is an algebraic torus. Then the following isomorphisms (up to additive closure) hold:
\begin{itemize}
\item[(1)] 
$
M_{\delta}\cong \mu_{N}^{d-1}(M_{\delta'})\cong\mu_{N}^{-d^*+1}(M_{\delta'})
$.
\vspace{1mm}\item[(2)]
$
M_{\delta'}\cong \mu_{N}^{d^*-1}(M_{\delta})\cong \mu_{N}^{-d+1}(M_{\delta}).
$
\end{itemize}
In particular, the iterated mutation of $M_{\delta}$ at $N$ is periodic, and the periodicity is $d + d^*- 2$.
\end{thm}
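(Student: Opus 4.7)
The plan is to deduce Theorem \ref{main 1.3} as a direct specialization of Theorem 1.2 to the toric setting, combined with the standard inversion property of Iyama--Wemyss mutations. Since $G$ is a torus, the Weyl group $W$ is trivial, so $\ell(w_0)=0$, and by hypothesis the sets $\scrF_{(\delta,\delta')}$ and $\scrF_{(\delta',\delta)}$ each contain a unique face $F$ and $F^{*}$ respectively. The decomposition $M_{\delta} = M_{\delta\cap\delta'} \oplus M_{\delta}^{F} = N \oplus M_{\delta}^{F}$ then exhibits $M_{\delta}^{F}$ as a complement $N^{c}$ of $N$ in $M_{\delta}$, and similarly $M_{\delta'} = N \oplus M_{\delta'}^{F^{\dagger}}$.

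The first step is to verify that in the toric case the auxiliary module $L^{F}_{\delta}$ appearing in Theorem 1.2(3) coincides with $N = M_{\delta\cap\delta'}$, so that $N^{F}_{\delta} = M_{\delta}/M_{\delta}^{F} = N$ as well; then the iterated exchange $\varepsilon^{\,d-1}_{(L^{F}_{\delta},N^{F}_{\delta})}$ reduces to the iterated exchange $\varepsilon_{N}^{d-1}$ at $N$. With this identification in hand, Theorem 1.2(3) gives, up to additive closure,
\[
M_{\delta} \cong N \oplus \varepsilon_{N}^{\,d-1}\!\bigl(M_{\delta'}^{F^{\dagger}}\bigr) = \mu_{N}^{\,d-1}(M_{\delta'}),
\]
which is the first isomorphism of (1). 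Exchanging the roles of $\delta$ and $\delta'$ yields the first isomorphism of (2), namely $M_{\delta'} \cong \mu_{N}^{\,d^{*}-1}(M_{\delta})$.

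For the left-mutation statements, I would invoke the fact from \cite{IW} that, for an NCCR-giving module $M = N \oplus N^{c}$, the left and right mutations at $N$ are mutually inverse up to additive closure, i.e.\ $\mu_{N}^{-}\bigl(\mu_{N}(M)\bigr) \cong M \cong \mu_{N}\bigl(\mu_{N}^{-}(M)\bigr)$, and this extends to iterated mutations. Applying $\mu_{N}^{-(d^{*}-1)}$ to both sides of $M_{\delta'} \cong \mu_{N}^{\,d^{*}-1}(M_{\delta})$ produces $M_{\delta} \cong \mu_{N}^{-d^{*}+1}(M_{\delta'})$, which is the second isomorphism of (1); the second isomorphism of (2) is obtained analogously. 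Finally, composing the two directions gives $M_{\delta} \cong \mu_{N}^{\,d-1}\bigl(\mu_{N}^{\,d^{*}-1}(M_{\delta})\bigr) = \mu_{N}^{\,d+d^{*}-2}(M_{\delta})$ up to additive closure, proving the claimed periodicity.

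The substantive content is already packaged in Theorem 1.2, so the proof amounts to bookkeeping. The only genuine technical point is the identification $L^{F}_{\delta} = N^{F}_{\delta} = N$ in the toric setting, which requires unwinding the definitions of the face set $\scrF_{(\delta,\delta')}$ and the summand assignments $F \mapsto L^{F}_{\delta},\,M^{F}_{\delta}$ introduced earlier in the paper. Once this identification is established, the rest of the argument is formal, and the periodicity $d+d^{*}-2$ drops out immediately from combining the two halves of Theorem 1.2(3).
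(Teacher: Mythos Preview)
Your proposal is correct and follows essentially the same route as the paper. The paper's proof of Theorem~\ref{mutation MMA toric} likewise reduces everything to Theorem~\ref{main thm} (your Theorem~1.2) after two short combinatorial lemmas establishing $N_{\delta}^{F}=M_{\delta\cap\delta'}$ and $L_{\delta}^{F}\in\add M_{\delta\cap\delta'}$, then invokes Theorem~\ref{IW mutation}(1) for the inversion and swaps $(\delta,\delta')$ for the remaining isomorphisms. One small correction: you should not expect $L_{\delta}^{F}=N$ literally, only $L_{\delta}^{F}\in\add N$; this is what the paper proves and is all that is needed, since by Lemma~\ref{exchange rem}(1) it already forces $\cE_{(L_{\delta}^{F},N)}\subseteq\cE_{N}$.
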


For each $i\geq 1$, put $M_i\defeq \mu_N^{-i}(M_{\delta})$ and 
$\Lambda_i\defeq \End_R(M_i)$. 
Note that the above theorem shows that $M_{d-1}=M_{\delta'}$. 
Recall that each mutation functor associates the equivalence
\[
\Phi_N\colon \Db(\fmod \Lambda_i)\simto \Db(\fmod\Lambda_{i+1}),
\]
and we write 
\[
\Phi_N^m\colon \Db(\fmod \Lambda_{\delta})\simto \Db(\fmod \Lambda_m)
\]
for the composition $\Db(\fmod \Lambda_{\delta})\xrightarrow{\Phi_N}\Db(\fmod \Lambda_1)\xrightarrow{\Phi_N}\cdots\xrightarrow{\Phi_N}\Db(\fmod \Lambda_m)$. A similar argument as in \cite{HigNak, hara17, hara22} shows the following.

\begin{thm}[\ref{composition mut}, \ref{toric main}] \label{main 1.4}
Assume that $G$ is an algebraic torus, and let $i\geq 1$. 
\begin{itemize}
\item[$(1)$]  There exists a tilting bundle $\scrV_i$ on $\X$ such that 
\[
\varphi_*(\scrV_i)\cong M_i.
\]

\item[$(2)$] The $\Lambda_i$-module $T_i\defeq \Hom(M_i,M_{i+1})$ is a tilting module, and the mutation functor 
\[
\Phi_N\colon\Db(\fmod \Lambda_i)\simto \Db(\fmod \Lambda_{i+1})
\] 
is given by $\RHom(T_i,-)$.
\item[$(3)$] The following diagram of equivalences commutes.
\[
\begin{tikzpicture}[xscale=1.3]
\node (A0) at (2,0) {$\Db(\fmod\Lambda_{\delta})$};
\node (A5) at (8,0) {$\Db(\fmod\Lambda_{\delta'})$};
\node (B0) at (2,1.5) {$\scrM(\delta+\bnabla)$};
\node (B4) at (4.75,1.5) {$\Db(\coh\X)$};
\node (B5) at (8,1.5) {$\scrM(\delta'+\bnabla)$};
\draw[->] (A0) -- node[above] {$\scriptstyle\Phi_N^{d-1}$}(A5);
\draw[->] (B0) -- node[above] {$\scriptstyle\res$}(B4);
\draw[->] (B4) -- node[above] {$\scriptstyle\res^{-1}$}(B5);
\draw[->] (B0) -- node[left] {$\scriptstyle\RHom_{}(\scrT_{\delta},-)$}(A0);
\draw[->] (B5) -- node[right] {$\scriptstyle\RHom_{}(\scrT_{\delta'},-)$}(A5);
\end{tikzpicture}
\]
\end{itemize}
\end{thm}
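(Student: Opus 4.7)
The plan is to prove the three parts in order, with the bulk of the work residing in (1); once (1) is in hand, (2) and (3) follow quickly from the Iyama--Wemyss mutation theorem \cite{IW} and from Theorems \ref{wall crossing} and \ref{main thm}. The strategy for (1) is to realize the chain of iterated left mutations $M_\delta = M_0, M_1, \ldots, M_{d-1} = M_{\delta'}$ as the pushforward along $\varphi$ of a chain of tilting bundles $\scrV_\delta = \scrV_0, \scrV_1, \ldots, \scrV_{d-1} = \scrV_{\delta'}$ on $\X$, each of which is a direct sum of line bundles.

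For part (1), I would exploit the fact that $G$ is a torus, so every irreducible representation $V(\chi)$ is one dimensional and $M(\chi) = \varphi_*\cO_\X(\chi)$. In particular, $\scrV_\delta = \bigoplus_{\chi\in \scrC_\delta}\cO_\X(\chi)$ and similarly for $\scrV_{\delta'}$. Next I would identify the intermediate modules $M_i$ inductively. By Theorem \ref{main 1.3} together with the toric specialization of Theorem \ref{main thm}(3), the singleton $\scrF_{(\delta,\delta')} = \{F\}$ encodes a family $S \subseteq \scrC_\delta$ of characters indexing the non-common summand $M_\delta^F = \bigoplus_{\chi\in S}M(\chi)$, and I expect each left $(\add N)$-exchange at $N$ to shift $S$ by the wall-crossing direction $\ell$, so that $M_i = M_{\delta\cap\delta'}\oplus \bigoplus_{\chi\in S-i\ell}M(\chi)$. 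The natural candidate $\scrV_i = \bigoplus_{\chi\in \scrC_\delta(i)}\cO_\X(\chi)$, with $\scrC_\delta(i)$ the corresponding index set, should then be tilting by the magic-window criterion of \cite{hl-s}: the set $\scrC_\delta(i)$ fits inside a shifted polytope $\delta_i + \bnabla$ for a suitable $\delta_i\in \M_{\bR}^W\backslash \scrH^W$ in a chamber situated between $C(\delta)$ and $C(\delta')$, whence $\scrV_i \cong \res(\scrT_{\delta_i})$ up to additive closure. Cohomological affineness of $\varphi$ then gives $\End_\X(\scrV_i) \cong \End_R(M_i) = \Lambda_i$, so $\scrV_i$ is a tilting bundle with the required pushforward.

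Parts (2) and (3) follow from (1). For (2), $T_i = \Hom_R(M_i, M_{i+1})\cong \Hom_\X(\scrV_i, \scrV_{i+1})$ is a tilting $\Lambda_i$-module by standard derived Morita theory applied to the pair of tilting bundles $(\scrV_i, \scrV_{i+1})$ on $\X$; since $M_{i+1} = \mu_N^-(M_i)$, the mutation functor $\Phi_N$ is by definition the Iyama--Wemyss left mutation functor, which \cite{IW} identifies with $\RHom_{\Lambda_i}(T_i, -)$. For (3), iterating the derived Morita equivalence along the chain $\Db(\fmod \Lambda_i)\simeq \Db(\coh \X)\simeq \Db(\fmod \Lambda_{i+1})$ yields
\[
\Phi_N^{d-1} \;\simeq\; \RHom_{\Lambda_\delta}\bigl(\Hom_\X(\scrV_\delta, \scrV_{\delta'}),\, -\bigr) \;=\; \RHom_{\Lambda_\delta}\bigl(T_{(\delta,\delta')},\,-\bigr),
\]
where the last equality uses $\Hom_\X(\scrV_\delta, \scrV_{\delta'}) = \Hom_R(M_\delta, M_{\delta'}) = T_{(\delta,\delta')}$. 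By Theorem \ref{main thm}(2), this is precisely the composition along the bottom path of the diagram, which gives the commutativity.

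The main obstacle is the inductive identification carried out in (1): the mutation in Theorem \ref{main 1.3} is formulated purely in $\fmod R$, whereas the tilting property on $\X$ requires the shifted indexing set to lie in a magic window. The technical heart will be an explicit determination of minimal right $(\add N)$-approximations in the toric situation, which I expect to extract from the defining inequalities of $\bnabla$ and $\bsigma$ together with the structure of the single face $F\in \scrF_{(\delta,\delta')}$. Once this combinatorial input is secured, the remaining steps reduce to a routine application of standard tilting and mutation machinery.
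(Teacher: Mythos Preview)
Your approach to part (1) has a genuine gap. You propose that the intermediate tilting bundles $\scrV_i$ are direct sums of line bundles $\bigoplus_{\chi\in\scrC_\delta(i)}\cO_{\X}(\chi)$ for index sets obtained by shifting $S$ in the direction $\ell$, and that these fit into magic windows $\delta_i+\bnabla$ for $\delta_i$ in chambers ``between $C(\delta)$ and $C(\delta')$''. But $(\delta,\delta')$ is an \emph{adjacent} pair: there are no chambers strictly between them, so no such $\delta_i$ exist. More fundamentally, the intermediate mutations $M_i=\mu_N^{-i}(M_\delta)$ are \emph{not} of the form $\bigoplus_{\chi}M(\chi)$ for any set of characters, and the $\scrV_i$ the paper constructs are not sums of line bundles. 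The paper instead takes the long exact Koszul-type sequence on $\X$ built in the proof of Theorem~\ref{main thm}, splices it into short exact sequences $0\to\cE_\chi^{i-1}\to B_{F,\chi}^i\to\cE_\chi^i\to 0$ with $B_{F,\chi}^i\in\add\cN$, and sets $\scrV_i\defeq\cN\oplus\bigoplus_\chi\cE_\chi^i$. The syzygy bundles $\cE_\chi^i$ are non-split in general; their tilting property is obtained not from the magic-window criterion but from a separate lemma \cite[Lemma~6]{hara22}, and the identification $\varphi_*\scrV_i\cong M_i$ together with the mutation description comes from repeatedly applying Proposition~\ref{tilting mutation} to these short exact sequences.

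Your part (2) also overstates what \cite{IW} provides. The Iyama--Wemyss mutation functor is defined via the tilting module $V\oplus Q$ built from an approximation sequence, \emph{not} via $\Hom_R(M_i,M_{i+1})$; the paper's Remark~\ref{counterexample for 2.21} gives an explicit example where these two modules differ. The identification $\Phi_N\cong\RHom(T_i,-)$ with $T_i=\Hom_R(M_i,M_{i+1})$ is precisely what Proposition~\ref{tilting mutation}(3),(4) establishes once one has the geometric short exact sequences of bundles above; it is not a formal consequence of \cite{IW}. Your outline for (3), composing tilting equivalences through $\Db(\coh\X)$ and comparing with $T_{(\delta,\delta')}$, is essentially the paper's argument in Corollary~\ref{toric main}, but it relies on having the correct $\scrV_i$ from (1).
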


In Section \ref{section: THK}, it is observed that the above results on mutations descend to \textit{toric hyperK\"{a}hler varieties}.
Namely, the same statements as in Theorem \ref{main 1.3} and \ref{main 1.4} (1) hold for toric hyperK\"{a}hler varieties.

The above results also can be applied to the study of the derived category of projective Calabi-Yau varieties.
Combining the above results with the tilting theory for matrix factorizations established in \cite{ot,H3}, 
it is shown in Section \ref{section: CICY} that iterated mutations induce a group action of the fundamental group $\pi_1(\bP^1\,\backslash\{0,1,\infty\})$ on the derived category  of a Calabi-Yau complete intersection $Y\subset \bfP(a_1,\hdots,a_n)$ in a weighted projective space (Theorem \ref{window shift}). 
Furthermore, we show that the spherical twist autoequivalences associated to line bundles $\cO_Y(m)$  correspond to iterated Iyama--Wemyss mutations via noncommutative matrix factorizations (Corollary \ref{cor:spherical twist}).
This is done by generalising the results of Koseki-Ouchi \cite[Proposition 8.5]{ko} on Calabi-Yau hypersurfaces to Calabi-Yau complete intersections (Lemma \ref{lem for cor}).

\subsection{Notation}\label{notation} We work over an algebraically closed field $\sfk$ of characteristic zero. Throughout this paper, $G$ denotes a connected reductive group, and we fix  a maximal torus $T$ and a Borel subgroup $B$  of $G$; $T\subseteq B\subseteq G$. Let $n$ denote the dimension of $T$. We denote by $\Upphi$ the set of roots of $G$, and $\Upphi^{+}$ (resp. $\Upphi^-$) the set of positive (resp. negative) roots such that the weights in the Lie algebra of $B$ are negative roots. We define  $\rho\defeq (1/2)\sum_{\alpha\in \Phi^+}\alpha$.  We denote by  $\M$ the character lattice of $T$ and by $\N$ the lattice of one-parameter subgroups of $T$, and write $\M^+$ (resp. $\M_{\bR}^+$) for the dominant weights (resp. the dominant cone). For each  $\chi\in \M^+$, we write $V(\chi)$ for the irreducible representation of $G$ with highest weight $\chi$, and for any $G$-variety $S$, we write $V_S(\chi)\defeq V(\chi)\otimes \cO_S$ (resp. $V_{[S/G]}(\chi)$) for the $G$-equivariant locally free sheaf (resp. corresponding locally free sheaf on $[S/G]$) associated to $\chi$. We write $W$ for the Weyl group, and we write $w_0$ for the longest element of $W$. We fix  a $W$-invariant inner product $\l-,-\r\colon\M\times \N\to \bZ$. For any lattice $L\cong\bZ^n$, we write $L_{\bR}\defeq L\otimes\bR$ and $L_{\bC}\defeq L\otimes\bC$. 

Mainly during the study of the combinatorics of quasi-symmetric representations, 
this paper sets up masses of notations.
To sort out this pain, Appendix \ref{notation appendix} gives the list of notations that are frequently used among the discussions, and recalls rough definitions and the pages and lines where those notations are defined.

\vspace{3mm}

\noindent
\textbf{Acknowledgements.}
W.\,H.~would like to thank Prof.~Michael Wemyss for discussions and comments.
W.\,H.~was supported by EPSRC grant EP/R034826/1 and by ERC Consolidator Grant 101001227 (MMiMMa).
Y.\,H.~was supported by JSPS KAKENHI 19K14502.
The authors thank the referee for their careful reading and comments.


\section{Exchanges and Mutations of modifying modules}

\subsection{Noncommutative crepant resolution}

The present section recalls the definition of some basic notions that are studied in this article.

Let $R$ be a noetherian commutative ring of  Krull dimension $d$, and let $\Lambda$  be an $R$-algebra which is finitely generated as an $R$-module. Set $E\defeq \bigoplus_{\m\in\Max R}E(R/\m)$, where $E(-)$ denotes the  injective hull. This module  induces  Matlis duality 
\[
D\defeq\Hom_R(-,E)\colon \fl R\simto \fl R,
\]
where $\fl R$ denotes the category of finite length $R$-modules.
\begin{dfn} Let $n\in\bZ$. We say that $\Lambda$ is  {\it $n$-sCY} if there is a functorial isomorphism
\[
\Hom_{\D(\Mod \Lambda)}(M,N[n])\cong D\Hom_{\D(\Mod \Lambda)}(N,M)
\]
for all $M\in \Db(\fl\Lambda)$ and $N\in \Db(\fmod \Lambda)$.
\end{dfn}

\begin{prop}[{\cite[Proposition 3.10]{IR}}]
$R$ is $d$-sCY if and only if $R$ is Gorenstein and $\dim R_{\m}=d$ for every $\m\in\Max R$.
\end{prop}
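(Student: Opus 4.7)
The claim is essentially the characterization of Gorenstein rings of pure dimension $d$ via Grothendieck's local duality, packaged through Matlis duality. I would proceed in three steps: localization, the direction from Gorenstein to sCY via local duality, and the converse by extracting the defining Ext-vanishing/isomorphism from the sCY property.

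\medskip
\textbf{Step 1: Reduction to the local case.}
Since $E=\bigoplus_{\m\in\Max R} E(R/\m)$ splits orthogonally over $\Max R$, and every $M\in\fl R$ decomposes as a finite direct sum $M=\bigoplus_{\m} M_{\m}$ with $M_\m$ supported at $\m$, both $D$ and the Hom-spaces $\Hom_{\D(\Mod R)}(M,N[d])$, $\Hom_{\D(\Mod R)}(N,M)$ split accordingly (using $\Hom_R(M_\m,N)=\Hom_{R_\m}(M_\m,N_\m)$ for finite length modules). Thus the sCY property for $R$ is equivalent to the sCY property for each $R_\m$. The condition ``Gorenstein with $\dim R_\m=d$ for all $\m$'' is also local by definition, so the whole statement reduces to the case of a local noetherian ring $(R,\m,\sfk)$.

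\medskip
\textbf{Step 2: Gorenstein $\Rightarrow$ $d$-sCY.}
Assume $(R,\m)$ is local Gorenstein of dimension $d$. Then $R[d]$ is a normalized dualizing complex, and Grothendieck local duality states that for every $N\in\Db(\fmod R)$,
\[
\RHom_R\bigl(R\Gamma_{\m}(N),E(R/\m)\bigr)\;\simeq\;\RHom_R(N,R)[d].
\]
For $M\in\Db(\fl R)$ the local cohomology $R\Gamma_\m(M)=M$, so applying $\RHom_R(M,-)$ to the above (or equivalently using tensor-Hom adjunction and $\RHom_R(M,E)=D(M)$) gives a natural isomorphism
\[
\RHom_R(M,N[d])\;\simeq\;D\,\RHom_R(N,M),
\]
which is exactly the $d$-sCY property after taking $H^0$. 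Functoriality is inherited from the naturality of local duality.

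\medskip
\textbf{Step 3: $d$-sCY $\Rightarrow$ Gorenstein of dimension $d$.}
Assume $R$ is $d$-sCY. Take $M=\sfk=R/\m$ and $N=R$ in the defining isomorphism, together with the analogous one for each shift $N[i]$, to obtain
\[
\Ext_R^{d-i}(\sfk,R)\;\cong\;D\,\Ext_R^{i}(R,\sfk)\;=\;\begin{cases} \sfk & i=0,\\ 0 & i\ne 0,\end{cases}
\]
which shows $\Ext_R^d(\sfk,R)\cong \sfk$ and $\Ext_R^i(\sfk,R)=0$ for $i\ne d$. By the standard characterization of local Gorenstein rings (equivalently, computing the injective dimension via $\Ext^*(\sfk,-)$), this forces $R$ to be Gorenstein with $\operatorname{injdim} R=d$; combined with $\Ext^d(\sfk,R)\ne 0$ and the depth sensitivity formula $d=\operatorname{depth}(R)\le \dim R\le \operatorname{injdim} R=d$, we conclude $\dim R=d$.

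\medskip
\textbf{Main obstacle.}
The routine parts are the localization in Step 1 and the Ext-vanishing extraction in Step 3. The real content is Step 2: recognizing that the $d$-sCY identity is precisely the combination of local duality (which requires a dualizing complex) with the Gorenstein condition (which makes that dualizing complex equal to $R[d]$). The only delicate point is extending the Matlis-duality isomorphism from the pair $(\sfk,R)$ to arbitrary $(M,N)$ with $M\in\Db(\fl R)$, $N\in\Db(\fmod R)$; this is handled by dévissage along composition series of $M$ and by reducing $N$ to $R$ via free resolutions (truncated to get objects in $\fmod R$), using that both sides are cohomological functors in each variable.
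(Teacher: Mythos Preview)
The paper does not give its own proof of this proposition; it is simply quoted from \cite[Proposition 3.10]{IR} and used as a black box. Your reconstruction is the standard one and is correct in outline: localize to reduce to a Noetherian local ring $(R,\m,\sfk)$, use Grothendieck local duality with $\omega_R=R[d]$ for the implication Gorenstein $\Rightarrow$ $d$-sCY, and for the converse specialise to $M=\sfk$, $N=R$ (and its shifts) to read off $\Ext^i_R(\sfk,R)\cong\sfk$ for $i=d$ and $0$ otherwise, which forces $\operatorname{injdim}R=\operatorname{depth}R=d$ and hence Gorenstein of dimension $d$.

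One small point of imprecision in Step~2: the sentence ``applying $\RHom_R(M,-)$ to the above \dots\ gives $\RHom_R(M,N[d])\simeq D\,\RHom_R(N,M)$'' does not follow by literally applying $\RHom_R(M,-)$ to the displayed local-duality isomorphism; what one actually uses is that for $M\in\Db(\fl R)$ one has $\RHom_R(M,N)\simeq\RHom_R(M,\RGamma_{\m}N)$ (since $M$ is $\m$-torsion), and then local duality together with tensor--Hom adjunction and Matlis biduality on finite-length objects. You correctly flag this in the final paragraph as the place where d\'evissage is needed. Note also that ``reducing $N$ to $R$ via free resolutions'' only works directly when $N$ is perfect; over a non-regular Gorenstein ring $\Db(\fmod R)$ is not generated by $R$, so the reduction on the $N$-side is better done via Grothendieck duality $\RHom_R(-,R)$ (an autoequivalence of $\Db(\fmod R)$) rather than via free resolutions. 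With that adjustment the plan goes through.
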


\begin{dfn} Let $R$ be a normal $d$-sCY ring.
\begin{enumerate}
\item[(1)] A reflexive $R$-module $M$ is called a {\it modifying module} if $\End_R(M)$ is a maximal Cohen-Macaulay $R$-module.
\item[(2)] We say that a reflexive module $M$ gives a \textit{noncommutative crepant resolution (=NCCR)} $\Lambda = \End_R(M)$ if $M$ is modifying and the algebra $\Lambda$ has finite global dimension.
\end{enumerate}
\end{dfn}

\begin{rem}
Note that our definition of NCCR is different from the one in \cite{vdb} or \cite{IW}.
However, if $R$ is $d$-sCY, our definition is equivalent to other definitions.
See \cite[Lemma 4.2]{vdb} or \cite[Lemma 2.23]{IW}.
\end{rem}

\subsection{Exchanges/Mutations of modules}

This section recalls the notion of mutations introduced by Iyama--Wemyss  \cite{IW}. 
Let $R$ be a normal $d$-sCY algebra,  
and $M$, $N$, and $L$ reflexive $R$-modules.  
A {\it right $(\add L)_N$-approximation} of $M$ is a morphism
\[
\alpha\colon K\to M
\]
from $K\in \add L$ such that the induced morphism $\alpha\circ(-)\colon \Hom(N,K)\to \Hom(N,M)$ is surjective. If $L=N$, we just call $\alpha$ a {\it right $(\add L)$-approximation} of $M$. 
A right $(\add L)_N$-approximation $\alpha\colon K\to M$ of $M$ is said to be {\it minimal} if  any endomorphism $\varphi\in \End(K)$ satisfying $\alpha\circ \varphi=\alpha$  is an automorphism, and we say that  $\alpha$ is {\it reduced} if any direct summand $K'$ of $K$ is not contained in $\Ker(\alpha)$. Note that if a right approximation is minimal, it is reduced, and in the case when $R$ is complete local, the converse also holds.

\begin{rem}\label{app rem} 
Since $R$ is a normal domain,  the functor  $\Hom(L,-)\colon \refl R\simto \refl \End(L)$ is an equivalence \cite[Lemma 2.5\,(4)]{IW}, which implies that a right $(\add L)$-approximation of $M$ always exists. 
\end{rem}

\begin{dfn}
Let $R$ be a normal $d$-sCY, and let $M,N,L\in\refl R$.\vspace{1mm}\\
(1) A  {\it right $(\add L)_N$-exchange} of $M$ is the kernel of a  right  $(\add L)_N$-approximation of $M$. For a full subcategory $\cS\subseteq \refl R$, we denote by 
$\cE_{(L,N)}(\cS)$ \label{not:set of right exchanges} \linelabel{line not:set of right exchanges} 
the set of  right $(\add L)_N$-exchanges of $R$-modules in $\cS$. By abuse of notation,  we sometimes denote by  $\varepsilon_{(L,N)}(\cS)$ one of  objects  in $\cE_{(L,N)}(\cS)$.\vspace{1mm}\\
(2)
Dually a {\it left $(\add L)_N$-exchange} of $M$ is defined to be the $R$-dual of a right $(\add L^*)_{N^*}$-exchange of $M^*$. We write 
$\cE_{(L,N)}^{-}(\cS)$ 
\label{not:set of left exchanges} \linelabel{line not:set of left exchanges} 
for the set of  left $(\add L)_N$-exchanges of $M\in \cS$, and we denote by $\varepsilon_{(L,N)}^-(\cS)$ one of  objects in $\cE_{(L,N)}^{-}(\cS)$. Note that $\cE_{(L,N)}^{-}(\cS)=\bigl(\cE_{(L^*,N^*)}(\cS^*)\bigr)^*$.\vspace{1mm}\\
(3) A right (resp. left) exchange is said to be {\it reduced} if it is  the kernel of a reduced right (resp. left) approximation.

\end{dfn}

Since the kernel of a morphism between reflexive $R$-modules is again reflexive, the set
$\cE^{\pm}_{(L,N)}(\cS)$
is contained in $\refl R$.
For a positive integer $m$, set
\[\cE_{(L,N)}^{\pm m}(\cS)\defeq \underbrace{\cE^{\pm}_{(L,N)}\bigl(\cdots\cE^{\pm}_{(L,N)}(\cE^{\pm}_{(L,N)}}_{m}(\cS))\cdots\bigr),\]
and $\varepsilon_{(L,N)}^{\pm m}(\cS)$ denotes one of objects in $\cE_{(L,N)}^{\pm m}(\cS)$. 
For $M\in \refl R$, put $\cE_{(L,N)}^{\pm m}(M)\defeq \cE_{(L,N)}^{\pm m}(\{M\})$ and $\varepsilon_{(L,N)}^{\pm m}(M)\defeq \varepsilon_{(L,N)}^{\pm m}(\{M\})$.

\begin{lem}\label{exchange rem}
Notation is same as above.
\begin{itemize}
\item[$(1)$] If $L'\in\add L$, there is an inclusion \[\cE^{\pm}_{(L',N)}(\cS)\subseteq \cE^{\pm}_{(L,N)}(\cS),\] which remains  true when restricting to reduced exchanges.
\item[$(2)$]  If $N'\in \add N$,  there is an inclusion
\[\cE^{\pm}_{(L,N)}(\cS)\subseteq \cE^{\pm}_{(L,N')}(\cS),\]
which remains  true when restricting to reduced exchanges.
\item[$(3)$] For another full subcategory $\cS'\subseteq \refl R$, there is an inclusion 
\[
\cE^{\pm}_{(L,N)}(\cS)\oplus \cE^{\pm}_{(L,N)}(\cS')\subseteq\cE^{\pm}_{(L,N)}(\cS\oplus \cS').
\]
If $R$ is complete local, the similar inclusion also holds for reduced exchanges.
\end{itemize}
\begin{proof}
(1), (2) and the first assertion in (3) are obvious. The second assertion in (3) follows from the fact that, if $R$ is complete local, two approximations $\alpha\colon K\to M$ and $\alpha'\colon K'\to M'$ are reduced if and only if $\alpha\oplus \alpha'\colon K\oplus K'\to M\oplus M'$ is reduced. 
\end{proof}
\end{lem}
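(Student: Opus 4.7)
The plan is to address the three items separately, since each is essentially a matter of unpacking the definition of an $(\add L)_N$-approximation; the only non-formal input is needed in the reduced case of part (3).

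For (1), if $L'\in\add L$ then $\add L'\subseteq \add L$, so any morphism $\alpha\colon K\to M$ with $K\in\add L'$ tautologically qualifies as a right $(\add L)_N$-approximation whenever it is a right $(\add L')_N$-approximation, and the surjectivity condition $\Hom(N,K)\twoheadrightarrow\Hom(N,M)$ is unchanged. Kernels agree, so $\cE^\pm_{(L',N)}(\cS)\subseteq \cE^\pm_{(L,N)}(\cS)$. The property of being reduced (no direct summand of $K$ lies in $\Ker\alpha$) refers only to the pair $(K,\alpha)$ and not to the ambient additive class, so the inclusion persists at the reduced level. The left-exchange statement follows by dualising.

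For (2), the key observation is that $N'\in\add N$ gives $N'$ as a summand of $N^{\oplus k}$ for some $k$. Thus surjectivity of $\alpha\circ(-)\colon\Hom(N,K)\to\Hom(N,M)$ forces surjectivity of $\Hom(N^{\oplus k},K)\to\Hom(N^{\oplus k},M)$, which restricts to surjectivity for $N'$. Hence every right $(\add L)_N$-approximation is a right $(\add L)_{N'}$-approximation, which upgrades to an inclusion of kernels. Reducedness again transfers for the same reason as in (1).

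For (3), for the first inclusion I would take right $(\add L)_N$-approximations $\alpha\colon K\to M$ of $M\in\cS$ and $\alpha'\colon K'\to M'$ of $M'\in\cS'$, and check that $\alpha\oplus\alpha'\colon K\oplus K'\to M\oplus M'$ is again a right $(\add L)_N$-approximation of $M\oplus M'\in\cS\oplus\cS'$; the Hom functor distributes over direct sums so the surjectivity condition is preserved, and the kernel splits as $\Ker\alpha\oplus\Ker\alpha'$. The reduced version over a complete local base is the only place that requires substantive input, and this is where I expect the main obstacle to lie: I would invoke the Krull--Schmidt theorem (valid for finitely generated modules over a complete local ring, where endomorphism rings of indecomposables are local) to conclude that every indecomposable summand of $K\oplus K'$ is a summand of $K$ or of $K'$. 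Given this, a summand of $K\oplus K'$ lies in $\Ker(\alpha\oplus\alpha')=\Ker\alpha\oplus\Ker\alpha'$ if and only if its $K$-part lies in $\Ker\alpha$ and its $K'$-part lies in $\Ker\alpha'$; consequently $\alpha\oplus\alpha'$ is reduced precisely when both $\alpha$ and $\alpha'$ are, which yields the reduced inclusion.
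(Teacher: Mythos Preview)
Your treatment of (1), (2), and the first inclusion in (3) is correct and matches the paper, which simply declares these obvious. For the reduced part of (3) the paper does no more than \emph{state} the key fact---that over a complete local ring $\alpha$ and $\alpha'$ are reduced if and only if $\alpha\oplus\alpha'$ is---without proof; you go further and attempt to justify it via Krull--Schmidt, which is the right instinct, but the argument you sketch has a genuine gap.

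The claim ``every indecomposable summand of $K\oplus K'$ is a summand of $K$ or of $K'$'' is false as written: take $K=K'=R$ and the diagonal copy of $R$ in $R^{2}$. Krull--Schmidt only tells you such a summand is \emph{isomorphic} to an indecomposable summand of $K$ or of $K'$. Your next observation, that a submodule $P$ lies in $\Ker\alpha\oplus\Ker\alpha'$ iff its two projections land in $\Ker\alpha$ and $\Ker\alpha'$, is correct but does not close the argument: projections of a direct summand need not be direct summands, so reducedness of $\alpha$ and $\alpha'$ cannot be invoked to force $P=0$. One clean fix is to use that reduced coincides with minimal over a complete local ring. If $\phi=\begin{psmallmatrix}a&b\\c&d\end{psmallmatrix}\in\End(K\oplus K')$ satisfies $(\alpha\oplus\alpha')\phi=\alpha\oplus\alpha'$, then $\alpha a=\alpha$, $\alpha' d=\alpha'$, $\alpha b=0$, $\alpha' c=0$; minimality of $\alpha'$ makes $d$ invertible, and then $\alpha(a-bd^{-1}c)=\alpha a-\alpha b\,d^{-1}c=\alpha$, so minimality of $\alpha$ makes $a-bd^{-1}c$ invertible, whence $\phi$ is invertible by the block-triangular factorisation.
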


\begin{lem}
If $\Hom_R(N,M \oplus N)\in \CM R$, then 
$M\in \cE^-_{(L,N)}\cE_{(L,N)}(M)$ holds. 
Dually, assuming $\Hom_R(M \oplus N,N)\in\CM R$  implies that $M\in\cE_{(L,N)}\cE^-_{(L,N)}(M)$.
\begin{proof}
Assume that $\Hom(N,M \oplus N)$ is Cohen-Macaulay,
and consider an exact sequence
\[ 0 \to \Ker \alpha \to K \xrightarrow{\alpha} M, \]
where $\alpha$ is a right $(\add L)_N$-approximation.
Put $\mathbb{F} \defeq \Hom_R(N,-)$. 
Then the definition of the approximation gives an exact sequence
\[ 0 \to \mathbb{F}\Ker \alpha \to \mathbb{F}K \to \mathbb{F}M \to 0. \]
Now applying the functor $\Hom(-,\mathbb{F}R)$ to this sequence together with the reflexive equivalence proves that the dual sequence
\[ 0 \to M^* \to K^* \to (\Ker \alpha)^*\]
is exact.

 By the assumption, $\mathbb{F}M$ is Cohen-Macaulay, and  $\End(N)$ is  $d$-sCY  \cite[Lemma 2.22]{IW}. Applying the functor $\Hom(\mathbb{F}(-), \mathbb{F}N)$ to the original sequence and using the vanishing $\Ext^1_{\End(N)}(\mathbb{F}M, \End(N)) = 0$ imply that the sequence
\[ 0 \to \Hom(\mathbb{F}M, \mathbb{F}N) \to \Hom(\mathbb{F}K, \mathbb{F}N) \to \Hom(\mathbb{F}\Ker \alpha, \mathbb{F}N) \to 0 \]
remains exact.
Since all modules in the original sequence are reflexive, the reflexive equivalence and the duality yield an isomorphism
\[ \Hom_{\End(N)}(\mathbb{F}M, \mathbb{F}N) \simeq \Hom_R(N^*, M^*) \]
and similar isomorphisms for $K$ and $\Ker \alpha$, which imply the exactness of the sequence
\[ 0 \to \Hom(N^*,M^*) \to \Hom(N^*, K^*) \to \Hom(N^*, (\Ker \alpha)^*) \to 0 .\]
Thus the dual morphism
\[ K^* \to (\Ker \alpha)^* \]
is a right $(\add L^*)_{N^*}$-approximation with the kernel $M^*$, which proves the first assertion. The second assertion follows from a similar argument.
\end{proof}
\end{lem}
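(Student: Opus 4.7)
My approach is to prove the first assertion directly by constructing the required left exchange, then deduce the dual by substituting duals.

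Pick a right $(\add L)_N$-approximation $\alpha\colon K\to M$, put $N'\defeq\Ker\alpha\in\cE_{(L,N)}(M)$, and let $\mathbb{F}\defeq\Hom_R(N,-)$. The defining surjectivity of $\mathbb{F}\alpha$ turns the left-exact sequence $0\to N'\to K\xrightarrow{\alpha}M$ into a \emph{short} exact sequence
\[
0\to\mathbb{F}N'\to\mathbb{F}K\to\mathbb{F}M\to 0
\]
in $\fmod\End(N)$. Applying $\Hom_{\End(N)}(-,\mathbb{F}R)$ and invoking the reflexive equivalence $\mathbb{F}\colon\refl R\simto\refl\End(N)$ together with the identification $\Hom_{\End(N)}(\mathbb{F}X,\mathbb{F}R)\cong X^*$ yields an exact sequence
\[
0\to M^*\to K^*\to (N')^*,
\]
identifying $M^*$ as the kernel of the dual map.

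To conclude, I need the dual map $K^*\to(N')^*$ to be a right $(\add L^*)_{N^*}$-approximation. Now $\mathbb{F}M$ is a direct summand of $\Hom_R(N,M\oplus N)$, so the hypothesis makes $\mathbb{F}M$ a CM $\End(N)$-module, and $\End(N)$ is $d$-sCY by \cite[Lemma 2.22]{IW}; hence $\Ext^1_{\End(N)}(\mathbb{F}M,\End(N))=0$. Applying $\Hom_{\End(N)}(-,\mathbb{F}N)$ to the short exact sequence above therefore stays right-exact, and transporting through the isomorphism $\Hom_{\End(N)}(\mathbb{F}X,\mathbb{F}Y)\cong\Hom_R(Y^*,X^*)$ for reflexive $X,Y$ shows exactly that $\Hom_R(N^*,K^*)\to\Hom_R(N^*,(N')^*)$ is surjective. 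Thus $M^*\in\cE_{(L^*,N^*)}\bigl((N')^*\bigr)$, whence $M\in\cE^-_{(L,N)}(N')\subseteq\cE^-_{(L,N)}\cE_{(L,N)}(M)$.

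The dual statement follows from the first by substituting $(M,N,L)$ with $(M^*,N^*,L^*)$, using $\Hom_R(N^*,M^*\oplus N^*)\cong\Hom_R(M\oplus N,N)$ to match the CM hypothesis, and dualising back. The decisive technical point throughout is the $\Ext^1$-vanishing: one must carefully translate the CM condition across the reflexive equivalence from $R$-modules to $\End(N)$-modules before invoking the sCY property of $\End(N)$, and the remaining bookkeeping is formal reflexive duality applied to exact sequences.
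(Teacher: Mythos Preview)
Your proof is correct and follows essentially the same approach as the paper: both arguments take a right $(\add L)_N$-approximation, apply $\mathbb{F}=\Hom_R(N,-)$ to obtain a short exact sequence over $\End(N)$, use the reflexive equivalence to identify the dual sequence, and then exploit the $d$-sCY property of $\End(N)$ together with the Cohen--Macaulay hypothesis to get the $\Ext^1$-vanishing that makes $\Hom_{\End(N)}(-,\mathbb{F}N)$ right-exact, yielding the required approximation after transporting via $\Hom_{\End(N)}(\mathbb{F}X,\mathbb{F}Y)\cong\Hom_R(Y^*,X^*)$. The only cosmetic difference is that you spell out explicitly that $\mathbb{F}M$ is a summand of $\Hom_R(N,M\oplus N)$ and handle the dual statement by substitution rather than by saying ``a similar argument''.
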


The following says that exchanging a direct summand of a modifying module gives a new  modifying module in  nice situations.

\begin{prop}\label{ex modi}
Let $N\oplus M$ be  a modifying $R$-module, and let $L\in \refl R$ such that  $\Hom(N,L)\in \CM R$. If $M'\in\cE^{\pm m}_{(L,N)}(M)$ is modifying, so is $N\oplus M'$. 
\end{prop}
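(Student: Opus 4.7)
The plan is to prove $\End_R(N\oplus M')\in \CM R$ by exploiting the $R$-module decomposition
\[
\End_R(N\oplus M')=\End_R(N)\oplus \Hom_R(N,M')\oplus \Hom_R(M',N)\oplus \End_R(M').
\]
Two of the four summands are immediate: $\End_R(N)$ is a direct summand of $\End_R(N\oplus M)$ and so is CM, indeed $d$-sCY by \cite[Lemma 2.22]{IW}, since $N\oplus M$ is modifying; meanwhile $\End_R(M')$ is CM by the hypothesis that $M'$ is modifying. The task therefore reduces to showing that the two cross terms $\Hom_R(N,M')$ and $\Hom_R(M',N)$ are CM over $R$.

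For the right exchange case ($+$), I would induct on $m$ to establish the stronger statement that $\Hom_R(N,M^{(i)})\in \CM R$ for every intermediate $M^{(i)}\in \cE^{i}_{(L,N)}(M)$, with $M^{(0)}=M$. The base case is the modifying hypothesis on $N\oplus M$. For the inductive step, the defining approximation sequence
\[
0\to M^{(i)}\to K\xrightarrow{\alpha} M^{(i-1)},
\]
with $K\in \add L$, becomes the short exact sequence
\[
0\to \Hom_R(N,M^{(i)})\to \Hom_R(N,K)\to \Hom_R(N,M^{(i-1)})\to 0
\]
after applying $\Hom_R(N,-)$; the surjectivity on the right is exactly the defining property of the $(\add L)_N$-approximation $\alpha$. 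The middle term lies in $\add \Hom_R(N,L)\subseteq \CM R$ by the hypothesis on $L$, and the right term is CM by induction, so the depth lemma forces $\Hom_R(N,M^{(i)})\in \CM R$.

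Having established $\Hom_R(N,M')\in \CM R$, the other cross term is handled via the reflexive equivalence $\Hom_R(N,-)\colon \refl R\simto \refl \End(N)$, which supplies a natural $R$-module isomorphism
\[
\Hom_R(M',N)\cong \Hom_{\End(N)}(\Hom_R(N,M'),\End(N)).
\]
Since $\End(N)$ is $d$-sCY and $\Hom_R(N,M')$ is CM as an $R$-module (equivalently, as an $\End(N)$-module), the Calabi--Yau duality makes this dual CM as well, giving $\Hom_R(M',N)\in \CM R$. The left exchange case ($-$) reduces to the right one by dualization: by definition $(M')^*\in \cE^{m}_{(L^*,N^*)}(M^*)$, and one would apply the right-exchange argument to $(M^*,N^*,L^*)$. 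The required hypothesis $\Hom_R(N^*,L^*)\in \CM R$ reads, under the reflexive identification $\Hom(A^*,B^*)\cong \Hom(B,A)$, as $\Hom_R(L,N)\in \CM R$, and this in turn is supplied by the very Calabi--Yau duality just used: $\Hom_R(L,N)\cong \Hom_{\End(N)}(\Hom_R(N,L),\End(N))$, which is CM since $\Hom_R(N,L)$ is CM by hypothesis.

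I expect the main obstacle to be the careful bookkeeping for the left exchange case, where the Calabi--Yau duality has to be invoked twice --- once to upgrade the one-sided hypothesis on $L$ to the dual version and once again to pass between $\Hom_R(N,M')$ and $\Hom_R(M',N)$. Modulo this, the argument is a clean iteration of the depth lemma combined with the reflexive equivalence, entirely in the spirit of \cite{IW}.
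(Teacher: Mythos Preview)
Your argument is correct and follows essentially the same path as the paper: induct along the approximation sequences to show $\Hom_R(N,M')\in\CM R$ via the depth lemma, then pass to $\Hom_R(M',N)$ by duality. The only difference is that the paper handles this last step (and hence also the upgrade of the hypothesis on $L$ needed for the left-exchange case) with the more elementary observation $\Hom_R(A,B)^*\cong\Hom_R(B,A)$ for reflexive $A,B$ combined with the Gorenstein fact that CM is preserved under $(-)^*$, rather than routing through the $d$-sCY property of $\End_R(N)$.
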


Since $M'\in \cE^{\pm m}_{(L,N)}(M)$ is not modifying in general, neither $N\oplus M'$ is in general even if $N\oplus M$ is modifying and $\Hom(N,L)\in\CM R$.  The proof of Proposition \ref{ex modi} requires the following two standard lemmas.
We give the proofs for the convenience of the reader.

\begin{lem}\label{dual}
Let  $M\in \refl R$. The following equivalence holds.
\[
M\in \CM R  \Longleftrightarrow M^*\in \CM R.
\] 
\begin{proof}
We may assume that $R$ is local. Since $M$ is reflexive, it is enough to show the direction $(\Rightarrow)$. Since $R$ is Gorenstein, its injective dimension is finite. Thus the result follows from  \cite[Proposition 3.3.3 (b)]{bh}.
\end{proof}
\end{lem}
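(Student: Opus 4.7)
The plan is to reduce to the local case and then invoke standard Gorenstein duality.

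First, both the Cohen--Macaulay property of a finitely generated module and the formation of the dual $(-)^* = \Hom_R(-,R)$ commute with localization at prime ideals, so I may replace $R$ by $R_{\m}$ for an arbitrary maximal ideal $\m$ and assume $R$ is Gorenstein local of dimension $d$. Secondly, since $M$ is reflexive the natural map $M \to (M^*)^*$ is an isomorphism, so the reverse implication follows from the forward one applied to $M^*$. It therefore suffices to show: if $M \in \CM R$, then $M^* \in \CM R$.

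For this, I would use the standard characterization that over a Gorenstein local ring of dimension $d$, a finitely generated module $N$ is maximal Cohen--Macaulay if and only if $\Ext^i_R(N,R) = 0$ for all $i\geq 1$, equivalently that $\operatorname{{\bf R}Hom}_R(N,R)$ is concentrated in degree zero. Applying this to the MCM module $M$ gives $\operatorname{{\bf R}Hom}_R(M,R) \simeq M^*$, and then the Gorenstein biduality $\operatorname{{\bf R}Hom}_R(\operatorname{{\bf R}Hom}_R(M,R),R) \simeq M$ yields $\operatorname{{\bf R}Hom}_R(M^*,R) \simeq M$, again concentrated in degree zero. The Ext criterion applied in the other direction then shows $M^* \in \CM R$. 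This is exactly the packaged statement of \cite[Proposition~3.3.3\,(b)]{bh}.

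There is no serious obstacle: the lemma is a routine consequence of the fact that duality into the canonical module preserves the MCM property in the Gorenstein setting, and the reflexivity hypothesis is what converts a one-sided preservation statement into the desired two-sided equivalence.
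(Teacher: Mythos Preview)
Your proof is correct and follows essentially the same route as the paper: reduce to the local case, use reflexivity to make the statement symmetric, and then invoke the Gorenstein duality result recorded as \cite[Proposition~3.3.3\,(b)]{bh}. The only difference is that you spell out the Ext-vanishing/biduality content of that proposition rather than citing it as a black box.
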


\begin{lem}\label{opposite hom}
Let $R$ be a Gorenstein normal ring, and let $M,N\in \refl R$. Then 
\[
\Hom_R(M,N)\in \CM R \Longleftrightarrow \Hom_R(N,M)\in \CM R.
\]
\begin{proof}
It is enough to prove the direction ($\Rightarrow$). Assume that $\Hom(M,N)\in \CM R$. Then Lemma \ref{dual} implies that $\Hom(M,N)^*\in \CM R$. But by Lemma \cite[Lemma 2.9]{IW}, there is an isomorphism $\Hom(M,N)^*\cong \Hom(N,M)$, which shows that  $\Hom(N,M)\in \CM R$.
\end{proof}
\end{lem}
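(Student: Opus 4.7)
Since the statement is symmetric in $M$ and $N$, it suffices to prove the forward direction, so assume $\Hom_R(M,N)\in \CM R$. My plan is to use the $R$-linear dual $(-)^{*}\defeq\Hom_R(-,R)$ as an intermediary: first apply Lemma~\ref{dual} to transport Cohen-Macaulayness from $\Hom_R(M,N)$ to its $R$-dual, and then identify this dual with $\Hom_R(N,M)$ via a natural isomorphism.

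For Lemma~\ref{dual} to apply, I first need that $\Hom_R(M,N)$ is reflexive. This is standard over a normal Noetherian domain when $M,N$ are finitely generated and reflexive: the module $\Hom_R(M,N)$ satisfies Serre's $S_2$-condition and its formation commutes with localization at height one primes, where $M$ and $N$ become free over the DVR $R_{\mathfrak{p}}$. Hence Lemma~\ref{dual} applies and yields $\Hom_R(M,N)^{*}\in \CM R$.

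The crucial ingredient of the second step is a natural isomorphism $\Hom_R(M,N)^{*}\cong \Hom_R(N,M)$ for reflexive $M,N$ over a normal domain, which I would simply cite from \cite[Lemma~2.9]{IW} (already invoked repeatedly in the surrounding discussion). For a self-contained argument one could build the map from the composition pairing $\Hom_R(M,N)\otimes_R \Hom_R(N,M)\to R$ and verify the claim at height one primes, where things reduce to the tautology $\Hom(R_{\mathfrak{p}}^{a},R_{\mathfrak{p}}^{b})^{*}\cong \Hom(R_{\mathfrak{p}}^{b},R_{\mathfrak{p}}^{a})$, then extend globally using the $S_2$-condition of both sides. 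The main obstacle, if one were to develop everything from scratch, would be precisely this duality isomorphism, as it encodes a subtle interaction between reflexivity and $\Hom$ over a normal Gorenstein ring; granting it together with Lemma~\ref{dual}, the proof collapses into a two-step chain of implications $\Hom_R(M,N)\in \CM R\Rightarrow \Hom_R(M,N)^{*}\in \CM R\Rightarrow \Hom_R(N,M)\in \CM R$.
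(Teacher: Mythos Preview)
Your proposal is correct and follows exactly the paper's approach: reduce to one direction by symmetry, apply Lemma~\ref{dual} to pass to the $R$-dual, and then invoke \cite[Lemma~2.9]{IW} for the isomorphism $\Hom_R(M,N)^{*}\cong \Hom_R(N,M)$. The only addition you make is the explicit justification that $\Hom_R(M,N)$ is reflexive before applying Lemma~\ref{dual}, which the paper leaves implicit.
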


\begin{proof}[Proof of Proposition \ref{ex modi}]
Assume that $m>0$ and $M'\in \cE^m_{(L,N)}(M)$. 
By definition, there are $(\add L)_N$-approximations $\alpha_i\colon L_i\to M_i$ such that $M_0=M$,  $M_i=\Ker (\alpha_{i-1})$ for $i=1,\hdots, m$ and $M'=\varepsilon^m_{(L,N)}(M)$. 
Then $\Hom(N,M_i)$ are Cohen-Macaulay for every $i=1,\hdots,m$.
Indeed, applying the functor $\Hom(N,-)$ to the exact sequence $0\to M_{i+1}\to L_i\xrightarrow{\alpha_i}M_i$ gives an exact sequence 
\[
0\to\Hom(N,M_{i+1})\to \Hom(N,L_i)\to \Hom(N,M_i)\to 0.
\]
Since $\Hom(N,L_i)\in \CM R$ by assumption, $ \Hom(N,M_i)\in \CM R$ implies that $\Hom(N,M_{i+1})$ is also Cohen-Macaulay. Since $\Hom(N,M_0)\in \CM R$, the claim follows by induction on $i$. In particular, $\Hom(N,M')\in \CM R$. By Lemma \ref{opposite hom}, $\Hom(M',N)$ is also  Cohen-Macaulay. Therefore, if $M'$ is modifying, so is $N\oplus M'$.

The proof for the case when $m<0$ is similar.
\end{proof}

If $L=N$, set
\[
\cE^{\pm}_N(\cS)\defeq \cE_{(L,N)}^{\pm}(\cS)\hspace{5mm}\mbox{and}\hspace{5mm}\varepsilon^{\pm}_N(\cS)\defeq \varepsilon_{(L,N)}^{\pm}(\cS).
\]
\begin{dfn}[\cite{IW}]
Suppose  $M=N\oplus N^c$ for some $N^c\in \refl R$, and put
\label{not: mutation} \linelabel{line not: mutation}
\begin{align*}
\mu_{N}(M)&\defeq N\oplus \varepsilon_N(N^c)\\
\mu_{N}^-(M)&\defeq N\oplus \varepsilon^-_N(N^c).
\end{align*}
The module $\mu_{N}(M)$  is called a {\it right mutation} of $M$ at $N$, and $\mu_{N}^-(M)$ is called a {\it left mutation} of $M$ at $N$. We say that a mutation $\mu^{\pm}_{N}(M)$ is {\it reduced} (resp. {\it minimal}) if $\mu^{\pm}_{N}(M)=N\oplus \varepsilon^{\pm}_{N}(N^c)$ with the exchange $\varepsilon^{\pm}_{N}(N^c)$ is reduced (resp. minimal).
For a positive integer $m$, left and right $m$-iterated mutations are denoted by
$
\mu_{N}^{m}(M)\defeq N\oplus \varepsilon^m_N(N^c)$ and $\mu_{N}^{-m}(M)\defeq N\oplus \varepsilon^{-m}_N(N^c)
$, respectively.
\end{dfn}

\begin{rem} Since a right approximation is not unique in general,  neither is  right/left mutation.  However,  right/left mutation is unique up to additive closure \cite[Lemma 6.2]{IW}, and  if $R$ is complete local, minimal mutations are unique up to isomorphism.
\end{rem}

If $N=L$ in Proposition \ref{ex modi}, the assumptions are automatically satisfied, and so an iterated mutation $\mu_N^d(N\oplus M)$ is  modifying. Furthermore,  the following fundamental properties of right/left mutations of modifying modules hold.

\begin{thm}[{\cite[Proposition 6.5, Theorem 6.8, Theorem 6.10]{IW}}]\label{IW mutation} Let $M\in \refl R$ be a modifying $R$-module.
\begin{itemize}
\item[$(1)$] $\mu_N(\mu_N^{-}(M))\cong M$ and $\mu_N^{-}(\mu_N(M))\cong M$ hold up to additive closure.
\item[$(2)$] $\End_R(M)$, $\End_R(\mu_N(M))$ and $\End_R(\mu_N^{-}(M))$ are all  derived equivalent.
\item[$(3)$] If $M$ gives an NCCR of $R$, so do $\mu_N(M)$ and $\mu_N^-(M)$.
\end{itemize}
\end{thm}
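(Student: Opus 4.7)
The three parts decompose naturally: (1) should be a direct consequence of the preceding lemma combined with uniqueness of reduced exchanges, (2) should be reduced to constructing an explicit tilting module relating the two endomorphism rings, and (3) then follows formally from (2) together with Proposition \ref{ex modi}. The common tool throughout is the reflexive equivalence $\Hom_R(N,-)\colon \refl R \simto \refl \End_R(N)$ from Remark \ref{app rem}, under which right $(\add N)$-approximations over $R$ correspond to surjections from projective modules over $\End_R(N)$. Throughout I write $M = N \oplus N^c$ and $\Lambda \defeq \End_R(M)$, and I will only treat the right-mutation half of each statement, the left-mutation half being dual.

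For (1), I would prove $\mu_N^-(\mu_N(M)) \cong M$ up to additive closure. Fixing a reduced right $(\add N)$-approximation
\[
0 \to \varepsilon_N(N^c) \to K \xrightarrow{\alpha} N^c
\]
that realises $\varepsilon_N(N^c)$, the modifying hypothesis on $N\oplus N^c$ supplies $\Hom_R(N, N\oplus N^c) \in \CM R$, and the lemma preceding Proposition \ref{ex modi} immediately yields the containment $N^c \in \cE^-_N\cE_N(N^c)$, i.e.\ some choice of $\varepsilon^-_N\varepsilon_N(N^c)$ is isomorphic to $N^c$. To promote this to an identification up to additive closure, I would invoke the uniqueness of reduced exchanges from \cite[Lemma 6.2]{IW}: any two reduced left $(\add N)$-exchanges of $\varepsilon_N(N^c)$ share the same additive closure, so $N \oplus N^c$ and $N \oplus \varepsilon^-_N\varepsilon_N(N^c)$ agree up to additive closure.

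For (2), the candidate tilting module is $T \defeq \Hom_R(M, \mu_N(M))$, viewed as a right $\Lambda$-module. The identification $\End_\Lambda(T)^{\op} \cong \End_R(\mu_N(M))$ then follows from the reflexive equivalence for $\Lambda$. Applying $\Hom_R(M,-)$ to the exchange sequence produces a two-term complex of projective $\Lambda$-modules whose only nontrivial cohomology identifies with $\Hom_R(M, \varepsilon_N(N^c))$; this, together with the approximation property which makes the $\Hom_R(N,-)$-part right exact, supplies finite projective dimension for the nonprojective summand of $T$. The Ext-vanishing $\Ext^{>0}_\Lambda(T, T) = 0$ then reduces, via this short resolution and the reflexive equivalence, to the statement that every $R$-linear map $K \to \varepsilon_N(N^c)$ lifts through $\alpha$ after composing with a suitable endomorphism of $K$, which is controlled by the defining approximation property together with the fact that $N \oplus \varepsilon_N(N^c)$ is again modifying. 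Assertion (3) is then immediate: Proposition \ref{ex modi} applied with $L=N$ shows that $\mu_N^\pm(M)$ is modifying, and finite global dimension is preserved by the tilting equivalence from (2), so the NCCR property transfers.

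The main obstacle I anticipate is the Ext-vanishing step in (2). Because $T$ splits as $\Hom_R(M, N) \oplus \Hom_R(M, \varepsilon_N(N^c))$ and the first summand is projective, the delicate part is the cross Ext group whose vanishing encodes precisely that $\alpha$ is a minimal/approximation morphism; handling it will require carefully exploiting the modifying property of $N \oplus \varepsilon_N(N^c)$, whose proof in turn borrows the argument from Proposition \ref{ex modi}.
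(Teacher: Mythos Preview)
The paper does not prove this theorem; it is quoted from \cite{IW} without argument. However, immediately after the statement the paper records the tilting module that \cite{IW} actually uses for (2), and it is \emph{not} the one you propose. Your candidate $T=\Hom_R(M,\mu_N(M))$ is explicitly singled out in the paper as wrong in general: the remark following the definition of the mutation functor $\Phi_N$ states that the correct tilting module $V\oplus Q$ coincides with $\Hom_R(M,\mu_N^-(M))$ only for $3$-dimensional terminal singularities or in the toric situations treated later, and Remark~\ref{counterexample for 2.21} gives an explicit $A_1$-surface example where they differ. The module \cite{IW} constructs is
\[
V\oplus Q,\qquad V=\Cok\bigl(\Hom_R(M,N^c)\to\Hom_R(M,K^*)\bigr),\quad Q=\Hom_R(M,N),
\]
i.e.\ a \emph{cokernel} of a map of projectives, so that $\operatorname{pd}_\Lambda V\le 1$ is immediate from the defining two-term resolution.

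Your argument for finite projective dimension does not go through with the candidate $T$. Applying $\Hom_R(M,-)$ to the exchange sequence $0\to\varepsilon_N(N^c)\to K\to N^c$ exhibits $\Hom_R(M,\varepsilon_N(N^c))$ as the \emph{kernel} of a map between projective $\Lambda$-modules, which is a first syzygy but not, in general, a module of finite projective dimension; the map $\Hom_R(M,K)\to\Hom_R(M,N^c)$ need not be surjective (the approximation property only guarantees surjectivity after restricting to $\Hom_R(N,-)$), so you do not get a two-term projective resolution of anything useful. The Ext-vanishing step you flag as the ``main obstacle'' is indeed obstructed, and not for the reason you anticipate: the object itself is wrong. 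Your plans for (1) and (3) are reasonable and in line with the references the paper invokes, though note that Proposition~\ref{ex modi} as stated assumes $M'$ is already modifying, so (3) still relies on the full \cite{IW} argument rather than following formally.
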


The derived equivalence of $\End_R(M)$ and $\End_R(\mu_N^-(M))$ in Theorem \ref{IW mutation}\,(2) is given by an equivalence induced by a tilting module, which can be explicitly constructed as follows.

Let $\alpha\colon K\to (N^c)^*$ be an $(\add N^*)$-approximation, and set
\[
V\defeq \Cok\left(\bF(N^c)\xrightarrow{\bF(\alpha^*)}\bF(K^*)\right) \hspace{3mm}\mbox{and}\hspace{3mm} Q\defeq \bF(N),
\]
where $\bF(-)\defeq\Hom_R(M,-)$.
Then it was proved in \cite{IW} that $V \oplus Q$ is a tilting $\End_R(M)$-module such that its endomorphism algebra is isomorphic to  $ \End_R(\mu_N^-M)$. Therefore, $V\oplus Q$ defines an equivalence  \label{not: mutation equiv} \linelabel{line not: mutation equiv}
\begin{equation}\label{mutation functor}
\Phi_N \defeq \RHom(V \oplus Q, -) \colon  \Db(\fmod \End_R(M)) \xrightarrow{\sim} \Db(\fmod \End_R(\mu_N^-M)), \end{equation}
which is called the {\it mutation functor} at $N$. For a positive integer $m$, by abuse of notation, write 
\begin{equation}\label{iterated mutation}
\Phi_N^m\colon \Db(\fmod \End(M))\simto \Db(\fmod \End(\mu_N^{-m}(M)))
\end{equation}
for the composition $\Db(\fmod \End(M))\xrightarrow{\Phi_N}\cdots\xrightarrow{\Phi_N}\Db(\fmod\End(\mu_N^{-m}(M)))$.

\begin{rem}
If $R$ is a three dimensional Gorenstein terminal singularity, the tilting module $V\oplus Q$ is isomorphic to $\Hom_R(M, \mu_N^-M)$. 
This simpler description of the tilting module $V \oplus Q$ is also true for the toric examples of this paper (Proposition \ref{composition mut}\,(2)),
but not true in general (see Remark \ref{counterexample for 2.21}).
\end{rem}

\subsection{Tilting bundles and mutations} 

This section discusses tilting bundles over algebraic stacks.
We start from recalling some basic facts on the derived categories of algebraic stacks.

For an algebraic stack $\Y$, let $\D(\Y)$ be the (unbounded) derived category of modules over $\cO_\Y$.
Let $\D_{\mathrm{qc}}(\Y)$ be the triangulated full subcategory of complexes with quasi-coherent cohomologies.
Similarly, if $\Y$ is noetherian, let $\D_{\mathrm{c}}(\Y)$ (resp.\! $\D_{\mathrm{c}}(\Qcoh \Y)$) be the triangulated full subcategory of $\D(\Y)$ (resp.\! $\D(\Qcoh \Y)$) consisting of complexes with coherent cohomologies.

The following fact is standard.

\begin{prop}
Let $\Y$ be a quasi-compact separated algebraic stack with finite stabilizers. Then the natural functor
\begin{align*} 
\D(\Qcoh \Y) \to \D_{\mathrm{qc}}(\Y)
\end{align*}
gives an equivalence of categories.
If $\Y$ is in addition noetherian, then the natural functors
\begin{align*}
\D^\ast(\coh \Y) \to \D_{\mathrm{c}}^\ast(\Qcoh \Y) \to \D_{\mathrm{c}}^\ast(\Y) 
\end{align*}
are equivalences, for $\ast \in \{- ,\mathrm{b} \}$.
\end{prop}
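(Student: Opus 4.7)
The plan is to deduce this as a compilation of standard facts about derived categories on algebraic stacks; the statement is flagged as standard, so the task is really to identify and cite the appropriate theorems rather than to do fresh work.

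For the first equivalence $\D(\Qcoh \Y) \to \D_{\mathrm{qc}}(\Y)$, I would first record that under the stated hypotheses $\Qcoh \Y$ is a Grothendieck abelian category (so K-injective resolutions exist within $\Qcoh \Y$), and that $\Y$ is \emph{concentrated} in the sense of Hall--Rydh: pushforwards preserve small coproducts on $\D_{\mathrm{qc}}$, and $\RGamma$ commutes with filtered colimits of quasi-coherent sheaves. The combination quasi-compact + separated + finite stabilizers is exactly what ensures this (in characteristic zero the diagonal is quasi-affine, and the stabilizers being finite rules out the standard obstructions, e.g.\ stacks like $B\mathbb{G}_a$). With these inputs granted, the equivalence is a theorem of Hall--Neeman--Rydh on derived categories of quasi-coherent sheaves on algebraic stacks, which I would cite directly; the proof there is a Bökstedt--Neeman-type argument showing that the functor in question is fully faithful and that its essential image generates.

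For the second set of equivalences, the first arrow $\D^\ast(\coh \Y) \to \D^\ast_{\mathrm{c}}(\Qcoh \Y)$ is a noetherian comparison: since $\Y$ is noetherian, $\coh \Y \subseteq \Qcoh \Y$ is a Serre subcategory closed under extensions, and any quasi-coherent sheaf on $\Y$ is the filtered union of its coherent subsheaves. A standard inductive argument (Illusie/Grothendieck for schemes, extended to stacks by Laumon--Moret-Bailly and others) then produces bounded-above coherent resolutions of any object of $\D^-_{\mathrm{c}}(\Qcoh \Y)$, giving the equivalence for $\ast = -$; the case $\ast = \mathrm{b}$ follows by smart truncation applied to such a resolution. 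The second arrow $\D^\ast_{\mathrm{c}}(\Qcoh \Y) \to \D^\ast_{\mathrm{c}}(\Y)$ is then a formal consequence of the first equivalence of the proposition, obtained by restricting to complexes with coherent cohomology on each side.

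The main obstacle is the first equivalence in the unbounded case: this genuinely requires the Hall--Rydh machinery, and without their structural results it is not obvious that every object of $\D_{\mathrm{qc}}(\Y)$ admits a quasi-coherent replacement that computes derived functors correctly; in particular, unbounded complexes of injectives in $\Qcoh\Y$ need not remain acyclic for $\Hom$ out of arbitrary quasi-coherent objects without further hypotheses. The coherent/bounded part, by contrast, is essentially routine once the first equivalence is in hand, and can be dispatched by the classical resolution argument.
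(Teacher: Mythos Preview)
Your proposal is correct and amounts to the same strategy as the paper: recognize the proposition as standard and cite the literature. The paper simply points to \cite[Remark 2.12 and Proposition A.1]{BLS} (Bergh--Lunts--Schn\"urer) and observes that the bounded case follows from the bounded-above case, whereas you route through Hall--Rydh and the classical noetherian resolution argument; either citation path works, and your outline of the underlying mechanisms (concentrated stacks, filtered-union-of-coherents) is accurate, though the aside about the diagonal being quasi-affine in characteristic zero is slightly off---separated means the diagonal is proper, which is what is actually used.
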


\begin{proof}
This is proved in \cite[Remark 2.12 and Proposition A.1]{BLS}.
Note that the claim for bounded above categories implies the one for bounded categories.
\end{proof}

We say that a quasi-compact quasi-separated algebraic stack $\Y$ has \textit{finite cohomological dimension} if there exists an integer $d$ such that $H^i(\Y, \cF) = 0$ for all $\cF \in \Qcoh(\Y)$ and $i > d$.
This class of algebraic stacks has very good characterization for compact objects in $\D_{\mathrm{qc}}(\Y)$.

\begin{dfn}
Let $\Y$ be a quasi-compact quasi-separated algebraic stack.
An object of $\D_{\mathrm{qc}}(\Y)$  is \textit{perfect} if it is smooth-locally isomorphic to a bounded complex of free $\cO_\Y$-modules of finite rank.
\end{dfn}

\begin{prop}
Let $\Y$ be a quasi-compact quasi-separated algebraic stack that has finite cohomological dimension.
Then an object $P \in \D_{\mathrm{qc}}(\Y)$ is perfect if and only if it is compact.
\end{prop}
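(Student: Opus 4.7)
The plan is to prove each implication separately. The direction ``perfect implies compact'' is direct and uses finite cohomological dimension crucially, while ``compact implies perfect'' is substantially more involved and requires compact generation of $\D_{\mathrm{qc}}(\Y)$.

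For ``perfect implies compact,'' I would use the identity $\RHom(P,-)\simeq \RGamma\circ\cHom(P,-)$. Since $P$ is perfect---hence smooth-locally a bounded complex of finite-rank free $\cO_{\Y}$-modules---the internal functor $\cHom(P,-)$ commutes with arbitrary direct sums in $\D_{\mathrm{qc}}(\Y)$: the assertion is smooth-local, and for a bounded complex of finite-rank free modules it reduces to the fact that finite tensor products and shifts commute with direct sums. Meanwhile, finite cohomological dimension implies that $\RGamma(\Y,-)$ commutes with small direct sums of quasi-coherent complexes: above the cohomological dimension the terms vanish, and in each fixed degree $H^{i}(\Y,-)$ commutes with filtered colimits of quasi-coherent sheaves (since $\Qcoh(\Y)$ has exact filtered colimits), which handles sheaves; a truncation/spectral sequence argument then extends this from sheaves to complexes. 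Composing shows $\RHom(P,\bigoplus \cF_{j})\simeq \bigoplus \RHom(P,\cF_{j})$, so $P$ is compact.

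For ``compact implies perfect,'' my strategy is to exhibit a set of perfect compact generators of $\D_{\mathrm{qc}}(\Y)$ and then invoke a Neeman-type argument: once $\D_{\mathrm{qc}}(\Y)$ is compactly generated by perfect objects, every compact object lies in the thick subcategory they generate, and perfect complexes---being closed under cones, shifts, and direct summands---must then contain every compact object. To produce such generators, I would choose a smooth surjection $p\colon U\to \Y$ with $U$ a disjoint union of affine schemes, take perfect compact generators of $\D_{\mathrm{qc}}(U)$ (which exist by the classical Bondal--Van den Bergh theorem for quasi-compact quasi-separated schemes), and push them forward to $\Y$; finite cohomological dimension of $\Y$ is what ensures that $p_{*}$ preserves compactness and quasi-coherence in the needed manner.

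The main obstacle is the last step: verifying that the pushforwards of perfect generators from a smooth atlas are themselves perfect \emph{and} compactly generate $\D_{\mathrm{qc}}(\Y)$. Compact generation is the heart of such arguments, and in the generality of algebraic stacks it is a result of Hall--Rydh; the finite cohomological dimension hypothesis is precisely what controls the interaction of $p_{*}$ with infinite direct sums, without which pushforward from the atlas need not preserve compactness at all. Once compact generation by a set of perfect objects is granted, the conclusion is formal.
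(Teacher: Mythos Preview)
The paper does not prove this proposition; it simply cites \cite[Lemma 4.5 and Remark 4.6]{HR}. So there is no argument to compare against, only the question of whether your sketch is a plausible route to what Hall--Rydh actually do.

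Your direction ``perfect $\Rightarrow$ compact'' is fine: the factorization $\RHom(P,-)\simeq \RGamma\circ\cHom(P,-)$ together with finite cohomological dimension is exactly the standard argument.

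The other direction has a genuine gap in the mechanism you propose. Pushing forward perfect generators along a smooth atlas $p\colon U\to\Y$ does \emph{not} produce perfect (or even compact) objects on $\Y$, and finite cohomological dimension of $\Y$ does not fix this. Take $\Y=BG$ for $G=\bG_m$ over $\sfk$: this stack has cohomological dimension zero, yet for the atlas $p\colon\Spec\sfk\to BG$ one has $p_*\cO\cong\sfk[G]=\bigoplus_{n\in\bZ}\sfk(n)$, which is neither perfect (its pullback to the atlas is infinite-dimensional) nor compact (because $\Hom_G\bigl(\bigoplus_n\sfk(n),-\bigr)$ involves a product over $n$, which does not commute with infinite direct sums). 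So the sentence ``finite cohomological dimension of $\Y$ is what ensures that $p_*$ preserves compactness'' is false as stated, and the objects you propose as generators are not the right ones.

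You are right that the crux is compact generation of $\D_{\mathrm{qc}}(\Y)$ by perfect complexes, and that once this is known the Neeman/Thomason argument finishes the proof. But Hall--Rydh obtain such generators by a more delicate induction on the stack (via stratifications, quasi-finite covers, and their notion of concentrated morphisms), not by a single pushforward from an atlas. Since the paper is content to cite their result, the honest move here is the same: invoke \cite{HR} for compact generation by perfects, rather than sketch a mechanism that does not survive the $BG$ example.
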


\begin{proof}
This is proved in \cite[Lemma 4.5 and Remark 4.6]{HR}.
\end{proof}

\begin{dfn}
A \textit{tilting complex} on a quasi-compact quasi-separated algebraic stack $\Y$ is a perfect complex $\cT$ that satisfies the following two conditions.

\begin{enumerate}
\item[(1)] $\cT$ generates $\D_{\mathrm{qc}}(\Y)$, i.e.\! the kernel of the functor $\RHom(\cT,-)$ is zero.
\item[(2)] $\Ext^i(\cT,\cT) = 0$ for all $i \neq 0$.
\end{enumerate}
A \textit{tilting bundle} is  a vector bundle that is a tilting complex.
\end{dfn}

Let $\Y$ be a quasi-compact separated algebraic stack with finite stabilizers, and assume that $\Y$ has finite cohomological dimension.
If a tilting complex $\cT$ exists on $\Y$, it is a compact generator of $\D_{\mathrm{qc}}(\Y) \simeq \D(\Qcoh \Y)$,
and then the functor $\RHom(\cT,-)$ induces an equivalence
\[ \RHom(\cT,-) \colon \D_{\mathrm{qc}}(\Y) \xrightarrow{\sim} \D(\Mod \End_{\Y}(\cT)). \]

The following assertion is famous for schemes, and the stack version is explained in \cite[Section 2.1]{svdbtoric2}.

\begin{prop}
Let $\Y$ be a smooth separated Deligne-Mumford

that admits a tilting complex $\cT$. 
Then it induces an equivalence
\[ \RHom(\cT,-) \colon \Db(\coh \Y) \xrightarrow{\sim} \Db(\fmod \End_{\Y}(\cT)). \]
\end{prop}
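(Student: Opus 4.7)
The plan is to leverage the preceding proposition (the unbounded derived equivalence) and restrict it to the bounded coherent / finitely-generated pieces on either side using smoothness of $\Y$. Setting $A\defeq\End_{\Y}(\cT)$, the previous proposition supplies an equivalence
\[
\RHom(\cT,-)\colon \D_{\mathrm{qc}}(\Y)\xrightarrow{\sim}\D(\Mod A)
\]
with quasi-inverse $\Psi\defeq(-)\otimes^{\mathrm{L}}_{A}\cT$. Because any equivalence of triangulated categories preserves compact objects, this restricts automatically to an equivalence
\[
\Perf(\Y)\simeq\Perf(A).
\]

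The next step is to identify $\Perf$ on both sides with the desired bounded finitely-generated categories. On the geometric side, I would use smoothness: a smooth separated DM stack $\Y$ has regular étale-local rings, so every coherent sheaf admits a finite locally free resolution, yielding $\Perf(\Y)=\Db(\coh\Y)$. On the algebraic side, one needs $\Perf(A)=\Db(\fmod A)$, which amounts to showing that $A$ has finite global dimension. I would argue this as follows: given $M\in\fmod A$ with a surjection $A^{\oplus n}\twoheadrightarrow M$, there is a triangle $K\to A^{\oplus n}\to M\to K[1]$; applying $\Psi$ produces a triangle $\Psi K\to\cT^{\oplus n}\to\Psi M\to\Psi K[1]$ in $\D_{\mathrm{qc}}(\Y)$. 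Smoothness of $\Y$ bounds the cohomological dimension globally, so iterating the construction on $K$ and using compactness of $\cT$ eventually produces an iterated kernel whose image under $\Psi$ is perfect on $\Y$, forcing $M\in\Perf(A)$. Equivalently, one observes that the transported t-structure on $\Perf(A)$ has its heart embedded in the bounded coherent derived category of a smooth stack, whose $\Ext$-amplitude between any two objects is uniformly bounded, which forces finite projective dimension of every $M\in\fmod A$.

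Combining the two identifications gives the proposed equivalence $\RHom(\cT,-)\colon\Db(\coh\Y)\xrightarrow{\sim}\Db(\fmod A)$. The main technical obstacle I expect is establishing $\Perf(A)=\Db(\fmod A)$: while classical in the smooth projective scheme setting (Bondal--Van den Bergh, Hille--Van den Bergh), making the argument rigorous on a smooth separated DM stack requires the Hall--Rydh framework for perfect complexes on stacks together with careful tracking of the transported t-structures, and some attention to the noetherian hypothesis implicit in writing $\fmod A$ as an abelian category. The companion identification $\Perf(\Y)=\Db(\coh\Y)$ on the geometric side is by now standard and follows from the derived-category equivalences recalled in the propositions just before this one.
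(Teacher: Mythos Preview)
The paper does not actually prove this proposition; it simply cites \cite[Section 2.1]{svdbtoric2}, where the stack version is explained. Your sketch is precisely the standard argument one finds there (and in the scheme case in Bondal--Van den Bergh and Hille--Van den Bergh): pass through the unbounded equivalence, restrict to compact objects, and then identify $\Perf(\Y)=\Db(\coh\Y)$ by smoothness and $\Perf(A)=\Db(\fmod A)$ by deducing finite global dimension of $A$. So your approach is correct and is essentially what the cited reference does.

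One point worth tightening: the unbounded equivalence you invoke from the preceding discussion requires $\Y$ to have finite cohomological dimension, and you should check that a smooth separated Deligne--Mumford stack (in characteristic zero, or more generally tame) indeed satisfies this. Likewise, your argument for finite global dimension of $A$ is correct in spirit but could be made cleaner: since $\Psi$ sends $A$ to the perfect complex $\cT$ and is an equivalence, it sends $\Perf(A)$ to $\Perf(\Y)=\Db(\coh\Y)$; conversely, any $M\in\fmod A$ has $\Psi(M)\in\Db(\coh\Y)=\Perf(\Y)$, whence $M\in\Perf(A)$. The noetherianity of $A$ that you flag is guaranteed once $\Y$ is noetherian, since then $A=\End_{\Y}(\cT)$ is a finite module over the noetherian ring $H^0(\Y,\cO_{\Y})$ (or, in the situations of this paper, over $R$).
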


When a (stacky) crepant resolution of a Gorenstein ring $R$ admits a tilting bundle, then the endomorphism algebra of the tilting bundle gives an NCCR of $R$. 
To state this precisely, we introduce the following ad hoc notation.
For a morphism $f \colon \X \to Y$ from a stack $\X$ to a scheme $Y$, define the functor $f_\sharp$ by the formula 
\[ f_\sharp(\cF) \defeq (f_*\cF)^{**}, \]
where $(-)^{**}$ denotes the double-dual.

\begin{lem} \label{pushdown}
Let $f \colon \Y \to \Spec R$ be a stacky crepant resolution, where $R$ is normal Gorenstein. 
For two coherent sheaves $\cF, \cE$ on $\Y$, assume that $\cF$ is torsion free and $\Hom_{\Y}(\cF, \cE)$ is reflexive as an $R$-module.
Then the natural $R$-linear morphisms
\begin{align*}
F &\colon \Hom_{\Y}(\cF, \cE) \to \Hom_R(f_*\cF, f_*\cE), ~ \text{and}\\
D &\colon \Hom_R(f_*\cF, f_*\cE) \to \Hom_R(f_\sharp\cF, f_\sharp\cE)
\end{align*}
are isomorphisms.
\end{lem}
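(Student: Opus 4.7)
The plan is to exploit that a stacky crepant resolution $f \colon \Y \to \Spec R$ restricts to an isomorphism over an open subset $U \subseteq \Spec R$ whose complement $Z \defeq \Spec R \setminus U$ has codimension at least two, and to combine this with the reflexivity hypothesis via the codimension-two principle for reflexive modules. Set $V \defeq f^{-1}(U) \xrightarrow{\sim} U$.

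First I would verify that, after restriction to $U$, all three $R$-modules appearing in the statement canonically identify with $\Hom_V(\cF|_V, \cE|_V)$ via the isomorphism $f|_V$. Hence $F$ and $D$ are both isomorphisms in codimension one. Second, I would show that the composite $D \circ F$ is an isomorphism of reflexive modules: the source $\Hom_\Y(\cF, \cE)$ is reflexive by hypothesis, and the target $\Hom_R(f_\sharp\cF, f_\sharp\cE)$ is reflexive because $f_\sharp\cE$ is reflexive and, over a normal Gorenstein ring, Hom into a reflexive module is reflexive (it can be written as the dual $(f_*\cF \otimes (f_\sharp\cE)^{*})^{*}$). Two reflexive $R$-modules agreeing in codimension one are canonically isomorphic via the natural map.

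Third, $F$ is injective: $\ker F$ is a submodule of the torsion-free (since reflexive) module $\Hom_\Y(\cF, \cE)$ supported on $Z$, and torsion-free modules over a normal domain admit no nonzero submodule supported on a proper closed subset.

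The main obstacle, and the technical heart of the argument, is to conclude that $D$ is also injective (equivalently, that $F$ is surjective). An element of $\ker D$ is an $R$-linear map $\psi \colon f_*\cF \to f_*\cE$ whose image lies in the torsion submodule $T \subseteq f_*\cE$, and $T = f_*T_\cE$ for the torsion subsheaf $T_\cE \subseteq \cE$. The reflexivity of $\Hom_\Y(\cF, \cE)$ forces the sheaf-level vanishing $\Hom_\Y(\cF, T_\cE) = 0$: this Hom is an $R$-module annihilated by the ideal cutting out the proper closed image $f(\Supp T_\cE) \subsetneq \Spec R$, hence is torsion, and embeds into the torsion-free module $\Hom_\Y(\cF, \cE)$, so must vanish. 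The crux is then to upgrade this sheaf-theoretic vanishing to the $R$-module vanishing $\Hom_R(f_*\cF, f_*T_\cE) = 0$; I would attack this via the torsion-freeness of $f_*\cF$ together with a careful local-cohomology/adjunction argument that realises $R$-linear maps $f_*\cF \to f_*T_\cE$ as pushforwards of sheaf morphisms. Once $\ker D = 0$, the splitting of $\Hom_R(f_*\cF, f_*\cE)$ forced by injectivity of $F$ and invertibility of $D \circ F$ collapses, and both $F$ and $D$ are isomorphisms.
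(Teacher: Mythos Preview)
Your overall architecture matches the paper's: exploit that $f$ is an isomorphism over a codimension-$\geq 2$ open $U$, and use reflexivity to transport the comparison from $U$ to all of $\Spec R$. You correctly observe that $D \circ F$ is an isomorphism of reflexive modules and that $F$ is injective, hence split injective. This already gives a decomposition $N \defeq \Hom_R(f_*\cF, f_*\cE) \cong M \oplus C$ with $M = \Hom_\Y(\cF, \cE)$ and $C$ vanishing on $U$, so $C$ is a torsion $R$-module.

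The gap is in your attack on $\ker D$. You claim the torsion submodule $T$ of $f_*\cE$ equals $f_*T_\cE$, which is not justified (left exactness of $f_*$ gives only the inclusion $f_*T_\cE \subseteq T$). More seriously, your proposed final step --- ``realising $R$-linear maps $f_*\cF \to f_*T_\cE$ as pushforwards of sheaf morphisms'' --- is precisely an instance of the map $F$ being an isomorphism, now for the pair $(\cF, T_\cE)$; invoking it here is circular. The placeholder ``careful local-cohomology/adjunction argument'' is not an argument.

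The paper sidesteps all of this with a one-line observation you do not use: it shows $N$ itself is torsion-free. Choosing any surjection $R^{\oplus r} \twoheadrightarrow f_*\cF$ yields an injection $N \hookrightarrow (f_*\cE)^{\oplus r}$, so it suffices that $f_*\cE$ be torsion-free; the paper obtains this from $\cE$ being locally free together with $f_*\cO_\Y \cong R$. (The stated hypotheses do not say $\cE$ is locally free, but in every application in the paper it is.) Once $N$ is torsion-free, the torsion summand $C$ must vanish and $F$ is an isomorphism; then $D = (D \circ F) \circ F^{-1}$ is an isomorphism as well. This replaces your entire ``technical heart'' with two sentences.
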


\begin{proof}
Denote by $U$  the smooth locus of $\Spec R$, and by $i \colon U \to \Spec R$ the inclusion.
Put $M \defeq \Hom_{\Y}(\cF, \cE)$ and $N \defeq \Hom_R(f_*\cF, f_*\cE)$.
Since $M$ is reflexive by our assumption, the natural morphism $a \colon M \to i_*i^*M$ is an isomorphism.
Since $f$ is an isomorphism over $U$, $i^*M \simeq i^*N$ holds, and thus the adjunction gives
\[ M \xrightarrow[F]{} N \xrightarrow[\mathrm{adj}]{} i_*i^*M \xrightarrow[a^{-1}]{\sim} M. \]
Note that the composite of the above three morphisms is the identity when it is restricted to $U$, and since $M$ is reflexive, it is actually the identity of $M$.
Therefore, the morphism $F \colon M \to N$ is a splitting injection, and $N \simeq M \oplus T$ for some torsion $R$-module $T$.

To prove $F$ is an isomorphism, it is enough to show that $N$ is torsion free.
Since $\cE$ is a vector bundle and $f_*\cO_{\Y} \simeq R$, $f_*\cE$ is a torsion free $R$-module.
Now taking a surjection of $R^{\oplus r} \twoheadrightarrow f_*\cF$ for some $r \in \bZ_{> 0}$ gives an injection
\[ N \defeq \Hom_R(f_*\cF, f_*\cE) \hookrightarrow \Hom_R(R^{\oplus r}, f_*\cE) \simeq  (f_*\cE)^{\oplus r}. \]
Thus $N$ is a submodule of a torsion free $R$-module, which implies that $N$ is also torsion free.

Furthermore, since $\Hom_R(f_\sharp\cF, f_\sharp\cE)$ is reflexive and isomorphic to $M$ on $U$ by $D \circ F$, the composite $D \circ F$ and hence $D$ are isomorphisms.
\end{proof}

\begin{thm}
Let $R$ be a normal Gorenstein algebra and $f \colon \Y \to \Spec R$ is a 
stacky crepant resolution.
Assume that $\Y$ admits a tilting bundle $\cT$.
Then the reflexive $R$-module $f_\sharp \cT$ gives an NCCR of $Y$, which is naturally isomorphic to $\End_{\Y}(\cT)$.
\end{thm}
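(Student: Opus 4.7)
The plan is to establish three things in sequence: (a) an algebra isomorphism $\End_\Y(\cT) \cong \End_R(f_\sharp \cT)$ via Lemma \ref{pushdown}; (b) the maximal Cohen--Macaulay property of this algebra over $R$; and (c) finite global dimension. Reflexivity of $f_\sharp \cT$ itself is automatic from the defining formula $f_\sharp \cT = (f_* \cT)^{**}$.

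The key step is (b). Writing $\Lambda \defeq \End_\Y(\cT)$, I would combine the tilting vanishing $\Ext^{>0}_\Y(\cT, \cT) = 0$ with Grothendieck duality under crepancy. The tilting condition gives $Rf_* \cEnd(\cT) = f_* \cEnd(\cT) = \Lambda$, since there are no higher direct images. Stacky crepancy together with properness of $f$ yields $f^{!} R \cong \cO_\Y$, and $\cEnd(\cT) = \cT^\vee \otimes \cT$ is visibly self-dual as a vector bundle. Grothendieck duality then produces
\[
R\Hom_R(\Lambda, R) \cong Rf_* R\cHom_\Y(\cEnd(\cT), f^{!} R) \cong Rf_* \cEnd(\cT) \cong \Lambda,
\]
so $\Ext^{>0}_R(\Lambda, R) = 0$, and over the Gorenstein ring $R$ this forces $\Lambda \in \CM R$. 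In particular $\Lambda$ is reflexive as an $R$-module.

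Step (a) then follows by applying Lemma \ref{pushdown} with $\cF = \cE = \cT$: $\cT$ is torsion-free as a vector bundle on the reduced stack $\Y$, and $\Hom_\Y(\cT, \cT) = \Lambda$ has just been shown to be reflexive, so both $F$ and $D$ in the lemma are isomorphisms, yielding the natural chain $\End_\Y(\cT) \simto \End_R(f_* \cT) \simto \End_R(f_\sharp \cT)$. For step (c), smoothness of the stacky resolution $\Y$ gives $\Db(\coh \Y) = \Perf(\Y)$, and the tilting equivalence $\RHom(\cT, -) \colon \Db(\coh \Y) \simto \Db(\fmod \Lambda)$ transports perfectness, so $\Db(\fmod \Lambda) = \Perf(\Lambda)$ and hence $\gldim \Lambda < \infty$.

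The main obstacle I anticipate is the careful execution of step (b): verifying that Grothendieck duality applies in the stacky setting and that the crepancy hypothesis is exactly what makes $f^{!} R \cong \cO_\Y$ hold without a shift. This demands a precise interpretation of ``stacky crepant resolution'' (properness, birationality, triviality of $\omega_{\Y/R}$) in the category of Deligne--Mumford or tame stacks for which the six-functor formalism is available. Once the duality identification is in hand, the remaining arguments reduce to straightforward applications of Lemma \ref{pushdown} and standard properties of tilting bundles on smooth stacks.
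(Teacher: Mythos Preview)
Your proposal is correct and matches the paper's approach: the paper proves the same three points in the same way, citing \cite[Corollary 3.4]{svdbtoric1} for the Cohen--Macaulay step where you spell out the Grothendieck duality computation, and then invokes Lemma~\ref{pushdown} for the isomorphism $\End_\Y(\cT)\cong\End_R(f_\sharp\cT)$. Your expanded treatment of (b) is exactly the argument behind that citation, and your observation that (b) must precede (a) (to supply the reflexivity hypothesis in Lemma~\ref{pushdown}) is the correct logical order.
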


\begin{proof}
First, the derived equivalence associated to $\cT$ implies that $\End_\Y(\cT)$ has finite global dimension.
Second, the same proof as in \cite[Corollary 3.4]{svdbtoric1} shows that $\End_\Y(\cT)$ is Cohen-Macaulay as an $R$-module.
Finally, Lemma \ref{pushdown} gives the isomorphism $\End_\Y(\cT) \simeq \End_R(f_\sharp \cT)$.
Since $f_\sharp \cT$ is reflexive by definition, this completes the proof.
\end{proof}
 
The next proposition means that, under a nice geometric context, mutations of an NCCR and the associated equivalences can be described using tilting bundles.
This is a generalisation of \cite[Lemma 2]{hara22}.

\begin{prop} \label{tilting mutation}
Let $f \colon \Y \to \Spec R$ be a stacky crepant resolution, where $R$ is a $d$-sCY.
Given a vector bundle $\cL$ and an exact sequence
\begin{equation} \label{es 2.21}
  0 \to \cK \to \cE \to \cC \to 0  
\end{equation}  
of vector bundles,
assume that
\begin{enumerate}
\item[$({\rm a})$] $\cK \oplus \cL$ and $\cC \oplus \cL$ are tilting bundles, and that
\item[$({\rm b})$] $\cE \in \add \cL$.
\end{enumerate}
Let us write $K\defeq f_{\sharp}\cK$, $E\defeq f_{\sharp} \cE$, $C\defeq f_{\sharp} \cC$ and $L\defeq f_{\sharp} \cL$. Then, the following hold.
\begin{enumerate}
\item[$(1)$] The exact sequence 
\[ 0 \to K \to E \to C \]
gives a right $(\add L)$-approximation of $C$, and hence $L\oplus K \cong \mu_{L}(L\oplus C)$ holds up to additive closure.
\item[$(2)$] There is an isomorphism $L\oplus C\cong \mu_L^-(L\oplus K)$ up to additive closure.
\item[$(3)$] The mutation functor
\[ \Phi_{L} \colon \Db(\fmod \End_R(L \oplus K)) \to \Db(\fmod \End_R( L \oplus C)) \]
associated to the left mutation in $(2)$ is given by a tilting module $\Hom_{\Y}(\cL \oplus \cK, \cL \oplus \cC)$.
In particular, the following diagram of functors commutes.
\[  \begin{tikzpicture}[auto,->]
\node (a) at (0,0) {$\Db(\coh \Y)$};
\node (b) at (5,0) {$\Db(\fmod \End_R(L \oplus K))$};
\node (c) at (5,-1.2) {$\Db(\fmod \End_R(L \oplus C))$};

\draw (a) -- node {$\scriptstyle \RHom(\cL \oplus \cK,-)$} (b);
\draw (a) -- node[swap] {$\scriptstyle \RHom(\cL \oplus \cC,-)$} (node cs:name=c,anchor=north west);
\draw (b) -- node {$\scriptstyle \Phi_{L}$} (c);
\end{tikzpicture}
\]
\item[$(4)$] If $\codim \Sing(R) \geq3$, then the natural morphism  
\[ \Hom_{\Y}(\cL\oplus\cK,\cL\oplus\cC) \to \Hom_R(L\oplus K,L\oplus C) \]
is an isomorphism.
In particular, $T \defeq \Hom_R(L\oplus K,\mu_L^-(L\oplus K))$ is a tilting $\End_R(L \oplus K)$-module such that $\End_{\End_R(L \oplus K)}(T) \simeq \End_R(\mu_L^-(L\oplus K))$.
\end{enumerate}
\end{prop}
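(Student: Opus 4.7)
The plan is to pull everything back to the stack $\Y$, where the tilting assumptions in (a) supply the decisive Ext vanishings, and to invoke Lemma \ref{pushdown} to move between $\Hom_\Y$ and $\Hom_R$.

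For (1), apply $\Hom_\Y(\cL,-)$ to $0\to\cK\to\cE\to\cC\to 0$. Because $\cL\oplus\cK$ is tilting, $\Ext^1_\Y(\cL,\cK)=0$, and the three $\Hom_\Y$ groups in play are summands of the NCCRs $\End_R(L\oplus K)$ or $\End_R(L\oplus C)$, hence Cohen-Macaulay and reflexive over $R$. Lemma \ref{pushdown} then turns the long exact sequence into a short exact sequence $0\to\Hom_R(L,K)\to\Hom_R(L,E)\to\Hom_R(L,C)\to 0$, giving the surjectivity that makes $E\to C$ a right $(\add L)$-approximation of $C$. To identify its kernel with $K$, set $K'\defeq\Ker(E\to C)\in\refl R$; there is a natural map $K\to K'$ induced by $\cK\to\cE$ on $\Y$, and the exact sequence together with the reflexive equivalence $\Hom_R(L,-)\colon\refl R\simto\refl\End_R(L)$ force it to be an isomorphism. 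The mutation identity $L\oplus K\cong\mu_L(L\oplus C)$ follows, and part (2) is then immediate from (1) and Theorem \ref{IW mutation}\,(1).

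For (3), first verify $\RHom_\Y(\cL\oplus\cK,\cL\oplus\cC)$ is concentrated in degree zero: $\Ext^i_\Y(\cL\oplus\cK,\cE)=0$ because $\cE\in\add\cL$, $\Ext^i_\Y(\cL\oplus\cK,\cK)=0$ because $\cL\oplus\cK$ is tilting, and the long exact sequence from the SES then kills $\Ext^i_\Y(\cL\oplus\cK,\cC)$ for $i\ge 1$. Next identify this module with the Iyama--Wemyss tilting module $V\oplus Q$ of \eqref{mutation functor}. Dualising the SES yields $0\to\cC^*\to\cE^*\to\cK^*\to 0$; the argument of (1), now using the tilting property of $\cL\oplus\cC$ (i.e.\ $\Ext^1_\Y(\cC,\cL)=0$), shows that $E^*\to K^*$ is a right $(\add L^*)$-approximation of $K^*$ with kernel $C^*$. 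Plugging this into the IW formula computes $V=\Cok(\Hom_R(L\oplus K,K)\to\Hom_R(L\oplus K,E))$, which agrees with $\Hom_\Y(\cL\oplus\cK,\cC)$ via Lemma \ref{pushdown} and the vanishing above; combined with $Q\cong\Hom_\Y(\cL\oplus\cK,\cL)$ this gives $V\oplus Q\cong\Hom_\Y(\cL\oplus\cK,\cL\oplus\cC)$ as $\End_R(L\oplus K)$-modules, proving the claimed form of $\Phi_L$. The commutativity of the displayed diagram is then the standard compatibility between two tilting-bundle equivalences on $\Db(\coh\Y)$ and the Hom module between the two tilting bundles.

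For (4), Lemma \ref{pushdown} reduces the isomorphism to reflexivity of $\Hom_\Y(\cL\oplus\cK,\cL\oplus\cC)$ over $R$. Three of its four summands---$\End_\Y(\cL)$, $\Hom_\Y(\cL,\cC)$, $\Hom_\Y(\cK,\cL)$---are Cohen-Macaulay as summands of the two NCCRs. The remaining summand $\Hom_\Y(\cK,\cC)$ fits in a short exact sequence $0\to\End_R(K)\to\Hom_R(K,E)\to\Hom_\Y(\cK,\cC)\to 0$ coming from $\Hom_\Y(\cK,-)$ applied to the SES and $\Ext^1_\Y(\cK,\cK)=0$; the first two terms are CM, and the quotient is automatically torsion-free. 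A local depth count closes the case: at a prime of codim $\ge 2$ outside $\Sing R$ the crepant resolution is an isomorphism, so $\Hom_\Y(\cK,\cC)$ is locally free of full depth, whereas at a prime inside $\Sing R$ the codim hypothesis forces the codim to be $\ge 3$, and the standard depth estimate for a quotient of two CM modules gives depth $\ge 2$. Hence the Hom is reflexive, and the final claim about $T$ now combines (2), (3) and this isomorphism. The most delicate step throughout is the identification in (3), where one must carefully track the correspondence between the IW formula (built from a dual approximation) and the natural Hom module on $\Y$ via the dualised SES and the reflexive equivalence.
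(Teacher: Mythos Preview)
Your argument is correct and tracks the paper's proof closely for parts (1)--(3): both use Lemma~\ref{pushdown} to transport the relevant $\Hom$ groups between $\Y$ and $R$, the tilting Ext-vanishings to get the approximation and the degree-zero concentration, and the explicit Iyama--Wemyss description of $V\oplus Q$ (via the dualised sequence) to identify the mutation functor. Your extra care in (1) verifying that $K$ really is the kernel of $E\to C$ via the reflexive equivalence is a detail the paper leaves implicit.

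The one genuine divergence is in (4). The paper shows reflexivity of $V\cong\Hom_\Y(\cL\oplus\cK,\cC)$ by computing $\Ext^i_R(V,R)\cong\Ext^i_\Y(\cC,\cL\oplus\cK)$ through Grothendieck duality for the crepant morphism $f$, and then reading off the vanishing for $i\ge 2$ from the long exact sequence. Your route---the depth lemma applied to $0\to\End_R(K)\to\Hom_R(K,E)\to\Hom_\Y(\cK,\cC)\to 0$ with Cohen--Macaulay outer terms, giving depth $\ge h-1\ge 2$ at primes of height $h\ge 3$ in $\Sing R$---is more elementary and avoids duality entirely. Both reach the same conclusion; your version has the advantage of not invoking $f^!$, while the paper's version makes the role of the exact sequence~\eqref{es 2.21} in controlling the higher $\Ext$ groups more transparent.
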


\begin{proof}
Since $\End_\Y(\cC \oplus \cL)$ is an NCCR and hence a Cohen-Macaulay $R$-module, the assumptions (a) and (b) imply that $\Hom_{\Y}(\cL, \cE)$ and $\Hom_{\Y}(\cL, \cC)$ are reflexive.
Thus Lemma \ref{pushdown} yields the following commutative diagram
\[ \begin{tikzcd}
\Hom_{\Y}(\cL, \cE) \arrow[r] \arrow[d, "\simeq"] & \Hom_{\Y}(\cL, \cC) \arrow[r] \arrow[d, "\simeq"] & \Ext^1_{\Y}(\cL, \cK) = 0 \\
 \Hom_R(L, E) \arrow[r] & \Hom_R(L, C) &
\end{tikzcd} \]
Thus $\Hom_R(L, E) \to \Hom_R(L, C)$ is surjective, which proves (1), and (2) follows from (1) and Theorem \ref{IW mutation}\,(1).
Indeed, \cite[Propositon 6.4\,(3)]{IW} implies that the dual sequence
\[ 0 \to C^* \to E^* \to K^* \]
is exact and the morphism $E^* \to K^*$ is an $(\add L^*)$-approximation.

For (3), put
\begin{align*} 
V &\defeq 
\mathrm{Cok}\bigl( \Hom_R(L \oplus K, K) \to \Hom_R(L \oplus K,E) \bigr),
\end{align*}
and $Q \defeq \Hom_R(L \oplus K, L) \simeq \Hom_{\Y}(\cL \oplus \cK, \cL)$.
Then as recalled after Theorem \ref{IW mutation}, the equivalence is given by the tilting module $V \oplus Q$.
We now show that $V \simeq \Hom_{\Y}(\cL \oplus \cK, \cC)$.
To see this, consider the following commutative diagram of exact sequences
\[ \begin{tikzcd}
0 \arrow[r] & \Hom_{\Y}(\cL \oplus \cK, \cK) \arrow[r, hook] \arrow[d, "\simeq"] & \Hom_{\Y}(\cL \oplus \cK, \cE) \arrow[r, twoheadrightarrow] \arrow[d, "\simeq"] & \Hom_{\Y}(\cL \oplus \cK, \cC) \arrow[d] \arrow[r] & 0 \\
0 \arrow[r] & \Hom_R(L \oplus K, K) \arrow[r, hook] & \Hom_R(L \oplus K, E) \arrow[r] & \Hom_R(L \oplus K, C).  & {}
\end{tikzcd}\]
Note that the left and middle vertical arrows are isomorphisms by Lemma \ref{pushdown}.
Thus the diagram shows $V \simeq \Hom_{\Y}(\cL \oplus \cK, \cC)$ as desired.

Since $\Hom_{\Y}(\cL \oplus \cK, \cL \oplus \cC) \simeq \RHom_{\Y}(\cL \oplus \cK, \cL \oplus \cC)$ by assumption, the functorial isomorphisms
\begin{align*}
&\Phi_L \circ \RHom_{\Y}(\cL \oplus \cK, -) \\
= &\RHom_{\End_R(L \oplus K)}\left(\Hom_{\Y}(\cL \oplus \cK, \cL \oplus \cC), \RHom_{\Y}(\cL \oplus \cK, -)\right) \\
\simeq & \RHom_{\End_R(L \oplus K)}\left(\RHom_{\Y}(\cL \oplus \cK, \cL \oplus \cC), \RHom_{\Y}(\cL \oplus \cK, -)\right) \\
\simeq & \RHom_{\Y}(\cL \oplus \cC,  -)
\end{align*}
imply the commutativity of the diagram.

To prove (4), by Lemma \ref{pushdown}, it is enough to show that $V \simeq \Hom(\cL \oplus \cK, \cC) \simeq Rf_*((\cL \oplus \cK)^{*} \otimes \cC) $ is reflexive.
In order to check this, pick a prime ideal $\mathfrak{p} \in \Spec R$.
First, assume that $\mathrm{ht}(\mathfrak{p}) \leq 2$.
Since the assumption implies that the localization $f_{\mathfrak{p}}$ is an isomorphism, the stalk $V_{\mathfrak{p}}$ is free, and hence reflexive as an $R_{\mathfrak{p}}$-module.

Second, consider the case when $\mathrm{ht}(\mathfrak{p}) \geq 3$.
The computation using the Grothendieck duality gives isomorphisms
\begin{align*}
\Ext^i_R(V, R) & \simeq \Ext^i(Rf_*((\cL \oplus \cK)^{*} \otimes \cC), R) \\
& \simeq \Ext^i(((\cL \oplus \cK)^{*} \otimes \cC), f^!R) \\
& \simeq \Ext^i(\cC, \cL \oplus \cK).
\end{align*}
Applying the functor $\RHom(-, \cL \oplus \cK)$ to the sequence \eqref{es 2.21} shows that $\Ext^i_R(V, R) \simeq \Ext^i(\cC, \cL \oplus \cK) = 0$ for all $i \geq 2$.
In particular, since $h \defeq \mathrm{ht}(\mathfrak{p}) \geq 3$, the vanishings
\[ \Ext^{h}_{R_{\mathfrak{p}}}(V_{\mathfrak{p}}, R_{\mathfrak{p}}) = \Ext^{h-1}_{R_{\mathfrak{p}}}(V_{\mathfrak{p}}, R_{\mathfrak{p}}) = 0 \]
hold.
Thus \cite[Corollary 3.5.11]{bh} yields that $\mathrm{depth}_{R_{\mathfrak{p}}}(V_{\mathfrak{p}}) \geq 2$.
Now applying \cite[Tag 0AVA]{stacks} shows that $V$ is reflexive as an $R$-module.
\end{proof}

\begin{rem} \label{counterexample for 2.21}
In the proposition above, the additional assumption on the singular locus $\Sing(R)$ in (4) cannot be removed.

For example, consider the minimal resolution $f' \colon Y' \to \Spec R = \bA^2/G$, where $G = \bZ/2\bZ \subset \mathrm{SL}(2, \Bbbk)$, and the associated $n$-dimensional crepant resolution $f \colon Y \defeq Y' \times \bA^{n-2} \to \bA^2/G \times \bA^{n-2}$.
Let $\cO_Y(1)$ be the ample generator, and consider the bundles 
$\cK = \cO_Y(-1)$, $\cE \defeq \cO_Y^{\oplus 2}$, $\cC \defeq \cO_Y(1)$, $\cL \defeq \cO_Y$, and the exact sequence
\[ 0 \to \cO_Y(-1) \to \cO_Y^{\oplus 2} \to \cO_Y(1) \to 0. \]
They satisfy the general assumption in Proposition \ref{tilting mutation}.
It is not difficult to see that $K \simeq C$.
Thus $\Hom_R(L \oplus K, L \oplus C) \simeq \End_R(L \oplus K) \simeq \End_Y(\cL \oplus \cK).$
However, Since $\RHom(\cL \oplus \cK, -)$ is an equivalence,
$\cC \not\simeq \cK$ implies $\Hom(\cL \oplus \cK, \cL \oplus \cC) \not\simeq \End(\cL \oplus \cK)$.
Hence $\Hom_R(L \oplus K, L \oplus C) \not\simeq \Hom(\cL \oplus \cK, \cL \oplus \cC)$.
\end{rem}

\section{Quasi-symmetric representation and GIT quotient}
\subsection{Quasi-symmetric representations and magic windows}\label{section: qsym}
This section recalls fundamental properties of derived categories of GIT quotients arising from quasi-symmetric representations, which are established in \cite{hl-s} and \cite{svdb}.
We freely use notation from Section \ref{notation}.
\begin{dfn} 
A  representation  $X$  of $G$ is said to be  {\it quasi-symmetric} if every one dimensional subspace $L\subseteq \M_{\bR}$ satisfies $\sum_{\upbeta_i\in L}\upbeta_i=0$, where   $\upbeta_1,\cdots,\upbeta_d \in \M$ 
\label{not: upbata} \linelabel{line not: upbata}
is the set of $T$-weights of $X$.
\end{dfn}

\begin{dfn}[\cite{svdb}] Let $X$ be a quasi-symmetric $d$-dimensional representation of $G$, and let $\upbeta_1,\hdots,\upbeta_d$ denote the  $T$-weights of $X$. 
Then the associated $W$-invariant convex region is defined as
{\label{not: bisigma} \linelabel{line not: bsigma}}
\begin{equation*} 
\bsigma\defeq\left\{\sum_{i=1}^da_i\upbeta_i\middle| a_i\in[0,1]\right\}\subset \M_{\bR}. 
\end{equation*}
By the quasi-symmetry, the equality
\[ \bsigma=\left\{\sum_{i=1}^da_i\upbeta_i\middle| a_i\in[-1,0]\right\} \]
holds.
\end{dfn}

\begin{rem}
In the paper \cite{svdb}, $\Sigma$ denotes the interior of  the above $\bsigma$, and our $\bsigma$ is denoted by $\overline{\Sigma}$. In \cite{svdb, hl-s}, $\beta_i$ denotes the weights of the dual representation $X^{*}$.
\end{rem}

In the remainder of this section,  $X$ denotes a quasi-symmetric $d$-dimensional representation of $G$ with $T$-weights $\upbeta_1,\hdots,\upbeta_d\in \M$, and we always assume that $\bsigma$ spans $\M_{\bR}$.
The  assumption  that $\bsigma$ spans $\M_{\bR}$ is equivalent to the assumption that the $T$-action on $X$ has generically finite stabilizers.  Let $\scrA$ denote  the real hyperplane arrangement  in $\M_{\bR}$ defined to be the set of codimension one linear subspaces  that are parallel to some facet of $\bsigma$.  
Set
\label{not: M_R^W} \linelabel{ line not: M_R^W}
\[
(\M_{\bR}^{W})_{\rm gen}\defeq \M_{\bR}^W\backslash\bigcup_{A\in\scrA}\bigl(A\cap \M_{\bR}^{W}\bigr).
\]
A $W$-invariant element $\ell\in \M_{\bR}^W$ is said to be {\it generic} if $\ell\in (\M_{\bR}^W)_{\rm gen}$. 
Since $\M^W$ is canonically isomorphic to $\Pic BG$, an element $\ell\in \M^W$ defines a character of $G$, which is denoted by $\chi_{\ell}\colon G\to \bG_m$. Thus $\ell$ induces a $G$-equivariant invertible sheaf $\cO(\ell)$, and 
the $G$-invariant part $\Gamma(X,\cO(\ell))^G$ of its global section is the set of $\chi_{\ell}$-semi-invariant regular functions.
Here, for a character $\chi\colon G\to \bG_m$, a regular function $f$  is said to be {\it $\chi$-semi-invariant} if $f(gx)=\chi(g)f(x)$ for all $g\in G$ and $x\in X$.  
For an element $\ell\in \M^W$, 
let $X^{\rm ss}(\ell)$ be the semistable locus with respect to $\ell$, which is defined by 
\[
X^{\rm ss}(\ell)\defeq\left\{ x\in X\relmiddle| \exists k>0 \mbox{ and }\exists f\in \Gamma(X,\cO(k\ell))^G \mbox{ such that } f(x)\neq0 \right\},
\] 
and then it associates the GIT quotient stack $[X^{\rm ss}(\ell)/G]$.
\begin{rem}\label{HM criterion}
The Hilbert--Mumford numerical criterion shows that $x\in X^{\rm ss}(\ell)$ if and only if $\l\ell,\lambda\r\geq0$ for all one-parameter subgroup $\lambda\colon \bG_m\to G$ such that $\lim_{t\to0}\lambda(t)x$ exists.
\end{rem}
Assuming that $\bsigma$ spans $\M_{\bR}$ gives the following.

\begin{prop}[{\cite[Proposition 2.1]{hl-s}}]\label{dm stack}
If $\ell\in \M_{\bR}^W$ is generic, then $[X^{\rm ss}(\ell)/G]$ is a Deligne--Mumford stack.
\end{prop}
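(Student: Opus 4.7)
The plan is to show that every $x\in X^{\rm ss}(\ell)$ has finite $G$-stabilizer when $\ell$ is generic, which in characteristic zero already forces $[X^{\rm ss}(\ell)/G]$ to be Deligne--Mumford.

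I would argue by contradiction: suppose some $x\in X^{\rm ss}(\ell)$ has $\dim G_x>0$. Inside $\overline{G\cdot x}\cap X^{\rm ss}(\ell)$ one picks a closed $G$-orbit $G\cdot y$; a Matsushima-type GIT argument yields that $G_y$ is reductive with $\dim G_y>0$. After replacing $y$ by a suitable $G$-conjugate, a maximal torus of $G_y^{\circ}$ lies in $T$, so a nontrivial cocharacter $\mu\in \N$ fixes $y$. Since every hyperplane in $\scrA$ is linear, the origin is not generic, so $\ell\neq 0$, and Remark~\ref{HM criterion} applied to $x=0$ then forces $0\notin X^{\rm ss}(\ell)$; in particular $y\neq 0$.

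Next, decomposing $y=\sum_i y_i$ with $y_i\in X_{\upbeta_i}$, I set
\[
W(y)\defeq \operatorname{span}_{\bR}\{\upbeta_i:y_i\neq 0\}\subseteq \M_{\bR}.
\]
The $T$-stabilizer of $y$ is the subtorus of $T$ with Lie algebra $W(y)^{\perp}\subseteq \N_{\bR}$, and this contains $\mu\neq 0$, so $\dim W(y)\leq n-1$. For every $\nu\in W(y)^{\perp}\cap \N$, both $\nu$ and $-\nu$ fix $y$, so Remark~\ref{HM criterion} forces $\langle \ell,\nu\rangle=0$; hence $\ell\in W(y)$. Because $\bsigma$ spans $\M_{\bR}$, the weights $\upbeta_1,\dots,\upbeta_d$ span $\M_{\bR}$, so one can adjoin finitely many additional $\upbeta_j$'s to $W(y)$ and produce an $(n-1)$-dimensional subspace $H\subseteq \M_{\bR}$ spanned by $n-1$ linearly independent weights. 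Such an $H$ is parallel to a facet of $\bsigma$ and thus lies in $\scrA$; then $\ell\in W(y)\subseteq H$ contradicts $\ell\in(\M_{\bR}^W)_{\rm gen}$.

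The hardest part is the opening reduction from an arbitrary positive-dimensional stabilizer to a concrete cocharacter of $T$, for which reductivity of $G_y$ at the closed orbit inside $X^{\rm ss}(\ell)$ and a subsequent $G$-conjugation are the essential inputs; the combinatorial argument on $\scrA$ that comes afterwards should be straightforward given Hilbert--Mumford.
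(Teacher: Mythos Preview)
The paper does not give its own proof of this proposition; it is cited directly from \cite[Proposition 2.1]{hl-s} without argument, so there is nothing in the paper to compare against.

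Your argument is correct and self-contained. The reduction step---passing to a closed orbit $G\cdot y$ in $X^{\rm ss}(\ell)$, using that stabilizers along orbit closures can only grow in dimension, invoking Matsushima to get $G_y$ reductive, and conjugating a maximal torus of $G_y^{\circ}$ into $T$---is the standard route to producing a nontrivial cocharacter $\mu\in\N$ fixing a nonzero semistable point. The combinatorial finish is also sound: since $W(y)$ is spanned by weights and the full collection $\{\upbeta_i\}$ spans $\M_{\bR}$, you can extend $W(y)$ one weight at a time until you hit a hyperplane $H$; for the zonotope $\bsigma=\sum_i[0,\upbeta_i]$, any linear hyperplane spanned by a subset of the generating segments supports a facet, so $H\in\scrA$ and $\ell\in H$ contradicts genericity. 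One small point worth making explicit is that $W(y)$ is defined over $\bQ$ (being spanned by integral weights), so $W(y)^{\perp}\cap\N$ is a lattice of full rank in $W(y)^{\perp}$ and you may indeed choose $\nu\in\N$ when applying Hilbert--Mumford.
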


\begin{dfn} \label{def bnabla}
Put $\bL\defeq [X^{*}]-[\g^{*}]\in K_0(\rep T)$. 
For a one-parameter subgroup $\lambda\in \N$ and a representation $V\in \rep T$, 
write $V^{\lambda>0}\subseteq V$ for the direct sum of $T$-weights $\chi$ in $V$ such that $\l\chi,\lambda\r>0$. This extends to a linear map $(-)^{\lambda>0}\colon K_0(\rep T)\to K_0(\rep T)$, and 
using this, set $\eta_{\lambda}\defeq\l\bL^{\lambda>0},\lambda\r$ and
{\label{not: bnabla} \linelabel{line not: bnabla}}
\[
\bnabla\defeq\left\{\chi\in\M_{\bR}\relmiddle| -\frac{\eta_{\lambda}}{2}\leq \l\chi,\lambda\r\leq \frac{\eta_{\lambda}}{2} \mbox{ for all $\lambda\in \N$}\right\}\subset\M_{\bR}.
\]
Let $\scrH$ 
\label{not: scrH} \linelabel{line not: scrH}
denote the set of  hyperplanes  of the form $m+B$ for some $m\in \M$ and some hyperplane $B$ such that $B\cap\bnabla$ is a facet of $\bnabla$.
\end{dfn}

For $H\in\scrH$ that does not contain $\M_{\bR}^W$, set
$H^W\defeq H\cap \M_{\bR}^W$. 
If $(\M_{\bR}^W)_{\rm gen}\neq \emptyset$, a $(\M^W)$-periodic hyperplane arrangement $\scrH^W$ in $\M_{\bR}^W$ is defined by \label{not: H^W} \linelabel{line not: H^W}
\[
\scrH^W\defeq \{H^W\mid H\in\scrH \},
\]
and the complement $\M_{\bR}^W \backslash \bigcup_{H^W \in \scrH^W} H^W$ is simply denoted by  $\M_{\bR}^W\backslash\scrH^W$. 
A connected component of $\M_{\bR}^W\backslash\scrH^W$ is called a {\it chamber} of $\M_{\bR}^W\backslash\scrH^W$. We say that two elements $\delta,\delta'\in\M_{\bR}^W\backslash\scrH^W$ are {\it equivalent} if they lie in the same chamber. 
The symbol $[\delta]$ denotes the equivalence class of $\delta \in\M_{\bR}^W\backslash\scrH^W$.

\begin{prop}\label{nabla sigma} Notation is same as above. Assume that $(\M_{\bR}^W)_{\rm gen}\neq \emptyset$.
\begin{itemize}
\item[$(1)$] 
$\bnabla\cap\M^+_{\bR}=(-\rho+(1/2)\bsigma)\cap\M^+_{\bR}.$  In particular,  
\[\bnabla=\bigcup_{w\in W}w\left((-\rho+(1/2)\bsigma)\cap\M^+_{\bR}\right).\]
\item[$(2)$] For $\delta\in\M_{\bR}^W$, $\delta\in \M_{\bR}^W\backslash\scrH^W$ if and only if $\partial(\delta+\bnabla)\cap \M=\emptyset$.
\end{itemize}
\begin{proof}
(1) This follows from \cite[Lemma 2.9]{hl-s} (see also \cite[proof of Corollary 2.10]{hl-s}).\vspace{1mm}\\
(2) This is \cite[Lemma 3.3]{hl-s}. 
\end{proof}
\end{prop}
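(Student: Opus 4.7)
For part (1), the strategy is to reduce both memberships to inequalities tested against dominant one-parameter subgroups $\lambda\in\N$, and then equate them through explicit support-function computations. First, $\bL=[X^*]-[\g^*]$ is $W$-stable in $K_0(\rep T)$, so $\eta_{w\lambda}=\eta_\lambda$ for every $w\in W$; hence $\bnabla$ is $W$-invariant, and membership in $\bnabla$ can be tested against dominant $\lambda$ only. Quasi-symmetry forces $\sum_i\langle\upbeta_i,\lambda\rangle=0$ (weights along each line through the origin come in antipodal pairs of equal multiplicity), which yields
\[
\langle (X^{*})^{\lambda>0},\lambda\rangle \;=\; \sum_{\langle\upbeta_i,\lambda\rangle<0}|\langle\upbeta_i,\lambda\rangle| \;=\; \tfrac{1}{2}\sum_{i=1}^{d}|\langle\upbeta_i,\lambda\rangle|.
\]
For dominant $\lambda$, the weights of $\g^{*}$ that contribute to $(\g^{*})^{\lambda>0}$ are exactly the positive roots, so $\langle(\g^{*})^{\lambda>0},\lambda\rangle=\langle 2\rho,\lambda\rangle$ and therefore $\eta_\lambda/2 = \tfrac14\sum_i|\langle\upbeta_i,\lambda\rangle|-\langle\rho,\lambda\rangle$. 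Independently, the support function of $(1/2)\bsigma$ in direction $\lambda$ equals $\tfrac14\sum_i|\langle\upbeta_i,\lambda\rangle|$ (again by quasi-symmetry). Consequently, for dominant $\chi$ the condition $\chi+\rho\in(1/2)\bsigma$ -- tested against dominant $\lambda$ via support functions -- is precisely $\langle\chi,\lambda\rangle\le\eta_\lambda/2$, and the matching lower bound required for $\chi\in\bnabla$ comes for free from $\bnabla=-\bnabla$, itself a consequence of $\eta_\lambda=\eta_{-\lambda}$ (which in turn follows from the total sum of weights of $\bL$ being zero). This establishes the first equality; the ``in particular'' statement then follows from the $W$-invariance of $\bnabla$ together with the fact that every real weight is Weyl-conjugate to a dominant one.

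For part (2), I would argue both directions by contrapositive. In one direction, a lattice point $\chi\in\M\cap\partial(\delta+\bnabla)$ places $\chi-\delta$ on some facet $F\subset B$ of $\bnabla$; since $\bnabla=-\bnabla$, the hyperplane $-B$ is also a facet-hyperplane, so the identity $\delta=\chi-(\chi-\delta)$ exhibits $\delta$ on $\chi+(-B)\in\scrH$, whence $\delta\in\scrH^W$ after intersecting with $\M_{\bR}^W$. Conversely, given $\delta\in H^W$ with $H=m+B\in\scrH$ and $F=B\cap\bnabla$ a facet, the goal is to produce $m'\in\M$ with $\delta-m'\in F$ (not merely in $B$); then $m'$ is a lattice point on $\partial(\delta+\bnabla)$. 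The plan for this step is to exploit the $W$-invariance of $\delta$ (which forces $\delta$ onto every Weyl translate $wH=w(m)+wB\in\scrH$) and the $W$-invariance of the facet structure of $\bnabla$ in order to promote the bare wall condition $\delta\in m+B$ to the stronger condition $\delta-m'\in F$ for an appropriate lattice point $m'$.

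The main obstacle I anticipate is the converse in (2). Part (1) and the forward direction of (2) are essentially mechanical support-function computations, but the converse of (2) requires a genuinely delicate combinatorial argument combining the $W$-invariance of $\delta$, the lattice condition $m'\in\M$, and the requirement that $\delta-m'$ land in the open facet $F$ rather than merely on the affine hyperplane $B$ that contains it. This is where I would expect to spend the bulk of the work, and one natural approach is to look at the intersection of $\delta+\bnabla$ with the affine translate $\delta+B$ -- which is a copy of $-F$ shifted by $\delta$ -- and use the $(\M^W)$-periodic structure of the wall $H\in\scrH$ to locate an integral point in that slice.
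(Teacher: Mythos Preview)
The paper does not give a self-contained proof of this proposition: it simply cites \cite[Lemma 2.9 and Lemma 3.3]{hl-s}. Your attempt is therefore not so much a comparison as an independent reconstruction of those external arguments.

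Your treatment of (1) is correct and is essentially the computation carried out in \cite[Lemma 2.9]{hl-s}: the support function of $(1/2)\bsigma$ in a direction $\lambda$ is $\tfrac14\sum_i|\langle\upbeta_i,\lambda\rangle|$, and for dominant $\lambda$ one has $\eta_\lambda/2=\tfrac14\sum_i|\langle\upbeta_i,\lambda\rangle|-\langle\rho,\lambda\rangle$, so the two polytopes have the same supporting half-spaces over the dominant cone. One point you glossed over but which goes through: to reduce the test for $\chi\in-\rho+(1/2)\bsigma$ to dominant $\lambda$ only (for dominant $\chi$), you need that $\langle w^{-1}(\chi+\rho),\lambda\rangle\le\langle\chi+\rho,\lambda\rangle$ for dominant $\lambda$, which holds because both $\chi$ and $\rho$ are dominant and hence $\chi+\rho-w^{-1}(\chi+\rho)$ is a non-negative sum of positive roots.

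For (2), your forward direction is fine; you should also remark that the resulting $H=\chi+(-B)\in\scrH$ cannot contain $\M_{\bR}^W$, since otherwise (as $0\in\M_{\bR}^W$) $H$ would be linear, hence equal to the element of $\scrA$ parallel to $B$, contradicting $(\M_{\bR}^W)_{\rm gen}\neq\emptyset$.

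The genuine gap is exactly where you say it is: the converse of (2). You have $\delta-m\in B$ but not necessarily $\delta-m\in F$, and your proposed fix---using $W$-invariance of $\delta$ to pass to other walls $w(m)+wB$---does not obviously help, because $\delta-w(m)$ again lies only on the hyperplane $wB$, not on the facet $wF$. The actual argument in \cite[Lemma 3.3]{hl-s} is more delicate: one shows that if $\partial(\delta+\bnabla)\cap\M=\emptyset$ then the lattice points in $\delta+\bnabla$ are locally constant as $\delta$ varies, and since the chambers of $\M_{\bR}^W\backslash\scrH^W$ are precisely the connected components where this local constancy holds, $\delta$ must lie in an open chamber. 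In other words, the converse is not proved by exhibiting a single lattice point on a single wall, but by a deformation argument relating the wall structure of $\scrH^W$ to jumps in the set $\scrC_\delta$. Your plan to ``locate an integral point in the slice $\delta+B$'' would need an explicit existence statement for lattice points in translated facets, which is not available in general without further input.
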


For any  $\delta\in \M^W_{\bR}$, the corresponding finite subset $\scrC_{\delta}\subset \M^+$ of dominant weights is defined by \linelabel{line x delta}
\begin{equation}\label{x delta}
\scrC_{\delta}\defeq (\delta+\bnabla)\cap \M^+.
\end{equation}
\begin{lem}\label{independence}
Let $\delta, \tilde{\delta}\in \M_{\bR}^W\backslash\scrH^W$ with $[\delta]=[\tilde{\delta}]$.
\begin{itemize}
\item[(1)] $\scrC_{\delta}=\scrC_{\tilde{\delta}}$.
\item[(2)] $(\delta+(1/2)\bsigma)\cap (\M^++\rho)=(\tilde{\delta}+(1/2)\bsigma)\cap (\M^++\rho)$.
\end{itemize}
\begin{proof}
(1) This follows from Proposition \ref{nabla sigma}\,(2).\vspace{1mm}\\
(2) It is enough to prove $(\delta+(1/2)\bsigma)\cap (\M^++\rho)\subseteq(\tilde{\delta}+(1/2)\bsigma)\cap (\M^++\rho)$. Let $\chi\in \M^+$ such that $\chi+\rho= \delta+(1/2)\sigma$ for some $\sigma\in\bsigma$. By Proposition \ref{nabla sigma}\,(1),  $\chi=\delta-\rho+(1/2)\sigma\in \scrC_{\delta}$. By (1) there exists $\tilde{\sigma}\in \bsigma$ such that $\chi=\tilde{\delta}-\rho+(1/2)\tilde{\sigma}$. Then $\chi+\rho=\tilde{\delta}+(1/2)\tilde{\sigma}\in (\tilde{\delta}+(1/2)\bsigma)\cap (\M^++\rho)$. This shows that $(\delta+(1/2)\bsigma)\cap (\M^++\rho)\subseteq(\tilde{\delta}+(1/2)\bsigma)\cap (\M^++\rho)$. 
\end{proof}
\end{lem}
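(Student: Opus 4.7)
The plan is to reduce (2) to (1) via Proposition \ref{nabla sigma}\,(1), so the essential content lies in part (1). For (1), the key observation is that varying $\delta$ within a chamber of $\M_{\bR}^W\backslash\scrH^W$ deforms the translated polytope $\delta+\bnabla$ continuously, and Proposition \ref{nabla sigma}\,(2) says that throughout this deformation no lattice point ever touches the boundary.

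\textbf{Step 1 (Reformulate as a continuity argument).} Fix any $\chi\in\M^+$. I would consider the (closed) function
\[
f_{\chi}\colon \M_{\bR}^W\to\{0,1\},\quad f_{\chi}(\delta)\defeq \mathds{1}_{\delta+\bnabla}(\chi),
\]
and show it is locally constant on $\M_{\bR}^W\backslash\scrH^W$. Indeed, $f_{\chi}$ can jump at $\delta$ only if $\chi$ lies on the topological boundary $\partial(\delta+\bnabla)$. But Proposition \ref{nabla sigma}\,(2) tells us that $\partial(\delta+\bnabla)\cap\M=\emptyset$ for every $\delta\in\M_{\bR}^W\backslash\scrH^W$, so $f_{\chi}$ has no jump point in that complement.

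\textbf{Step 2 (Path within a chamber).} The chamber containing $\delta$ is a connected component of $\M_{\bR}^W\backslash\scrH^W$, hence path-connected. I would connect $\delta$ to $\tilde{\delta}$ by a continuous path staying inside the chamber; along this path $f_{\chi}$ is locally constant, hence constant. Since $\chi$ was arbitrary, this yields $\scrC_{\delta}=\scrC_{\tilde{\delta}}$, proving (1).

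\textbf{Step 3 (Deduce (2) from (1)).} By symmetry it suffices to prove the inclusion $\subseteq$. Given $\chi\in\M^+$ with $\chi+\rho\in\delta+(1/2)\bsigma$, I would write $\chi=\delta-\rho+(1/2)\sigma$ for some $\sigma\in\bsigma$, and use Proposition \ref{nabla sigma}\,(1) to identify $\chi$ with an element of $\scrC_{\delta}$. Applying part (1) gives $\chi\in\scrC_{\tilde{\delta}}$, and unraveling Proposition \ref{nabla sigma}\,(1) once more produces some $\tilde{\sigma}\in\bsigma$ with $\chi=\tilde{\delta}-\rho+(1/2)\tilde{\sigma}$, i.e.\ $\chi+\rho\in(\tilde{\delta}+(1/2)\bsigma)\cap(\M^++\rho)$.

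The only mild subtlety is ensuring in Step 1 that crossing between $(\delta+\bnabla)\cap\M^+$ and $(\tilde\delta+\bnabla)\cap\M^+$ cannot hide a lattice point entering through the \emph{relative} boundary coming from the dominant chamber; but since we only test the condition $\chi\in\delta+\bnabla$ (with $\chi\in\M^+$ fixed), this is automatic. No additional convexity of $\bnabla$ or action of $W$ is needed beyond what is already encoded in Proposition \ref{nabla sigma}.
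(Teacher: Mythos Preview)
Your proposal is correct and follows essentially the same route as the paper: both prove (1) from Proposition \ref{nabla sigma}\,(2) and then reduce (2) to (1) via Proposition \ref{nabla sigma}\,(1). Your Steps 1--2 simply make explicit the continuity/local-constancy argument that the paper leaves as a one-line citation, and your Step 3 is verbatim the paper's argument for (2).
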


\begin{figure}
\centering
\begin{tikzpicture}[baseline=(current bounding box.north), x=5mm, y=5mm]

          \draw[fill=gray!20, line width=1pt]   (5,5) --(5,1) --(3,-3) -- (-1, -5) --(-5, -5) --cycle;
          \draw[line width=1pt]   (-5,-5) --(-5,-1) --(-3,3) -- (1, 5) --(5, 5);
          \draw[line width=1pt]   (-6,-6) -- (6,6);
                  
           \draw[step=.5cm, gray,very thin] (-6,-6) grid (6,6); 
           
               \foreach \i in {-5,..., 5}{
      \foreach \j in {-5,...,5}{ 
      \ifodd\j\relax\else 
      \ifodd\i\relax\else      
        \ifnum\numexpr \i - \j > -1 
        \ifnum\numexpr \i - \j < 5

        \draw (\i,\j) node {$\bullet$};
      \fi
       \fi
       \fi
       \fi
       }
              }
              
        \foreach \i in {-5,..., 5}{
     
      \ifodd\i

        \draw (\i,\i) node {\textcolor{red}{$\bullet$}};
      \fi
       }

	\draw (0 ,0) node[above=6pt, fill=white, inner sep=1pt]  {$\delta$};
	\draw (4 ,-4) node[fill=white, inner sep=1pt]  {$\bullet \in \scrC_\delta$}; 
	\draw (4 ,-5) node[fill=white, inner sep=1pt]  {$\textcolor{red}{\bullet} \in \scrH^W$};

          \end{tikzpicture}
          
          \caption{A picture for $\scrC_\delta$ when $G = \mathrm{GL}_2(\Bbbk)$ and $X = \mathrm{Sym}^3(\Bbbk^2) \oplus \mathrm{Sym}^3(\Bbbk^2)^*$.  }
          
\end{figure}

Following \cite{hl-s},
for each $\delta\in\M^W_{\bR}\backslash\scrH^W$,
the corresponding {\it magic window}
\label{not: magic window} \linelabel{line not: magic window}
\[
\scrM(\delta+\bnabla)\subset\Db(\coh[X/G])
\]
is defined to be 
the thick subcategory of $\Db(\coh[X/G])$ generated by the objects of the form $V(\chi)\otimes \cO_X$ with $\chi\in(\delta+\bnabla)\cap\M^+$. 
Note that Proposition \ref{nabla sigma} implies
\begin{equation}\label{window generation}
\scrM(\delta+\bnabla)=\left\l V_X(\chi)\relmiddle| \chi \in \left(\delta-\rho+(1/2)\bsigma\right)\cap\M^+\right\r.
\end{equation}
The following result says that each magic window can be identified  with the derived category of the  GIT quotient stack $[X^{\rm ss}(\ell)/G]$ for a generic $\ell$.

\begin{thm}[{\cite[Theorem 3.2]{hl-s}}]\label{hl-s}
Let $\ell\in(\M_{\bR}^{W})_{\rm gen}$ and $\delta\in \M^W_{\bR}\backslash\scrH^W$. Then the restriction functor 
\[
\res\defeq i^*\colon \scrM(\delta+\bnabla)\to \Db(\coh [X^{\rm ss}(\ell)/G])
\]
is an equivalence, where $i\colon X^{\rm ss}(\ell)\hookto X$ is a natural open immersion. 
\end{thm}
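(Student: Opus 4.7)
The plan is to invoke the general framework of grade restriction windows due to Halpern-Leistner \cite{hl-s} applied to our specific setting. First I would fix a generic $\ell \in (\M_{\bR}^W)_{\rm gen}$ and stratify the unstable locus $X^{\rm us}(\ell)\defeq X\setminus X^{\rm ss}(\ell)$ via the Kempf--Ness stratification $X^{\rm us}(\ell)=\bigsqcup_\alpha S_\alpha$. Each stratum $S_\alpha$ is determined by a Kempf--Ness one-parameter subgroup $\lambda_\alpha\in\N$, comes with a $\lambda_\alpha$-fixed locus $Z_\alpha\subseteq X^{\lambda_\alpha}$, and carries the integer $\eta_{\lambda_\alpha}=\l \bL^{\lambda_\alpha>0},\lambda_\alpha\r$ measuring the $\lambda_\alpha$-weight on the top exterior power of the conormal bundle of $Z_\alpha$ (twisted by $\g^*$ to account for the stacky structure). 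By Proposition \ref{dm stack}, the quotient $[X^{\rm ss}(\ell)/G]$ is Deligne--Mumford, so the strata have trivial generic stabilizer after passing to suitable covers, putting us in the setting where the general theorem applies.

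Next I would verify that each generator $V(\chi)\otimes\cO_X$ of the magic window $\scrM(\delta+\bnabla)$, where $\chi\in(\delta+\bnabla)\cap\M^+$, satisfies the grade restriction condition for every KN stratum. Concretely, one needs the $\lambda_\alpha$-weights occurring in $V(\chi)|_{Z_\alpha}$ to lie in the half-open window $\bigl(-\eta_{\lambda_\alpha}/2,\, \eta_{\lambda_\alpha}/2\bigr]$ (up to a shift by $\l\delta,\lambda_\alpha\r$, which is the same for all terms since $\delta$ is $W$-invariant). This follows from the defining inequality of $\bnabla$, namely $\lvert\l\chi',\lambda_\alpha\r\rvert\leq\eta_{\lambda_\alpha}/2$ for every weight $\chi'$ appearing in $V(\chi)$ after translating by $-\delta$, combined with the genericity condition $\delta\notin\scrH^W$ (Proposition \ref{nabla sigma}(2)) which excludes the boundary and hence turns the weak inequality into the needed strict one on one side.

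I would then apply Halpern-Leistner's window theorem: under these grade restriction conditions, the restriction functor $i^*$ is fully faithful on the thick subcategory generated by the $V(\chi)\otimes\cO_X$, and essentially surjective because every object of $\Db(\coh[X^{\rm ss}(\ell)/G])$ admits a unique extension to $\Db(\coh[X/G])$ satisfying the same weight conditions on each $Z_\alpha$. Fully faithfulness reduces via adjunction to the vanishing of local cohomology $R\Gamma_{S_\alpha}\RHom(V(\chi),V(\chi'))$ for $\chi,\chi'\in(\delta+\bnabla)\cap\M^+$, which is a Koszul computation on each $Z_\alpha$ controlled precisely by the grade restriction window of width $\eta_{\lambda_\alpha}$.

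The main obstacle is that the magic window is a \emph{single} $W$-invariant subcategory meant to work uniformly for \emph{all} chambers of $\M_{\bR}^W\setminus\scrH^W$, rather than the chamber-specific windows used in the classical grade restriction theorem. The quasi-symmetry of $X$ is essential here: it forces $\bsigma$ (and hence $\bnabla$) to be symmetric about the origin so that both inequalities $\pm\l\chi',\lambda_\alpha\r\leq\eta_{\lambda_\alpha}/2$ can be imposed simultaneously, and the identification $\bnabla\cap\M_{\bR}^+=(-\rho+\tfrac12\bsigma)\cap\M_{\bR}^+$ in Proposition \ref{nabla sigma}(1) shows that after taking $W$-orbits the dominant weights in $\delta+\bnabla$ give exactly one representative of each $W$-orbit of weights in the classical window for $\ell$. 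This is what allows a single choice of generators to compute the entire derived category of $[X^{\rm ss}(\ell)/G]$, and verifying it reduces to the polytope combinatorics already encoded by the arrangement $\scrH$.
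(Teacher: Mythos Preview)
The paper does not give its own proof of this theorem: it is stated with a citation to \cite[Theorem~3.2]{hl-s} and used as a black box. So there is nothing in the paper to compare your proposal against directly.

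That said, your outline is a reasonable summary of the argument in \cite{hl-s}: one sets up the Kempf--Ness stratification of the unstable locus, verifies that the generators $V(\chi)\otimes\cO_X$ with $\chi\in(\delta+\bnabla)\cap\M^+$ satisfy the grade restriction rule on each stratum (this is where the definition of $\bnabla$ via the $\eta_\lambda$ and the genericity condition $\delta\notin\scrH^W$ enter), and then invokes the window equivalence of Halpern-Leistner. Your remark about quasi-symmetry being what allows a single $W$-invariant window to work simultaneously for all generic $\ell$ is the key point of \cite{hl-s} beyond the general window machinery. One detail you gloss over is essential surjectivity: you assert every object extends into the window, but the actual argument in \cite{hl-s} (building on \cite{svdb}) uses the complexes later appearing in this paper as $A_{F,\chi}^\bullet$ (Proposition~\ref{key cpx}) to iteratively resolve any $V(\chi)\otimes\cO_X$ by objects in the window, which is more delicate than a direct application of the general extension property. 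Apart from that, your sketch matches the cited proof.
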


\subsection{Fundamental group action on magic windows.}  
The present section recalls fundamental group actions of complexified K\"ahler moduli spaces  of quasi-symmetric representations constructed in \cite[Section 6]{hl-s}. Let us keep the 
same notation and assumption for the previous section. 
Throughout this section, assume that $(\M_{\bR}^W)_{\rm gen}\neq \emptyset$.

Let $\delta,\delta'\in \M_{\bR}^W\backslash \scrH^W$ be two elements. A {\it separating hyperplane} of $\delta$ and $\delta'$ is a hyperplane in $\scrH^W$ such that it meets the line segment with end points $\delta$ and $\delta'$.
Set \label{not: sep hyp set} \linelabel{line not: sep hyp set}
\[
\scrH(\delta,\delta')\defeq\{H^W\in \scrH^W\mid\mbox{$H^W$ is a separating hyperplane of $\delta$ and $\delta'$}\}\subset \scrH^W.
\]
The number of elements in $\scrH(\delta,\delta')$ is called the {\it distance} between $\delta$ and $\delta'$, and it is denoted by $d(\delta,\delta')$. 
We say that  $\delta$ and $\delta'$   are {\it adjacent} if $d(\delta,\delta')=1$.

Let ${\rm Arr}(\scrH^W)$ denote the set of labeled arrows 
\[
[\delta]\xrightarrow{\ell}[\delta']
\]
such that $\delta$ and $\delta'$ are elements in $(\M_{\bR}^W)\backslash \scrH^W$  and $\ell\in (\M_{\bR}^W)_{\rm gen}$. A {\it path} in $\M_{\bR}^W\backslash \scrH^W$ is a finite sequence 
\begin{equation}\label{path}
p\colon 
[\delta_0]\xrightarrow{\ell_1}[\delta_1]\xrightarrow{\ell_2}\cdots\xrightarrow{\ell_m}[\delta_m]
\end{equation}
of labeled arrows in ${\rm Arr}(\scrH^W)$. 
The equivalence classes $[\delta_0]$ and $[\delta_m]$ in \eqref{path} are called the {\it source} and the {\it target} of $p$, 
and denoted by $s(p)\defeq[\delta_0]$ and $t(p)\defeq[\delta_m]$, respectively. 
If $s(p)=[\delta]$ and $t(p)=[\delta']$, the path $p$ is simply described as $p\colon [\delta]\to[\delta']$.
If $p\colon[\delta]\to [\delta']$ and $q\colon [\delta']\to[\delta'']$ are two  paths with $t(p)=s(q)$,  the {\it composition} $q\circ p$ of $p$ and $q$ is naturally defined. 

Let $\Gamma(\scrH^W)$\label{not: gamma gpd} \linelabel{line not: gamma gpd}
denote the groupoid whose objects are  equivalence classes $[\delta]$ with $\delta\in (\M_{\bR}^W)\backslash \scrH^W$, and morphisms from $[\delta]$ to $[\delta']$ are paths $p\colon [\delta]\to[\delta']$ modulo the equivalence relation generated by the following equivalences:
\begin{itemize}
\item[(R1)] A labeled arrow $[\delta]\xrightarrow{\ell}[\delta]$ is equivalent to the identity morphism on $[\delta]$ for any $\ell$.
\item[(R2)] The composition of labeled arrows $[\delta]\xrightarrow{\ell}[\delta']$ and $[\delta']\xrightarrow{\ell}[\delta'']$ with a  common label $\ell$ is equivalent to $[\delta]\xrightarrow{\ell}[\delta'']$.
\item[(R3)] Labeled arrows $[\delta]\xrightarrow{\ell}[\delta']$ and $[\delta]\xrightarrow{\ell'}[\delta']$ are equivalent if $\ell$ has the same orientation as $\ell'$ with respect to every hyperplane in $\scrH(\delta,\delta')$.
\end{itemize}
The complexification of the real hyperplane arrangement $\scrH^W$ is denoted by $\scrH_{\bC}^W$.

\begin{prop}[{\cite[Proposition 6.2]{hl-s}}]\label{groupoid equiv}
There is an equivalence of groupoids
\[
\Gamma(\scrH^W)\cong \Pi_1(\M_{\bC}^W\backslash \scrH^W_{\bC}),
\]
where $\Pi_1(\M_{\bC}^W\backslash \scrH^W_{\bC})$ denotes the fundamental groupoid of  $\M_{\bC}^W\backslash \scrH^W_{\bC}$.
\end{prop}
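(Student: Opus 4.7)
The plan is to construct an explicit functor
\[ \Phi \colon \Gamma(\scrH^W) \longrightarrow \Pi_1(\M_{\bC}^W\backslash \scrH^W_{\bC}), \]
verify compatibility with the relations (R1)--(R3), and then identify $\Gamma(\scrH^W)$ with a classical presentation of the fundamental groupoid of the complement of a complexified real hyperplane arrangement.

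On objects, set $\Phi([\delta])\defeq \delta$ by choosing a representative. For a labeled arrow $p \colon [\delta] \xrightarrow{\ell} [\delta']$ with $\ell \in (\M_{\bR}^W)_{\rm gen}$, define $\Phi(p)$ as the homotopy class of
\[
\gamma_p(t) \defeq (1-t)\delta + t\delta' + i \epsilon \sin(\pi t)\, \ell, \qquad t \in [0,1],
\]
for small $\epsilon > 0$. Since $\ell$ is generic it lies on no hyperplane of $\scrH^W$, so the imaginary component is a nonzero vector avoiding every $H^W_{\bC}$ for $t\in(0,1)$; hence $\gamma_p$ takes values in $\M_{\bC}^W\backslash \scrH^W_{\bC}$.

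Next I would verify the three relations. For (R1), when $\delta$ and $\delta'$ share a chamber the image path lies in a tubular neighborhood of the chamber (for $\epsilon$ small) and so is contractible. For (R2), two consecutive paths labeled by the same $\ell$ concatenate into a single path of the same form via a straightforward reparametrization of the bump factor. For (R3), if $\ell,\ell'$ have the same sign against every separating hyperplane $H^W \in \scrH(\delta,\delta')$, then the linear interpolation $\ell_s\defeq (1-s)\ell + s\ell'$ preserves these signs, and the two-parameter family $\gamma_{p,s}$ built from $\ell_s$ is a homotopy inside the complement. Essential surjectivity of $\Phi$ is immediate, since any point of the complement can be joined to a real chamber representative by an imaginary perturbation of a straight-line path.

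The main obstacle is full faithfulness. To establish it I would invoke the Salvetti--Deligne presentation of $\pi_1$ of the complement of a complexified real hyperplane arrangement: its objects are chambers, its generators are moves between adjacent chambers, and its relations come from codimension-two flats. An adjacent-chamber generator is realized by a labeled arrow $[\delta]\xrightarrow{\ell}[\delta']$ in $\Gamma(\scrH^W)$ whose label has the correct sign relative to the single separating wall, and by (R3) its homotopy class depends only on that sign. A codimension-two relation---two minimal galleries between chambers on opposite sides of a codim-$2$ flat---can be refactored via (R2) into atomic wall crossings, and the two sides of the flat correspond to a pair of labels compatible under (R3). The subtlest point is showing that (R1)--(R3) are \emph{complete}, i.e. generate all the homotopies of paths in $\M_{\bC}^W\backslash \scrH^W_{\bC}$; this is essentially the content of the classical Salvetti--Deligne analysis applied to the arrangement $\scrH^W$ living in the ambient vector space $\M_{\bR}^W$, the $W$-invariance serving only to produce $\scrH^W$ and playing no further role in the $\pi_1$-computation.
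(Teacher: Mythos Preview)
The paper does not actually prove this proposition: it is stated with a direct citation to \cite[Proposition 6.2]{hl-s} and no proof is given. So there is no in-paper argument to compare against; the result is simply imported.

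Your sketch is the standard Salvetti-style approach and is broadly sound, but two points deserve attention if you intend it as a self-contained proof. First, $\scrH^W$ is an \emph{infinite} $\M^W$-periodic arrangement, not a finite one; the classical Salvetti/Deligne presentation you invoke is usually stated for finite real arrangements, so you need the locally finite version (or pass to the quotient torus $(\M^W_{\bC}\backslash\scrH^W_{\bC})/\M^W$ and work there). Second, your claim that the Salvetti--Deligne relations coincide exactly with (R1)--(R3) is the real content and is only asserted, not checked: Salvetti's presentation is in terms of adjacent-chamber generators and codimension-two ``flip'' relations, and you must show that every such flip relation is a consequence of (R2)+(R3) for a suitable common label $\ell$, and conversely that no extra relations arise. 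This matching is routine once set up carefully but is where the work lies; as written, your argument stops just short of it.
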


In what follows, we recall the fact that the groupoid $\Gamma(\scrH^W)$ can be identified with the groupoid completion of positive paths in $(\M_{\bR}^W)\backslash \scrH^W$.
The precise statement for this identification will be given using the morphism in (\ref{equiv of path}).

A labeled arrow $[\delta]\xrightarrow{\ell}[\delta']\in {\rm Arr}(\scrH^W)$ is said to be {\it positive} if  $[\delta]\neq[\delta']$ and $\ell$ has the same orientation as $\delta'-\delta$ with respect to every hyperplane in $\scrH(\delta,\delta')$,
and the subset of ${\rm Arr}(\scrH^W)$ consisting of all positive labeled arrows is denoted by ${\rm Arr}^+(\scrH^W)$. 
A path $p\colon 
[\delta_0]\xrightarrow{\ell_1}[\delta_1]\xrightarrow{\ell_2}\cdots\xrightarrow{\ell_m}[\delta_m]$  is said to be {\it positive} if each labeled arrow $[\delta_{i-1}]\xrightarrow{\ell_i}[\delta_i]$ is positive.

\begin{lem}\label{separating hyperplane} Let $\delta, \delta',\delta''\in \M_{\bR}^W\backslash \scrH^W$.
There is an equality
\[
\scrH(\delta,\delta'')=\bigl(\scrH(\delta,\delta')\cup\scrH(\delta',\delta'')\bigr)\backslash\bigl(\scrH(\delta,\delta')\cap\scrH(\delta',\delta'')\bigr).
\] 
In particular, it holds that $d(\delta,\delta'')\leq d(\delta,\delta')+d(\delta',\delta'')$, with equality if and only if $\scrH(\delta,\delta'')=\scrH(\delta,\delta')\sqcup\scrH(\delta',\delta'')$.
\begin{proof}
If $H\in\scrH(\delta,\delta'')$, then $H$ separates $\delta$ and $\delta''$. If $H\not\in \scrH(\delta,\delta')\cup\scrH(\delta',\delta'')$, then $\delta$, $\delta'$ and $\delta''$ lie in the same half space separated by $H$, which is a contradiction. If $H\in \scrH(\delta,\delta')\cap\scrH(\delta',\delta'')$, then $\delta$ and $\delta''$ must lie in the same half space separated by $H$, which is also a contradiction. This proves the inclusion $(\subseteq)$ of the first assertion. Conversely, let $H\in \bigl(\scrH(\delta,\delta')\cup\scrH(\delta',\delta'')\bigr)\backslash\bigl(\scrH(\delta,\delta')\cap\scrH(\delta',\delta'')\bigr)$. Then, we may assume that $H\in \scrH(\delta,\delta')\backslash\scrH(\delta',\delta'')$, and so  $H$ separates $\delta$ and $\delta'$, but $\delta'$ and $\delta''$ lie in the same half space separated by $H$. This implies that $\delta$ and $\delta''$ are separated by $H$, which completes the proof of the first assertion.  The second assertion follows from the first one. 
\end{proof}
\end{lem}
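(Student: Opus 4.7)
The plan is to reduce the first equality to a parity observation on a single hyperplane, and then derive the second from the standard cardinality identity for symmetric differences. Fix a hyperplane $H\in\scrH^W$. Since $\delta,\delta',\delta''$ all lie in $\M_{\bR}^W\backslash\scrH^W$, none of them lies on $H$, so each one sits in exactly one of the two open half-spaces determined by $H$. Writing $\epsilon_H(p)\in\{+,-\}$ for the half-space containing a point $p$, the defining condition of $\scrH(\cdot,\cdot)$ translates into the equivalence that $H\in\scrH(p,q)$ if and only if $\epsilon_H(p)\neq\epsilon_H(q)$.

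With this translation the first equality becomes a XOR identity on two bits: for any fixed $H$, the condition $\epsilon_H(\delta)\neq\epsilon_H(\delta'')$ holds precisely when exactly one of $\epsilon_H(\delta)\neq\epsilon_H(\delta')$ and $\epsilon_H(\delta')\neq\epsilon_H(\delta'')$ holds. In terms of sets, this says that $H\in\scrH(\delta,\delta'')$ if and only if $H$ belongs to the symmetric difference of $\scrH(\delta,\delta')$ and $\scrH(\delta',\delta'')$, which gives the desired set-theoretic equality member by member.

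For the second assertion I would apply the general identity $|A\,\triangle\,B|=|A|+|B|-2|A\cap B|$ with $A=\scrH(\delta,\delta')$ and $B=\scrH(\delta',\delta'')$. The first part rewrites the left-hand side as $d(\delta,\delta'')$, so one obtains
\[
d(\delta,\delta'')=d(\delta,\delta')+d(\delta',\delta'')-2|A\cap B|\leq d(\delta,\delta')+d(\delta',\delta''),
\]
with equality if and only if $A\cap B=\emptyset$, which is in turn equivalent to $\scrH(\delta,\delta'')=\scrH(\delta,\delta')\sqcup\scrH(\delta',\delta'')$. There is no real obstacle here; the only thing to check is that all three sets of separating hyperplanes are finite, which follows from the local finiteness of the $\M^W$-periodic arrangement $\scrH^W$ applied to the bounded line segments connecting the three points.
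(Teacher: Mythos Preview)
Your proof is correct and follows essentially the same approach as the paper: both argue hyperplane by hyperplane using which of the two half-spaces each of $\delta,\delta',\delta''$ occupies, which is exactly your $\epsilon_H$ reformulation. The paper does the case analysis by contradiction where you invoke the XOR identity, and it leaves the second assertion as an immediate consequence where you spell out the cardinality formula $|A\,\triangle\,B|=|A|+|B|-2|A\cap B|$ and the finiteness check; these are cosmetic differences rather than a different route.
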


A positive path $p\colon 
[\delta_0]\xrightarrow{\ell_1}[\delta_1]\xrightarrow{\ell_2}\cdots\xrightarrow{\ell_m}[\delta_m]$ in $\M_{\bR}^W\backslash \scrH^W$ is said to be {\it minimal} if 
$d(\delta_0,\delta_m)=\sum_{i=1}^md(\delta_{i-1},\delta_i)$. The following is elementary.

\begin{prop}\label{minimality}
Let $p\colon 
[\delta_0]\xrightarrow{\ell_1}[\delta_1]\xrightarrow{\ell_2}\cdots\xrightarrow{\ell_m}[\delta_m]$ be a positive path in $\M_{\bR}^W\backslash \scrH^W$.
Then the following are equivalent.

\begin{itemize}
\item[(1)] $p$ is minimal
\item[(2)] $\scrH(\delta_0,\delta_m)=\bigsqcup_{i=0}^{m-1}\scrH(\delta_i,\delta_{i+1})$.
\item[(3)] $\scrH(\delta_i,\delta_{i+1})\cap\scrH(\delta_j,\delta_{j+1})=\emptyset$ for any $i\neq j$.
\item[(4)] Each $\ell_i$ has the same orientation as $\delta_m-\delta_0$ with respect to  hyperplanes in $\scrH(\delta_{i-1},\delta_i)$.
\end{itemize}
\begin{proof}
By Lemma \ref{separating hyperplane},  an inclusion 
\begin{equation}\label{inclusion 2.11}
\scrH(\delta_0,\delta_m)\subseteq\bigcup_{i=0}^{m-1}\scrH(\delta_i,\delta_{i+1})
\end{equation}
holds, which implies (1)$\Leftrightarrow$(2). (2)$\Rightarrow$(3) is obvious.

Assume that (3) holds, and let $H\in \scrH(\delta_i,\delta_{i+1})$. Denote by $H_+$ the half space separated by $H$ such that $\delta_{i+1}\in H_+$. Then the assumption (3) implies that $\delta_j\in H_+$ if and only if $ i+1\leq j\leq m$. In particular, $\delta_0$ and $\delta_m$ are separated by $H$, and so  $H\in  \scrH(\delta_0,\delta_m)$. This and the inclusion \eqref{inclusion 2.11} shows that (2) holds. Moreover, $\delta_{i+1}-\delta_i$ and $\delta_{m}-\delta_0$ have the same orientation with respect to $H$. This shows that (4) holds, since $p$ is a positive path.

It only remains to prove (4)$\Rightarrow$(3). Assume (4) holds, but (3) does not hold. Then there is an element $H\in \scrH(\delta_i,\delta_{i+1})\cap \scrH(\delta_j,\delta_{j+1})$ for some $i\neq j$. 
By replacing $j$ if necessary, we may assume that $H\notin \scrH(\delta_k,\delta_{k+1})$ for all $i<k<j$.  Then $\delta_{i+1}-\delta_i$ and $\delta_{j+1}-\delta_j$ has the opposite orientation with respect to $H$. This contradicts to (4). 
\end{proof}
\end{prop}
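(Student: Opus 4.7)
The plan is to use Lemma \ref{separating hyperplane} as the key combinatorial tool, which gives the inclusion
\[
\scrH(\delta_0,\delta_m)\subseteq\bigcup_{i=0}^{m-1}\scrH(\delta_i,\delta_{i+1})
\]
(by induction on $m$) together with a precise criterion for when this inclusion is an equality with a disjoint right-hand side.

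First I would establish (1)$\Leftrightarrow$(2). Iterating Lemma \ref{separating hyperplane} gives both the above inclusion and the inequality $d(\delta_0,\delta_m)\leq\sum_{i=1}^{m}d(\delta_{i-1},\delta_i)$. Equality of cardinalities happens exactly when the union on the right is disjoint and equal to $\scrH(\delta_0,\delta_m)$, giving (1)$\Leftrightarrow$(2). The implication (2)$\Rightarrow$(3) is immediate from the definition of disjoint union.

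Next I would prove (3)$\Rightarrow$(2) and (3)$\Rightarrow$(4) simultaneously. Fix $H\in\scrH(\delta_i,\delta_{i+1})$ and denote by $H_+$ the open half-space separated by $H$ that contains $\delta_{i+1}$. I claim that $\delta_j\in H_+$ precisely when $i+1\le j\le m$, and $\delta_j$ lies in the opposite half-space when $0\le j\le i$. Indeed, if $\delta_k$ and $\delta_{k+1}$ lie on opposite sides of $H$ for some $k\neq i$, then $H\in\scrH(\delta_k,\delta_{k+1})$, contradicting (3); so the chain must stay on one side of $H$ on each of the segments $[0,i]$ and $[i+1,m]$. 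This immediately yields $H\in\scrH(\delta_0,\delta_m)$, establishing (2) via the inclusion above. Moreover, the same analysis shows that $\delta_{i+1}-\delta_i$ and $\delta_m-\delta_0$ have the same orientation with respect to $H$; since $p$ is positive, $\ell_{i+1}$ has the same orientation as $\delta_{i+1}-\delta_i$ with respect to $H$, and therefore $\ell_{i+1}$ has the same orientation as $\delta_m-\delta_0$, proving (4).

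Finally, for (4)$\Rightarrow$(3) I would argue by contradiction. Suppose some $H$ lies in $\scrH(\delta_i,\delta_{i+1})\cap\scrH(\delta_j,\delta_{j+1})$ with $i<j$, and choose such a pair with $j-i$ minimal; then $H\notin\scrH(\delta_k,\delta_{k+1})$ for any $i<k<j$, so $\delta_{i+1},\delta_{i+2},\dots,\delta_j$ all lie in the same half-space determined by $H$. Consequently the segments $\delta_{i+1}-\delta_i$ and $\delta_{j+1}-\delta_j$ cross $H$ in opposite directions, which forces $\ell_{i+1}$ and $\ell_{j+1}$ to have opposite orientations with respect to $H$ by the positivity of $p$. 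This contradicts (4), since the two arrows cannot both match the orientation of $\delta_m-\delta_0$.

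The main obstacle I anticipate is the bookkeeping in (3)$\Rightarrow$(4), where one needs to carefully translate ``same side of $H$'' into the statement about orientations of $\ell_{i+1}$ versus $\delta_m-\delta_0$, using positivity of the path only on the single hyperplane $H$; the rest is essentially a packaging of Lemma \ref{separating hyperplane}.
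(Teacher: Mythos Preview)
Your proof is correct and follows essentially the same route as the paper's: you use Lemma \ref{separating hyperplane} for (1)$\Leftrightarrow$(2), then the half-space tracking argument for (3)$\Rightarrow$(2),(4), and the minimal-gap contradiction for (4)$\Rightarrow$(3). The only cosmetic difference is that you phrase the choice of the pair $(i,j)$ in the last step as ``$j-i$ minimal'' rather than ``replace $j$ if necessary,'' which amounts to the same thing.
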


Consider the category $\dsG^+(\scrH^W)$ whose objects are the same as $\Gamma(\scrH^W)$, and whose morphisms are identity maps or positive paths  modulo  the smallest equivalence relation identifying minimal positive paths with the same source and target. 
Let $\dsG(\scrH^W)$ be the groupoid completion of the category $\dsG^+(\scrH^W)$, that is, it is the localization of $\dsG^+(\scrH^W)$ by the set of all morphisms. If a path $p\colon 
[\delta_0]\xrightarrow{\ell_1}[\delta_1]\xrightarrow{\ell_2}\cdots\xrightarrow{\ell_m}[\delta_m]$ is minimal and $\ell$ is a generic element with the same orientation as $\delta_m-\delta_0$ with respect to every hyperplane in $\scrH(\delta_0,\delta_m)$, then $p$ is equivalent to the labeled arrow $[\delta_0]\xrightarrow{\ell}[\delta_m]$ in $\Gamma(\scrH^W)$ by Proposition \ref{minimality} and the equivalence relations (R2) and (R3). 
This implies that minimal positive paths with the same source and target are equivalent in $\Gamma(\scrH^W)$. Thus there is a natural functor
$\dsG^+(\scrH^W)\to\Gamma(\scrH^W)$, which induces a functor 
\begin{equation}\label{equiv of path}
\dsG(\scrH^W)\to \Gamma(\scrH^W)
\end{equation}
by the universal property of the localization. By the argument in the proof of \cite[Proposition 6.2]{hl-s}, the functor \eqref{equiv of path} is an equivalence.

By the definition of the hyperplane arrangement $\scrH^W$, it is stable under the translation by the lattice $\M^W$. Thus the lattice $\M^W$ acts on $\M^W_{\bR}\backslash\scrH^W$ by translations. We define a groupoid $\widetilde{\Gamma}(\scrH^W)$, whose objects are the same as ${\Gamma}(\scrH^W)$, by formally adding an arrow
\[
[\delta]\rightsquigarrow[\delta+m]
\]
for every $\delta\in \M^W_{\bR}\backslash\scrH^W$ and every $m\in \M^W$, where we impose the following  additional equivalence relations:
\begin{itemize}
\item[(R4)] The composition $[\delta]\xrightarrow{\ell}[\delta']\rightsquigarrow[\delta'+m]$ is equivalent to the composition $[\delta]\rightsquigarrow[\delta+m]\xrightarrow{\ell}[\delta'+m]$ for any $\delta, \delta'\in \M^W_{\bR}\backslash\scrH^W$, $\ell\in (\M_{\bR}^W)_{\rm gen}$ and $m\in \M^W$. 
\item[(R5)] The composition $[\delta]\rightsquigarrow[\delta+m]\rightsquigarrow[\delta+m+m']$ is equivalent to $[\delta]\rightsquigarrow[\delta+m+m']$ for any $\delta\in \M^W_{\bR}\backslash\scrH^W$ and $m,m'\in \M^W$.
\end{itemize}
The following is an extension of Proposition \ref{groupoid equiv}.

\begin{prop}[{\cite[Proposition 6.5]{hl-s}}]\label{ext groupoid equiv}
There is an equivalence
\[
\widetilde{\Gamma}(\scrH^W)\simto \Pi_1\left(\bigl(\M_{\bC}^W\backslash \scrH^W_{\bC}\bigr)\relmiddle/\M^W\right)
\]
extending the equivalence in Proposition \ref{groupoid equiv}.
\end{prop}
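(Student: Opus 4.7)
The plan is to leverage covering space theory. Since the hyperplane arrangement $\scrH^W_{\bC}$ is invariant under translation by the lattice $\M^W$, the action of $\M^W$ on $\M_{\bC}^W \backslash \scrH^W_{\bC}$ by translations is free and properly discontinuous. Hence the quotient map
\[ \pi \colon \M_{\bC}^W \backslash \scrH^W_{\bC} \to \bigl(\M_{\bC}^W \backslash \scrH^W_{\bC}\bigr)/\M^W \]
is a Galois covering with deck transformation group $\M^W$, and my aim is to describe the fundamental groupoid of the base via the fundamental groupoid of the total space together with the deck action, which is precisely the content encoded by the new generators $[\delta] \rightsquigarrow [\delta + m]$ and the relations (R4), (R5).

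First I would construct a functor $F \colon \widetilde{\Gamma}(\scrH^W) \to \Pi_1\bigl((\M_{\bC}^W \backslash \scrH^W_{\bC})/\M^W\bigr)$. On objects, $F$ sends $[\delta]$ to $\pi(\delta)$. On labeled arrows, $F$ is defined by postcomposing the equivalence of Proposition \ref{groupoid equiv} with the functor $\pi_*$ induced by $\pi$. For a translation generator $[\delta] \rightsquigarrow [\delta + m]$, fix a path $\gamma_{\delta,m}$ in $\M_{\bC}^W \backslash \scrH^W_{\bC}$ from $\delta$ to $\delta + m$ and set $F([\delta] \rightsquigarrow [\delta + m]) \defeq [\pi \circ \gamma_{\delta,m}]$. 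Independence of the choice of $\gamma_{\delta,m}$ follows because any two such paths differ by a loop in the cover, whose image in $\Gamma(\scrH^W)$ is already accounted for by the first factor of $F$. The verification that $F$ respects (R4) and (R5) is routine: (R4) reflects the $\M^W$-equivariance of the cover, since the translate $m + \gamma_{\delta,\ell}$ is a path from $\delta + m$ to $\delta' + m$ lifting the same loop in the base, and (R5) reflects additivity, since the concatenation $\gamma_{\delta,m} \cdot (m + \gamma_{\delta + m, m'})$ is a legitimate choice for $\gamma_{\delta, m + m'}$.

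Next I would prove that $F$ is an equivalence. Essential surjectivity is immediate since $\pi$ is surjective on points. For fullness, given a path $\tilde{\gamma}$ in the base from $\pi(\delta)$ to $\pi(\delta')$, the unique path-lifting property of the covering produces a lift $\gamma$ starting at $\delta$ and ending at $\delta' + m$ for some $m \in \M^W$; then $[\tilde{\gamma}]$ is the image under $F$ of the morphism in $\widetilde{\Gamma}(\scrH^W)$ obtained by composing the morphism $[\delta] \to [\delta' + m]$ produced by Proposition \ref{groupoid equiv} with the inverse $[\delta' + m] \rightsquigarrow [\delta']$ of a translation arrow. For faithfulness, suppose two morphisms in $\widetilde{\Gamma}(\scrH^W)$ have equal image under $F$; using (R4) and (R5) one normalises each into the form (labeled-arrow composition) followed by a single translation arrow $[\delta''] \rightsquigarrow [\delta' + m]$, after which equality of images under $F$ together with path-lifting forces the deck-group parts to coincide and reduces the remaining equality to an equality in $\Gamma(\scrH^W)$, handled by Proposition \ref{groupoid equiv}.

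The hard part will be the faithfulness step, since one must verify that (R1)--(R5) exhaust all the relations present in $\Pi_1$ of the quotient. This is essentially a presentation-theoretic statement for the fundamental groupoid of a quotient by a free properly discontinuous action: every null-homotopy of a loop in the base must be expressible, after lifting, as a combination of null-homotopies already accounted for in $\Gamma(\scrH^W)$ together with the equivariance relation (R4) and the additivity relation (R5). I would treat this by a van Kampen style argument applied to an open cover of $(\M_{\bC}^W\backslash\scrH^W_{\bC})/\M^W$ by fundamental domains for the $\M^W$-action, where on each fundamental domain Proposition \ref{groupoid equiv} applies directly, while the transitions between fundamental domains introduce exactly the translation generators subject to (R4) and (R5).
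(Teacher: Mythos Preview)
The paper does not supply its own proof of this proposition; it is quoted directly from \cite[Proposition~6.5]{hl-s}. Your covering-space strategy is the natural one and is essentially what underlies the cited argument: the quotient map $\pi$ is a regular cover with deck group $\M^W$, and $\widetilde{\Gamma}(\scrH^W)$ is designed to be the corresponding extension of $\Gamma(\scrH^W)$ by $\M^W$.

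That said, there is a genuine gap in your construction of the functor $F$. Your sentence ``Independence of the choice of $\gamma_{\delta,m}$ follows because any two such paths differ by a loop in the cover, whose image in $\Gamma(\scrH^W)$ is already accounted for by the first factor of $F$'' is not correct: two choices of $\gamma_{\delta,m}$ differ by a loop in the cover, and the images $[\pi\circ\gamma_{\delta,m}]$ then differ by the \emph{nontrivial} element $\pi_*$ of that loop in $\Pi_1$ of the base. So $F$ does depend on the choices. More seriously, relation~(R5) is not automatically respected: for arbitrary choices there is no reason for $\gamma_{\delta,m}\ast\gamma_{\delta+m,m'}$ to be homotopic rel endpoints to $\gamma_{\delta,m+m'}$, since $\M_{\bC}^W\backslash\scrH^W_{\bC}$ is not simply connected. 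Your remark that ``the concatenation is a legitimate choice for $\gamma_{\delta,m+m'}$'' only defers the problem, because different decompositions $m+m'=m''+m'''$ would then impose conflicting definitions.

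The fix is to make the choices coherently. One clean way: pick a generic $y\in\M_{\bR}^W$ (not orthogonal to any defining covector of $\scrH^W$), so that the real affine slice $\M_{\bR}^W+iy$ misses every $H_{\bC}$ and is contractible. Define $\gamma_{\delta,m}$ by going vertically from $\delta$ to $\delta+iy$, then along the straight segment in this slice to $\delta+m+iy$, then vertically down to $\delta+m$. With these choices, the verification of (R4) and (R5) becomes genuinely routine: the detours through the slice cancel, and the remaining comparison happens inside a contractible set. After this repair your fullness and faithfulness arguments go through as written, using the short exact sequence $1\to\pi_1(\M_{\bC}^W\backslash\scrH^W_{\bC})\to\pi_1\bigl((\M_{\bC}^W\backslash\scrH^W_{\bC})/\M^W\bigr)\to\M^W\to1$ and the normal form you describe.
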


Now we are ready to define a groupoid action on magic windows. For $[\delta]\in {\Gamma}(\scrH^W)$, put
$\uprho([\delta])\defeq\scrM(\delta+\bnabla)$, and for a positive labeled arrrow $[\delta]\xrightarrow{\ell}[\delta']$, the corresponding functor is defined by
\[
\uprho\left([\delta]\xrightarrow{\ell}[\delta']\right)
\defeq \scrM(\delta+\bnabla)\xrightarrow{\res} \Db(\coh [X^{\rm ss}(\ell)/G])\xrightarrow{\res^{-1}}\scrM(\delta'+\bnabla).
\]
By \cite[Proposition 6.6]{hl-s}, this defines a groupoid action of ${\Gamma}(\scrH^W)$ on the groupoid of magic windows. In particular, by Proposition \ref{groupoid equiv}, this action defines a group action 
\begin{equation}\label{fund action}
\uprho\colon \pi_1(\M_{\bC}^W\backslash \scrH^W_{\bC})\to \Auteq \bigl(\scrM(\delta+\bnabla)\bigr)
\end{equation}
for any $\delta\in\M_{\bC}^W\backslash \scrH^W_{\bC}$. 
Furthermore, by \cite[Proposition 6.6]{hl-s},  
this groupoid action can be extended to a groupoid action  of $\widetilde{\Gamma}(\scrH^W)$ by setting
\[
\widetilde{\uprho}\bigl([\delta]\rightsquigarrow[\delta+m]\bigr)\defeq \scrM(\delta+\bnabla)\xrightarrow{(-)\otimes\cO(m)}\scrM(\delta+m+\bnabla),
\]
where $\cO(m)$ denotes the corresponding $G$-equivariant line bundle on $X$. By Proposition \ref{ext groupoid equiv}, this  groupoid action defines a group action
\begin{equation}\label{ext fund action}
\widetilde{\uprho}\colon\pi_1\left(\bigl(\M_{\bC}^W\backslash \scrH^W_{\bC}\bigr)\relmiddle/\M^W\right)\to \Auteq\bigl(\scrM(\delta+\bnabla)\bigr)
\end{equation}
for any $\delta \in\M_{\bC}^W\backslash \scrH^W_{\bC}$, which extends  \eqref{fund action}.

\subsection{Combinatorial structures of \texorpdfstring{$(1/2)\bsigma$}{TEXT} and \texorpdfstring{$\bnabla$}{TEXT}} \label{section combinatiorics}
This section studies combinatorial structures of the polytopes $(1/2)\bsigma$ and $\bnabla$, which are necessary for later use. Throughout this section, assume that $\bsigma$ spans $\M_{\bR}$ and $(\M_{\bR}^W)_{\rm gen}\neq \emptyset$.

Recall that a {\it supporting hyperplane} of a convex set $A\subset \bR^n$ is a  hyperplane $B\subset \bR^n$ such that $A$ is contained in one of  the two  closed half spaces bounded by $B$ and that $B\cap \partial A\neq\emptyset$. 
Let $\scrS(A)$ denote the set of supporting hyperplanes of $A$. 
For  a convex polytope $P$,  we set 
\label{not: B(P) and F(P)} \linelabel{line not: B(P) and F(P)}
\begin{align*}
\scrB(P)&\defeq\{ B\in\scrS(P)\mid \mbox{$B\cap P$ is a facet of $P$}\}, \\
\scrF^k(P)&\defeq\{\mbox{codimension $k$ face of $P$}\},
\end{align*}
and $\scrF(P)\defeq\bigcup_{k=1}^{n}\scrF^k(P)$.
\label{not: F(P)} \linelabel{line not: F(P)}
A polytope $P\subset \bR^n$ is {\it centrally symmetric} if it has a point $c\in P$, called the {\it center}, such that for $a\in \bR^n$, $c+a\in P$ if and only if $c-a\in P$, or equivalently,  for $a\in \bR^n$, $a\in P$ if and only if $-a+2c\in P$. 
For $a\in P$, put $a^*\defeq -a+2c$, and this is called the dual point of $a$ with respect to $P$. 
Let $P\subset \bR^n$ be a centrally symmetric convex polytope with center $c\in \bR^n$. 
Then, for $B\in\scrB(P)$ and $F\in\scrF^k(P)$, the set of dual points defines the duals
\linelabel{line not: dual facet}
\begin{align}\label{dual facet}
B^*&\defeq-B+2c\in\scrB(P), ~\text{and}\\
F^*&\defeq-F+2c\in \scrF^k(P).
\end{align}
\begin{rem}\label{rem:central}
For any $\delta\in \M_{\bR}$, the convex polytope $\delta+\bnabla$ is centrally symmetric with center $\delta$, and so a hyperplane $B\subset \M_{\bR}$ lies in $\scrB(\delta+\bnabla)$ if and only if $B^*$ lies in $\scrB(\delta+\bnabla)$. 
\end{rem}
Let $P\subset \M_{\bR}$ be a centrally symmetric convex polytope.
For $B\in\scrS(P)$, let $B_+$  (resp. $B_-$) denote the closed half space bounded by $B$ such that $P\subset B_+$ (resp. $P\not\subset B_{-}$). 
The half-space $B_+$ is called a {\it supporting half-space} of $P$.  For $B\in\scrB(P)$, the length one normal vector to $B$ with the same orientation as $b_+-b_-$ for some $b_+\in B_+\backslash B$ and $b_-\in B_-\backslash B$ is denoted by $\lambda_B\in\N_{\bR}$. 
For $F\in\scrF(P)$, let $\scrB_F(P)\subset \scrB(P)$ be the subset consisting of $B\in \scrB(P)$ that contains $F$, and using this, set 
\begin{equation}\label{lambdas}
\scrL_{F}\defeq \{\lambda_B\mid B\in \scrB_F(P)\}\subset \N_{\bR}.
\end{equation}

\begin{dfn}\label{upbeta}
Let $F\in\scrF(\delta+(1/2)\bsigma)$  for some $\delta\in\M_{\bR}$.
We define \linelabel{line not: W_F}
 \begin{align} \label{not: W_F}
 \scrW^+_F&\defeq\left\{\upbeta_i\relmiddle|\mbox{$\l\upbeta_i,\lambda\r>0$ for some $\lambda\in\scrL_F$}\right\},\\
  \scrW^0_F&\defeq\left\{\upbeta_i\relmiddle|\mbox{$\l\upbeta_i,\lambda\r=0$ for all $\lambda\in\scrL_F$}\right\}
 \end{align}
 and put \linelabel{line not: upbeta_F^+}
\begin{equation} \label{not: upbeta_F^+}
\upbeta_F^+\defeq \sum_{\upbeta_i\in \scrW_{F}^+}\upbeta_i.
\end{equation}
Similarly, put $\scrW_{F}^-\defeq\left\{\upbeta_i\relmiddle|\mbox{$\l\upbeta_i,\lambda\r<0$ for some $\lambda\in\scrL_F$}\right\}$ and  $\upbeta_{F}^-\defeq\sum_{\upbeta_i\in \scrW_{F}^-}\upbeta_i$. 
Moreover, denote by
\label{not: d_F^+} \linelabel{line not: d_F^+}
\begin{align}
d_F^+&\defeq \#\scrW_F^+\\
d_F^-&\defeq \#\scrW_F^-
\end{align}
 the number of elements in each set $\scrW_F^{\pm}$.
\end{dfn}


\begin{lem}\label{property of beta} Let $F\in\scrF(\delta+(1/2)\bsigma)$ and $w\in W$. 
\begin{itemize} 
\item[$(1)$]  $\upbeta_{F^*}^+=-\upbeta_F^+$.
\item[$(2)$] The action $w\colon \M_{\bR}\simto\M_{\bR}$ of $w$ restricts to the bijection $\scrW_{F}^+\simto \scrW_{w(F)}^+$.
\item[$(3)$]  $\upbeta_{w(F)}^+=w(\upbeta_F^+)$.
\end{itemize}
\begin{proof}
(1) Since $\scrL_{F^*}=-\scrL_F$, it holds that $\scrW_{F^*}^+=\scrW_F^-$. 
Let $L_1,\hdots, L_k$ be the complete set of  one dimensional subspaces in $\M_{\bR}$  containing some $\upbeta_i\in \scrW_F^+$.  
Then $\upbeta_F^+=\sum_{i=1}^k(\sum_{\upbeta_j\in (L_i\cap\scrW_F^+)}\upbeta_j)$. 
The assumption that $X$ is quasi-symmetric gives
\[
\sum_{\upbeta_j\in (L_i\cap\scrW_F^+)}\upbeta_j=-\left(\sum_{\upbeta_j\in (L_i\cap\scrW_F^-)}\upbeta_j\right).
\] 
Hence $\upbeta_{F^*}^+=\sum_{i=1}^k(\sum_{\upbeta_j\in (L_i\cap\scrW_{F^*}^+)}\upbeta_j)=-\sum_{i=1}^k(\sum_{\upbeta_j\in (L_i\cap\scrW_{F}^+)}\upbeta_j)=-\upbeta_F^+$.\vspace{1mm}\\
(2) Since the pairing $\l-,-\r$ is $W$-invariant, the equality $\scrL_{w(F)}=\{w(\lambda)\,|\,\lambda\in\scrL_F\}$ holds, and the result follows from the following:
\begin{align*}
w(\upbeta_i)\in\scrW_{w(F)}^+& \Longleftrightarrow \l w(\upbeta_i), \lambda'\r>0 \mbox{ for some } \lambda'\in\scrL_{w(F)}\\
& \Longleftrightarrow \l w(\upbeta_i), w(\lambda)\r>0 \mbox{ for some } \lambda\in\scrL_{F}\\
& \Longleftrightarrow \l \upbeta_i, \lambda\r>0 \mbox{ for some } \lambda\in\scrL_{F}\\
& \Longleftrightarrow \upbeta_i\in\scrW_F^+.
\end{align*}
(3) This follows from (2).
\end{proof}
\end{lem}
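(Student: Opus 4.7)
The plan is to handle the three parts in the natural order, with (2) and (3) being formal consequences of $W$-equivariance while (1) is where the quasi-symmetry hypothesis plays its role.

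For (1), I would begin by observing that $P \defeq \delta + (1/2)\bsigma$ is centrally symmetric with center $\delta$, so $F^* = -F + 2\delta$ and its supporting hyperplanes are exactly the $B^* = -B + 2\delta$ for $B \in \scrB_F(P)$. The corresponding unit outward normals flip sign, so $\lambda_{B^*} = -\lambda_B$ and hence $\scrL_{F^*} = -\scrL_F$, from which $\scrW_{F^*}^+ = \scrW_F^-$ is immediate. It therefore suffices to prove $\upbeta_F^- = -\upbeta_F^+$. To see this I would partition the weights lying in $\scrW_F^+ \cup \scrW_F^-$ according to the one-dimensional $\bR$-subspace $L \subset \M_{\bR}$ they span: on such a line all weights are proportional, so a given $\lambda \in \scrL_F$ pairs with all of them with a common sign pattern (determined by which ray of $L$ they lie on), and consequently every weight on $L$ lies in $\scrW_F^+$ or $\scrW_F^-$. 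Quasi-symmetry $\sum_{\upbeta_i \in L} \upbeta_i = 0$ then yields the line-by-line cancellation $\sum_{\upbeta_j \in L\cap \scrW_F^+} \upbeta_j = -\sum_{\upbeta_j \in L \cap \scrW_F^-} \upbeta_j$, and summing over the finitely many relevant lines gives $\upbeta_F^+ + \upbeta_F^- = 0$.

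For (2), the three key inputs are the $W$-invariance of $\bsigma$ (so $w$ sends $P$ to $w(\delta) + (1/2)\bsigma$ and carries $\scrB_F(P)$ bijectively onto $\scrB_{w(F)}(w(P))$), the $W$-invariance of the pairing $\langle-,-\rangle$ (so that the unit normals transform as $w(\lambda_B) = \lambda_{w(B)}$, yielding $\scrL_{w(F)} = w(\scrL_F)$), and the fact that the weight set of $X$ is $W$-stable. For any weight $\upbeta_i$ and any $\lambda \in \scrL_F$ one then has $\langle w(\upbeta_i), w(\lambda)\rangle = \langle \upbeta_i, \lambda\rangle$, which shows $w(\upbeta_i) \in \scrW_{w(F)}^+$ iff $\upbeta_i \in \scrW_F^+$; equivalently, $w$ restricts to a bijection $\scrW_F^+ \simto \scrW_{w(F)}^+$. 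Part (3) is then immediate from (2) by $\bR$-linearity of $w$, namely $\upbeta_{w(F)}^+ = \sum_{\upbeta_i \in \scrW_F^+} w(\upbeta_i) = w(\upbeta_F^+)$.

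The only genuinely nontrivial step is the line-by-line cancellation in (1), which is the unique place where the quasi-symmetry hypothesis is actively used; everything else amounts to $W$-equivariant bookkeeping for the polytope $P$ and its normal data.
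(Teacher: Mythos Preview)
Your argument is correct and follows essentially the same route as the paper: for (1) you use $\scrL_{F^*}=-\scrL_F$ to get $\scrW_{F^*}^+=\scrW_F^-$ and then the line-by-line quasi-symmetry cancellation, and for (2)--(3) you use $W$-invariance of the pairing to obtain $\scrL_{w(F)}=w(\scrL_F)$ and the same chain of equivalences. If anything, you spell out slightly more than the paper does (e.g.\ why every weight on a relevant line lands in $\scrW_F^+\cup\scrW_F^-$), but the logical skeleton is identical.
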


The following is elementary, but we give a proof for the convenience of the  reader.

\begin{lem}\label{reflection}
 Let $\delta\in \M_{\bR}$ and $F\in \scrF^k(\delta+(1/2)\bsigma)$.  
 \begin{itemize}
 \item[$(1)$]  
 \[
 F=\left\{\delta-\frac{1}{2}\upbeta_{F}^++\sum_{\upbeta_{i_j}\in\scrW_F^0}a_j\upbeta_{i_j}\relmiddle| -\frac{1}{2}\leq a_j\leq 0\right\}.
 \]
 \item[$(2)$] 
$
F^*=F+\upbeta_{F}^+.
$
 \end{itemize}
\begin{proof}
We may assume that $\delta$ is the origin of $\M_{\bR}$.\vspace{1mm}\\
(1) Since $F$ is of codimension $k$, there exist $k$ facets $F_1,\hdots,F_k\in \scrF^1((1/2)\bsigma)$ such that $F=\cap_{i=1}^k F_i$. It is easy to see that  
\begin{align*}
F_i&=\left\{-\frac{1}{2}\upbeta_{F_i}^++\sum_{\upbeta_{i_j}\in\scrW_{F_i}^0}a_j\upbeta_{i_j}\relmiddle| -\frac{1}{2}\leq a_j\leq 0\right\}\\
&=\left\{\sum_{i=1}^da_i\upbeta_i\relmiddle| -\frac{1}{2}\leq a_i\leq 0, \mbox{$\displaystyle a_i=-\frac{1}{2}$ if $\upbeta_i\in \scrW_{F_i}^+$ and $a_i=0$ if $\upbeta_i\in \scrW_{F_i}^-$}\right\}.
\end{align*}
Therefore, 
\begin{align*}
F&=F_1\cap\cdots\cap F_k\\
&=\left\{\sum_{i=1}^da_i\upbeta_i\relmiddle| -\frac{1}{2}\leq a_i\leq 0, \mbox{$\displaystyle a_i=-\frac{1}{2}$ if $\displaystyle\upbeta_i\in \bigcup_{i=1}^k\scrW_{F_i}^+$ and $a_i=0$ if $\displaystyle\upbeta_i\in \bigcup_{i=1}^k\scrW_{F_i}^-$}\right\}.
\end{align*}
Since $\scrW_{F}^{\pm}=\bigcup_{i=1}^k\scrW_{F_i}^{\pm}$, 
the equality above yields
$F=\left\{-\frac{1}{2}\upbeta_{F}^++\sum_{\upbeta_{i_j}\in\scrW_F^0}a_j\upbeta_{i_j}\relmiddle| -\frac{1}{2}\leq a_j\leq 0\right\}$.\vspace{1mm}\\
(2)  It is enough to prove 
\[
-F=F+\upbeta_{F}^+.
\]
The assumption that $X$ is quasi-symmetric provides $\upbeta_{(-F)}^+=-\upbeta_F^+$. 
Since $F$ and $-F$ are parallel, it also holds that $\scrW_F^0=\scrW_{(-F)}^0$. Thus  
\[
-F=\left\{\frac{1}{2}\upbeta_{F}^++\sum_{\upbeta_{i_j}\in\scrW_F^0}a_j\upbeta_{i_j}\relmiddle| -\frac{1}{2}\leq a_j\leq 0\right\}=F+\upbeta_F^+.
\]
\end{proof}
\end{lem}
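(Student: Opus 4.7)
The plan is to translate by $-\delta$ to reduce to $\delta=0$; note that quasi-symmetry gives $\sum_i \upbeta_i = 0$, so the origin is the centre of the centrally symmetric polytope $(1/2)\bsigma$ and hence $F^* = -F$ in part (2).

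For (1), write $F = F_1 \cap \cdots \cap F_k$, where the $F_i$ are the facets of $(1/2)\bsigma$ containing $F$. Using the parameterisation $(1/2)\bsigma = \{\sum_j a_j \upbeta_j : a_j \in [-1/2, 0]\}$ (legitimate by quasi-symmetry), each facet $F_i$ is characterised as the set of $p$ minimising $\langle -, \lambda_{F_i}\rangle$ over $(1/2)\bsigma$. A one-line linear-programming computation then shows that a parameterisation $(a_j)$ yields a point of $F_i$ if and only if $a_j = -1/2$ for every $\upbeta_j \in \scrW_{F_i}^+$, $a_j = 0$ for every $\upbeta_j \in \scrW_{F_i}^-$, and $a_j$ is unconstrained in $[-1/2, 0]$ for $\upbeta_j \in \scrW_{F_i}^0$. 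Moreover, since $\sum_j a_j \langle \upbeta_j, \lambda_{F_i}\rangle = \langle p, \lambda_{F_i}\rangle$ depends only on $p$, \emph{every} parameterisation of a point of $F_i$ must satisfy these constraints. Taking the intersection over $i$ and using $\scrW_F^\pm = \bigcup_i \scrW_{F_i}^\pm$ and $\scrW_F^0 = \bigcap_i \scrW_{F_i}^0$ then yields the claimed description of $F$. As a by-product, the non-emptiness of $F$ forces the disjointness $\scrW_F^+ \cap \scrW_F^- = \emptyset$, so $\upbeta_F^+$ involves no cancellation.

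For (2), apply (1) to the face $-F$. Its normal cone equals $-\scrL_F$, so $\scrW_{-F}^+ = \scrW_F^-$ and $\scrW_{-F}^0 = \scrW_F^0$; this gives
\[ -F = \Bigl\{ -\tfrac12 \upbeta_F^- + \sum_{\upbeta_{i_j} \in \scrW_F^0} a_j \upbeta_{i_j} \;\Big|\; a_j \in [-\tfrac12, 0] \Bigr\}. \]
It remains to check $\upbeta_F^- = -\upbeta_F^+$. Since $\scrW_F^0$ is defined by vanishing on $\scrL_F$ and any two collinear weights pair proportionally with each $\lambda \in \scrL_F$, every one-dimensional subspace of $\M_{\bR}$ either has all of its weights in $\scrW_F^0$ or none of them. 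Applying quasi-symmetry on such a subspace gives $\sum_{\upbeta \in \scrW_F^0} \upbeta = 0$, and combined with $\sum_i \upbeta_i = 0$ together with the disjointness of the three sets this yields $\upbeta_F^+ + \upbeta_F^- = 0$, so $-F = F + \upbeta_F^+$.

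The main subtlety I anticipate is the disjointness $\scrW_F^+ \cap \scrW_F^- = \emptyset$: it is not forced by the definitions alone but by the non-emptiness of $F$, and it underpins the consistency of the formula in (1) as well as the identity $\upbeta_F^+ + \upbeta_F^- = 0$ in (2). The remainder is routine bookkeeping with the quasi-symmetry condition.
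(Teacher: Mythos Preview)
Your proof is correct and follows essentially the same route as the paper: reduce to $\delta=0$, write $F$ as an intersection of facets, describe each facet via the parameterisation $(1/2)\bsigma=\{\sum a_j\upbeta_j:a_j\in[-1/2,0]\}$, intersect, and then for (2) apply (1) to $-F$ together with $\upbeta_F^-=-\upbeta_F^+$. Your treatment is in fact slightly more careful than the paper's: you justify why \emph{every} parameterisation of a point on a facet obeys the extremal constraints (via the linear form $\langle-,\lambda_{F_i}\rangle$), and you make explicit that the disjointness $\scrW_F^+\cap\scrW_F^-=\emptyset$ is a consequence of $F\neq\emptyset$ rather than of the definitions --- a point the paper uses tacitly both here and in the preceding Lemma~\ref{property of beta}.
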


Let $P$ be a $W$-invariant convex polytope in $\M_{\bR}$. A face $F\in \scrF(P)$  is said to be {\it dominant}, if $F\subset \M_{\bR}^+$.  
Denote the sets of dominant faces by  \label{not: dom faces} \linelabel{line not: dom faces}
\begin{align*}
 \scrF^k_+(P)&\defeq \{F\in \scrF^k(P)\mid \mbox{$F$ is dominant}\}\\
 \scrF_+(P)&\defeq \{F\in \scrF(P)\mid \mbox{$F$ is dominant}\}.
\end{align*}
For $\delta\in \M_{\bR}^W$ and $\chi\in \partial(\delta+\bnabla)\cap\M_{\bR}^+$, let 
\[
F_{\chi}(\delta)\in \scrF_+(\delta+(1/2)\bsigma)
\]
denote the maximal codimensional face of $\delta+(1/2)\bsigma$ containing $\rho+\chi$. 

For $\chi\in \M_{\bR}$ and $w\in W$, set \label{not: ast action} \linelabel{line not: ast action}
\[
w\ast \chi\defeq w(\rho+\chi)-\rho\in \M.
\]
If $\chi\in\M$ satisfies $w\ast(\rho+\chi)\neq \rho+\chi$ for any $w\in W\backslash\{1\}$, there exists a unique $w^+\in W$ such that $w^+\ast\chi\in \M^+$. 
This specific element is denoted by 
\label{not: dom chi} \linelabel{line not: dom chi}
$\chi^+\defeq w^+\ast\chi\in \M^+$.

\begin{lem}\label{boundary}
Let $\delta\in\M_{\bR}^W$ and $\chi\in\partial(\delta+\bnabla)\cap\M^+_{\bR}$.
\begin{itemize}
\item[$(1)$]  $w(\rho+\chi+\upbeta_{F_{\chi}(\delta)}^+)=\rho+\chi+\upbeta_{F_{\chi}(\delta)}^+$ implies $w=1$.
\item[$(2)$]  $w_0(\rho+\chi+\upbeta_{F_{\chi}(\delta)}^+)\in \M^+_{\bR}$.
\item[$(3)$] Assume that $\chi\in\M^+$. Then it holds that
\[
\bigl(\chi+\upbeta_{F_{\chi}(\delta)}^+\bigr)^+=w_0\ast(\chi+\upbeta_{F_{\chi}(\delta)}^+),
\] 
and that $\bigl(\chi+\upbeta_{F_{\chi}(\delta)}^+\bigr)^+$ lies in  $\partial(\delta+\bnabla)\cap\M^+$.
 \end{itemize}
 \begin{proof}
 If $W$ is trivial, the results are obvious. Thus, assume that $W\neq1$. \vspace{1mm}\\
 (1) We may assume $\delta=0$. 
Since $\rho+\chi\in F_{\chi}(\delta)$, Lemma \ref{reflection} implies that $\rho+\chi+\upbeta_{F_{\chi}(\delta)}^+\in -F_{\chi}(\delta)$. 
Thus there exists $\chi'\in F_{\chi}(\delta)$ such that $\rho+\chi+\upbeta_{F_{\chi}(\delta)}^+=-\chi'$. 
It is enough to show that $\chi'\defeq-(\rho+\chi+\upbeta_{F_{\chi}(\delta)}^+)\in \M^+_{\bR}$ is strongly dominant. 
Since $F_{\chi}(\delta)$ is the face containing $\rho+\chi$ of maximal codimension, $\rho+\chi$ lies in $F_{\chi}(\delta)^{\circ}$, where $F_{\chi}(\delta)^{\circ}$ denotes the relative interior of $F_{\chi}(\delta)$. 
Since the bijective map $(-)+\upbeta_{F_{\chi}(\delta)}^+\colon \M_{\bR}\to \M_{\bR}$ maps $F_{\chi}(\delta)$ to $-F_{\chi}(\delta)$, it holds that $\rho+\chi+\upbeta_{F_{\chi}(\delta)}^+\in -F_{\chi}(\delta)^{\circ}$, 
and thus $\chi'\in F_{\chi}(\delta)^{\circ}$ since $(1/2)\bsigma$ is centrally symmetric. 
Since $(1/2)\bsigma$ is $W$-invariant and $F_{\chi}(\delta)$ is a dominant face containing a strongly dominant interior point $\rho+\chi$, 
every interior point in $F_{\chi}(\delta)$ is strongly dominant, and in particular $\chi'$ is strongly dominant.
 \vspace{1mm}\\
 (2) We may assume $\delta=0$. 
 The argument above implies that $\rho+\chi+\upbeta_{F_{\chi}(\delta)}^+\in -\M_{\bR}^+$, and hence $w_0(\rho+\chi+\upbeta_{F_{\chi}(\delta)}^+)\in\M_{\bR}^+$.
 \vspace{1mm}\\
 (3)  Since $\rho+\chi+\upbeta_{F_{\chi}(\delta)}^+\in \partial(\delta+(1/2)\bsigma)$ and the polytope $\delta+(1/2)\bsigma$ is $W$-invariant, $w_0(\rho+\chi+\upbeta_{F_{\chi}(\delta)}^+)\in \partial(\delta+(1/2)\bsigma)$. The statement of (2) and Proposition \ref{nabla sigma} show that $w_0(\rho+\chi+\upbeta_{F_{\chi}(\delta)}^+)-\rho\in \partial(\delta+\bnabla)\cap\M^+$.  
 In particular, $w_0\ast(\chi+\upbeta_{F_{\chi}(\delta)}^+)\in\M^+$, and thus the desired equality $\bigl(\chi+\upbeta_{F_{\chi}(\delta)}^+\bigr)^+=w_0\ast(\chi+\upbeta_{F_{\chi}(\delta)}^+)$
 holds.
  \end{proof}
\end{lem}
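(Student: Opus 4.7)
The plan is to deduce all three assertions from the single geometric observation that $\chi'\defeq 2\delta-(\rho+\chi+\upbeta_{F_\chi(\delta)}^+)$ lies in the relative interior of $F_\chi(\delta)$ and is strongly dominant. By translating, I may first assume $\delta=0$, which is harmless since $\delta\in\M^W_{\bR}$ is orthogonal to every root and all statements are $W$-equivariant. The map $p\mapsto -(p+\upbeta_F^+)+2\delta$ is a bijection from $F_\chi(\delta)$ to itself (the composition of the translation $F\to F^*$ from Lemma~\ref{reflection}(2) with central symmetry $F^*\to F$), so since $\rho+\chi\in F_\chi(\delta)^\circ$ by maximality of the codimension, one gets $\chi'\in F_\chi(\delta)^\circ$. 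Moreover every relative-interior point of the dominant face $F_\chi(\delta)$ is strongly dominant: $\rho+\chi$ is (being the sum of a strongly dominant weight and a dominant one), and if some interior point of $F_\chi(\delta)$ lay on a wall $\langle\alpha_i^\vee,-\rangle=0$, a relatively open neighborhood in $F_\chi(\delta)$ would cross the wall, contradicting $F_\chi(\delta)\subseteq \M^+_{\bR}$.

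The three assertions then follow cleanly. Assertion (1) holds because $\rho+\chi+\upbeta_F^+ = 2\delta-\chi'$ inherits triviality of the $W$-stabilizer from the strongly dominant point $\chi'$, using that $\delta$ is $W$-fixed. Assertion (2) holds because $w_0$ maps the anti-dominant chamber to the dominant chamber, placing $w_0(\rho+\chi+\upbeta_F^+)\in\M^+_{\bR}$. For (3), assertion (1) ensures $(\chi+\upbeta_F^+)^+$ is defined, and (2) shows the dominant representative is given by $w_0$, so $(\chi+\upbeta_F^+)^+ = w_0\ast(\chi+\upbeta_F^+)$: integrality follows from $\chi+\upbeta_F^+\in\M$ together with $w_0(\rho)-\rho$ lying in the root lattice, while integer-dominance uses that $\langle\alpha_i^\vee,w_0(\rho+\chi+\upbeta_F^+)\rangle$ is a positive integer (hence $\geq 1$) and $\langle\alpha_i^\vee,\rho\rangle=1$. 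For the boundary claim, $\rho+\chi+\upbeta_F^+\in F^*\subseteq \partial(\delta+(1/2)\bsigma)$; $W$-invariance of the polytope then puts $w_0(\rho+\chi+\upbeta_F^+)$ in $\partial(\delta+(1/2)\bsigma)\cap\M^+_{\bR}$, and Proposition~\ref{nabla sigma}(1) converts this to $\partial(\delta+\bnabla)\cap\M^+$.

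The main obstacle I anticipate is the convexity claim that the relative interior of the dominant face $F_\chi(\delta)$ consists entirely of strongly dominant points; once this is in place, the remainder is routine bookkeeping with the Weyl group action and the identification of $\bnabla$ with $-\rho+(1/2)\bsigma$ on the dominant chamber.
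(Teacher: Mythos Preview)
Your proof is correct and follows essentially the same route as the paper: both arguments reduce to showing that $\chi'=-(\rho+\chi+\upbeta_{F_\chi(\delta)}^+)$ (after setting $\delta=0$) lies in the relative interior of the dominant face $F_\chi(\delta)$ and is therefore strongly dominant, then read off (1)--(3) from this. Your treatment of (3) is slightly more explicit than the paper's, spelling out why $w_0\ast(\chi+\upbeta_F^+)$ lands in $\M^+$ rather than merely $\M_{\bR}^+$ (via the integrality of $\langle\alpha_i^\vee,\rho\rangle$), a point the paper leaves implicit in its appeal to Proposition~\ref{nabla sigma}.
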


For each dominant face $F\in \scrF^k_+(\delta+(1/2)\bsigma)$, define a face $F^\dag$ by
\begin{equation}\label{dag}
F^{\dag}\defeq w_0(F^*)\in \scrF^k(\delta+(1/2)\bsigma).
\end{equation}
Since $F^*$ lies in the anti-dominant cone, the face $F^{\dag}$ is dominant.

\begin{lem} \label{hara syukudai 1}
Let $\delta\in \M_{\bR}^W$ and $\chi\in \partial(\delta+\bnabla)\cap \M^+_{\bR}$, and write $\chi'\defeq(\chi+\upbeta_{F_{\chi}(\delta)}^+)^+$. Then   
\[
F_{\chi}(\delta)^{\dag}=F_{\chi'}(\delta).
\] 
\begin{proof}
It follows from Lemma \ref{reflection}\,(2) that $F_{\chi}(\delta)^* = F_{\chi}(\delta) + \beta_{F_{\chi}(\delta)}^+$, which implies that $F_{\chi}(\delta)^*$ contains $\chi + \rho + \beta_{F_{\chi}(\delta)}^+$ in its relative interior.
Thus $w_0\bigl(F_{\chi}(\delta)^*\bigr)$ is the maximal codimensional face of $\delta + (1/2)\bsigma$ that contains $w_0\bigl(\chi + \rho + \beta_{F_{\chi}(\delta)}^+\bigr)$.
Now Lemma \ref{boundary}\,(3) gives $w_0(\chi + \rho + \beta_{F_{\chi}(\delta)}^+) = \chi' + \rho$,
and thus $F_{\chi}(\delta)^{\dag} = w_0(F_{\chi}(\delta)^*)$ coincides with the maximal codimensional face of $\delta + (1/2)\bsigma$ that contains $\chi' + \rho$, which is $F_{\chi'}(\delta)$. 
\end{proof}
\end{lem}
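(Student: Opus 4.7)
The plan is to show both faces coincide by exhibiting a common relative-interior point, since a point determines the unique face of maximal codimension containing it.

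First I would use the defining property of $F_{\chi}(\delta)$: being the face of maximal codimension of $\delta+(1/2)\bsigma$ that contains $\rho+\chi$, the point $\rho+\chi$ must actually sit in the \emph{relative interior} $F_{\chi}(\delta)^{\circ}$ (otherwise it would belong to a proper sub-face, contradicting maximality of codimension). This observation is the key bridge; everything else is transport along the maps $(-)+\upbeta_{F_{\chi}(\delta)}^+$ and $w_0$.

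Next I would invoke Lemma \ref{reflection}\,(2) to identify $F_{\chi}(\delta)^{*}=F_{\chi}(\delta)+\upbeta_{F_{\chi}(\delta)}^{+}$, so that $\rho+\chi+\upbeta_{F_{\chi}(\delta)}^{+}$ lies in the relative interior of $F_{\chi}(\delta)^{*}$. Applying $w_{0}$, which permutes the faces of the $W$-invariant polytope $\delta+(1/2)\bsigma$ while preserving relative interiors, shows that $w_{0}\bigl(\rho+\chi+\upbeta_{F_{\chi}(\delta)}^{+}\bigr)$ lies in the relative interior of $F_{\chi}(\delta)^{\dag}=w_{0}\bigl(F_{\chi}(\delta)^{*}\bigr)$, and that $F_{\chi}(\delta)^{\dag}$ is the unique face of maximal codimension with this property.

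Then I would apply Lemma \ref{boundary}\,(3) to rewrite $w_{0}\bigl(\rho+\chi+\upbeta_{F_{\chi}(\delta)}^{+}\bigr)=\rho+\bigl(\chi+\upbeta_{F_{\chi}(\delta)}^{+}\bigr)^{+}=\rho+\chi'$. At this point $\rho+\chi'$ sits in the relative interior of $F_{\chi}(\delta)^{\dag}$, so by maximality of codimension the face $F_{\chi'}(\delta)$ defined to be the maximal codimensional face containing $\rho+\chi'$ must coincide with $F_{\chi}(\delta)^{\dag}$, completing the proof.

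The main (mild) obstacle is the passage from ``contains $\rho+\chi$'' to ``contains $\rho+\chi$ in its relative interior'': one must be a bit careful about why the maximal-codimension condition forces interiority, but this is a standard property of the face lattice of a convex polytope and requires no new input.
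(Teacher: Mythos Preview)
Your proposal is correct and follows essentially the same argument as the paper: both use Lemma~\ref{reflection}\,(2) to translate $\rho+\chi$ to the relative interior of $F_{\chi}(\delta)^{*}$, apply $w_0$, and then invoke Lemma~\ref{boundary}\,(3) to identify the resulting point as $\rho+\chi'$. Your write-up is slightly more explicit about why the maximal-codimension condition forces $\rho+\chi$ into the relative interior and why $w_0$ preserves relative interiors of faces, but the logical structure and the lemmas used are identical to the paper's proof.
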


 \begin{dfn} An ordered pair $(\delta,\delta')$ of elements in $\M_{\bR}^W\backslash\scrH^W$ is called an {\it adjacent pair} if $\delta$ and $\delta'$ are adjacent. Two adjacent pairs $(\delta_1,\delta_2)$ and $(\delta'_1,\delta_2')$ are {\it equivalent} if  $[\delta_i]=[\delta'_i]$ holds for each $i=1,2$.
\end{dfn}

In what follows, $(\delta,\delta')$ is an adjacent pair in $\M^W_{\bR}\backslash \scrH^W$, and let $H^W\in \scrH(\delta,\delta')$ be the unique element. Denote by $\delta_0\in H^W$  the point that meets the line segment with endpoints $\delta$ and $\delta'$.

\begin{lem}\label{chi boundary}
Let $\chi\in\scrC_{\delta}$. Then $\chi\notin\scrC_{\delta'}$ if and only if $\chi\in\partial(\delta_0+\bnabla)$.
\begin{proof}
If $\chi\notin\scrC_{\delta'}$, there exists $0\leq t<1$ such that $\chi\in \partial(\delta+t(\delta'-\delta)+\bnabla)$.
But by Proposition \ref{nabla sigma}\,(2)   $\delta+t(\delta'-\delta)=\delta_0$ holds. Conversely, assume that $\chi\in \scrC_{\delta'}$. Then  $\chi$  lies in $(\delta+\bnabla)^{\circ}\cap(\delta'+\bnabla)^{\circ}$ by Proposition \ref{nabla sigma}\,(2), and thus $\chi\in(\delta_0+\bnabla)^{\circ}$ since $\bnabla$ is convex. 
\end{proof}
\end{lem}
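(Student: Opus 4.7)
My approach is direct: it relies on the convexity of $\bnabla$ together with the characterization of $\scrH^W$ given in Proposition~\ref{nabla sigma}\,(2), which says that the boundary of a translate $\delta + \bnabla$ meets the lattice $\M$ precisely when $\delta \in \scrH^W$. Since $(\delta,\delta')$ is an adjacent pair, the only hyperplane of $\scrH^W$ meeting the open line segment from $\delta$ to $\delta'$ is the one through $\delta_0$.

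For the ``only if'' direction, assume $\chi \in \scrC_\delta \setminus \scrC_{\delta'}$. Parametrize the segment from $\delta$ to $\delta'$ by $\delta_t \defeq \delta + t(\delta'-\delta)$ for $t \in [0,1]$, and consider the closed set $S \defeq \{\, t \in [0,1] \mid \chi \in \delta_t + \bnabla \,\}$. By hypothesis, $0 \in S$ but $1 \notin S$. I would then let $t^\ast \defeq \sup S$; note $t^\ast \in [0,1)$, and by closedness $\chi \in \delta_{t^\ast} + \bnabla$. If $\chi$ were in the interior $(\delta_{t^\ast} + \bnabla)^\circ$, a small neighbourhood of $t^\ast$ in $[0,1]$ would lie in $S$, contradicting maximality. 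Hence $\chi \in \partial(\delta_{t^\ast} + \bnabla)$, and Proposition~\ref{nabla sigma}\,(2) forces $\delta_{t^\ast} \in \scrH^W$. Since the only separating hyperplane is $H^W$, and $\delta_{t^\ast}$ sits on the open segment between $\delta$ and $\delta'$, we conclude $\delta_{t^\ast} = \delta_0$, so $\chi \in \partial(\delta_0 + \bnabla)$.

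For the converse, suppose $\chi \in \scrC_\delta \cap \scrC_{\delta'}$. Because $\delta, \delta' \notin \scrH^W$, Proposition~\ref{nabla sigma}\,(2) says $\partial(\delta + \bnabla) \cap \M = \partial(\delta' + \bnabla) \cap \M = \emptyset$, so $\chi$ lies in the open interiors $(\delta + \bnabla)^\circ$ and $(\delta' + \bnabla)^\circ$, i.e.\ $\chi - \delta \in \bnabla^\circ$ and $\chi - \delta' \in \bnabla^\circ$. Writing $\delta_0 = (1-t_0)\delta + t_0 \delta'$ for some $t_0 \in (0,1)$, I get
\[
\chi - \delta_0 = (1-t_0)(\chi - \delta) + t_0(\chi - \delta') \in \bnabla^\circ
\]
by convexity of $\bnabla^\circ$, so $\chi \in (\delta_0 + \bnabla)^\circ$, i.e.\ $\chi \notin \partial(\delta_0 + \bnabla)$.

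There is really no major obstacle here; the only mild subtlety is making sure the supremum argument in the first direction lands exactly on $\delta_0$, which is handled by adjacency (uniqueness of the separating hyperplane). Everything else follows from convexity and the cited proposition.
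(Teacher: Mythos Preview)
Your proof is correct and follows essentially the same approach as the paper's: both directions rely on Proposition~\ref{nabla sigma}\,(2) together with convexity of $\bnabla$, and your supremum argument in the forward direction is simply a more explicit version of the paper's one-line claim that some $t\in[0,1)$ with $\chi\in\partial(\delta_t+\bnabla)$ exists. The only minor omission is that you assert $\delta_{t^\ast}$ lies on the \emph{open} segment without explicitly noting $t^\ast\neq 0$; this follows immediately since $\delta_{t^\ast}\in\scrH^W$ while $\delta\notin\scrH^W$.
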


 Set 
 \label{not: Fdeltadelta} \linelabel{line not: Fdeltadelta}
 \begin{align*}
 \scrF_{(\delta,\delta')}\defeq\left\{ F\in \scrF_+(\delta_0+(1/2)\bsigma)\relmiddle| \exists \chi\in \scrC_{\delta} \mbox{ such that }F=F_{\chi}(\delta_0)\right\}.
\end{align*}
Note that $\scrF_{(\delta_1,\delta_2)}=\scrF_{(\delta'_1,\delta_2')}$ if $(\delta_1,\delta_2)$ is equivalent to $(\delta'_1,\delta'_2)$.

\begin{lem}\label{orientation lem}
Any $F\in\scrF_{(\delta,\delta')}$ and any $\lambda\in \scrL_{F}$ satisfy $\l\delta'-\delta,\lambda\r>0$.
\begin{proof}
Let $F=F_{\chi}(\delta_0)\in \scrF_{(\delta,\delta')}$. Since the sign of $\l\delta'-\delta,\lambda\r$ does not change under small perturbation of $(\delta,\delta')$, the assumption $(\M_{\bR}^W)_{\rm gen}\neq \emptyset$ allows us to assume that $\delta'-\delta$ is not parallel to any hyperplane  in $ \scrB_F(\delta+\bnabla)$. Let $B_0\in \scrB_F(\delta_0+(1/2)\bsigma)$, and write $B\defeq B_0+\delta-\delta_0$ and $B'\defeq B_0+\delta'-\delta_0$. Then, since $\chi+\rho$ lies in the upper hald space $B_+$ and $\chi+\rho\in B_0$,   $\chi+\rho$ does not lie in the upper half space $B'_+$. In particular, it follows that $B'_+\subsetneq B_+$, which shows that $\lambda_B$ and $\delta'-\delta$ has the same orientation with respect to $B$. Therefore $\l\delta'-\delta,\lambda_B\r>0$.
\end{proof}
\end{lem}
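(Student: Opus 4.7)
The plan is to pin the point $\chi + \rho$ to the hyperplane $B$, then exploit the fact that $\chi + \rho$ nevertheless lies in the translated polytope $\delta + (1/2)\bsigma$ to extract the orientation inequality by translating $B$.

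Fix $\chi \in \scrC_\delta$ with $F = F_\chi(\delta_0)$, pick $B \in \scrB_F(\delta_0 + (1/2)\bsigma)$, and let $\lambda = \lambda_B$. Since $F \subseteq B$, the point $\chi + \rho \in F$ satisfies $\langle \chi + \rho, \lambda\rangle = \langle b, \lambda\rangle$ for every $b \in B$; this is the ``pinning'' observation. Next, I would show that $\chi + \rho \in \delta + (1/2)\bsigma$. Because $\delta \in \M_\bR^W$ is $W$-invariant, for every coroot $\alpha^\vee$ the reflection identity gives $\langle \delta, \alpha^\vee\rangle = 0$, so $\langle \chi - \delta, \alpha^\vee\rangle = \langle \chi, \alpha^\vee\rangle \geq 0$ and $\chi - \delta$ remains dominant. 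Proposition \ref{nabla sigma}(1) then places $\chi - \delta$ in $\bnabla \cap \M_\bR^+ = (-\rho + (1/2)\bsigma) \cap \M_\bR^+$, which is equivalent to $\chi + \rho \in \delta + (1/2)\bsigma$.

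Now the parallel translate $B' \defeq B + (\delta - \delta_0)$ is a facet-supporting hyperplane of $\delta + (1/2)\bsigma$ with the same outward normal $\lambda$, so applying the supporting half-space inequality to $\chi + \rho$ and taking $b' = b + (\delta - \delta_0)$ with $b \in B$ yields $\langle \chi + \rho - b - (\delta - \delta_0), \lambda\rangle \geq 0$. Combined with the pinning identity, this collapses to $\langle \delta_0 - \delta, \lambda\rangle \geq 0$. Since $\delta \notin H^W$ but $\delta_0 \in H^W$, the point $\delta_0$ lies strictly between $\delta$ and $\delta'$ on the segment joining them, so $\delta_0 - \delta$ is a positive scalar multiple of $\delta' - \delta$, and hence $\langle \delta' - \delta, \lambda\rangle \geq 0$.

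The main obstacle is upgrading this weak inequality to the strict one. For this I would argue by a genericity/perturbation: the sign of the continuous function $\langle \delta' - \delta, \lambda_B\rangle$ is locally constant on the open region where it is nonzero, and the hypothesis $(\M_\bR^W)_{\rm gen} \neq \emptyset$ guarantees that $\M_\bR^W$ is not contained in any hyperplane parallel to a facet of $\bsigma$; equivalently, for each $B \in \scrB_F(\delta_0 + (1/2)\bsigma)$ the linear functional $\langle -, \lambda_B\rangle$ does not vanish identically on $\M_\bR^W$. Since $\scrB_F(\delta_0 + (1/2)\bsigma)$ is finite, one may perturb $(\delta,\delta')$ slightly within their respective chambers so that $\delta' - \delta$ simultaneously avoids each of the codimension-one conditions $\langle v, \lambda_B\rangle = 0$ in $\M_\bR^W$. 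This forces $\langle \delta' - \delta, \lambda\rangle \neq 0$, which together with the weak inequality above gives the required strict positivity.
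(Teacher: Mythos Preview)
Your proof is correct and follows essentially the same approach as the paper's. Both arguments pin $\chi+\rho$ to the facet hyperplane $B_0$ of $\delta_0+(1/2)\bsigma$, translate by $\delta-\delta_0$ to obtain a supporting hyperplane of $\delta+(1/2)\bsigma$, and use $\chi+\rho\in\delta+(1/2)\bsigma$ to extract the orientation inequality; both then invoke $(\M_{\bR}^W)_{\rm gen}\neq\emptyset$ to rule out equality. The only cosmetic difference is ordering: the paper perturbs first to make $\delta'-\delta$ non-parallel and then argues the strict inequality geometrically in one stroke, whereas you establish the weak inequality $\langle\delta'-\delta,\lambda\rangle\geq 0$ first and defer the perturbation to the end. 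Your explicit verification that $\chi-\delta$ is dominant (so that Proposition~\ref{nabla sigma}(1) applies) makes transparent a step the paper leaves implicit.
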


\begin{lem}\label{lattice points}
Let $F=F_{\chi}(\delta_0)\in \scrF_{(\delta,\delta')}$ for some $\chi\in \scrC_{\delta}$. Then 
\[
\bigl(F^{\circ}\cap (\M^++\rho)\bigr)\bigcup \bigl(F^{\circ}\cap (\M^++\rho)\bigr)^*\subset \delta+(1/2)\bsigma,\]
where $\bigl(F^{\circ}\cap (\M^++\rho)\bigr)^*\defeq\left\{-x+2\bigl(\delta_0-(1/2)\upbeta^+_{F}\bigr)\relmiddle| x\in F^{\circ}\cap (\M^++\rho)\right\}$.
\begin{proof}
By Lemma \ref{independence}\,(2) we may assume that $\delta$ is sufficiently near the point $\delta_0$.  By Lemma \ref{reflection}\,(1) $F$ is centrally symmetric with center $\delta_0-(1/2)\upbeta_F^+$, and so $\bigl(F^{\circ}\cap (\M+\rho)\bigr)^*\subset F^{\circ}$.  
The fact that $\chi+\rho$ lies in the interior of the polytope $\delta+(1/2)\bsigma$ implies that $\dim\bigl(F\cap (\delta+(1/2)\bsigma)\bigr)=\dim F$. Thus the finite set $\bigl(F^{\circ}\cap (\M^++\rho)\bigr)\bigcup \bigl(F^{\circ}\cap (\M^++\rho)\bigr)^*\subset F^{\circ}$ is contained in $\delta+(1/2)\bsigma$, since $\delta$ is sufficiently near $\delta_0$ and $F\cap (\delta_0+(1/2)\bsigma)=F$. 
\end{proof}
\end{lem}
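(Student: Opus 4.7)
The plan is to reduce to the case where $\delta$ is arbitrarily close to the wall-point $\delta_0$, exploit the central symmetry of the face $F$ to treat both subsets uniformly, and then conclude by a continuity argument. First, I would invoke Lemma~\ref{independence}\,(2), which says that $(\delta + (1/2)\bsigma) \cap (\M^+ + \rho)$ depends only on the chamber $[\delta]$. This allows me to replace $\delta$ by any representative of its chamber, and in particular to assume $\delta$ is as close to $\delta_0$ as needed.

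Next, I would use Lemma~\ref{reflection}\,(1) to identify $F$ as a centrally symmetric convex polytope with center $c_F \defeq \delta_0 - (1/2)\upbeta_F^+$. Consequently the involution $x \mapsto -x + 2c_F$ preserves both $F$ and its relative interior $F^{\circ}$, so the dual set $\bigl(F^{\circ} \cap (\M^+ + \rho)\bigr)^*$ automatically lies in $F^{\circ}$. Hence both sets whose inclusion one wants to verify are finite subsets of $F^{\circ}$, and it suffices to check that an arbitrary finite subset of $F^{\circ}$ lands in $\delta + (1/2)\bsigma$ once $\delta$ is close enough to $\delta_0$.

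The geometric heart of the argument is the observation that $F \cap (\delta + (1/2)\bsigma)$ has dimension equal to $\dim F$. Since $\chi \in \scrC_\delta$ and $\delta \notin \scrH^W$, Proposition~\ref{nabla sigma}\,(2) gives $\chi \in (\delta + \bnabla)^{\circ}$; combined with the identification $\bnabla \cap \M^+_{\bR} = (-\rho + (1/2)\bsigma) \cap \M^+_{\bR}$ from Proposition~\ref{nabla sigma}\,(1), and exploiting $W$-invariance of $\delta$ and dominance of $\chi$, this upgrades to the statement that $\chi + \rho$ is an interior point of $\delta + (1/2)\bsigma$. Since $\chi + \rho \in F$, the intersection $F \cap (\delta + (1/2)\bsigma)$ contains a relative neighbourhood of $\chi + \rho$ in $F$, so it is of full dimension in $F$. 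As $\delta \to \delta_0$ this intersection exhausts $F$; hence for $\delta$ sufficiently close to $\delta_0$ the finite set $F^{\circ} \cap (\M^+ + \rho)$ together with its image under $x \mapsto -x + 2c_F$ lies inside $\delta + (1/2)\bsigma$. Combined with the first step, this yields the claim.

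The main obstacle I anticipate is the passage from $\chi \in (\delta + \bnabla)^{\circ}$ to $\chi + \rho \in (\delta + (1/2)\bsigma)^{\circ}$: the two polytopes $\bnabla$ and $-\rho + (1/2)\bsigma$ only coincide on the dominant cone, so one must be careful to stay inside $\M^+_{\bR}$ throughout. Once that interior statement is established, the rest is merely the continuity of $\delta + (1/2)\bsigma$ in $\delta$ together with the finiteness of the lattice set $F^{\circ} \cap (\M^+ + \rho)$.
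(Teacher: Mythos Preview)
Your proposal is correct and follows essentially the same route as the paper: reduce via Lemma~\ref{independence}\,(2) to $\delta$ close to $\delta_0$, use Lemma~\ref{reflection}\,(1) for the central symmetry of $F$ so that the dual set sits in $F^{\circ}$, observe that $\chi+\rho\in(\delta+(1/2)\bsigma)^{\circ}$ forces $\dim\bigl(F\cap(\delta+(1/2)\bsigma)\bigr)=\dim F$, and finish by continuity. The only difference is that you spell out the passage from $\chi\in(\delta+\bnabla)^{\circ}$ to $\chi+\rho\in(\delta+(1/2)\bsigma)^{\circ}$ via Proposition~\ref{nabla sigma}, which the paper states without comment; your caution about staying inside $\M_{\bR}^{+}$ at that step is well placed.
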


\begin{figure}
\centering
\begin{tikzpicture}[baseline=(current bounding box.north), x=7mm, y=7mm]

          \draw[fill=red!20] (3.25,3) --(3.25, 0) --(2.75, 0) --(2.75,3) arc(180:0:0.25) --cycle; 
          \draw[fill=blue!20] (3, 0) circle[radius=0.25];
          
          \draw[fill=red!20] (-3,-3.25) --(0,-3.25) --(0,-2.75) --(-3,-2.75) arc(90:270:0.25) --cycle;
          \draw[fill=blue!20] (0,-3) circle[radius=0.25];
           
           \draw[step=.35cm, gray,very thin] (-4,-4) grid (4,4); 
           
            \draw[line width=1pt]                (3,0) --(3,3) --(0,3) -- (-2, 2) --(-3, 0) --(-3,-3) --(0, -3) --(2,-2) --cycle;
          
          \draw (-0.5, -0.5) node[red]  {$\bullet$};
           \draw (0,0) node[teal] {$\bullet$};
           \draw (0.5,0.5) node[blue]  {$\bullet$};
             
           \draw (-0.5, -0.5) node[below=2pt, red]  {$\delta$};
           \draw (0,0) node[below=2pt, teal]  {$\delta_0$};
           \draw (0.5,0.5) node[below=2pt, blue]  {$\delta'$};

          \node (chi1) at (-2,-3) {$\bullet$};
          \node (chi2) at (-1,-3) {$\bullet$};
          \node (chi3) at (0,-3) {$\bullet$};
          
          \draw (-1.5 ,-3) node[below=8pt,  inner sep=1pt, red, fill=white]  {$F_1$};
           \draw (0,-3) node[below=8pt, inner sep=1pt, blue, fill=white]  {$F_2$};
           
           \node (chi'1) at (3,2) {$\bullet$};
          \node (chi'2) at (3,1) {$\bullet$};
          \node (chi'3) at (3,0) {$\bullet$};
          
          \draw (3 ,1.5) node[right=8pt, inner sep=1pt, red, fill=white]  {$F_1^{\dag}$};
           \draw (3,0) node[right=8pt,  inner sep=1pt, blue, fill=white]  {$F_2^{\dag}$};
           
           \draw (-2.5, 2.7) node[fill=white, inner sep=1pt] {$\delta_0 + \frac{1}{2}\bsigma$};
           
           \draw (0, -4.25) node {$\scrF_{(\delta, \delta')} = \{ F_1, F_2 \}, ~ \scrF_{(\delta', \delta)} = \{ F_1^{\dag}, F_2^{\dag}\}$ };

          \end{tikzpicture}
          
          \caption{A picture for $\scrF_{(\delta, \delta')}$ and  $\scrF_{(\delta', \delta)}$ when $G = \mathrm{GL}_2(\Bbbk)$ and $X = \mathrm{Sym}^3(\Bbbk^2) \oplus \mathrm{Sym}^3(\Bbbk^2)^*$.  }
  \end{figure}

 For $\chi\in\scrC_{\delta}\backslash\scrC_{\delta'}$, define
 the element $\mu_{(\delta,\delta')}(\chi)$ by the formula
 \label{not: mudeltadelta} \linelabel{line not: mudeltadelta}
\[
\mu_{(\delta,\delta')}(\chi)\defeq(\chi+\upbeta_{F_{\chi}(\delta_0)}^+)^+\in\M^+.
\]
\begin{prop} \label{crossing separrationg hyperplane}
Let $\chi\in\scrC_{\delta}\backslash\scrC_{\delta'}$. Then $\mu_{(\delta,\delta')}(\chi)\in\scrC_{\delta'}\backslash\scrC_{\delta}$.

\begin{proof} By Lemma \ref{chi boundary}, $\chi\in \partial(\delta_0+\bnabla)$. Thus Lemma \ref{boundary}\,(3) implies that   $\mu_{(\delta,\delta')}(\chi)\in \partial(\delta_0+\bnabla)\cap\M^+$. Again by Lemma \ref{chi boundary}, it is enough to prove that $\mu_{(\delta,\delta')}(\chi)\in\scrC_{\delta'}$.  

Since $\chi+\rho\in \partial(\delta_0+(1/2)\bsigma)$, $F\defeq F_{\chi}(\delta_0)$ exists. Since $\chi+\rho$ lies in the relative interior of $F$, Lemma \ref{lattice points} shows that there exists $\sigma\in \bsigma$ such that 
\begin{equation}\label{eq:2.18}
-(\chi+\rho)+2(\delta_0-(1/2)\upbeta_F^+)=\delta+(1/2)\sigma.
\end{equation}
By Lemma \ref{independence}\,(1), we may assume that $\delta_0=(\delta+\delta')/2$. Then the equation \eqref{eq:2.18} is equivalent to the equation
\[
\rho+\chi+\upbeta_F^+=\delta'-(1/2)\sigma.
\]
This implies $\rho+\chi+\upbeta_F^+\in \delta'+(1/2)\bsigma$ because $-\bsigma=\bsigma$. Since $\delta'+(1/2)\bsigma$ is $W$-invariant, $w_0(\rho+\chi+\upbeta_F^+)\in \delta'+(1/2)\bsigma$ holds.
Thus
\begin{align*}
\mu_{(\delta,\delta')}(\chi)= w_0(\rho+\chi+\upbeta_F^+) - \rho \in \bigl(\delta' + (-\rho + (1/2)\bsigma)\bigr) \cap \M^+ = \scrC_{\delta'},
\end{align*}
as desired.
\end{proof}
\end{prop}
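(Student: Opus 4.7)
The plan is to reduce the statement, via Lemma~\ref{chi boundary} and Lemma~\ref{boundary}\,(3), to the single claim that $\mu_{(\delta,\delta')}(\chi)$ lies in $\delta'+\bnabla$, and then to verify this by a short geometric calculation inside the centrally symmetric polytope $\delta_0+(1/2)\bsigma$.

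First, since $\chi\in\scrC_{\delta}\setminus\scrC_{\delta'}$, Lemma~\ref{chi boundary} gives $\chi\in\partial(\delta_0+\bnabla)$, so in particular $\chi+\rho$ lies on the boundary of $\delta_0+(1/2)\bsigma$ and the face $F\defeq F_{\chi}(\delta_0)$ is well-defined. Applying Lemma~\ref{boundary}\,(3) to the pair $(\delta_0,\chi)$ shows that
\[
\mu_{(\delta,\delta')}(\chi) = w_0\ast(\chi+\upbeta_F^+) \in \partial(\delta_0+\bnabla)\cap\M^+.
\]
Because $\mu_{(\delta,\delta')}(\chi)$ sits on $\partial(\delta_0+\bnabla)$, the contrapositive of Lemma~\ref{chi boundary} reduces the problem to showing $\mu_{(\delta,\delta')}(\chi)\in\scrC_{\delta'}$; once this is done, Lemma~\ref{chi boundary} automatically forces $\mu_{(\delta,\delta')}(\chi)\notin\scrC_{\delta}$.

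Next, I would use Lemma~\ref{independence}\,(1) to replace $\delta$ (without changing $\scrC_{\delta}$ or $\scrC_{\delta'}$) by a point close to $\delta_0$ satisfying $\delta_0=(\delta+\delta')/2$. Under this normalization, Lemma~\ref{lattice points} yields $\sigma\in\bsigma$ with
\[
-(\chi+\rho)+2\bigl(\delta_0-(1/2)\upbeta_F^+\bigr) \;=\; \delta+(1/2)\sigma,
\]
which can be rearranged to $\rho+\chi+\upbeta_F^+ = \delta'-(1/2)\sigma$. Since $-\bsigma=\bsigma$, this shows $\rho+\chi+\upbeta_F^+ \in \delta'+(1/2)\bsigma$. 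The polytope $\delta'+(1/2)\bsigma$ is $W$-invariant, so $w_0(\rho+\chi+\upbeta_F^+)$ remains in it, and subtracting $\rho$ and combining with the dominance statement in Lemma~\ref{boundary}\,(2) together with Proposition~\ref{nabla sigma}\,(1) places $\mu_{(\delta,\delta')}(\chi)$ inside $(\delta'+\bnabla)\cap\M^+ = \scrC_{\delta'}$, as required.

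The only genuinely nontrivial step is the symmetry identity $\rho+\chi+\upbeta_F^+ = \delta'-(1/2)\sigma$; the rest is bookkeeping. This identity is the combinatorial content of ``reflecting through $F$'' and it is essentially Lemma~\ref{reflection}\,(2) applied in the polytope $\delta_0+(1/2)\bsigma$, combined with the reduction $\delta_0=(\delta+\delta')/2$. Hence the main technical input is Lemma~\ref{lattice points}, which ensures that when $\delta$ is chosen sufficiently close to $\delta_0$, the $W$-orbit of the dual point actually lies in $\delta+(1/2)\bsigma$ and not merely in $\delta_0+(1/2)\bsigma$; this is what makes the normalization in the second paragraph legitimate.
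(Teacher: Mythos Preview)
Your proposal is correct and follows essentially the same route as the paper: both proofs reduce via Lemma~\ref{chi boundary} and Lemma~\ref{boundary}\,(3) to showing $\mu_{(\delta,\delta')}(\chi)\in\scrC_{\delta'}$, then use the normalization $\delta_0=(\delta+\delta')/2$ (justified by Lemma~\ref{independence}\,(1)), apply Lemma~\ref{lattice points} to obtain the identity $\rho+\chi+\upbeta_F^+=\delta'-(1/2)\sigma$, and finish with $-\bsigma=\bsigma$ and the $W$-invariance of $\delta'+(1/2)\bsigma$. The only cosmetic difference is that you perform the normalization before invoking Lemma~\ref{lattice points} and spell out the final containment via Lemma~\ref{boundary}\,(2) and Proposition~\ref{nabla sigma}\,(1), whereas the paper writes these steps in a slightly more compressed order.
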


By the previous result, $\mu_{(\delta,\delta')}(-)$ defines a map
\begin{equation}\label{mu map}
\mu_{(\delta,\delta')}\colon \scrC_{\delta}\backslash\scrC_{\delta'}\to\scrC_{\delta'}\backslash\scrC_{\delta}.
\end{equation}
The following result proves that this map is bijective. 

\begin{prop} \label{CSW2}
Let $\chi\in\scrC_{\delta}\backslash\scrC_{\delta'}$ and put $\chi'\defeq\mu_{(\delta,\delta')}(\chi)$. 
The following hold.
\begin{itemize}
\item[$(1)$]  $\upbeta_{F_{\chi'}(\delta_0)}^+=-w_0(\upbeta_{F_{\chi}(\delta_0)}^+)$.
\item[$(2)$]  $\mu_{(\delta',\delta)}\circ \mu_{(\delta,\delta')}=\id_{\scrC_{\delta}\backslash\scrC_{\delta'}}$.
\end{itemize}
In particular, the map $\mu_{(\delta,\delta')}\colon \scrC_{\delta}\backslash\scrC_{\delta'}\simto\scrC_{\delta'}\backslash\scrC_{\delta}$ is bijective.
\begin{proof}
(1) Since $\rho+\chi+\upbeta_{F_{\chi}(\delta_0)}^+$ lies in the relative interior of the face $F_{\chi}(\delta_0)^*$, 
the element $\rho+\chi'=w_0(\rho+\chi+\upbeta_{F_{\chi}(\delta_0)}^+)$ lies in the relative interior of the face $w_0(F_{\chi}(\delta_0)^*)$. 
This implies that the face $w_0(F_{\chi}(\delta_0)^*)$  is the maximal codimensional face containing $\rho+\chi'$, and thus $F_{\chi'}(\delta_0)=w_0(F_{\chi}(\delta_0)^*)$. 
Therefore, by Lemma \ref{property of beta}, 
\[
\upbeta_{F_{\chi'}(\delta_0)}^+=\upbeta_{w_0(F_{\chi}(\delta_0)^*)}^+=w_0(\upbeta_{F_{\chi}(\delta_0)^*}^+)=-w_0(\upbeta_{F_{\chi}(\delta_0)}^+).
\]
(2) We show that $\mu_{(\delta',\delta)}(\chi')=\chi$. This follows from the following equalities
\begin{align*}
\mu_{(\delta',\delta)}(\chi')&=(\chi'+\upbeta_{F_{\chi'}(\delta_0)}^+)^+\\
&=w_0(\rho+\chi'+\upbeta_{F_{\chi'}(\delta_0)}^+)-\rho\\
&=w_0\Bigl(\rho+w_0\bigl(\rho+\chi+\upbeta_{F_{\chi}(\delta_0)}^+\bigr)-\rho-w_0\bigl(\upbeta_{F_{\chi}(\delta_0)}^+\bigr)\Bigr)-\rho\\
&=w_0(w_0(\rho+\chi))-\rho\\
&=\chi,
\end{align*}
where the third equality follows from (1). 
\end{proof}
\end{prop}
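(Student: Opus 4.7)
\medskip\noindent\textbf{Proof proposal.}
The plan is to first prove (1) by identifying the face $F_{\chi'}(\delta_0)$ explicitly, then deduce (2) from a direct computation using the involutivity $w_0^2 = 1$, and finally obtain bijectivity as a formal consequence.

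For (1), I would begin with the observation that, by the maximality in the definition of $F_\chi(\delta_0)$, the point $\rho+\chi$ lies in the relative interior of $F_\chi(\delta_0)$. Applying Lemma \ref{reflection}\,(2), the translation $(-) + \upbeta_{F_\chi(\delta_0)}^+$ maps $F_\chi(\delta_0)$ bijectively onto $F_\chi(\delta_0)^*$, so $\rho+\chi+\upbeta_{F_\chi(\delta_0)}^+$ lies in the relative interior of $F_\chi(\delta_0)^*$. Applying the $W$-action by $w_0$, which permutes faces of the $W$-invariant polytope $\delta_0+(1/2)\bsigma$, the point $w_0(\rho+\chi+\upbeta_{F_\chi(\delta_0)}^+) = \rho+\chi'$ lies in the relative interior of $w_0(F_\chi(\delta_0)^*) = F_\chi(\delta_0)^\dag$. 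Since the maximal codimensional face containing $\rho+\chi'$ is characterized by lying in its relative interior, this forces $F_{\chi'}(\delta_0) = w_0(F_\chi(\delta_0)^*)$. Combining parts (1) and (3) of Lemma \ref{property of beta} (which give $\upbeta_{w(F)}^+ = w(\upbeta_F^+)$ and $\upbeta_{F^*}^+ = -\upbeta_F^+$) then yields the claimed identity $\upbeta_{F_{\chi'}(\delta_0)}^+ = -w_0(\upbeta_{F_\chi(\delta_0)}^+)$.

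For (2), the key point is that $\delta_0$ is symmetric in the roles of $\delta$ and $\delta'$, so the same point $\delta_0$ governs both $\mu_{(\delta,\delta')}$ and $\mu_{(\delta',\delta)}$. Using Lemma \ref{boundary}\,(3), both applications of $\mu$ act as $\chi \mapsto w_0(\rho+\chi+\upbeta_{F_\chi(\delta_0)}^+)-\rho$. Substituting the formula for $\chi'$ and using (1) to replace $\upbeta_{F_{\chi'}(\delta_0)}^+$ by $-w_0(\upbeta_{F_\chi(\delta_0)}^+)$, the contribution of the $\upbeta$ terms cancels, leaving $\mu_{(\delta',\delta)}(\chi') = w_0^2(\rho+\chi) - \rho = \chi$, since $w_0$ is an involution. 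This is exactly the calculation carried out in the statement.

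Finally, to upgrade from (2) to bijectivity, I would symmetrically apply the same argument to the adjacent pair $(\delta',\delta)$: a priori one only knows $\mu_{(\delta',\delta)}$ maps into $\scrC_\delta \setminus \scrC_{\delta'}$ by Proposition \ref{crossing separrationg hyperplane}, and the composition identity in (2), paired with its analogue $\mu_{(\delta,\delta')}\circ\mu_{(\delta',\delta)}=\id$ obtained by swapping the roles, shows $\mu_{(\delta,\delta')}$ and $\mu_{(\delta',\delta)}$ are mutually inverse bijections. The only delicate step, and the one I regard as the main obstacle, is the face identification $F_{\chi'}(\delta_0) = w_0(F_\chi(\delta_0)^*)$: it requires carefully tracking that the relative interior is preserved under translation by $\upbeta_{F_\chi(\delta_0)}^+$ and under the $W$-action, and that the maximal codimensional face property uniquely pins down the face. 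Once this is clear, the rest of the argument is a formal manipulation.
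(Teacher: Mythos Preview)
Your proposal is correct and follows essentially the same approach as the paper's proof: identify $F_{\chi'}(\delta_0) = w_0(F_\chi(\delta_0)^*)$ by tracking the relative interior under translation by $\upbeta_{F_\chi(\delta_0)}^+$ and the $w_0$-action, then apply Lemma~\ref{property of beta} for (1), and carry out the same direct computation using $w_0^2=1$ for (2). Your write-up is slightly more explicit in citing Lemma~\ref{reflection}\,(2) for the translation step and in spelling out the symmetric argument for bijectivity, but there is no substantive difference.
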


\begin{figure}
\centering
\begin{tikzpicture}[x=5mm, y=5mm]
\tikzset{cross/.style={preaction={-,draw=white,line width=3pt}}} 

        \draw[red, line width=1pt]                (4,0) --(4,4) --(0,4) -- (-4, 2) --(-6, -2) --(-6,-6) --(-2, -6) --(2,-4) --(4,0);
        \draw[shift={(1,1)},teal, line width=1pt] (4,0) --(4,4) --(0,4) -- (-4, 2) --(-6, -2) --(-6,-6) --(-2, -6) --(2,-4) --(4,0);
        \draw[shift={(2,2)},blue, line width=1pt] (4,0) --(4,4) --(0,4) -- (-4, 2) --(-6, -2) --(-6,-6) --(-2, -6) --(2,-4) --(4,0);
        
        \draw[purple, line width=1pt] (-8,-8) --(8,8);

    \foreach \i in {-7,..., 7}
      \foreach \j in {-7,...,7}{
        
        \ifodd \i
        \ifodd \j
        \draw (\i,\j) node {$\cdot$};
        
        \fi
        \fi
              };
              
               \draw[step=.5cm, gray,very thin] (-8,-8) grid (8,8); 
              
              \node[shape=circle, line width=1pt, draw] (a) at (-5,-5) {$\bullet$};
              \node[shape=circle, line width=1pt, draw] (A) at (5,3) {$\bullet$};
              \draw[-Straight Barb, line width=1pt, cross] (a) to[out=75,in=195] (A);

              \node[shape=circle, line width=1pt, draw] (b) at (-3,-5) {$\bullet$};
              \node[shape=circle, line width=1pt, draw] (B) at (5,5) {$\bullet$};
              \draw[-Straight Barb, line width=1pt, cross] (b) to[out=75,in=195] (B);
              
        \node[shape=circle, line width=1pt, draw] (c) at (-1,-5) {$\bullet$};
              \node[shape=circle, line width=1pt, draw] (C) at (5,1) {$\bullet$};
              \draw[-Straight Barb, line width=1pt, cross] (c) to[out=80,in=190] (C);
              
               \node[red] (o1) at (-1,-1) {$\bullet$};
               \node[teal] (o2) at (0,0) {$\bullet$}; 
               \node[blue] (o3) at (1,1) {$\bullet$};
               
               \draw (o1) node[below=2pt, red]  {$\delta$};
               \draw (o2) node[below=2pt, teal]  {$\delta_0$};
               \draw (o3) node[below=2pt, blue]  {$\delta'$};

             \draw (a) node[below=10pt, fill=white, inner sep=1pt]  {$\chi_1$};
             \draw (A) node[right=10pt,fill=white, inner sep=1pt]  {$\chi'_1 = \mu_{\delta, \delta'}(\chi_1)$};
             
             \draw (b) node[below=10pt, fill=white, inner sep=1pt]  {$\chi_2$};
             \draw (B) node[right=10pt,fill=white, inner sep=1pt]  {$\chi'_2 = \mu_{\delta, \delta'}(\chi_2)$};
             
             \draw (c) node[below=10pt, fill=white, inner sep=1pt]  {$\chi_3$};
             \draw (C) node[right=10pt,fill=white, inner sep=1pt]  {$\chi'_3 = \mu_{\delta, \delta'}(\chi_3)$};

             \draw (-7,-7) node[right=10pt, inner sep=1pt, purple, fill=white]  {$\M _{\mathbb{R}}^W$};
             
             \draw  (0,-5) node[right=5pt, inner sep=1pt, red, fill=white]  {$\delta + \bnabla$};
             \draw  (3,-3) node[right=5pt, inner sep=1pt, teal, fill=white]  {$\delta_0 + \bnabla$};
             \draw  (5,-0) node[below=5pt, inner sep=1pt, blue, fill=white]  {$\delta' + \bnabla$};
  \end{tikzpicture}
  
   \caption{A picture for the map $\mu_{\delta, \delta'}$ when $G = \mathrm{GL}_2(\Bbbk)$ and $X = \mathrm{Sym}^3(\Bbbk^2) \oplus \mathrm{Sym}^3(\Bbbk^2)^*$.  }
     
  \end{figure}

\subsection{Complexes associated to dominant faces}
This section gives the construction of complexes associated to dominant faces of the polytope $\delta+(1/2)\bsigma$, which are dual versions and minor generalizations of the complexes constructed in \cite[Section 3.2]{hl-s} (see also \cite[Section 10.2]{svdb}). 
In this section, $x_i\in X$ denotes the eigenvector with weight $\upbeta_i$. 

Let $F\in\scrF_+(\delta+(1/2)\bsigma)$ be a dominant face. 
Let $X_F^{\leq0}\subset X$ denote the subspace  spanned  by eigenvectors $x_i$ such that  $\l\upbeta_i,\lambda\r\leq0$ for all $\lambda\in\scrL_F$. 
Since $F$ is dominant, every one-parameter subgroup in  $\scrL_F$ is anti-dominant, and  thus  the subspace $X_F^{\leq0}$ is a $B$-submodule of $X$. 
Let $X_F^+\subset X$ be the subspace  generated by eigenvectors $x_i$ with $\upbeta_i\in\scrW_F^+$. 
Then this $X_F^+$ is naturally isomorphic to $X/X_F^{\leq0}$, and this gives a structure of $B$-representation on $X_F^+$. 
Let $S_F$ be the $G$-equivariant vector bundle over $G/B$ defined by $S_F\defeq G\times^B X_F^{\leq0}$, which is a $G$-subbundle of a trivial vector bundle $V\defeq G\times^B X\cong G/B\times X$. 
For $\chi\in\M$, let $k_{\chi}\in\rep B$ denote the  one dimensional representation of $B$ induced by $\chi$, 
and  write
$L_{\chi}\defeq G\times^Bk_{\chi}$ for the $G$-equivariant line bundle on $G/B$. Let $\cL(\chi)$ be the $G$-equivariant invertible sheaf such that $L_{\chi}\cong \underline{\Spec}({\rm Sym}(\cL(\chi)^*))$.
For later use, the following recalls the Borel--Weil--Bott theorem.

\begin{thm}[Borel--Weil--Bott]\label{bwb}
If $\chi+\rho$ has a non-trivial stabilizer in $W$, then for any $i\in \bZ$, the vanishing $H^i(G/B,\cL(\chi))\cong0$ holds. If there is  $w\in W$ such that $w\ast \chi$ is dominant, then $H^{\ell(w)}(G/B,\cL(\chi))\cong V(\chi^+)$ and $H^i(G/B,\cL(\chi))\cong0$ for all $i\neq \ell(w)$. 
\end{thm}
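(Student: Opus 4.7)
My plan is to prove this classical Borel--Weil--Bott theorem by induction on the length of the Weyl group element needed to dominate $\chi$ under the $\ast$-action, using the $\bP^1$-fibration structure coming from minimal parabolic subgroups. The base case is Borel--Weil: when $\chi \in \M^+$ (so $w = 1$), the global sections $H^0(G/B, \cL(\chi))$ give $V(\chi)$ with $V(\chi^+) = V(\chi)$ while the higher cohomology vanishes. I would obtain this by combining the Iwasawa decomposition / construction of $V(\chi)$ as induced from $B$ with Kodaira-type vanishing for the ample (when $\chi$ is strictly dominant) line bundle $\cL(\chi)$, then handling the boundary of the dominant chamber by a continuity/degeneration argument or the classical algebraic proof using the tangent bundle sequence of $G/B$.

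For the inductive step, fix a simple root $\alpha$ and let $P_\alpha \supset B$ be the associated minimal parabolic. The projection $\pi \colon G/B \to G/P_\alpha$ is a Zariski-locally trivial $\bP^1$-bundle, and for each weight $\chi$ with $n \defeq \langle \chi, \alpha^\vee\rangle$, the restriction of $\cL(\chi)$ to a fiber is $\cO_{\bP^1}(n)$. By the projection formula and the standard computation of cohomology of line bundles on $\bP^1$, one computes $R^i\pi_* \cL(\chi)$ explicitly: if $n \geq 0$ then $\pi_*\cL(\chi)$ is $P_\alpha$-equivariant locally free and $R^1\pi_* = 0$; if $n = -1$ then $R^i\pi_*\cL(\chi) = 0$ for all $i$; and if $n \leq -2$ then $\pi_*\cL(\chi) = 0$ while $R^1\pi_* \cL(\chi)$ is naturally identified with $\pi_*\cL(s_\alpha \ast \chi)$, since $s_\alpha \ast \chi = \chi - (n+1)\alpha$ satisfies $\langle s_\alpha \ast \chi, \alpha^\vee\rangle = -n-2 \geq 0$.

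Combining these computations via the Leray spectral sequence yields two key facts. First, if $\chi + \rho$ is fixed by some simple reflection $s_\alpha$, that is $n = -1$, then the fiberwise vanishing on $\bP^1$ and Leray force $H^i(G/B, \cL(\chi)) = 0$ for all $i$; the general statement about non-trivial stabilizer in $W$ then reduces to this case, because any nontrivial stabilizer of $\chi+\rho$ contains a reflection associated to some positive root, which can be conjugated to a simple reflection fixing a $W$-translate of $\chi + \rho$, and both cohomology and the hypothesis are $W$-invariant under the $\ast$-action in an appropriate sense. Second, when $\rho + \chi$ has trivial stabilizer and we write $w^{+}$ for the unique element with $w^+ \ast \chi$ dominant, I would pick a simple root $\alpha$ with $(w^+)^{-1}(\alpha) < 0$ (equivalently $\ell(s_\alpha w^+) = \ell(w^+) - 1$) so that the reduction isomorphism $H^i(G/B, \cL(\chi)) \cong H^{i-1}(G/B, \cL(s_\alpha \ast \chi))$ strictly decreases the length of the dominating element; iterating $\ell(w^+)$ times and applying Borel--Weil yields $H^{\ell(w^+)}(G/B,\cL(\chi)) \cong V(\chi^+)$ with all other cohomology vanishing.

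The main obstacle I anticipate is the careful bookkeeping needed to ensure that a suitable simple root $\alpha$ can always be chosen at each stage of the induction so that the sequence of reflections realizes a reduced expression for $w^+$. This is resolved by the classical fact that $\ell(w^+)$ equals the cardinality of $\Upphi^+ \cap (w^+)^{-1}(\Upphi^-)$, guaranteeing a simple root $\alpha$ with $(w^+)^{-1}(\alpha) \in \Upphi^-$ whenever $w^+ \neq 1$; replacing $w^+$ by $s_\alpha w^+$ strictly shortens the length, so the induction terminates. Compatibility between the equivariance of the identification $R^1\pi_*\cL(\chi) \cong \pi_*\cL(s_\alpha \ast \chi)$ and the $\ast$-action is the technical point that makes the induction go through.
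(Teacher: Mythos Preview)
The paper does not prove this statement at all: it is recalled as the classical Borel--Weil--Bott theorem and used as a black box. Your outline is the standard Demazure argument via the $\bP^1$-fibrations $G/B \to G/P_\alpha$, and the strategy is correct.

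There is, however, a small but genuine bookkeeping error in your inductive step. You propose to choose a simple root $\alpha$ with $(w^+)^{-1}(\alpha) \in \Upphi^-$, equivalently $\ell(s_\alpha w^+) = \ell(w^+)-1$. But after replacing $\chi$ by $s_\alpha \ast \chi$, the new dominating element is $w^+ s_\alpha$, not $s_\alpha w^+$: indeed $(w^+ s_\alpha)\ast(s_\alpha\ast\chi)=w^+\ast\chi=\chi^+$. For the length to drop you therefore need $w^+(\alpha)\in\Upphi^-$, not $(w^+)^{-1}(\alpha)\in\Upphi^-$. This is also exactly the condition that makes the fiberwise degree negative: writing $\chi+\rho=(w^+)^{-1}(\chi^++\rho)$ one gets
\[
\langle \chi+\rho,\alpha^\vee\rangle=\langle \chi^++\rho,\,w^+(\alpha^\vee)\rangle,
\]
which is negative precisely when $w^+(\alpha)\in\Upphi^-$ (and then $\leq -1$, hence $\langle\chi,\alpha^\vee\rangle\leq -2$ by the trivial-stabilizer hypothesis). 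With your stated condition $(w^+)^{-1}(\alpha)<0$ neither the length drop nor the sign of $\langle\chi,\alpha^\vee\rangle$ follows. Once you swap to the correct condition $w^+(\alpha)\in\Upphi^-$, the existence of such $\alpha$ whenever $w^+\neq 1$ is the standard fact $\ell(w^+)=\#\bigl(\Upphi^+\cap w^+(\Upphi^-)\bigr)$, and the rest of your argument goes through.
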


Put $E_F\defeq G\times^BX_F^+$, 
and let $\cE_F$ be the $G$-equivariant locally free sheaf such that $E_F\cong \underline{\Spec}({\rm Sym}(\cE_F^{*}))$.   
Let $s\defeq (x_i)\colon \cO_V\to p^*\cE_F$ be a regular section locally defined by the regular functions $x_i$ with $\upbeta_i\in \scrW_F^+$. Then $S_F$ is the zero scheme of $s$, where $p\colon V\to G/B$ is the projection to the first factor. Thus the following resolution of $\cO_{S_F}$ exists:
\begin{equation}\label{koszul cpx}
0\to \bigwedge^{d_F}p^*(\cE_F^{*})\to\cdots\to p^*(\cE_F^{*})\xrightarrow{s^{*}} \cO_V\to\cO_{S_F}\to0.
\end{equation}
This resolution is the one so called Koszul resolution (or complex).
Taking the dual of the above Koszul complex and then tensoring with the line bundle $\cL(\chi)$ gives a complex 
\[
K_{F,\chi}^{\bullet}\colon 0\to \cL(\chi)\to \cL(\chi)\otimes p^*\cE_F\to\cdots\to\cL(\chi)\otimes \bigwedge^{d_F}p^*\cE_F\to0.
\]
Let $q\colon V\to X$ denote the other projection. 
Then the direct image $q_*\colon \coh_GV\to\coh_GX$ is left exact, and the $\cO_X$-module $\dR^iq_*\bigl(\cL(\chi)\otimes\bigwedge^{j}p^*\cE_F\bigr)\cong H^i\bigl(G/B,\cL(\chi)\otimes\bigwedge^{j}p^*\cE_F\bigr)\otimes\cO_X$ is projective. Hence, by \cite[Theorem 11.3]{svdb}, there exists a  complex $A_{F,\chi}^{\bullet}\in\Db(\coh_GX)$ such that each term is given by 
\begin{equation}\label{A cpx}
A_{F,\chi}^{i}\defeq \bigoplus_{j\in\bZ}H^j\left(G/B,\cL(\chi)\otimes\bigwedge^{i-j}p^*\cE_F\right)\otimes\cO_X,
\end{equation}
and that it is quasi-isomorphic to $\dR q_*(K_{F,\chi}^{\bullet})$. Set 
\begin{align*}
\gwedge^m\scrW_F^+&\defeq \left\{\upbeta_{i_1}+\cdots+\upbeta_{i_m}\relmiddle|\upbeta_{i_j}\in \scrW_F^+ \mbox{ and } i_1<\cdots<i_m \right\}\\
\gwedge^*\scrW_F^+&\defeq \bigcup_{1\leq m\leq d_F^+-1}\gwedge^m\scrW_F^+.
\end{align*}

\begin{prop}\label{key cpx} 
Assume that $\chi$ is dominant and $\ell\in\M^W$ is generic.
Then the following hold.
\begin{itemize}
\item[$(1)$] If $\l\ell,\lambda\r>0$ for some $\lambda\in\scrL_F$, then the restriction of $A_{F,\chi}^{\bullet}$ to  $[X^{\rm ss}(\ell)/G]$ is acyclic.
\item[$(2)$] $A_{F,\chi}^0=V(\chi)\otimes\cO_X$ and $A_{F,\chi}^{d_F^++\ell(w_0)}=V\left((\chi+\upbeta_F^+)^+\right)\otimes\cO_X$.
\item[$(3)$] The terms $A_{F,\chi}^{1},\hdots, A_{F,\chi}^{d_F^++\ell(w_0)-1}$ are direct sums of $G$-equivariant locally free sheaves of the form
\[
V\left((\chi+\upbeta)^+\right)\otimes\cO_X,
\]
where $\upbeta\in\gwedge^*\scrW_F^+$.
\item[$(4)$] $A_{F,\chi}^i\not\cong0$  only if $0\leq i\leq d_F^++\ell(w_0)$.
\end{itemize}
\begin{proof}
(1) Recall that $x\in X^{\rm ss}(\ell)$ if and only if $\l\ell,\lambda\r\geq0$ for all $\lambda$ such that $\lim_{t\to0}\lambda(t) x$ exists. If $x\in X_F^{\leq0}$, then $\lim_{t\to0}(-\lambda)(t) x$ exists for all $\lambda\in\scrL_F$. Thus  $\l\ell,\lambda\r>0$ for some $\lambda\in\scrL_F$ implies that $x\notin X^{\rm ss}(\ell)$. Hence $X_F^{\leq0}\subseteq X\backslash X^{\rm ss}(\ell)$. Since   $\Supp(A_{F,\chi}^{\bullet})\subseteq q(S_F)=X_F^{\leq0}$,  the restriction $A_{F,\chi}^{\bullet}|_{\X}$ is acyclic. \vspace{1mm}\\
(2) Since $H^i(G/B, \cL(\chi)\otimes \gwedge^m p^*\cE_F)\not\cong0$  only if $0\leq i\leq \ell(w_0)$ and $0\leq m\leq d_F^+$, the formula (\ref{A cpx}) gives $A_{F,\chi}^0=H^0(G/B,\cL(\chi))\otimes \cO_X$ and $A_{F,\chi}^{d_F^++\ell(w_0)}=H^{\ell(w_0)}(G/B,\cL(\chi)\otimes\det(p^*\cE_F))\otimes\cO_X$. Thus the result follows from Borel--Weil--Bott theorem.\vspace{1mm}\\
(3)  Fix $0\leq m\leq d_F^+$. Weights of $k_{\chi}\otimes\gwedge^m X_F^+$ are  of the form $\chi+\upbeta$ for some $\upbeta\in \gwedge^m\scrW_F^+$. Since $B$ is solvable, there is a full flag of the $B$-representation $k_{\chi}\otimes\gwedge^m X_F^+$, and this flag induces a filtration 
\begin{equation}\label{filtration}
0\subset \cE_1\subset \cdots\subset \cE_r=\cL(\chi)\otimes\gwedge^m p^*\cE_F
\end{equation}
whose factors $\cL_i\defeq \cE_i/\cE_{i-1}$ are isomorphic to $\cL(\chi+\upbeta)$ for some $\upbeta\in\gwedge^m\scrW_F^+$.

 We claim that for each $1\leq n\leq r$ and each $i\in \bZ$, the cohomology group $H^i(G/B, \cE_n)$ is the direct sum of $G$-representations of the form $V((\chi+\upbeta)^+)$ for some $\upbeta\in\gwedge^m\scrW_F^+$. 
Let us prove this claim by induction on $n$. The claim is obvious when $n=1$.  
Consider the long exact sequence  
\begin{equation}\label{exact seq}
\cdots\to H^{i-1}(\cL_n)\to H^i(\cE_{n-1})\to H^i(\cE_n)\to H^i(\cL_n)\to H^{i+1}(\cE_{n-1})\to\cdots
\end{equation}
obtained from the short exact sequence $0\to \cE_{n-1}\to\cE_n\to \cL_n\to0$, where $H^i(-)$ denotes $H^i(G/B,-)$ for short.
If $H^i(\cL_n)\cong 0$ for all $i\in \bZ$, \eqref{exact seq} implies isomorphisms $H^i(\cE_{n})\cong H^i(\cE_{n-1})$ for all $i\in \bZ$. 
In this case, the claim follows from inductive hypothesis. 
If $H^{\ell}(\cL_n)\neq 0$ for some $\ell$,   Borel--Weil--Bott theorem shows that  $H^{i}(\cL_n)\cong0$ for all $i\neq\ell$.  Then \eqref{exact seq} implies isomorphisms $H^i(\cE_{n-1})\cong H^i(\cE_n)$ for any $i\neq \ell, \ell+1$, and there is an exact sequence 
\[
0\to H^{\ell}(\cE_{n-1})\to H^{\ell}(\cE_n)\xrightarrow{\alpha} H^{\ell}(\cL_n)\to H^{{\ell}+1}(\cE_{n-1})\to H^{{\ell}+1}(\cE_{n})\to0.
\]
Since $H^{\ell}(\cL_n)$ is an irreducible representation, the morphism $\alpha$ is either surjective or the zero map. In either case, $H^{\ell}(\cE_n)$ and $H^{\ell+1}(\cE_n)$ lie in the additive closure of the representations $H^{\ell}(\cE_{n-1})$, $H^{\ell+1}(\cE_{n-1})$ and $H^{\ell}(\cL_n)$. Since $H^{\ell}(\cL_n)$ is a representation of the from $V((\chi+\upbeta_{i_1}+\cdots+\upbeta_{i_m})^+)$, the inductive hypothesis implies that $H^{\ell}(\cE_n)$ and $H^{\ell+1}(\cE_n)$ are direct sums of representations of the form $V((\chi+\upbeta_{i_1}+\cdots+\upbeta_{i_m})^+)$.  This proves the claim.

Since $\chi$ is dominant,  Borel--Weil--Bott theorem shows that $H^i(\cL(\chi)\otimes \gwedge^0 p^*\cE_F)$ vanish for all $i\neq 0$, and $H^i(\cL(\chi)\otimes \gwedge^{d_F^+} p^*\cE_F)$ vanish for all $i\neq \ell(w_0)$. 
This implies that the summands $H^i(\cL(\chi)\otimes \gwedge^0 p^*\cE_F)$ and $H^i(\cL(\chi)\otimes \gwedge^{d_F^+} p^*\cE_F)$ in the terms $A_{F,\chi}^{1},\hdots, A_{F,\chi}^{d_F^++\ell(w_0)-1}$ vanish. Thus the claim proves (2).\\
(4) By Borel--Weil--Bott theorem, $H^i(\cL_n)=0$ for all $i>\ell(w_0)$ and $1\leq n\leq r$, where $\cL_n$ is the factor $\cE_n/\cE_{n-1}$ of the filtration \eqref{filtration}. Therefore, by  induction on $n$,  the vanishing $H^i(\cE_n)=0$ holds for all $i>\ell(w_0)$ and $1\leq n\leq r$. In particular, 
$H^i(G/B, \cL(\chi)\otimes \gwedge^m p^*\cE_F)\not\cong0$  only if $0\leq i\leq \ell(w_0)$. Thus, since $\cL(\chi)\otimes \gwedge^m p^*\cE_F\not\cong 0$ only if $0\leq m\leq d_F^+$,  $A_{F,\chi}^i$ does not vanish only if $0\leq i\leq d_F^++\ell(w_0)$.
\end{proof}
\end{prop}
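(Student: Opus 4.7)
The plan is to prove (1) by the Hilbert--Mumford criterion applied to a negated one-parameter subgroup in $\scrL_F$, and to prove (2)--(4) via the Borel--Weil--Bott theorem together with an inductive argument on a $B$-stable filtration of $k_\chi \otimes \gwedge^m X_F^+$.

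For (1), since the Koszul complex \eqref{koszul cpx} is supported set-theoretically on $S_F$, the derived pushforward $A_{F,\chi}^{\bullet} \simeq \dR q_*(K_{F,\chi}^{\bullet})$ is supported on $q(S_F) = X_F^{\leq 0}$. It therefore suffices to show $X_F^{\leq 0} \cap X^{\rm ss}(\ell) = \emptyset$. Pick $\lambda \in \scrL_F$ with $\langle \ell, \lambda\rangle > 0$. For $x \in X_F^{\leq 0}$ the limit $\lim_{t\to 0}(-\lambda)(t)\cdot x$ exists because every weight appearing in $X_F^{\leq 0}$ pairs non-positively with $\lambda$, hence non-negatively with $-\lambda$. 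Since $\langle \ell, -\lambda\rangle < 0$, Remark \ref{HM criterion} then forces $x \notin X^{\rm ss}(\ell)$, so $A_{F,\chi}^{\bullet}|_{[X^{\rm ss}(\ell)/G]}$ is acyclic.

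For the remaining parts, fix $0 \leq m \leq d_F^+$ and analyse $H^i\bigl(G/B, \cL(\chi) \otimes \gwedge^m p^*\cE_F\bigr)$. Solvability of $B$ gives a complete $B$-stable flag of $k_\chi \otimes \gwedge^m X_F^+$ whose successive quotients are the one-dimensional modules $k_{\chi+\upbeta}$ as $\upbeta$ ranges over $\gwedge^m\scrW_F^+$; this induces a filtration
\[
0 \subset \cE_1 \subset \cdots \subset \cE_r = \cL(\chi) \otimes \gwedge^m p^*\cE_F
\]
whose successive quotients $\cL_n$ are of the form $\cL(\chi+\upbeta)$. Borel--Weil--Bott (Theorem \ref{bwb}) tells us that each $H^*(G/B, \cL_n)$ is either identically zero or an irreducible $V((\chi+\upbeta)^+)$ concentrated in a single degree in the range $[0, \ell(w_0)]$. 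An induction on $n$ using the long exact sequence attached to $0 \to \cE_{n-1} \to \cE_n \to \cL_n \to 0$, combined with the semisimplicity of $G$-representations in characteristic zero to split each extension, then shows that every $H^i(G/B, \cL(\chi) \otimes \gwedge^m p^*\cE_F)$ decomposes as a direct sum of representations $V((\chi+\upbeta)^+)$ with $\upbeta \in \gwedge^m\scrW_F^+$, and vanishes for $i > \ell(w_0)$. This vanishing together with $\gwedge^m p^*\cE_F = 0$ for $m > d_F^+$ yields (4). Specialising to the two extremes, $m = 0$ gives $\cL(\chi)$ whose cohomology by Borel--Weil--Bott is $V(\chi)$ in degree $0$ (using dominance of $\chi$), while $m = d_F^+$ gives $\cL(\chi + \upbeta_F^+)$, since $\gwedge^{d_F^+} p^*\cE_F = p^*\cL(\upbeta_F^+)$; again Borel--Weil--Bott produces $V((\chi+\upbeta_F^+)^+)$ in degree $\ell(w_0)$ (or zero when the $W$-stabilizer of $\chi+\upbeta_F^+ +\rho$ is non-trivial, interpreted consistently). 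This establishes (2), and simultaneously shows that the contributions to $A_{F,\chi}^i$ for $1 \leq i \leq d_F^+ + \ell(w_0) - 1$ can only come from $1 \leq m \leq d_F^+ - 1$, i.e.\! from $\upbeta \in \gwedge^* \scrW_F^+$, proving (3).

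The main obstacle will be running the inductive filtration argument cleanly: one has to track each connecting morphism in the long exact sequence and distinguish the two Borel--Weil--Bott regimes (total vanishing versus a single non-zero irreducible cohomology), then invoke Weyl's complete reducibility to split the resulting short exact sequences so that the decomposition into $V((\chi+\upbeta)^+)$'s is preserved at every step. Everything else is essentially a bookkeeping exercise packaging Borel--Weil--Bott with the formula \eqref{A cpx}.
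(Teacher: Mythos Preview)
Your proposal is correct and follows essentially the same route as the paper: the Hilbert--Mumford argument for (1), and Borel--Weil--Bott combined with an induction along a $B$-stable filtration of $k_\chi\otimes\gwedge^m X_F^+$ for (2)--(4). The only noteworthy difference is in the inductive step: the paper argues that the map $\alpha\colon H^\ell(\cE_n)\to H^\ell(\cL_n)$ is either zero or surjective because its target is irreducible, whereas you invoke complete reducibility of $G$-representations in characteristic zero to split the relevant short exact sequences directly; both justifications are valid and yield the same conclusion.
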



\section{Main results}
\subsection{Wall crossing and tilting equivalence}

This section shows that wall-crossings of magic windows correspond to equivalences that are induced by  tilting modules. 

Let $X$ be a quasi-symmetric representation of $G$ with $T$-weights $\upbeta_1,\hdots,\upbeta_d\in \M$, and  assume that $\bsigma$ spans $\M_{\bR}$ and $(\M_{\bR}^W)_{\rm gen}\neq \emptyset$.
In what follows, fix a generic $\ell$, and set
\label{not: GIT stack} \linelabel{line not: GIT stack}
\[
\X\defeq[X^{\rm ss}(\ell)/G].
\]
 By Remark \ref{HM criterion}, for $x\in X^{\rm us}(\ell)\defeq X\backslash X^{\rm ss}(\ell)$, there exists $\lambda\colon \bG_m\to G$ such that $\lim_{t\to0}\lambda(t)x$ exists and $\l\ell,\lambda\r<0$. Moreover, there are finitely many one-parameter subgroups $\lambda_1,\hdots,\lambda_r$ with $\l\ell,\lambda_i\r<0$ such that each $\lambda_i$ defines a certain $G$-invariant  subvariety $S_{\lambda_i}\subset X^{\rm us}(\ell)$ and there is  a disjoint union
\begin{equation}\label{stratification}
X^{\rm us}(\ell)=\bigsqcup_{i=1}^rS_{\lambda_i}.
\end{equation}
Put $\eta_{i}\defeq \eta_{g\lambda_ig^{-1}}$, where $g\in G$ is an element such that $g\lambda_ig^{-1}\in\N$. 
The following vanishing is a consequence of Teleman's quantization theorem  \cite{tel} (see also \cite[Theorem 3.29]{hl} for more general version).

\begin{thm}[{\cite{tel}}]\label{quantization}  Let $V$ be a representation of $G$ such that,  for all $1\leq i\leq r$, every $\lambda_i$-weight $\chi$ of $V$ satisfies  $\l\chi,\lambda_i\r<\eta_{i}$.  Then, for all $k\neq 0$, the following vanishing holds:
\[
H^k(\X,  V\otimes \cO_{\X})\cong 0.
\]
\begin{proof}
By Teleman's quantization theorem \cite{tel}, for all $k\in \bZ$, the natural restriction map induces an isomorphism 
\[
H^k([X/G],V\otimes\cO_X)\cong H^k(\X, V\otimes \cO_{\X}).
\]
The assumption that $G$ is reductive and that $X$ is affine imply
$H^k([X/G],V\otimes\cO_X)\cong 0$ for all $k\neq0$.
\end{proof}
\end{thm}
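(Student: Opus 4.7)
The plan is to reduce the claim to two well-known inputs: Teleman's quantization theorem (in the formulation given by Halpern-Leistner for GIT quotient stacks with a Kirwan--Ness stratification) and the vanishing of higher cohomology on the quotient stack of an affine variety by a reductive group.

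First, I would set up the comparison between $[X/G]$ and $\X$ via the stratification \eqref{stratification}. Each unstable stratum $S_{\lambda_i}$ comes with the numerical invariant $\eta_i$, which equals $\langle \bL^{\lambda_i > 0}, \lambda_i\rangle$ in the notation of Definition \ref{def bnabla}; this is precisely the weight of the relative dualizing line bundle of the closed immersion of the attracting locus along the Kirwan--Ness stratum. The hypothesis on the $\lambda_i$-weights of $V$ is therefore exactly the ``grade restriction'' needed for Teleman's theorem (Theorem 3.29 of the Halpern-Leistner reference cited above): the theorem asserts that under this numerical condition the open immersion $j\colon \X \hookrightarrow [X/G]$ induces an isomorphism
\[
H^k([X/G], V \otimes \cO_X) \xrightarrow{\sim} H^k(\X, V \otimes \cO_{\X})
\]
for every $k \in \bZ$. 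This is the key step that transfers the problem from $\X$, where cohomology is hard to control directly, to the ambient stack $[X/G]$.

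Second, I would compute the left-hand side directly. Since $X$ is affine, $H^k(X, V \otimes \cO_X) = 0$ for $k > 0$, so the equivariant cohomology is computed by the single functor of taking $G$-invariants of global sections. Because $G$ is reductive and the base field has characteristic zero, the $G$-invariants functor on rational $G$-modules is exact, hence has no higher derived functors. Combining these two facts shows
\[
H^k([X/G], V \otimes \cO_X) \cong \bigl(H^k(X, V \otimes \cO_X)\bigr)^G = 0 \quad \text{for all } k \neq 0.
\]
Feeding this into the isomorphism above yields the desired vanishing.

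The main subtlety is really bookkeeping: one must check that the bound $\langle \chi, \lambda_i \rangle < \eta_i$ in the statement matches the threshold in the quantization theorem as applied to sections (and not, say, to some twist by the determinant of the conormal bundle). In Halpern-Leistner's formulation the threshold for the vanishing of $\dR j_* \cF$ on strata is given exactly by the weight $\eta_{\lambda_i}$ of Definition \ref{def bnabla}, because the contribution $\det(\g^*)^{\lambda_i > 0}$ of the stabilizer cancels against the corresponding term in the tangent complex of $[X/G]$; this is why the definition of $\eta_\lambda$ uses the virtual representation $\bL = [X^*] - [\g^*]$ rather than $X^*$ alone. Once this alignment is confirmed, no further computation is required.
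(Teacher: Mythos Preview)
Your proposal is correct and follows essentially the same argument as the paper: invoke Teleman's quantization theorem to identify $H^k(\X,V\otimes\cO_{\X})$ with $H^k([X/G],V\otimes\cO_X)$, then use that $X$ is affine and $G$ is reductive to conclude the latter vanishes for $k\neq 0$. Your additional paragraph unpacking why the bound $\langle\chi,\lambda_i\rangle<\eta_i$ matches the threshold in the quantization theorem (via the virtual representation $\bL=[X^*]-[\g^*]$) is extra detail the paper leaves implicit, but the structure of the proof is the same.
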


If $\delta\in \M^W_{\bR}\backslash\scrH^W$, by \eqref{window generation} and Theorem \ref{hl-s}, the objects 
\label{tilting bdl T} \linelabel{line: tilting bdl T} 
$\scrT_{\delta}\defeq \bigoplus_{\chi\in\scrC_{\delta}}V_X(\chi)\in\coh[X/G]$ and \linelabel{line: tilting bdl}
\begin{equation}\label{tilting bdl}
\scrV_{\delta}\defeq \bigoplus_{\chi\in\scrC_{\delta}}V_{\X}(\chi)=\res(\scrT_{\delta})\in \coh \X
\end{equation}
are split generators of $\scrM(\delta+\bnabla)$ and $\Db(\coh \X)$ respectively. Moreover, since $G$ is reductive and $X$ is affine, $\scrT_{\delta}$ and $\scrV_{\delta}$ are  tilting objects. 
Define the corresponding endomorphism algebras by
\linelabel{line not: NCCR}
\begin{equation}\label{end alg}
\Lambda_{\delta}\defeq \End_{[X/G]}(\scrT_{\delta})\cong \End_{\X}(\scrV_{\delta}),
\end{equation}

which provides a natural equivalence
\[
\RHom(\scrT_{\delta},-)\colon \scrM(\delta+\bnabla)\simto \Db(\fmod \Lambda_{\delta}).
\]

\begin{lem}\label{vanishing}
 Let $\delta,\delta'\in \M_{\bR}^W$ be two distinct points such that either $\delta$ or $\delta'$ does not lie in $\scrH^W$. If $\delta'-\delta$ has the same orientation as $\ell$ with respect to all separating hyperplanes of $\delta$ and $\delta'$, then 
\[
\Ext^k_{\X}(V_{\X}(\chi),V_{\X}(\chi'))\cong0
\]
for any $\chi\in \scrC_{\delta}$, $\chi'\in \scrC_{\delta'}$  and $k\neq0$.
\begin{proof} 
We only consider the case when   $\delta\not\in \scrH^W$, since the other case can be proved similarly.  Then  $\scrC_{\delta}$ does not change under small perturbations of $\delta$ inside the chamber  of $\M_{\bR}^W\backslash \scrH^W$ containing $\delta$. Therefore, by the assumption on the orientation of $\delta'-\delta$, we may assume that $\delta'=\delta+t\ell$ for  some $t>0$. Let $\alpha$ and $\alpha'$ be arbitrary weights  of $V(\chi)$ and $V(\chi')$ respectively.   Let $\lambda_1,\hdots,\lambda_r$ be the one-parameter subgroup defining the decomposition \eqref{stratification}. Since  $\alpha\in \delta+\bnabla$ and $\alpha'\in\delta'+\bnabla$, 
\begin{align*}
\l\alpha,\lambda_i\r&\in \l\delta,\lambda_i\r+[-\eta_i/2,\eta_i/2]\\
\l\alpha',\lambda_i\r&\in \l\delta,\lambda_i\r+t\l\ell,\lambda_i\r+[-\eta_i/2,\eta_i/2]
\end{align*}
for each $1\leq i\leq r$.  These show that $\l\alpha'-\alpha,\lambda_i\r<\eta_i$ for each $1\leq i\leq r$, since $\l\ell,\lambda_i\r<0$. Hence Theorem \ref{quantization} implies that  \[H^k(\X,V_{\X}(\chi')\otimes V_{\X}(\chi)^{*})\cong0\] for all $k\neq0$, and this completes the proof.
\end{proof}
\end{lem}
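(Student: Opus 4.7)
The plan is to realize the claimed $\Ext$-vanishing as an instance of Teleman's quantization theorem (Theorem \ref{quantization}). Since $V_\X(\chi)$ is a locally free sheaf on $\X$, there is a canonical isomorphism
\[
\Ext^k_\X\bigl(V_\X(\chi),\,V_\X(\chi')\bigr)\cong H^k\bigl(\X,\;V(\chi')\otimes V(\chi)^{*}\otimes\cO_\X\bigr),
\]
so the task reduces to verifying the Teleman bound for the $G$-representation $V\defeq V(\chi')\otimes V(\chi)^{*}$: namely, $\langle\alpha'-\alpha,\lambda_i\rangle<\eta_i$ for every weight $\alpha$ of $V(\chi)$, every weight $\alpha'$ of $V(\chi')$, and every KN one-parameter subgroup $\lambda_i$ appearing in the stratification \eqref{stratification}.

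The first key input is a uniform bound on the individual weights. Since $\bnabla$ is $W$-invariant and $\delta\in\M_\bR^W$, the translate $\delta+\bnabla$ is $W$-stable; as the weights of the irreducible representation $V(\chi)$ lie in the convex hull of the Weyl orbit $W\chi\subset\delta+\bnabla$, every such weight $\alpha$ satisfies $\alpha\in\delta+\bnabla$, and similarly $\alpha'\in\delta'+\bnabla$. The Weyl invariance of $\eta_{(-)}$ (after conjugating $\lambda_i$ into $\N$) combined with the defining inequalities of $\bnabla$ from Definition \ref{def bnabla} then yields
\[
\langle\alpha,\lambda_i\rangle\in\langle\delta,\lambda_i\rangle+[-\eta_i/2,\,\eta_i/2],\qquad \langle\alpha',\lambda_i\rangle\in\langle\delta',\lambda_i\rangle+[-\eta_i/2,\,\eta_i/2],
\]
so that $\langle\alpha'-\alpha,\lambda_i\rangle\le\langle\delta'-\delta,\lambda_i\rangle+\eta_i$. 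It therefore suffices to prove the strict inequality $\langle\delta'-\delta,\lambda_i\rangle<0$ for every $i$.

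For this I would invoke the orientation hypothesis together with the assumption $\delta\notin\scrH^W$ (the case $\delta'\notin\scrH^W$ being symmetric). By Lemma \ref{independence}, $\scrC_\delta$ depends only on the chamber $C(\delta)$, so $\delta$ may be replaced by any other point of $C(\delta)$ without affecting the hypotheses $\chi\in\scrC_\delta$ or the orientation condition. The orientation hypothesis on $\delta'-\delta$ with respect to $\ell$ on every separating hyperplane is then exactly what allows one to make such a replacement so that the new pair satisfies $\delta'=\delta+t\ell$ for some $t>0$. Once this reduction is in place, $\langle\delta'-\delta,\lambda_i\rangle=t\langle\ell,\lambda_i\rangle<0$, since $\langle\ell,\lambda_i\rangle<0$ for each KN stratum of $X^{\mathrm{us}}(\ell)$; feeding this back gives $\langle\alpha'-\alpha,\lambda_i\rangle<\eta_i$, and Theorem \ref{quantization} yields the vanishing $H^k(\X,V\otimes\cO_\X)=0$ for $k\neq 0$. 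The step I expect to demand the most care is the reduction to $\delta'=\delta+t\ell$: one must check that, using the openness of $C(\delta)$ and the orientation condition, some translate of $\delta$ inside its chamber can always be chosen so that $\delta'-\delta$ becomes proportional to $\ell$ with positive coefficient.
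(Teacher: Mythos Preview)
Your proposal is correct and follows essentially the same route as the paper: reduce the $\Ext$-vanishing to Teleman's quantization theorem by bounding the pairings $\langle\alpha'-\alpha,\lambda_i\rangle$ via the containments $\alpha\in\delta+\bnabla$, $\alpha'\in\delta'+\bnabla$, and then use the orientation hypothesis together with the chamber-invariance of $\scrC_\delta$ to arrange $\delta'=\delta+t\ell$ with $t>0$, so that $\langle\ell,\lambda_i\rangle<0$ forces the strict Teleman bound. The only cosmetic difference is the order of presentation (you derive the weight estimate before the reduction, the paper after), and you spell out a couple of points the paper leaves implicit---the convex-hull argument for why all weights of $V(\chi)$ lie in $\delta+\bnabla$, and the flag on the reduction step as the place needing the most care.
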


For an ordered pair $(\delta,\delta')$ of points in $\M_{\bR}^W\backslash \scrH^W$, put
\[
T_{(\delta,\delta')}\defeq \Hom_{\X}(\scrV_{\delta},\scrV_{\delta'})\in \fmod\Lambda_{\delta}.
\]
\begin{thm}\label{wall crossing} Let $\delta, \delta'\in\M_{\bR}^W\backslash \scrH^W$, and  
assume that $\delta'-\delta$ has the same orientation as $\ell$ with respect to all separating hyperplanes of $\delta$ and $\delta'$.
Then the following hold:
\begin{itemize}
    \item[$(1)$] $T_{(\delta,\delta')}$ is a tilting $\Lambda_{\delta}$-module. 
    \item[$(2)$]  The  diagram 
\[
\begin{tikzpicture}[xscale=1.3]
\node (A0) at (2,0) {$\Db(\fmod\Lambda_{\delta})$};
\node (A5) at (8,0) {$\Db(\fmod\Lambda_{\delta'})$};
\node (B0) at (2,1.5) {$\scrM(\delta+\bnabla)$};
\node (B4) at (4.75,1.5) {$\Db(\coh\X)$};
\node (B5) at (8,1.5) {$\scrM(\delta'+\bnabla)$};
\draw[->] (A0) -- node[above] {$\scriptstyle\RHom_{}(T_{(\delta,\delta')},-)$}(A5);
\draw[->] (B0) -- node[above] {$\scriptstyle\res$}(B4);
\draw[->] (B4) -- node[above] {$\scriptstyle\res^{-1}$}(B5);
\draw[->] (B0) -- node[left] {$\scriptstyle\RHom_{}(\scrT_{\delta},-)$}(A0);
\draw[->] (B5) -- node[right] {$\scriptstyle\RHom_{}(\scrT_{\delta'},-)$}(A5);
\end{tikzpicture}
\]
of equivalences is commutative.
\end{itemize}
\begin{proof}
(1)  
The adjunction gives an isomorphism
\[
\RHom_{}(\scrT_{\delta}, \res^{-1}\res(\scrT_{\delta'}))\cong \RHom_{\X}(\scrV_{\delta},\scrV_{\delta'}).
\]
By Lemma \ref{vanishing}, the vanishing $\Ext^k(\scrV_{\delta},\scrV_{\delta'})\cong 0$ holds for all $k\neq 0$, and thus 
\begin{equation}\label{equation:comm}
\RHom_{}(\scrT_{\delta},\res^{-1}\res(\scrT_{\delta'}))\cong T_{(\delta,\delta')}.
\end{equation}
Since  $T_{(\delta,\delta')}$ is  the image of the tilting object $\scrT_{\delta'}$ by the equivalence $\RHom_{}(\scrT_{\delta},\res^{-1}\res(-))$, it is also a tilting object. This proves (1).\vspace{1mm}\\
(2)  Since all the functors in the diagram can be lifted to suitable dg functors, there exist functor isomorphisms 
\begin{align}
\RHom(\scrT_{\delta'},\res^{-1}\res(-))&\cong \RHom(\res^{-1}\res(\scrT_{\delta'}),-)\label{eqn:1}\\
\RHom_{}(T_{(\delta,\delta')},\RHom(\scrT_{\delta},-))&\cong \RHom_{}(T_{(\delta,\delta')}\otimes^{\dL}\scrT_{\delta},-).\label{eqn:2}
\end{align}
Therefore we only need to prove that the right hand sides of \eqref{eqn:1} and \eqref{eqn:2} are isomorphic functors.
But this follows from a natural isomorphism 
\[
\res^{-1}\res(\scrT_{\delta'})\cong T_{(\delta,\delta')}\otimes^{\dL}\scrT_{\delta}
\]
 obtained by applying the equivalence $(-)\otimes^{\dL}\scrT_{\delta}$ to the isomorphism \eqref{equation:comm}.
\end{proof}
\end{thm}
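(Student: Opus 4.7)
The plan is to leverage the orientation hypothesis together with the general principle that tilting equivalences send tilting objects to tilting objects. The central inputs are the fact, recalled just before the theorem, that $\scrV_\delta$ and $\scrV_{\delta'}$ are both tilting bundles on $\X$ (with endomorphism algebras $\Lambda_\delta$ and $\Lambda_{\delta'}$), and Lemma \ref{vanishing}, which under the orientation assumption on $\delta'-\delta$ yields the vanishing $\Ext^k_\X(\scrV_\delta,\scrV_{\delta'})=0$ for all $k\neq 0$.

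For part (1), I would invoke the tilting equivalence
\[ \RHom_\X(\scrV_\delta,-)\colon \Db(\coh\X)\xrightarrow{\sim} \Db(\fmod\Lambda_\delta). \]
The Ext-vanishing above shows that $\RHom_\X(\scrV_\delta,\scrV_{\delta'})$ is concentrated in degree zero, and therefore equals $T_{(\delta,\delta')}$. Since the above equivalence carries the tilting object $\scrV_{\delta'}$ to a tilting object of $\Db(\fmod\Lambda_\delta)$, we conclude that $T_{(\delta,\delta')}$ is a tilting $\Lambda_\delta$-module. Its endomorphism algebra is naturally identified with $\End_\X(\scrV_{\delta'})=\Lambda_{\delta'}$, which is consistent with the derived equivalence appearing in the bottom row of the diagram.

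For part (2), the two side triangles of the diagram commute essentially by construction, since $\scrV_\delta=\res(\scrT_\delta)$ and $\scrV_{\delta'}=\res(\scrT_{\delta'})$, and both vertical arrows are tilting equivalences induced by these objects. The substantive content is to identify the two functors $\RHom_{}(T_{(\delta,\delta')},\RHom(\scrT_\delta,-))$ and $\RHom(\scrT_{\delta'},\res^{-1}\res(-))$ on $\scrM(\delta+\bnabla)$. Using the adjunction $\res\dashv \res^{-1}$ together with compatible DG-enhancements, I would identify $\res^{-1}\res(\scrT_{\delta'})$ with $T_{(\delta,\delta')}\otimes^{\dL}_{\Lambda_\delta}\scrT_\delta$; the desired commutativity then follows from the standard tensor-hom adjunction combined with the objectwise isomorphism $\RHom(\scrT_\delta,\res^{-1}\res(\scrT_{\delta'}))\cong T_{(\delta,\delta')}$ provided by the adjunction and the Ext-vanishing.

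The main obstacle I anticipate is the passage from objectwise isomorphisms to functor-level 2-commutativity. In particular, the identification $\res^{-1}\res(\scrT_{\delta'})\simeq T_{(\delta,\delta')}\otimes^{\dL}_{\Lambda_\delta}\scrT_\delta$ must be upgraded to a natural isomorphism of functors through DG enhancements of all categories involved; one has to check that all functors in the diagram admit such enhancements compatibly. Once this formal step is secured, both (1) and (2) follow cleanly from the tilting formalism.
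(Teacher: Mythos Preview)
Your proposal is correct and follows essentially the same route as the paper: both arguments use the Ext-vanishing of Lemma~\ref{vanishing} to identify $\RHom_\X(\scrV_\delta,\scrV_{\delta'})$ with $T_{(\delta,\delta')}$, deduce (1) from the fact that equivalences preserve tilting objects, and for (2) pass through the identification $\res^{-1}\res(\scrT_{\delta'})\cong T_{(\delta,\delta')}\otimes^{\dL}\scrT_\delta$ together with dg enhancements to upgrade objectwise isomorphisms to functor isomorphisms. The only cosmetic difference is that the paper phrases (1) via the composite equivalence $\RHom(\scrT_\delta,\res^{-1}\res(-))$ on $\scrM(\delta'+\bnabla)$ rather than $\RHom_\X(\scrV_\delta,-)$ on $\Db(\coh\X)$, but these coincide by adjunction.
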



\subsection{Exchanges/mutations of modules from magic windows}
Let $X$ be a quasi-symmetric representation of $G$ with $T$-weights $\upbeta_1,\hdots,\upbeta_d\in \M$, and  assume that $\bsigma$ spans $\M_{\bR}$ and $(\M_{\bR}^W)_{\rm gen}\neq \emptyset$. 
A quasi-symmetric representation $X$ is said to be {\it generic} if ${\rm codim}(X\backslash X^{\rm ts},X)\geq 2$, where $X^{\rm ts}$ is a $G$-invariant open subspace of $X$ defined by 
\[
X^{\rm ts}\defeq\{x\in X\mid \mbox{the orbit $G\cdot x$ is closed and the stabilizer $G_x$ is trivial}\}.
\]
Write  $R\defeq \sfk[X]^G\subseteq \sfk[X]$ for  the subring of  $G$-invariant polynomials. Then $\Spec R$ is the affine quotient of $X$, and  taking $G$-invariant part defines a functor
\begin{equation}\label{inv part}
(-)^G\colon \fmod_G\sfk[X]\to \fmod R\semicolon M\mapsto M^G.
\end{equation}
Denote by $\refl_G\sfk[X]\subset \fmod_G\sfk[X]$ and by $\refl R\subset \fmod R$ the subcategories of reflexive modules. Then the functor  \eqref{inv part} restricts to a functor $(-)^G\colon \refl _G \sfk[X]\to \refl R$. 

\begin{lem}[{\cite[Lemma 3.3]{svdb}}]
Notation is same as above. If the $G$-representation $X$ is generic, the functor \[(-)^G\colon \refl _G \sfk[X]\to \refl R\] is an equivalence of symmetric monoidal categories.
\end{lem}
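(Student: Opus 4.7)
The plan is to use the genericity assumption to reduce the statement to descent along a principal $G$-bundle, and then to use reflexivity to extend the resulting equivalence from the free locus to all of $X$ and $\Spec R$.

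First, observe that on $X^{\rm ts}$ the group $G$ acts freely with closed orbits, so the quotient map $\pi\colon X^{\rm ts}\to X^{\rm ts}/G$ is a principal $G$-bundle. By faithfully flat descent along $\pi$, the functor
\[
(-)^G\colon \Qcoh_G(X^{\rm ts})\simto \Qcoh(X^{\rm ts}/G)
\]
is an equivalence of symmetric monoidal categories, with quasi-inverse $\pi^*$. This gives the statement for the free locus for free.

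Second, genericity of $X$ means ${\rm codim}(X\setminus X^{\rm ts},X)\geq 2$, so $X\setminus X^{\rm ts}$ has codimension $\geq 2$ in $X$, and since $\pi$ is affine and $R\cong \sfk[X^{\rm ts}]^G$, the complement of $X^{\rm ts}/G$ in $\Spec R$ also has codimension $\geq 2$. Because $R$ is normal (in fact quasi-symmetric quotients are Gorenstein normal, and $\sfk[X]$ is regular hence normal), reflexive modules on $\Spec R$ and $G$-equivariant reflexive modules on $X$ are determined by their restrictions to these codimension $\geq 2$ open subsets. That is, the restriction functors
\[
\refl R \to \refl(X^{\rm ts}/G)\quad\text{and}\quad\refl_G \sfk[X]\to \refl_G\sfk[X^{\rm ts}]
\]
are fully faithful, with essential image the reflexive sheaves on the open subsets whose pushforward to $\Spec R$ (resp.~$X$) is coherent. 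Together with the equivalence from the previous paragraph (which preserves reflexivity because $\pi$ is smooth), this would produce the desired equivalence $(-)^G\colon \refl_G\sfk[X]\simto\refl R$, with quasi-inverse given by $M\mapsto (\pi^*(M|_{X^{\rm ts}/G}))^{**}$, where $(-)^{**}$ denotes double-dualization inside $\fmod_G\sfk[X]$.

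Finally, the symmetric monoidal structure: on the free locus $(-)^G$ intertwines $\otimes_{\sfk[X^{\rm ts}]}$ and $\otimes_{\sfk[X^{\rm ts}/G]}$ by descent, and on reflexive modules the tensor product is defined as the reflexive hull of the ordinary tensor product (both on $\refl_G\sfk[X]$ and on $\refl R$). Since both reflexive hulls agree with the naive tensor product on the codimension $\geq 2$ complement, compatibility is automatic.

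The main obstacle is making the codimension-$\geq 2$ extension argument clean on the equivariant side; the key point is that the stacky quotient $[X^{\rm ts}/G]\to X^{\rm ts}/G$ is an isomorphism because of triviality of stabilizers, so the $G$-equivariant reflexive theory on $X$ reduces to the ordinary reflexive theory on $X^{\rm ts}/G\subset \Spec R$ without any extra equivariant book-keeping.
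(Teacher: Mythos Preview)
The paper does not supply its own proof of this lemma; it is quoted verbatim from \cite[Lemma 3.3]{svdb} and left unproved. Your sketch is the standard argument and is essentially what one finds in the cited reference: descent along the principal $G$-bundle $X^{\rm ts}\to X^{\rm ts}/G$ gives the equivalence on the free locus, and the codimension-$\geq 2$ hypothesis together with normality lets you extend to reflexive modules on both sides.

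One point that could be tightened: you assert that the complement of $X^{\rm ts}/G$ in $\Spec R$ has codimension $\geq 2$, citing ``$\pi$ is affine and $R\cong\sfk[X^{\rm ts}]^G$''. The clean way to see this is exactly via that isomorphism: since $\codim(X\setminus X^{\rm ts},X)\geq 2$ and $X$ is smooth, $\sfk[X]=\sfk[X^{\rm ts}]$, hence $R=\sfk[X]^G=\sfk[X^{\rm ts}]^G=\Gamma(X^{\rm ts}/G,\cO)$. For a normal affine scheme $\Spec R$ and an open subset $U$ with $\Gamma(U,\cO)=R$, the complement $\Spec R\setminus U$ automatically has codimension $\geq 2$ (a codimension-one component would produce a rational function regular on $U$ but not in $R$). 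This fills the only step you left implicit; otherwise the argument is complete and matches the intended proof.
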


For a $G$-representation $V\in \rep G$, an $R$-module $M(V)$ is defined by 
\[
M(V)\defeq (V\otimes_{\sfk} \sfk[X])^G,
\]
which is called a {\it module of covariants} associated to $V$.
Then $M(-)$ defines a functor 
$
M(-)\colon \rep G\to \fmod R
$. If $V=V(\chi)$ is an irreducible representation with highest weight $\chi\in \M^+$, 
put \label{not: MCM ass to chi} \linelabel{line not: MCM ass to chi}
\[
M(\chi)\defeq M(V(\chi)).
\]
 \begin{lem}
Let $\delta\in\M_{\bR}^W$, and $\chi,\chi'\in \scrC_{\delta}$. If either $\chi$ or $\chi'$ lies in the interior of $\delta+\bnabla$, then the $R$-module $\Hom_R(M(\chi),M(\chi'))$ is Cohen-Macaulay.
\begin{proof}
There is a natural isomorphism $\Hom_R(M(\chi),M(\chi'))\cong M(V(\chi)^*\otimes V(-w_0(\chi'))^*)$. By \cite[Proposition 3.14]{svdb}, it is enough to show that $\chi-w_0(\chi')$ lies in the interior of $-2\rho+\bsigma$. We may assume that $\delta$ is the origin, and then we may write $\chi=-\rho+(1/2)\sum_{i}a_i\upbeta_i$ and 
$\chi'=-\rho+(1/2)\sum_{i}a'_i\upbeta_i$ for some $a_i, a'_i\in [0,1]$. Since $\bsigma=-\bsigma$, $-w_0(\chi')=-\rho-(1/2)\sum_ia'_i(w_0(\upbeta_i))$ lies in $-\rho+(1/2)\bsigma$. Hence there are $a''_i\in [0,1]$ such that $-w_0(\chi')=-\rho+\sum_ia''_i\upbeta_i$, and then   
\begin{align*}
\chi-w_0(\chi')&=-2\rho+\sum_i\left(\frac{1}{2}a_i+\frac{1}{2}a''_i\right)\upbeta_i.
\end{align*}
If $\chi'$ lies in the interior of $\bnabla$, so does $-w_0(\chi')$.
Therefore, if either $\chi$ or $\chi'$ lies in the interior of $\bnabla$, we may assume that either all $a_i\neq1$ or all $a''_i\neq1$, and then $\chi-w_0(\chi')$ lies in the interior of $-2\rho+\bsigma$.
\end{proof}
 \end{lem}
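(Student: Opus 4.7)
The plan is to reduce the question to the criterion for Cohen--Macaulayness of modules of covariants established in \cite[Proposition 3.14]{svdb}. Concretely, there is a natural isomorphism
\[
\Hom_R(M(\chi),M(\chi'))\cong M\bigl(V(\chi)^{*}\otimes V(-w_0(\chi'))^{*}\bigr),
\]
obtained via the self-duality $V(\chi')^{*}\cong V(-w_0(\chi'))$ and the equivalence $(-)^{G}\colon \refl_{G}\sfk[X]\simto \refl R$. By that proposition, Cohen--Macaulayness is guaranteed once every irreducible summand of $V(\chi)^{*}\otimes V(-w_0(\chi'))^{*}$ has highest weight contained in the interior of $-2\rho+\bsigma$. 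I would then argue that it suffices to verify this for the highest weight of the tensor product itself; since $\bsigma$ is $W$-invariant, the usual PRV/Cartan weight estimates confine the highest weights of the remaining summands to the same convex region.

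Next I would translate the membership $\chi,\chi'\in\scrC_{\delta}=(\delta+\bnabla)\cap\M^{+}$ into explicit parametrizations. After shifting so that $\delta=0$, Proposition \ref{nabla sigma}\,(1) together with $W$-invariance lets me write
\[
\chi=-\rho+\tfrac{1}{2}\sum_{i}a_{i}\upbeta_{i},\qquad \chi'=-\rho+\tfrac{1}{2}\sum_{i}a'_{i}\upbeta_{i}
\]
with $a_{i},a'_{i}\in[0,1]$. The symmetry $\bsigma=-\bsigma$ together with the $W$-invariance of $\bsigma$ gives $-w_0(\chi')\in -\rho+(1/2)\bsigma$, hence a presentation $-w_0(\chi')=-\rho+\tfrac{1}{2}\sum_{i}a''_{i}\upbeta_{i}$ with $a''_{i}\in[0,1]$, yielding
\[
\chi-w_0(\chi')=-2\rho+\sum_{i}\tfrac{1}{2}(a_{i}+a''_{i})\,\upbeta_{i}.
\]
Since the coefficients $\tfrac{1}{2}(a_{i}+a''_{i})$ lie in $[0,1]$, this always exhibits $\chi-w_0(\chi')$ as a point of $-2\rho+\bsigma$.

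Finally, I would use the hypothesis that either $\chi$ or $\chi'$ is an interior point of $\delta+\bnabla$ to upgrade this to membership in the \emph{interior} of $-2\rho+\bsigma$. If $\chi'$ is interior, then so is $-w_0(\chi')$ by $W$-invariance, so the presentation $-w_0(\chi')=-\rho+\tfrac{1}{2}\sum_i a''_i\upbeta_i$ can be chosen with all $a''_i<1$; similarly if $\chi$ is interior one can choose all $a_i<1$. Either way, each convex coefficient $\tfrac{1}{2}(a_i+a''_i)$ is strictly less than $1$, placing $\chi-w_0(\chi')$ in the interior of $-2\rho+\bsigma$, as required. The only subtle point, and the step I would spend most effort on justifying carefully, is the passage from a parametrization of a point of a zonotope with one coefficient vector strictly in $(0,1)^{d}$ to the conclusion that the point itself is interior; this is immediate here because $\bsigma$ spans $\M_{\bR}$, so the map $[0,1]^{d}\to \bsigma$ is open at points with all coordinates in $(0,1)$.
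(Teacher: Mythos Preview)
Your proof is correct and follows essentially the same route as the paper's: reduce via the isomorphism $\Hom_R(M(\chi),M(\chi'))\cong M(V(\chi)^*\otimes V(-w_0(\chi'))^*)$ and \cite[Proposition 3.14]{svdb} to showing $\chi-w_0(\chi')$ lies in the interior of $-2\rho+\bsigma$, then verify this with the explicit zonotope parametrization after translating $\delta$ to the origin. One minor imprecision worth fixing: your final justification invokes coordinates in $(0,1)^d$, but the argument only produces coefficients in $[0,1)$ (strictly below $1$, not necessarily above $0$); the interior conclusion still holds, since for any nonzero $\lambda$ the spanning hypothesis together with quasi-symmetry guarantees some $\langle\upbeta_i,\lambda\rangle>0$, forcing $\sum_i c_i\langle\upbeta_i,\lambda\rangle<\sum_{\langle\upbeta_i,\lambda\rangle>0}\langle\upbeta_i,\lambda\rangle$ whenever all $c_i<1$.
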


Let  $\ell\in \M^W_{\rm gen}$ be a generic element. Then the associated GIT quotient stack $\X=[X^{\rm ss}(\ell)/G]$ is a smooth Deligne--Mumford stack by Proposition \ref{dm stack}. Since $\ell$ is generic, the natural map
$
X^{\rm ss}(\ell) \to X^{\rm ss}(\ell)/\!\!/G
$
is a geometric quotient, and let 
\[\pi\colon \X\to X^{\rm ss}(\ell)/\!\!/G\]
be the natural map. Then  $\pi$ is proper, and the restriction $\pi|_{U}\colon [U/G]\to U/\!\!/G$ to  the open subspace $U\defeq X^{\rm ts}\subseteq X^{\rm ss}(\ell)$ is an isomorphism.  
Let $f\colon X^{\rm ss}(\ell)/\!\!/G\to \Spec R$ be the natural quotient map, and put
\[
\varphi\defeq f\circ \pi\colon \X\to \Spec R.
\]

\begin{prop}[{\cite[Lemma 4.16]{vdb}}]
If the representation $X$ is generic, the morphism $\varphi\colon \X\to \Spec R$ is a (stacky) crepant resolution. 
\end{prop}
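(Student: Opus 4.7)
The plan is to verify the four defining properties of a (stacky) crepant resolution for $\varphi\colon \X \to \Spec R$: that $\X$ is smooth, that $\varphi$ is proper, that $\varphi$ is birational, and that $\omega_\X \cong \varphi^*\omega_{\Spec R}$. The first property is immediate: since $X$ is a vector space (hence smooth) and $X^{\mathrm{ss}}(\ell) \subseteq X$ is open, the stack $\X$ is smooth, and Proposition \ref{dm stack} together with the genericity of $\ell$ gives that $\X$ is a smooth DM stack. For properness, I would factor $\varphi$ as $\pi$ followed by $f$: the map $\pi\colon \X \to X^{\mathrm{ss}}(\ell)/\!\!/G$ is proper because it is the good moduli space morphism of a DM stack with finite stabilizers, and $f\colon X^{\mathrm{ss}}(\ell)/\!\!/G \to \Spec R$ is projective by the standard GIT construction of $X^{\mathrm{ss}}(\ell)/\!\!/G$ as $\mathrm{Proj}$ of $\bigoplus_{k\geq 0} \Gamma(X,\cO(k\ell))^G$ over $R = \sfk[X]^G$.

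For birationality, the genericity hypothesis $\mathrm{codim}(X\setminus X^{\mathrm{ts}}, X) \geq 2$ is crucial. Since $X^{\mathrm{ts}} \subseteq X^{\mathrm{ss}}(\ell)$ and $G$ acts freely with closed orbits on $X^{\mathrm{ts}}$, the restriction of $\pi$ to $[X^{\mathrm{ts}}/G]$ is an isomorphism onto the geometric quotient $X^{\mathrm{ts}}/G$, which naturally embeds as an open subscheme of $\Spec R$ with complement of codimension at least two. Hence $\varphi$ restricts to an isomorphism over a dense open subset of $\Spec R$.

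The crepancy is where the quasi-symmetry enters. The $G$-equivariant canonical bundle of $X$ is $\cO_X$ twisted by the character $-\sum_i \upbeta_i$ of $T$, which is trivial precisely by quasi-symmetry (and the observation that a character of the connected reductive $G$ is determined by its restriction to $T$). Thus $\omega_\X$ is trivial as a line bundle on $\X$. On the other hand, $R$ is Gorenstein with trivial canonical module: this follows since $R$ inherits the trivial $G$-equivariant canonical from $\sfk[X]$ via the equivalence $(-)^G\colon \refl_G \sfk[X] \simto \refl R$ on reflexive modules, and the canonical module of $R$ is reflexive. The desired isomorphism $\omega_\X \cong \varphi^*\omega_{\Spec R}$ then holds on the big open locus where $\varphi$ is an isomorphism, and extends globally since both sides are reflexive rank-one sheaves on the smooth stack $\X$.

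The main obstacle I anticipate is bookkeeping the crepancy statement carefully: one must match the \emph{$G$-equivariant} trivialisation of $\omega_X$ (which uses quasi-symmetry in an essential way) with a trivialisation of the canonical module of the invariant ring $R$, and confirm that these are compatible under $\varphi^*$. Everything else is a standard consequence of GIT together with the genericity hypothesis on codimension.
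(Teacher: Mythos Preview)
The paper does not supply its own proof of this proposition; it simply cites \cite[Lemma 4.16]{vdb} and moves on. Your sketch is essentially the standard argument one finds in that reference (and in \cite{svdb}): smoothness from Proposition~\ref{dm stack}, properness from the GIT factorisation $\X \to X^{\rm ss}(\ell)/\!\!/G \to \Spec R$, birationality from the codimension hypothesis on $X\setminus X^{\rm ts}$, and crepancy from the $G$-equivariant triviality of $\omega_X$ (which is exactly the equation $\sum_i \upbeta_i = 0$ forced by quasi-symmetry). So your proposal is correct and aligned with the literature; there is nothing to compare against in the paper itself.

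One minor point on bookkeeping: for the crepancy step, you do not actually need the reflexive-module equivalence $(-)^G$ to identify $\omega_R$. It is more direct to observe that since $\omega_{\sfk[X]}$ is $G$-equivariantly trivial, the Knop--Watanabe-type argument gives $\omega_R \cong (\omega_{\sfk[X]})^G \cong R$ immediately; then both $\omega_\X$ and $\varphi^*\omega_R$ are the structure sheaf and there is nothing further to match.
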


 Since $\scrV_{\delta}$ is a tilting bundle on $\X$, 
there is a natural derived equivalence \[\Db(\coh \X)\cong \Db(\fmod\Lambda_{\delta}).\] 
For each $\delta\in \M_{\bR}^W$, set \label{not: M_delta} \linelabel{ line not: M_delta}
\[
M_{\delta}\defeq\bigoplus_{\chi\in\scrC_{\delta}}M(\chi) \in\fmod R.
\]
If $X$ is generic,  the algebra $\Lambda_{\delta}$, defined in \eqref{end alg}, is isomorphic to the $R$-algebra $\End_R(M_{\delta})$, and there is a natural isomorphism
 \begin{equation}\label{gen isom}
 M(\chi)\cong \varphi_*(V_{\X}(\chi)).
 \end{equation}

\begin{prop}[{\cite[Theorem 4.6]{vdb}}]
If $X$ is generic, $\Lambda_{\delta}$ is an NCCR of $R$.
\end{prop}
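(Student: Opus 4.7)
The plan is to deduce this statement directly from the general theorem proved earlier in the paper, namely the one asserting that if $f \colon \Y \to \Spec R$ is a stacky crepant resolution admitting a tilting bundle $\cT$, then $f_\sharp\cT$ gives an NCCR of $R$, naturally isomorphic to $\End_\Y(\cT)$. The input tilting bundle will of course be $\scrV_\delta$ on $\X$, and the entire exercise is to identify $\varphi_\sharp\scrV_\delta$ with $M_\delta$ and $\End_\X(\scrV_\delta)$ with $\Lambda_\delta$.

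First I would check the hypotheses of that theorem: $R$ is normal Gorenstein (this comes from the quasi-symmetry together with genericity), and $\varphi \colon \X \to \Spec R$ is a stacky crepant resolution, as recorded immediately before the proposition. The object $\scrV_\delta = \bigoplus_{\chi \in \scrC_\delta} V_\X(\chi)$ is a tilting bundle on $\X$ by \eqref{tilting bdl} and the split-generation/tilting discussion after Theorem \ref{hl-s}. Hence the earlier theorem applies and yields that $\varphi_\sharp \scrV_\delta$ gives an NCCR of $R$, with $\End_R(\varphi_\sharp \scrV_\delta) \simeq \End_\X(\scrV_\delta)$.

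Next I would identify $\varphi_\sharp \scrV_\delta$ with $M_\delta$. By \eqref{gen isom} we have $\varphi_* V_\X(\chi) \simeq M(\chi)$ for every $\chi \in \scrC_\delta$, and since $X$ is generic the equivalence $(-)^G \colon \refl_G \sfk[X] \xrightarrow{\sim} \refl R$ shows that each $M(\chi)$ is reflexive. Therefore
\[
\varphi_\sharp \scrV_\delta = (\varphi_* \scrV_\delta)^{**} = \Bigl(\bigoplus_{\chi \in \scrC_\delta} M(\chi)\Bigr)^{**} = M_\delta.
\]
Combined with the identification $\Lambda_\delta \simeq \End_\X(\scrV_\delta)$ from \eqref{end alg}, this gives $\Lambda_\delta \simeq \End_R(M_\delta)$ and shows that $M_\delta$ gives an NCCR of $R$.

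There is essentially no obstacle here: the statement is a direct corollary of the earlier theorem once one matches up the pushforward with the module of covariants. The only small point to be careful with is that the double-dual can be harmlessly removed because $M_\delta$ is already reflexive, a consequence of genericity via the symmetric monoidal equivalence $(-)^G$. No further vanishing or dimension estimate is needed, since finite global dimension of $\Lambda_\delta$ comes for free from $\scrV_\delta$ being tilting, and Cohen--Macaulayness of $\End_R(M_\delta)$ is built into the cited theorem.
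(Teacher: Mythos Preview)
Your argument is correct. The paper itself does not supply a proof here: the proposition is stated with a direct citation to \cite[Theorem 4.6]{vdb}, and the surrounding text simply records that $\Lambda_\delta \cong \End_R(M_\delta)$ when $X$ is generic.

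Your route is therefore genuinely different, and in fact more internal to the paper: rather than appealing to the external reference, you invoke the general theorem proved earlier (that a tilting bundle on a stacky crepant resolution pushes down to an NCCR), feed in $\scrV_\delta$ on $\X$, and then identify $\varphi_\sharp \scrV_\delta$ with $M_\delta$ via \eqref{gen isom} and reflexivity of modules of covariants. This has the advantage of being self-contained within the paper's own framework and of making transparent why genericity enters (it is needed both for $\varphi$ to be a crepant resolution and for $M(\chi)$ to be reflexive). The paper's approach, by contrast, simply outsources the result to the literature; the benefit there is brevity and an explicit pointer to where the statement is treated in full generality.
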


 In the remainder of this section, assume that $X$ is generic, and fix  an adjacent pair $(\delta,\delta')$  in $\M_{\bR}^W\backslash\scrH^W$ separated by a hyperplane $H^W\in \scrH^W$.  
 Denote by $\delta_0\in H^W$  the point that meets line segment with endpoints $\delta$ and $\delta'$. 
The rest of this section discusses a relationship between $M_{\delta}$ and $M_{\delta'}$.

For $F\in \scrF_{(\delta,\delta')}$, put
\label{not: C^F_delta} \linelabel{line not: C^F_delta}
$
\scrC_{\delta}^F\defeq\left\{ \chi \in \scrC_{\delta}\relmiddle| F=F_{\chi}(\delta_0)\right\},
$
and set \label{not: M^F_delta} \linelabel{line not: M^F_delta}
\begin{align*}
M_{\delta}^F&\defeq\bigoplus_{\chi\in \scrC_{\delta}^F}M(\chi).
\end{align*}
Then, by Lemma \ref{chi boundary}, there is a direct sum decomposition 
\begin{equation}\label{decomp}
M_{\delta}=M_{\delta\cap\delta'}\oplus \bigoplus_{F\in \scrF_{(\delta,\delta')}}M_{\delta}^F,
\end{equation}
where 
\label{not: M_common} \linelabel{line not: M_common} 
 $M_{\delta\cap\delta'}\defeq \bigoplus_{\chi\in \scrC_{\delta}\cap \scrC_{\delta'}}M(\chi)$.
 Recall from \eqref{mu map} that there is a bijective map
 \[
 \mu_{(\delta,\delta')}\colon \scrC_{\delta}\backslash\scrC_{\delta'}\simto\scrC_{\delta'}\backslash\scrC_{\delta}.
 \]
\begin{lem}\label{mu lem}
Notation is same as above. 
\begin{itemize}
\item[(1)] $\scrF_{(\delta',\delta)}=\left\{ F^{\dag}\relmiddle|F\in \scrF_{(\delta,\delta')}\right\}$.
\item[(2)] Let $F\in \scrF_{(\delta,\delta')}$. The map $\mu_{(\delta,\delta')}$ restricts to a bijection $\mu_{(\delta,\delta')}\colon \scrC_{\delta}^F\simto \scrC_{\delta'}^{F^{\dag}}$.
\item[(3)] $M_{\delta'}^{F^{\dag}}\cong \bigoplus_{\chi\in \scrC_{\delta}^F} M(\mu_{(\delta,\delta')}(\chi))$.
\end{itemize}
\begin{proof}
(1) For $F \in \scrF_{(\delta,\delta')}$,  we show that $F^{\dag} \in \scrF_{(\delta',\delta)}$.
By  definition of $\scrF_{(\delta,\delta')}$, there exists $\chi \in \scrC_{\delta}$ such that $F = F_{\chi}(\delta_0)$.
Then Lemma \ref{hara syukudai 1} shows that $F^{\dag} = F_{\chi'}(\delta_0)$, where $\chi' = (\chi + \beta_{F_{\chi}(\delta_0)}^+)^+$.
Since $\chi'=\mu_{(\delta,\delta')}(\chi) \in \scrC_{\delta'}$ by Proposition \ref{crossing separrationg hyperplane}, the definition of $F_{\chi'}(\delta_0)$ implies that $F^{\dag} \in \scrF_{(\delta',\delta)}$.
Now we have checked that $\scrF_{(\delta',\delta)} \supset \left\{ F^{\dag}\relmiddle|F\in \scrF_{(\delta,\delta')}\right\}$.
Swapping $\delta$ with $\delta'$ gives a sequence of inclusions
\[ \left\{ (F_1^{\dag})^{\dag} \relmiddle|F_1 \in \scrF_{(\delta',\delta)}\right\} \subset \left\{ F_2^{\dag} \relmiddle| F_2 \in \scrF_{(\delta,\delta')}\right\} \subset \scrF_{(\delta',\delta)}. \]
Now Proposition \ref{CSW2}\,(2) shows that $(F_1^{\dag})^{\dag} = F_1$ for all $F_1 \in  \scrF_{(\delta',\delta)}$.
Thus all inclusions above are equalities, which completes the proof of (1).\vspace{1mm}\\
(2) This also follows from Lemma \ref{hara syukudai 1} and the fact that $\mu_{\delta,\delta'}$ is a bijection.\vspace{1mm}\\
(3) This is a consequence of (2).
\end{proof}
\end{lem}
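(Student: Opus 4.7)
The plan is to prove (1), (2), and (3) in order, with parts (2) and (3) reducing essentially to re-indexing arguments once (1) is in hand. The key inputs are Lemma \ref{hara syukudai 1}, which computes $F_\chi(\delta_0)^{\dag} = F_{\mu_{(\delta,\delta')}(\chi)}(\delta_0)$, and Proposition \ref{CSW2}\,(2), which establishes that $\mu_{(\delta,\delta')}$ is a bijection with inverse $\mu_{(\delta',\delta)}$.

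For (1), I would prove the two inclusions separately. Given $F \in \scrF_{(\delta,\delta')}$, write $F = F_\chi(\delta_0)$ for some $\chi \in \scrC_\delta$, and set $\chi' \defeq \mu_{(\delta,\delta')}(\chi)$. Proposition \ref{crossing separrationg hyperplane} yields $\chi' \in \scrC_{\delta'}$, and Lemma \ref{hara syukudai 1} gives $F^{\dag} = F_{\chi'}(\delta_0)$, so $F^{\dag} \in \scrF_{(\delta',\delta)}$. For the reverse inclusion, swap $\delta$ and $\delta'$ to obtain $\scrF_{(\delta,\delta')} \supset \{(F')^{\dag} : F' \in \scrF_{(\delta',\delta)}\}$, then apply $(-)^{\dag}$ to both sides and use the involutivity $(F^{\dag})^{\dag} = F$. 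This involutivity is a direct computation: since $\delta_0 \in \M_{\bR}^W$ is fixed by $w_0$, we have $F^* = -F + 2\delta_0$ and $(F^{\dag})^* = w_0(F)$, hence $(F^{\dag})^{\dag} = w_0(w_0(F)) = F$.

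For (2), take $\chi \in \scrC_\delta^F$, so $F_\chi(\delta_0) = F$. Then Lemma \ref{hara syukudai 1} gives $F_{\mu_{(\delta,\delta')}(\chi)}(\delta_0) = F^{\dag}$, i.e.\ $\mu_{(\delta,\delta')}(\chi) \in \scrC_{\delta'}^{F^{\dag}}$. Thus $\mu_{(\delta,\delta')}$ restricts to a well-defined map $\scrC_\delta^F \to \scrC_{\delta'}^{F^{\dag}}$. The symmetric argument (swapping $\delta \leftrightarrow \delta'$ and $F \leftrightarrow F^{\dag}$) produces a map $\scrC_{\delta'}^{F^{\dag}} \to \scrC_\delta^{(F^{\dag})^{\dag}} = \scrC_\delta^F$, which by Proposition \ref{CSW2}\,(2) is the two-sided inverse of the restricted $\mu_{(\delta,\delta')}$.

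Part (3) then follows immediately from (2): by definition $M_{\delta'}^{F^{\dag}} = \bigoplus_{\chi' \in \scrC_{\delta'}^{F^{\dag}}} M(\chi')$, and re-indexing via the bijection $\mu_{(\delta,\delta')}\colon \scrC_\delta^F \simto \scrC_{\delta'}^{F^{\dag}}$ rewrites this as $\bigoplus_{\chi \in \scrC_\delta^F} M(\mu_{(\delta,\delta')}(\chi))$. There is no substantive obstacle here; the entire argument is a matter of assembling the combinatorial facts established in Section \ref{section combinatiorics}, and the mildly delicate point---the involutivity of $(-)^{\dag}$---is settled by the short calculation above.
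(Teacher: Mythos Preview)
Your proof is correct and follows essentially the same approach as the paper: both use Lemma~\ref{hara syukudai 1} and Proposition~\ref{crossing separrationg hyperplane} for the forward inclusion in (1), swap $\delta$ and $\delta'$ for the reverse inclusion, and derive (2) and (3) from Lemma~\ref{hara syukudai 1} together with the bijectivity of $\mu_{(\delta,\delta')}$. The one minor difference is that the paper deduces the involutivity $(F^{\dag})^{\dag}=F$ indirectly via Proposition~\ref{CSW2}\,(2) (through $\mu_{(\delta,\delta')}\circ\mu_{(\delta',\delta)}=\id$ and Lemma~\ref{hara syukudai 1}), whereas you verify it by a direct calculation using $w_0^2=1$ and $w_0(\delta_0)=\delta_0$; your route is slightly more elementary but the overall structure is the same.
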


For each $F\in \scrF_{(\delta,\delta')}$, set 
\label{not: LN} \linelabel{line not: LN}
 \begin{align*}
 L^F_{\delta}&\defeq \bigoplus_{\chi\in \scrC_{\delta}^F}\left( \bigoplus_{{\upbeta\in{\tiny \gwedge}^*\scrW_F^+}} M((\chi+\upbeta)^+)\right),\\
 N^F_{\delta}&\defeq \bigoplus_{\chi\in \scrC_{\delta}\backslash \scrC^F_{\delta}}M(\chi)\cong M_{\delta}/M_{\delta}^F.
 \end{align*}
 By \eqref{decomp} and Lemma \ref{mu lem}\,(1), $M_{\delta'}$ can be described as follows:
\begin{equation}\label{delta' decomp}
M_{\delta'}=M_{\delta\cap\delta'}\oplus \bigoplus_{F\in \scrF_{(\delta,\delta')}}M_{\delta'}^{F^{\dag}}.
\end{equation}
The following result shows that $M_{\delta}$ can be obtained from $M_{\delta'}$ by iterated exchanges of  the summands $M_{\delta'}^{F^{\dag}}$.

\begin{thm}\label{main thm}
Notation is same as above.

\begin{itemize}
\item[(1)] Let $F\in \scrF(\delta,\delta')$, and $\chi\in \scrC^F_{\delta}$. Then 
\[
M(\chi)\in \cE_{(L_{\delta}^F, N_{\delta}^F)}^{d_{F}^++\ell(w_0)-1}\Bigl(M\bigl(\mu_{(\delta,\delta')}(\chi)\bigr)\Bigr).
\]
\item[(2)] For each $F\in \scrF(\delta,\delta')$, 
\[
M_{\delta}^F\in\cE_{(L^F_{\delta},N^F_{\delta})}^{d_{F}^++\ell(w_0)-1}(M_{\delta'}^{F^{\dag}}).
\]
\item[(3)] The following   holds.
\[
M_{\delta}\in M_{\delta\cap\delta'}\oplus \bigoplus_{F\in \scrF_{(\delta,\delta')}} \cE_{(L^F_{\delta},N^F_{\delta})}^{d_{F}^++\ell(w_0)-1}(M_{\delta'}^{F^{\dag}}).
\]
\end{itemize}
\begin{proof}
(1) 
Let $\ell\defeq \delta'-\delta$, and  $\chi\in\scrC_{\delta}^F$. Then $\ell$ is generic, and $\X\defeq[X^{\rm ss}(\ell)/G]$ is a Deligne--Mumford stack. Consider the restriction  $B_{F,\chi}^{\bullet}\defeq A_{F,\chi}^{\bullet}|_{\X}$ of  the complex $A_{F,\chi}^{\bullet}$ in \eqref{A cpx} to $\X$. Then, by Proposition \ref{key cpx}, the complex $B_{F,\chi}^{\bullet}$ is a bounded complex of the form
\begin{equation}\label{long ex seq 4.9}
 0\to V_{\X}(\chi)\xrightarrow{\delta^0} \cdots\xrightarrow{\delta^{d_F^++\ell(w_0)-1}} V_{\X}\left((\chi+\upbeta_F^+)^+\right)\to0,
\end{equation}
and it is exact by Proposition \ref{key cpx}\,(1) and Lemma \ref{orientation lem}.  
Now putting $\cE_\chi^i\defeq \Ker(\delta^{i+1})$ gives the short exact sequences
\begin{equation}\label{ex seq}
0\to \cE_\chi^{i-1}\to B_{F,\chi}^i\xrightarrow{\beta^i} \cE_\chi^{i}\to0
\end{equation}
for all $1\leq i\leq d_F^++\ell(w_0)-1$,
which are spliced to be the original sequence (\ref{long ex seq 4.9}).
Set $M^i\defeq \varphi_*(\cE_\chi^i)$ and $K^i\defeq \varphi_*(B_{F,\chi}^i)$. Since $\varphi_*$ is left exact, there is an exact sequence
\[
0\to M^{i-1}\to K^i\xrightarrow{\alpha^i} M^{i},
\]
where $\alpha^i\defeq\varphi_*(\beta^i)$. 
Now Proposition \ref{key cpx}\,(2) and the isomorphism $M(\chi)\cong \varphi_*(V_{\X}(\chi))$ in \eqref{gen isom} give an isomorphism $M^0\cong M(\chi)$.    Since $F=F_{\chi}(\delta_0)$,  $(\chi+\upbeta_F^+)^+=\mu_{(\delta,\delta')}(\chi)$ holds, and thus $M^{d_F^++\ell(w_0)-1}\cong M(\mu_{(\delta,\delta')}(\chi))$. 
Therefore, it is enough to show that the morphism $\alpha^i\colon K^i\to M^{i}$ is an $(\add L_{\delta}^F)_{N_{\delta}^F}$-approximation for each $1\leq i\leq d_F^++\ell(w_0)-1$.  

Note that $K^i\in \add L_{\delta}^F$ follows from Proposition \ref{key cpx}\,(2), and thus it suffices to show that the morphism
\begin{equation}\label{exchange seq}
\alpha^i\circ(-)\colon \Hom_R(N_{\delta}^F,K^i)\to \Hom_R(N_{\delta}^F,M^{i})
\end{equation}
is surjective. 
Putting $\cN\defeq \bigoplus_{\chi\in \scrC_{\delta}\backslash\scrC_{\delta}^F}V_{\X}(\chi)$ gives $\varphi_*(\cN)\cong N_{\delta}^F$. Applying $\Hom_{\X}(\cN,-)$ to the exact sequence \eqref{ex seq} for $i=1$ induces the exact sequence
\begin{equation}\label{hom seq}
\Ext_{\X}^m(\cN,B_{F,\chi}^1)\to \Ext_{\X}^m(\cN,\cE_\chi^{1})\to\Ext_{\X}^{m+1}(\cN,\cE_\chi^0)
\end{equation}
for all $m\in \bZ$. Since $L_{\delta}^F\in \add M_{\delta_0}$, Theorem \ref{vanishing} implies  vanishing
$
\Ext_{\X}^m(\cN,B_{F,\chi}^i)\cong 0
$
for all $m>0$ and all $1\leq i\leq d_F^++\ell(w_0)-1$. Since $\cE_\chi^0\cong V_{\X}(\chi)$ and $\chi\in \scrC_{\delta}$, Theorem \ref{vanishing} also shows that $\Ext_{\X}^m(\cN,\cE_\chi^0)\cong 0$ for all $m>0$, and thus  $\Ext_{\X}^m(\cN,\cE_\chi^1)\cong 0$ for all $m>0$. 
Repeating similar arguments shows that
\[
\Ext_{\X}^m(\cN,\cE_\chi^i)\cong 0
\]
for all $m>0$ and all $1\leq i\leq d_F^++\ell(w_0)-1$. Since $X$ is generic, the functor $\varphi_*\colon \refl \X\to \refl R$ is fully faithful. Consider the following  commutative diagram
\[\begin{tikzcd}
\Hom_{\X}(\cN, B_{F,\chi}^i)\arrow[rr,"\beta^i\circ(-)"]\arrow[d,"\mbox{$\varphi_*$}"']&&\Hom_{\X}(\cN,\cE_\chi^{i})\arrow[d, "\mbox{$\varphi_*$}"]\arrow[rr]&&\Ext^1_{\X}(\cN,\cE_\chi^{i-1})=0\\
\Hom_R(N_{\delta}^F,K^i)\arrow[rr, "\alpha^i\circ(-)"]&&\Hom_R(N_{\delta}^F,M^{i}),&&
\end{tikzcd}\]
where the vertical arrows are isomorphisms, and the top sequence is exact. This shows that the morphism \eqref{exchange seq} is surjective, and hence $\alpha^i$ is an $(\add L_{\delta}^F)_{N_{\delta}^F}$-approximation. \vspace{2mm}\\
(2) This follows from (1),  Lemma \ref{mu lem}\,(3) and Lemma \ref{exchange rem}\,(3).\vspace{2mm}\\
(3) This follows from (2) and \eqref{decomp}.
\end{proof}
\end{thm}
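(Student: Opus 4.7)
My plan is to establish part (1) first using the Koszul-type complexes $A_{F,\chi}^\bullet$ from Proposition \ref{key cpx}, and then to deduce (2) and (3) by forming direct sums. Set $\ell \defeq \delta' - \delta$ and $\X \defeq [X^{\mathrm{ss}}(\ell)/G]$. For $\chi \in \scrC_\delta^F$, Lemma \ref{orientation lem} gives $\l\ell,\lambda\r > 0$ for every $\lambda \in \scrL_F$, so Proposition \ref{key cpx}(1) forces the restriction $B_{F,\chi}^\bullet \defeq A_{F,\chi}^\bullet|_{\X}$ to be acyclic. Combined with Proposition \ref{key cpx}(2)--(3) and the identity $(\chi+\upbeta_F^+)^+ = \mu_{(\delta,\delta')}(\chi)$, which holds because $F = F_\chi(\delta_0)$ by the very definition of $\mu_{(\delta,\delta')}$, this produces an exact sequence on $\X$ with $d_F^+ + \ell(w_0) + 1$ nonzero terms, starting at $V_\X(\chi)$, ending at $V_\X(\mu_{(\delta,\delta')}(\chi))$, and whose middle terms are direct sums of the bundles $V_\X((\chi+\upbeta)^+)$ for $\upbeta \in \gwedge^* \scrW_F^+$.

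Next, I would split this exact complex into short exact sequences $0 \to \cE_\chi^{i-1} \to B_{F,\chi}^i \to \cE_\chi^i \to 0$ via $\cE_\chi^i \defeq \ker(B_{F,\chi}^{i+1} \to B_{F,\chi}^{i+2})$, and then push down by $\varphi_*$ to obtain morphisms $\alpha^i \colon \varphi_*(B_{F,\chi}^i) \to \varphi_*(\cE_\chi^i)$ with kernel $\varphi_*(\cE_\chi^{i-1})$; the identification $\varphi_*(V_\X(\chi')) \cong M(\chi')$ from \eqref{gen isom} tells us that the two endpoints of the splicing are $M(\chi)$ and $M(\mu_{(\delta,\delta')}(\chi))$, and places each $\varphi_*(B_{F,\chi}^i)$ in $\add L_\delta^F$. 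The remaining, and main, obstacle is to verify that each $\alpha^i$ is a right $(\add L_\delta^F)_{N_\delta^F}$-approximation, which reduces to showing surjectivity of the map $\alpha^i \circ (-) \colon \Hom_R(N_\delta^F, \varphi_*(B_{F,\chi}^i)) \to \Hom_R(N_\delta^F, \varphi_*(\cE_\chi^i))$. Setting $\cN \defeq \bigoplus_{\chi' \in \scrC_\delta \setminus \scrC_\delta^F} V_\X(\chi')$, so that $\varphi_* \cN \cong N_\delta^F$ and $\varphi_*$ is fully faithful on reflexives by genericity of $X$, this surjectivity in turn reduces to the vanishing $\Ext^1_\X(\cN, \cE_\chi^{i-1}) = 0$. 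I would prove this by induction on $i$: Lemma \ref{vanishing} applied with respect to the adjacent chambers of $\delta$ and $\delta_0$ gives $\Ext^{>0}_\X(\cN, B_{F,\chi}^j) = 0$ for all $j \geq 1$ since all the relevant summands of $B_{F,\chi}^j$ have highest weights in $\scrC_{\delta_0}$ while the orientation condition is inherited from $\ell = \delta' - \delta$, and the base case $\Ext^{>0}_\X(\cN, \cE_\chi^0) = \Ext^{>0}_\X(\cN, V_\X(\chi)) = 0$ holds because $\cN \oplus V_\X(\chi)$ is a summand of the tilting bundle $\scrV_\delta$.

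Parts (2) and (3) then follow formally from (1). For (2), Lemma \ref{mu lem}(3) yields $M_{\delta'}^{F^\dag} \cong \bigoplus_{\chi \in \scrC_\delta^F} M(\mu_{(\delta,\delta')}(\chi))$, so summing the length-$(d_F^+ + \ell(w_0) - 1)$ exchange chains supplied by (1) over $\chi \in \scrC_\delta^F$ and invoking Lemma \ref{exchange rem}(3) (direct sums of exchanges lie inside exchanges of direct sums) proves (2). Part (3) follows by summing (2) over $F \in \scrF_{(\delta,\delta')}$ and combining with the decomposition \eqref{decomp} for $M_\delta$ and \eqref{delta' decomp} for $M_{\delta'}$.
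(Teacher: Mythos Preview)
Your proposal is correct and follows essentially the same route as the paper: restrict the complex $A_{F,\chi}^\bullet$ to $\X$, use Proposition~\ref{key cpx} and Lemma~\ref{orientation lem} to get an exact sequence with the prescribed endpoints and middle terms, splice into short exact sequences, push forward by $\varphi_*$, and verify the approximation property by an inductive Ext-vanishing argument based on Lemma~\ref{vanishing} together with the full faithfulness of $\varphi_*$ on reflexives; parts (2) and (3) are then deduced from (1) via Lemma~\ref{mu lem}(3), Lemma~\ref{exchange rem}(3), and the decomposition~\eqref{decomp}, exactly as the paper does. The only cosmetic difference is that you justify the base case $\Ext^{>0}_\X(\cN,V_\X(\chi))=0$ by invoking the tilting property of $\scrV_\delta$ rather than Lemma~\ref{vanishing} directly, which amounts to the same thing.
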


\begin{rem}
By the assumption that $X$ is generic, for each $F\in \scrF_{(\delta,\delta')}$, automatically $d_F^++\ell(w_0)-1\geq1$ holds. Indeed, if $G$ is a torus, since  $X$ is quasi-symmetric and $\bsigma$ spans $\M_{\bR}$, $d_F^+=1$ implies that $G=\bG_m$. But then $X$ is not generic, since the divisor $\{x_i=0\}$ is contained in $X\backslash X^{\rm ts}$, where $x_i$ is an eigenvector with weight $\upbeta_i$ such that $\scrW_F^+=\{\upbeta_i\}$.
If $G$ is not a torus, $\ell(w_0)\geq1$ and $d_F^+\geq1$ holds, and thus $d_F^++\ell(w_0)-1\geq1$.
\end{rem}

\subsection{Mutations in toric case.} \label{mut toric} This section gives applications of the results in the previous section to torus actions. 
Keep notation and assumptions in the previous sections, and assume that $G$ is an algebraic torus. Under this assumption,  $\M^+=\M$ and $\bnabla=(1/2)\bsigma$ hold, and $W$ is trivial. 
Let $(\delta,\delta')$  be an adjacent pair in  $\M_{\bR}\backslash\scrH$ separated by a hyperplane $H\in \scrH$, and denote by $\delta_0$ the point that meets the line segment with endpoints $\delta$ and $\delta'$. Since $H\in \scrH$, there is a unique facet $F\in \scrF(\delta_0+\bnabla)$ such that $F$ is parallel to $H$ and $F\cap (\delta+\bnabla)\neq \emptyset$. 

\begin{lem}\label{toric boundary} With notation  same as above,  the following hold.
\begin{itemize}
\item[(1)] $F=F_{\chi}(\delta_0)$ for any $\chi\in \scrC_{\delta}\backslash \scrC_{\delta'}$.
\item[(2)] $\scrF_{(\delta,\delta')}=\{F\}$ holds.
\end{itemize}
\begin{proof}
(1) Let $\chi\in  \scrC_{\delta}\backslash \scrC_{\delta'}$. By Lemma \ref{chi boundary},  the facet $F_{\chi}(\delta_0)$ exists. It suffices to show that $F_{\chi}(\delta_0)$ ia parallel to $H$. To see this,  denote by $B_0\in \scrB(\delta_0+\bnabla)$ the supporting hyperplane such that $B_0\cap (\delta_0+\bnabla)=F_{\chi}(\delta_0)$. Then $B\defeq B_0+\delta-\delta_0\in \scrB(\delta+\bnabla)$  never contain $\chi$ by Proposition \ref{nabla sigma}\,(2). If $F_{\chi}(\delta_0)$ is not parallel to $H$, by a small perturbation of the pair $(\delta,\delta')$, we may assume that $\delta'-\delta$ is parallel to $F_{\chi}(\delta_0)$. But then $\delta-\delta_0$ is parallel to $B_0$, and so $B=B_0$ holds, which contradicts to $\chi\not\in B$.
\vspace{1mm}\\
(2) By (1), $F\in \scrF_{(\delta,\delta')}$. Conversely, for any $F'\in  \scrF_{(\delta,\delta')}$, there exists $\chi\in \scrC_{\delta}\backslash\scrC_{\delta'}$ such that $F'=F_{\chi}(\delta_0)$. Again $F=F_{\chi}(\delta_0)$ by (1), and so $F'=F$. 
\end{proof}
\end{lem}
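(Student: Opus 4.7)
The plan is to prove (1) first and then deduce (2) immediately. For (1), fix any $\chi \in \scrC_\delta \setminus \scrC_{\delta'}$, which is non-empty because crossing the wall $H$ must shift some lattice point in or out of $(\delta+\bnabla)\cap\M$, by the characterising property of $\scrH$ in Proposition~\ref{nabla sigma}(2). Lemma~\ref{chi boundary} places $\chi \in \partial(\delta_0 + \bnabla)$, so the face $F_\chi(\delta_0)$ of $\delta_0 + (1/2)\bsigma = \delta_0+\bnabla$ is defined (recall $\rho = 0$ for a torus). By the uniqueness clause defining $F$, it suffices to show that every supporting hyperplane $B_0 \in \scrB(\delta_0 + \bnabla)$ with $\chi \in B_0$ is parallel to $H$.

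The crucial tool is again Proposition~\ref{nabla sigma}(2): since $\delta \notin \scrH^W = \scrH$, no lattice point lies on $\partial(\delta + \bnabla)$, so in particular $\chi \notin \partial(\delta + \bnabla)$. Consider the parallel translate $B \defeq B_0 + (\delta - \delta_0)$, which is a supporting hyperplane of $\delta + \bnabla$. If $\chi \in B$, then $\chi \in B \cap (\delta + \bnabla) \subseteq \partial(\delta + \bnabla)$, a contradiction; so $\chi \notin B$. Since $\chi \in B_0$, this forces $B \neq B_0$, equivalently $\delta - \delta_0$ (and hence $\delta' - \delta$, which is collinear with it) is not parallel to $B_0$.

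Now argue by contradiction. Assume $B_0$ is not parallel to $H$. Then one can perturb $(\delta, \delta')$ to a new pair $(\widetilde\delta, \widetilde\delta')$ in the same chambers, still adjacent across $H$, with $\widetilde\delta' - \widetilde\delta$ parallel to $B_0$; by Lemma~\ref{independence}, $\scrC_\delta$ and $\scrC_{\delta'}$ are preserved, so $\chi$ still lies in $\scrC_{\widetilde\delta} \setminus \scrC_{\widetilde\delta'}$. Choosing the perturbation so that $\chi$ remains on the facet of $\widetilde\delta_0 + \bnabla$ parallel to the original $B_0$, one applies the preceding paragraph to conclude $\widetilde\delta' - \widetilde\delta \not\parallel B_0$: contradiction. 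Hence $B_0 \parallel H$. This gives $F_\chi(\delta_0) \subseteq F$, and equality holds because $\chi$ lies on exactly one of the two facets of $\delta_0 + \bnabla$ parallel to $H$ (they are disjoint), namely the one meeting $\delta + \bnabla$, which is $F$.

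Part~(2) follows: by the definition of $\scrF_{(\delta,\delta')}$ and Lemma~\ref{chi boundary}, every element is $F_\chi(\delta_0)$ for some $\chi \in \scrC_\delta \setminus \scrC_{\delta'}$, and (1) gives $F_\chi(\delta_0) = F$; conversely, $F \in \scrF_{(\delta,\delta')}$ because $\scrC_\delta \setminus \scrC_{\delta'}$ is non-empty. The principal difficulty is the perturbation step in the argument for (1): one must exhibit $(\widetilde\delta, \widetilde\delta')$ in the prescribed chambers with $\widetilde\delta' - \widetilde\delta$ parallel to $B_0$ \emph{and} with the perturbed $\widetilde\delta_0$ such that $\chi$ still lies on the translate of $B_0$ supporting $\widetilde\delta_0 + \bnabla$. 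These two codimension-one affine constraints are compatible precisely because $B_0 \not\parallel H$---the transversality of the linear subspace parallel to $B_0$ with the one parallel to $H$ is what powers the whole argument.
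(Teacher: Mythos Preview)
Your proof is correct and follows essentially the same approach as the paper's: both show that $\chi\notin B=B_0+(\delta-\delta_0)$ using Proposition~\ref{nabla sigma}(2), then derive a contradiction via a perturbation of $(\delta,\delta')$ if $B_0$ were not parallel to $H$. You are in fact more careful than the paper at two points: you work with an arbitrary supporting hyperplane $B_0$ through $\chi$ (rather than tacitly assuming $F_\chi(\delta_0)$ is a facet), and you make explicit the second perturbation constraint---that $\chi$ must remain on the translate of $B_0$ supporting $\widetilde\delta_0+\bnabla$---which the paper's terser argument leaves implicit when it writes ``$B=B_0$'' after perturbing.
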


\begin{lem}\label{l delta}
For any $\chi\in \scrC_{\delta}\backslash\scrC_{\delta'}$ and any  $\upbeta\in {\tiny \gwedge}^*\scrW_F^+$, it holds that $\chi+\upbeta\in \scrC_{\delta}\cap\scrC_{\delta'}$.
\begin{proof}
Lemma \ref{toric boundary}\,(1) implies that $\chi\in F$. Thus Lemma \ref{reflection} shows that 
\[
\chi+\rho=\delta_0-\frac{1}{2}\upbeta_F^++\sum_{\upbeta_{i_j}\in\scrW_F^0}a_j\upbeta_{i_j}
\]
for some $-1/2\leq a_j\leq 0$. Let us write $\scrW_F^+=\{\upbeta'_1,\cdots,\upbeta'_{d_F^+}\}$. Then there exists a proper subset $S\subsetneq \{1,\hdots,d_F^+\}$ such that $\upbeta=\sum_{i\in S}\upbeta'_i$. Pick an  element $t\in \{1,\hdots,d_F^+\}\backslash S$. Then we may assume that there exists a sufficiently small $0<\varepsilon \ll 1$ such that $\delta_0=\delta+\varepsilon \upbeta'_{t}$, since $\upbeta'_t$ is not parallel to $H$. Therefore,
\[
\chi+\upbeta=-\rho+\delta-\frac{1}{2}\sum_{i\not\in S\cup\{t\}}\upbeta'_i+\frac{1}{2}\sum_{i\in S}\upbeta'_i+\left(\varepsilon -\frac{1}{2}\right)\upbeta'_t+\sum_{\upbeta_{i_j}\in\scrW_F^0}a_j\upbeta_{i_j}.
\]
Thus $\chi+\upbeta\in\scrC_{\delta}$. If $\chi+\upbeta\not\in \scrC_{\delta'}$, $F_{\chi+\upbeta}(\delta_0)$ exists by Lemma \ref{chi boundary}. Then Lemma \ref{toric boundary}\,(2) shows that necessarily $\chi+\upbeta\in F$, which can not occur since $(\chi+\upbeta)-\chi=\upbeta$ is not parallel to $H$. Hence $\chi+\upbeta\in \scrC_{\delta'}$.
\end{proof}
\end{lem}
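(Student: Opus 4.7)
The plan is to write $\chi+\upbeta+\rho$ as an explicit element of $\delta_0+(1/2)\bsigma$ using the facet structure at $F$, and then use Lemma \ref{independence} to choose a convenient representative of $[\delta]$ so that membership in $\delta+(1/2)\bsigma$ (and hence in $\scrC_\delta$) becomes transparent; containment in $\scrC_{\delta'}$ will then follow by a short contradiction argument using Lemma \ref{chi boundary} and Lemma \ref{toric boundary}.

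First, since $\chi\in\scrC_{\delta}\setminus\scrC_{\delta'}$, Lemma \ref{chi boundary} gives $\chi\in\partial(\delta_0+\bnabla)$ and Lemma \ref{toric boundary}(1) yields $\chi\in F$. Applying Lemma \ref{reflection} to $F$ expresses $\chi+\rho$ as $\delta_0-\tfrac12\upbeta_F^++\sum_{\upbeta_{i_j}\in\scrW_F^0}a_j\upbeta_{i_j}$ with $-\tfrac12\le a_j\le0$. Writing $\scrW_F^+=\{\upbeta'_1,\dots,\upbeta'_{d_F^+}\}$ and $\upbeta=\sum_{i\in S}\upbeta'_i$ for a proper nonempty $S\subsetneq\{1,\dots,d_F^+\}$, I would then expand
\[
\chi+\upbeta+\rho=\delta_0-\tfrac12\sum_{i\notin S}\upbeta'_i+\tfrac12\sum_{i\in S}\upbeta'_i+\sum_{\upbeta_{i_j}\in\scrW_F^0}a_j\upbeta_{i_j}.
\]
Because $S$ is a proper subset, I can pick $t\in\{1,\dots,d_F^+\}\setminus S$. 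Since $\upbeta'_t\in\scrW_F^+$ has strictly positive pairing with the unique $\lambda\in\scrL_F$ (in the toric case $F$ is a facet, so $\scrL_F$ is a singleton), $\upbeta'_t$ is not parallel to $H$; hence I may use Lemma \ref{independence} to replace $\delta$ by $\delta_0-\varepsilon\upbeta'_t$ for some small $\varepsilon>0$ (still in the chamber $C(\delta)$). Substituting gives
\[
\chi+\upbeta=-\rho+\delta-\tfrac12\!\!\sum_{i\notin S\cup\{t\}}\!\!\upbeta'_i+\tfrac12\sum_{i\in S}\upbeta'_i+\Bigl(\varepsilon-\tfrac12\Bigr)\upbeta'_t+\sum_{\upbeta_{i_j}\in\scrW_F^0}a_j\upbeta_{i_j},
\]
and each coefficient lies in $[-1/2,1/2]$, so $\chi+\upbeta\in(\delta-\rho+(1/2)\bsigma)\cap\M=\scrC_\delta$.

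For the other containment, suppose for contradiction that $\chi+\upbeta\notin\scrC_{\delta'}$. By Lemma \ref{chi boundary} we would then have $\chi+\upbeta\in\partial(\delta_0+\bnabla)$, and Lemma \ref{toric boundary}(2) would force $\chi+\upbeta\in F$. But $\chi\in F$ and $F$ is parallel to $H$, so the difference $\upbeta$ would have to lie in the direction of $F$; since $\langle\upbeta,\lambda\rangle=\sum_{i\in S}\langle\upbeta'_i,\lambda\rangle>0$ while $\lambda$ is normal to $F$, this is a contradiction. Hence $\chi+\upbeta\in\scrC_{\delta'}$, completing the proof. The only subtle point is justifying the change of representative $\delta\leadsto\delta_0-\varepsilon\upbeta'_t$ via Lemma \ref{independence}, which requires that $\delta_0-\varepsilon\upbeta'_t$ lies in $C(\delta)$ for small $\varepsilon$; this follows because $\upbeta'_t$ and $\delta-\delta_0$ point to the same side of $H$ (both having positive pairing with $\lambda$), and for $\varepsilon$ small enough no other hyperplane of $\scrH$ is crossed.
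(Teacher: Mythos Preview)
Your proof is correct and follows essentially the same route as the paper's: you locate $\chi$ on the facet $F$ via Lemma \ref{toric boundary}, use the description of $F$ from Lemma \ref{reflection} to write $\chi+\upbeta$ explicitly, replace $\delta$ by $\delta_0-\varepsilon\upbeta'_t$ inside its chamber to read off membership in $\scrC_\delta$, and then argue by contradiction using Lemma \ref{chi boundary} and Lemma \ref{toric boundary}(2) for $\scrC_{\delta'}$. There is one small slip in your final parenthetical: by Lemma \ref{orientation lem} one has $\langle\delta'-\delta,\lambda\rangle>0$, so $\langle\delta-\delta_0,\lambda\rangle<0$, not $>0$; it is $-\varepsilon\upbeta'_t$ (not $\upbeta'_t$) that points to the same side of $H$ as $\delta-\delta_0$, and this is exactly what you need for $\delta_0-\varepsilon\upbeta'_t\in C(\delta)$.
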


\begin{lem}\label{n delta}
The equality $\scrC_{\delta}^F=\scrC_{\delta} \backslash \scrC_{\delta'}$ holds, and hence $\scrC_\delta \backslash \scrC_{\delta}^F= \scrC_{\delta} \cap \scrC_{\delta'}$.
\begin{proof}
Let $\chi\in \scrC_{\delta}^F$. Then $\chi\in F$, and so $\chi\not\in \scrC_{\delta'}$ by Lemma \ref{chi boundary}. The other inclusion $\scrC_{\delta}\backslash \scrC_{\delta'}\subset \scrC_{\delta}^F$ follows from  Lemma \ref{toric boundary}\,(1). 
\end{proof}
\end{lem}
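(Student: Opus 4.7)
The plan is to prove the two inclusions $\scrC_\delta^F \subseteq \scrC_\delta \backslash \scrC_{\delta'}$ and $\scrC_\delta \backslash \scrC_{\delta'} \subseteq \scrC_\delta^F$ separately, using Lemma \ref{chi boundary} (which characterises $\scrC_\delta \backslash \scrC_{\delta'}$ as $\scrC_\delta \cap \partial(\delta_0 + \bnabla)$) together with Lemma \ref{toric boundary}\,(1) (which says that in the toric setting the relevant maximal codimensional face is forced to be $F$). The second assertion in the lemma is then a set-theoretic triviality.

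For the inclusion $\scrC_\delta^F \subseteq \scrC_\delta \backslash \scrC_{\delta'}$, I would take $\chi \in \scrC_\delta^F$, so by definition $F = F_\chi(\delta_0)$ exists. Since $G$ is a torus one has $\rho = 0$ and so the defining condition on $F_\chi(\delta_0)$ gives $\chi \in F \subseteq \partial(\delta_0 + (1/2)\bsigma) = \partial(\delta_0 + \bnabla)$. Applying Lemma \ref{chi boundary} now yields $\chi \notin \scrC_{\delta'}$, hence $\chi \in \scrC_\delta \backslash \scrC_{\delta'}$.

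For the reverse inclusion, take $\chi \in \scrC_\delta \backslash \scrC_{\delta'}$. By Lemma \ref{chi boundary}, $\chi \in \partial(\delta_0 + \bnabla) \cap \M$, which in the toric setting gives $\chi \in \partial(\delta_0 + \bnabla) \cap \M^+_{\bR}$, so that the face $F_\chi(\delta_0)$ is defined. Lemma \ref{toric boundary}\,(1) then identifies this face with $F$, so $\chi \in \scrC_\delta^F$, completing the proof of the first equality.

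For the second assertion, $\scrC_\delta \backslash \scrC_\delta^F = \scrC_\delta \backslash (\scrC_\delta \backslash \scrC_{\delta'}) = \scrC_\delta \cap \scrC_{\delta'}$ is immediate from the first. There is no substantial obstacle here since everything is already set up by Lemma \ref{chi boundary} and Lemma \ref{toric boundary}; the only subtlety is checking that $F_\chi(\delta_0)$ is well-defined for $\chi \in \scrC_\delta \backslash \scrC_{\delta'}$, which follows immediately from the triviality of $W$ (hence $\M^+_{\bR} = \M_{\bR}$) in the toric case.
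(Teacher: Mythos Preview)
Your proof is correct and follows essentially the same approach as the paper: one inclusion via Lemma~\ref{chi boundary} (using $\chi \in F \subseteq \partial(\delta_0+\bnabla)$) and the reverse inclusion via Lemma~\ref{toric boundary}\,(1). You merely spell out a little more explicitly why $\rho=0$ and $\M^+_{\bR}=\M_{\bR}$ make the relevant objects well-defined in the toric setting.
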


\begin{thm} \label{mutation MMA toric}
With notation same as above, the following isomorphisms hold up to additive closure.

\begin{itemize}
\item[(1)] 
$
M_{\delta}\cong \mu_{M_{\delta\cap\delta'}}^{d_{F}^+-1}(M_{\delta'})\cong\mu_{M_{\delta\cap\delta'}}^{-d_{F^{*}}^++1}(M_{\delta'})
$.
\vspace{1mm}\item[(2)]
$
M_{\delta'}\cong \mu_{M_{\delta\cap\delta'}}^{d_{F^{*}}^+-1}(M_{\delta})\cong \mu_{M_{\delta\cap\delta'}}^{-d_{F}^++1}(M_{\delta}).
$
\end{itemize}
In particular, the iterated mutation of $M_{\delta}$ at $M_{\delta\cap\delta'}$ is periodic, and the periodicity is $d_F^+ + d_{F^*}^+ - 2$.
\begin{proof}
By Lemma \ref{n delta}, $N_{\delta}^F=M_{\delta\cap\delta'}$. By Lemma \ref{l delta}, $L_{\delta}^F\in \add M_{\delta\cap\delta'}$. Moreover by \eqref{delta' decomp}, $M_{\delta'}=M_{\delta\cap \delta'}\oplus M_{\delta'}^{F^*}$. 
Therefore, Theorem \ref{main thm} gives an isomorphism 
\begin{equation}
M_{\delta}\cong \mu_{M_{\delta\cap \delta'}}^{d_F^+-1}(M_{\delta'}),
\end{equation}
which yields an isomorphism $M_{\delta'}\cong\mu_{M_{\delta\cap\delta'}}^{-d_{F}^++1}(M_{\delta})$ by Theorem \ref{IW mutation}\,(1). The remaining isomorphisms follow from the identical argument for the swapped adjacent pair $(\delta',\delta)$.
\end{proof}
\end{thm}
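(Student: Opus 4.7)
The plan is to reduce the theorem to a direct application of Theorem \ref{main thm} by exploiting the drastic simplifications available in the toric setting. Since $G$ is a torus the Weyl group $W$ is trivial, so $\ell(w_0)=0$ and the dagger operation collapses to $F \mapsto F^{\dag} = w_0(F^*) = F^*$. Moreover, Lemma \ref{toric boundary}\,(2) reduces $\scrF_{(\delta,\delta')}$ to the single element $F$, so the decomposition \eqref{decomp} becomes $M_\delta = M_{\delta\cap\delta'} \oplus M_\delta^F$, and by \eqref{delta' decomp} the companion decomposition reads $M_{\delta'} = M_{\delta \cap \delta'} \oplus M_{\delta'}^{F^*}$.

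Next I would invoke Lemma \ref{n delta} to identify $N_\delta^F = M_{\delta\cap\delta'}$, and Lemma \ref{l delta} to conclude that $L_\delta^F \in \add M_{\delta\cap\delta'}$. Thus $\add L_\delta^F \subseteq \add N_\delta^F$, which is exactly the condition needed for the iterated $(\add L_\delta^F)_{N_\delta^F}$-exchange appearing in Theorem \ref{main thm} to coincide, up to additive closure, with an iterated $(\add M_{\delta\cap\delta'})$-exchange — the defining operation of mutation at $M_{\delta\cap\delta'}$ in the sense of Definition \ref{not: mutation}.

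Applying Theorem \ref{main thm}\,(3) with $\ell(w_0)=0$ then gives
\[
M_\delta^F \in \cE_{(L_\delta^F, N_\delta^F)}^{d_F^+ - 1}(M_{\delta'}^{F^*}),
\]
and combining with the two direct sum decompositions above produces the first isomorphism of (1), namely $M_\delta \cong \mu_{M_{\delta\cap\delta'}}^{d_F^+ - 1}(M_{\delta'})$, up to additive closure. The left-mutation companion $M_{\delta'} \cong \mu_{M_{\delta\cap\delta'}}^{-d_F^+ + 1}(M_\delta)$ follows at once from Theorem \ref{IW mutation}\,(1). Statement (2) is obtained by running the identical argument with the swapped pair $(\delta',\delta)$, which interchanges $F$ with $F^*$ via Lemma \ref{mu lem}\,(1). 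Finally, composing (1) and (2) yields $\mu_{M_{\delta\cap\delta'}}^{d_F^+ + d_{F^*}^+ - 2}(M_{\delta'}) \cong M_{\delta'}$, which is the asserted periodicity.

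The one subtlety to verify is that iterated $(\add L_\delta^F)_{N_\delta^F}$-exchanges really do coincide with honest mutations at $N_\delta^F = M_{\delta\cap\delta'}$ under the additive-closure identification. The inclusion $\add L_\delta^F \subseteq \add M_{\delta\cap\delta'}$ coming from Lemma \ref{l delta} is indispensable: it allows any $(\add L_\delta^F)_{N_\delta^F}$-approximation to be enlarged to an $(\add M_{\delta\cap\delta'})$-approximation by appending a split surjection from a summand in $\add M_{\delta\cap\delta'}$, so the resulting kernels agree modulo direct summands in $\add M_{\delta\cap\delta'}$ — exactly the ambiguity built into the mutation construction.
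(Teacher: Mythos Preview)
Your proposal is correct and follows essentially the same approach as the paper's proof: both invoke Lemma~\ref{n delta} and Lemma~\ref{l delta} to identify $N_\delta^F=M_{\delta\cap\delta'}$ and $L_\delta^F\in\add M_{\delta\cap\delta'}$, apply Theorem~\ref{main thm} (with $\ell(w_0)=0$ and $F^{\dag}=F^*$ in the toric case) to obtain the first isomorphism of (1), use Theorem~\ref{IW mutation}\,(1) to invert it, and then swap $(\delta,\delta')$ for the remaining isomorphisms. Your last paragraph spelling out why the $(\add L_\delta^F)_{N_\delta^F}$-exchange agrees with the Iyama--Wemyss mutation up to additive closure is a useful clarification that the paper leaves implicit.
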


\begin{prop}\label{composition mut}
Set $N\defeq M_{\delta\cap\delta'}$ and $d\defeq d_F^+$.
Assume that $\delta'-\delta$ has the same orientation as $\ell$ with respect to $H$. 
\begin{itemize}
\item[$(1)$]  There exists a tilting bundle $\scrV_i$ on $\X$ such that 
\[
\varphi_*(\scrV_i)\cong \mu_N^{-i}(M_{\delta})
\]
for each $1\leq i\leq d-1$.
\item[$(2)$] For any $i\geq 0$, the $\Lambda_i\defeq \End(\mu_N^{-i}(M_{\delta}))$-module $T_i\defeq \Hom(\mu_N^{-i}(M_{\delta}),\mu_N^{-i-1}(M_{\delta}))$ is a tilting module, and the mutation functor 
\[
\Phi_N\colon\Db(\fmod \Lambda_i)\simto \Db(\fmod \Lambda_{i+1})
\] 
is given by $\RHom(T_i,-)$.
\item[$(3)$] The following diagram commutes
\[
\begin{tikzpicture}[xscale=1.3]
\node (A0) at (3,0) {$\Db(\fmod\Lambda_{\delta})$};
\node (A5) at (7,0) {$\Db(\fmod\Lambda_{\delta'})$,};
\node (B4) at (5,1.5) {$\Db(\coh\X)$};
\draw[->] (A0) -- node[above] {$\scriptstyle\Phi_N^{d-1}$}(A5);
\draw[->] (B4) -- node[auto=right] {$\scriptstyle\RHom_{}(\scrV_{\delta},-)$}(A0);
\draw[->] (B4) -- node[auto=left] {$\scriptstyle\RHom_{}(\scrV_{\delta'},-)$}(A5);
\end{tikzpicture}
\]
where the bottom functor denotes the composition of mutation fucntors as in \eqref{iterated mutation}.
\end{itemize}
\begin{proof}
This proof adopts the same notation as in the proof of Theorem \ref{main thm}.
For the unique facet $F$, taking the direct sum of the exact sequences \eqref{long ex seq 4.9} for all $\chi \in \scrC_{\delta}^F$ gives
\begin{equation*} \label{for tilting mutation long}
0 \to 
\bigoplus_{\chi \in \scrC_{\delta}^F} V_\X(\chi) 
\to 
\bigoplus_{\chi \in \scrC_{\delta}^F} B^1_{F, \chi} 
\to \cdots \to 
\bigoplus_{\chi \in \scrC_{\delta}^F} B^{d_F^+ - 1}_{F, \chi} 
\to 
\bigoplus_{\chi \in \scrC_{\delta}^F} V_\X\left(\mu_{(\delta, \delta')}(\chi) \right) \to 0,
\end{equation*}
where $\mu_{(\delta, \delta')}(\chi) = (\chi + \upbeta^+_F)^+ = \chi + \upbeta^+_F$ as in Section \ref{section combinatiorics},
and this exact sequence is the splicing of the short exact sequences
\begin{equation} \label{for tilting mutation}
    0 \to \bigoplus \cE_\chi^{i-1} \to \bigoplus B^i_{F,\chi} \to \bigoplus \cE_\chi^i \to 0.
\end{equation}
Now Lemma \ref{n delta} implies that $\cN = \bigoplus_{\chi \in \scrC_{\delta} \cap \scrC_{\delta'}} V(\chi)$, 
and then Lemma \ref{n delta}, Proposition \ref{crossing separrationg hyperplane}, and Proposition \ref{CSW2} show that 
\[ \text{$\scrV_{\delta} = \cN \oplus \bigoplus_{\chi \in \scrC_{\delta}^F} V(\chi)$ \hspace{3mm} and \hspace{3mm} $\scrV_{\delta'} = \cN \oplus \bigoplus_{\chi \in \scrC_{\delta}^F} V(\mu_{(\delta, \delta')}(\chi))$.} \]
In addition, Lemma \ref{l delta} combined with Proposition \ref{key cpx}\,(2) shows that $B^i_{F, \chi} \in \add \cN$ for all $\chi \in \scrC_{\delta}^F$ and all $1 \leq i \leq d_F^+ -1$.
Then applying \cite[Lemma 6]{hara22}\footnote{\cite[Lemma 6]{hara22} is stated only for schemes, but the proof also works for stacks.}
implies that the bundle
\[ \scrV_i \defeq \cN \oplus \bigoplus \cE_\chi^i \]
is tilting for all $0 \leq i \leq d_F^+ -1$.
Since $N \simeq \phi_*(\cN)$ by definition, applying Proposition \ref{tilting mutation} repeatedly to the sequences \eqref{for tilting mutation} gives the results (1), (2) and (3).
\end{proof}
\end{prop}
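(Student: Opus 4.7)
The plan is to construct, for each $0 \leq i \leq d-1$, a tilting bundle $\scrV_i$ on $\X$ interpolating between $\scrV_\delta$ and $\scrV_{\delta'}$ as syzygies inside the exact Koszul-type complexes $B^\bullet_{F,\chi}$ built in the proof of Theorem~\ref{main thm}, and then to iteratively apply Proposition~\ref{tilting mutation} to descend the mutation statements from $\X$ to $\Spec R$. Since $G$ is a torus, $\ell(w_0)=0$, and for each $\chi \in \scrC_\delta^F$ the complex $B^\bullet_{F,\chi}$ is a length-$d$ exact sequence from $V_\X(\chi)$ to $V_\X(\chi+\upbeta_F^+)$ with interior terms in $\add \cN$, where $\cN \defeq \bigoplus_{\chi' \in \scrC_\delta \cap \scrC_{\delta'}} V_\X(\chi')$ by Lemma~\ref{l delta}. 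Setting $\cE_\chi^i \defeq \Ker(\delta^{i+1})$ splits each complex into short exact sequences, and I define
\[ \scrV_i \defeq \cN \oplus \bigoplus_{\chi \in \scrC_\delta^F} \cE_\chi^i. \]
Using Lemma~\ref{n delta} together with Lemma~\ref{mu lem}\,(2), one checks $\scrV_0 = \scrV_\delta$ and $\scrV_{d-1} = \scrV_{\delta'}$.

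For part~(1), I would first prove by induction on $i$ that $\scrV_i$ is tilting. The inductive step applies \cite[Lemma~6]{hara22}---a tilting-preservation principle stating that mutating a tilting bundle through a short exact sequence whose middle term lies in the additive closure of the fixed summand yields another tilting bundle---to the splicing sequence $0 \to \bigoplus_\chi \cE_\chi^{i-1} \to \bigoplus_\chi B^i_{F,\chi} \to \bigoplus_\chi \cE_\chi^i \to 0$. Although originally stated for schemes, the proof goes through verbatim for Deligne--Mumford stacks. With each $\scrV_i$ now known to be tilting, Proposition~\ref{tilting mutation}\,(2) applied to the same short exact sequence yields $\varphi_*(\scrV_i) \cong \mu_N^-(\varphi_*(\scrV_{i-1}))$ up to additive closure; an induction starting from $\varphi_*(\scrV_0) = M_\delta$ then gives $\varphi_*(\scrV_i) \cong \mu_N^{-i}(M_\delta)$, proving~(1).

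For parts~(2) and~(3), I would apply Proposition~\ref{tilting mutation}\,(3)--(4) to each splicing sequence with $\cL = \cN$, $\cK = \bigoplus \cE_\chi^{i-1}$, $\cC = \bigoplus \cE_\chi^i$: this identifies the mutation functor $\Phi_N \colon \Db(\fmod \Lambda_{i-1}) \xrightarrow{\sim} \Db(\fmod \Lambda_i)$ with $\RHom(T_{i-1},-)$ for the tilting module $T_{i-1} = \Hom_R(\mu_N^{-(i-1)}(M_\delta), \mu_N^{-i}(M_\delta))$, and simultaneously supplies a commutative triangle $\RHom(\scrV_i, -) \cong \Phi_N \circ \RHom(\scrV_{i-1}, -)$. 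This establishes part~(2) for $0 \leq i \leq d-2$, and the result extends to all $i \geq 0$ by the periodicity of Theorem~\ref{mutation MMA toric} after swapping the roles of $\delta$ and $\delta'$. Stacking the $d-1$ triangles for $i = 1, \ldots, d-1$ then yields part~(3), namely $\RHom(\scrV_{\delta'}, -) \cong \Phi_N^{d-1} \circ \RHom(\scrV_\delta, -)$.

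The main obstacle lies in the use of Proposition~\ref{tilting mutation}\,(4), which requires the codimension bound $\codim \Sing(R) \geq 3$ in order to identify the stack-level Hom with the ring-level Hom as the tilting module $T_i$. In the generic quasi-symmetric toric setting this should follow from the standard stratification of the unstable locus, but if the singular locus happens to have codimension only $2$ one must instead argue directly that the relevant Homs coincide for the specific reflexive summands involved, using full faithfulness of $\varphi_*$ on reflexive sheaves (Lemma~\ref{pushdown}).
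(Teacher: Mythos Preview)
Your proposal is correct and follows essentially the same route as the paper: construct the intermediate syzygy bundles $\scrV_i = \cN \oplus \bigoplus_\chi \cE_\chi^i$ from the spliced Koszul-type complexes, invoke \cite[Lemma~6]{hara22} to verify each is tilting, and then iterate Proposition~\ref{tilting mutation} on the short exact sequences to obtain (1)--(3). The concern you flag about Proposition~\ref{tilting mutation}\,(4) and the codimension bound $\codim\Sing(R)\geq 3$ is legitimate---the paper's own proof simply cites Proposition~\ref{tilting mutation} wholesale without isolating this hypothesis, so you are being more careful here rather than missing something; in practice the identification of stack-level and ring-level Homs in this toric setting is handled via the genericity assumption and Lemma~\ref{pushdown}, exactly along the lines you sketch in your final paragraph.
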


In the following corollaries, put $N\defeq M_{\delta\cap\delta'}$ and $d\defeq d_F^+$.

\begin{cor}\label{toric main}

Assume that $\delta'-\delta$ has the same orientation as $\ell$ with respect to $H$.
\begin{itemize}
\item[$(1)$] The following diagram of equivalences commutes:
\[
\begin{tikzpicture}[xscale=1.3]
\node (A0) at (2,0) {$\Db(\fmod\Lambda_{\delta})$};
\node (A5) at (8,0) {$\Db(\fmod\Lambda_{\delta'})$};
\node (B0) at (2,1.5) {$\scrM(\delta+\bnabla)$};
\node (B4) at (4.75,1.5) {$\Db(\coh\X)$};
\node (B5) at (8,1.5) {$\scrM(\delta'+\bnabla)$};
\draw[->] (A0) -- node[above] {$\scriptstyle\Phi_N^{d-1}$}(A5);
\draw[->] (B0) -- node[above] {$\scriptstyle\res$}(B4);
\draw[->] (B4) -- node[above] {$\scriptstyle\res^{-1}$}(B5);
\draw[->] (B0) -- node[left] {$\scriptstyle\RHom_{}(\scrT_{\delta},-)$}(A0);
\draw[->] (B5) -- node[right] {$\scriptstyle\RHom_{}(\scrT_{\delta'},-)$}(A5);
\end{tikzpicture}
\]
\item[$(2)$] Write $T\defeq\Hom_R(M_{\delta},M_{\delta'})$. Then $\RHom(T,-)\cong \Phi_N^{d-1}$ holds.
\end{itemize}
\begin{proof}
(1) Since  $\scrV_{\delta}= \res(\scrT_{\delta})$  by definition, there exists a natural isomorphism
\begin{equation}\label{res tilt}
\RHom(\scrT_{\delta},-)\cong \RHom(\scrV_{\delta},-)\circ \res
\end{equation}
of functors. Therefore, the result follows from Proposition \ref{composition mut}\,(2).\vspace{1mm}\\
(2) Since $M_{\delta}\cong \varphi_*(\scrV_{\delta})$, there is an isomorphism $T\cong T_{(\delta,\delta')}=\Hom_{\X}(\scrV_{\delta},\scrV_{\delta'})$. Thus (1) and Theorem \ref{wall crossing}\,(1) give isomorphisms $\RHom(T,-)\cong \RHom(T_{(\delta,\delta')},-)\cong \Phi_N^{d-1}$.
\end{proof}
\end{cor}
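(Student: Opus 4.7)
The strategy is to reduce both parts to results already proved in Proposition \ref{composition mut} and Theorem \ref{wall crossing} by identifying the various tilting objects and modules across the two natural sides of the picture: the magic window side sitting in $\Db(\coh[X/G])$, and the GIT side on $\X$.

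For part (1), the key link is equation \eqref{tilting bdl}: the tilting object $\scrT_\delta$ on $[X/G]$ and the tilting bundle $\scrV_\delta$ on $\X$ are related by $\scrV_\delta = \res(\scrT_\delta)$, and similarly for $\delta'$. Since $\res\colon \scrM(\delta+\bnabla)\simto \Db(\coh\X)$ is an equivalence by Theorem \ref{hl-s}, one obtains a natural functor isomorphism
\[
\RHom_{[X/G]}(\scrT_\delta,-) \;\cong\; \RHom_{\X}(\scrV_\delta,-)\circ \res,
\]
and the analogous statement for $\delta'$. Thus the left and right quadrilaterals in the target diagram are identified with triangles whose apex is $\Db(\coh \X)$ and whose two legs are the tilting equivalences for $\scrV_\delta$ and $\scrV_{\delta'}$. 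The desired commutativity therefore reduces to the commutativity of the triangle in Proposition \ref{composition mut}\,(3), which provides exactly the factorization of $\Phi_N^{d-1}$ through $\Db(\coh\X)$.

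For part (2), the isomorphism $M_\delta \cong \varphi_*(\scrV_\delta)$ from \eqref{gen isom}, combined with the fully faithfulness of $\varphi_*$ on reflexive sheaves (available because $X$ is generic), yields a natural isomorphism
\[
T \;=\; \Hom_R(M_\delta,M_{\delta'}) \;\cong\; \Hom_{\X}(\scrV_\delta,\scrV_{\delta'}) \;=\; T_{(\delta,\delta')}.
\]
Theorem \ref{wall crossing}\,(1) shows that $\RHom(T_{(\delta,\delta')},-)$ realizes the window-crossing equivalence under the two tilting identifications, and part (1) just proved identifies that same composition with $\Phi_N^{d-1}$. Chaining the two isomorphisms gives $\RHom(T,-) \cong \Phi_N^{d-1}$, as claimed.

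The main subtle point will be verifying the displayed identification $\RHom_{[X/G]}(\scrT_\delta,-) \cong \RHom_{\X}(\scrV_\delta,-)\circ \res$ as an identification of triangulated functors rather than merely on objects. This is a standard dg-lift argument, and is carried out in exactly the style used in the proof of Theorem \ref{wall crossing}\,(2): one lifts $\res$ and the two $\RHom$ functors to quasi-equivalences of dg-enhancements, uses adjunction for the restriction, and observes that $\scrV_\delta = \res(\scrT_\delta)$ then forces the required natural isomorphism. Beyond this point the proof is purely formal bookkeeping of equivalences already established.
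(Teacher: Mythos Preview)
Your proof is correct and follows essentially the same approach as the paper: reduce (1) to the commutative triangle of Proposition \ref{composition mut}\,(3) via the identification $\scrV_\delta = \res(\scrT_\delta)$, and deduce (2) by identifying $T$ with $T_{(\delta,\delta')}$ through $M_\delta \cong \varphi_*(\scrV_\delta)$ and then combining (1) with Theorem \ref{wall crossing}. The only minor slip is that the commutative diagram you invoke from Theorem \ref{wall crossing} is its part (2), not part (1).
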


The next corollary is an analogy of \cite[Theorem 4.2]{HomMMP}. 

\begin{cor} \label{HomMMP4.2 toric}
Let $\ell_1,\ell_2\in (\M_{\bR})_{\rm gen}$ be generic elements, and set $\X_i\defeq [X^{\rm ss}(\ell_i)/G]$. If $\delta'-\delta$ has the same orientation as $\ell_2$, then the following diagram commutes.
\[
\begin{tikzpicture}[xscale=1.3]
\node (A0) at (2,0) {$\Db(\fmod\Lambda_{\delta})$};
\node (A5) at (8,0) {$\Db(\fmod\Lambda_{\delta'})$};
\node (B0) at (2,1.5) {$\Db(\coh\X_1)$};
\node (B4) at (4.75,1.5) {$\scrM(\delta+\bnabla)$};
\node (B5) at (8,1.5) {$\Db(\coh\X_2)$};
\draw[->] (A0) -- node[above] {$\scriptstyle\Phi_N^{d-1}$}(A5);
\draw[->] (B0) -- node[above] {$\scriptstyle\res^{-1}$}(B4);
\draw[->] (B4) -- node[above] {$\scriptstyle\res$}(B5);
\draw[->] (B0) -- node[left] {$\scriptstyle\RHom_{}(\scrV_{\delta},-)$}(A0);
\draw[->] (B5) -- node[right] {$\scriptstyle\RHom_{}(\scrV_{\delta'},-)$}(A5);
\end{tikzpicture}
\]
\begin{proof}
Proposition \ref{composition mut}\,(2) gives an isomorphism
\[
\RHom(\scrV_{\delta'},-)\cong \Phi_N^{d-1}\circ\RHom(\scrV_{\delta},-)
\]
of functors from $\Db(\coh \X_2)$ to $\Db(\fmod\Lambda_{\delta'})$. Therefore the result follows from the following isomorphisms
\begin{align*}
\RHom(\scrV_{\delta'},-)\circ \res^{-1}\circ \res&\cong\Phi_N^{d-1}\circ\RHom(\scrV_{\delta},-)\circ \res^{-1}\circ \res\\
&\cong\Phi_N^{d-1}\circ\RHom(\scrT_{\delta},-)\circ \res\\
&\cong\Phi_N^{d-1}\circ\RHom(\scrV_{\delta},-),
\end{align*}
where the second and  third isomorphisms follow from \eqref{res tilt}.
\end{proof}
\end{cor}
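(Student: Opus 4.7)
The plan is to reduce the claim to the single-stack assertion of Proposition \ref{composition mut}\,(3) applied to $\X=\X_2$. The essential point is that the tilting bundles appearing in the diagram are canonically restrictions of the same objects from the magic window: writing $\res_i\colon \scrM(\delta+\bnabla)\simto \Db(\coh\X_i)$ for the restriction equivalences of Theorem \ref{hl-s}, the bundle $\scrV_\delta$ on $\X_1$ equals $\res_1(\scrT_\delta)$ and the one on $\X_2$ equals $\res_2(\scrT_\delta)$, and likewise for $\scrV_{\delta'}$. Consequently the composite equivalence $\res_2\circ\res_1^{-1}\colon \Db(\coh\X_1)\simto\Db(\coh\X_2)$ sends the $\X_1$-incarnation of $\scrV_\delta$ to its $\X_2$-incarnation, and by lifting to dg enhancements (as already employed in the proof of Theorem \ref{wall crossing}\,(2)) one obtains a functor isomorphism
\begin{equation*}
\RHom_{\X_1}(\scrV_\delta,-)\;\cong\; \RHom_{\X_2}\bigl(\scrV_\delta,\res_2\circ\res_1^{-1}(-)\bigr).
\end{equation*}

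The orientation hypothesis that $\delta'-\delta$ agrees with $\ell_2$ across the separating hyperplane is precisely what is required to invoke Proposition \ref{composition mut}\,(3) for the stack $\X_2$, which yields the functorial isomorphism
\begin{equation*}
\RHom_{\X_2}(\scrV_{\delta'},-)\;\cong\;\Phi_N^{d-1}\circ\RHom_{\X_2}(\scrV_\delta,-)
\end{equation*}
of functors $\Db(\coh\X_2)\to \Db(\fmod\Lambda_{\delta'})$. Precomposing both sides with $\res_2\circ\res_1^{-1}$ and substituting the first display then identifies the top-path functor $\RHom_{\X_2}(\scrV_{\delta'},-)\circ\res_2\circ\res_1^{-1}$ with the bottom-path functor $\Phi_N^{d-1}\circ\RHom_{\X_1}(\scrV_\delta,-)$, giving the required commutativity.

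No substantive obstacle is anticipated, as the argument is a formal chase assembling two ingredients: the compatibility of the restriction equivalences with the tilting bundles arising from magic-window objects, and Proposition \ref{composition mut}\,(3) applied on the single stack $\X_2$. The only subtlety worth flagging is ensuring that the equivalence $\res_2\circ\res_1^{-1}$ intertwines the derived $\Hom$-functors, which is exactly where the dg enhancement used in Theorem \ref{wall crossing}\,(2) is invoked.
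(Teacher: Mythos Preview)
Your proof is correct and follows essentially the same route as the paper. Both arguments apply Proposition~\ref{composition mut} on the stack $\X_2$ to obtain $\RHom_{\X_2}(\scrV_{\delta'},-)\cong\Phi_N^{d-1}\circ\RHom_{\X_2}(\scrV_\delta,-)$, and then transport along $\res_2\circ\res_1^{-1}$; your dg-enhancement step is exactly the identity \eqref{res tilt}, $\RHom(\scrT_\delta,-)\cong\RHom(\scrV_\delta,-)\circ\res$, applied once for each $\X_i$. (Incidentally, your citation of part~(3) of Proposition~\ref{composition mut} is the correct one for the commutative triangle being used; the paper's reference to~(2) appears to be a slip.)
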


\begin{rem}
With the same notations as in Proposition \ref{composition mut}, put $\X' \defeq [X^{\rm ss}(-\ell)/G]$.
The composition
\begin{align*} \phi_{\delta, \delta'} \colon 
\Db(\coh\X) \xrightarrow{\res^{-1}} \scrM(\delta+\bnabla) \xrightarrow{\res} \Db(\coh\X') \xrightarrow{\res^{-1}} \scrM(\delta'+\bnabla) \xrightarrow{\res} \Db(\coh\X) 
\end{align*}
gives a natural autoequivalence of $\Db(\coh\X)$ associated to the wall-crossing.
Similarly, there is the autoequivalence of $\Db(\coh\X')$ that is given by
\begin{align*} \phi'_{\delta', \delta} \colon 
\Db(\coh\X') \xrightarrow{\res^{-1}} \scrM(\delta' +\bnabla) \xrightarrow{\res} \Db(\coh\X) \xrightarrow{\res^{-1}} \scrM(\delta+\bnabla) \xrightarrow{\res} \Db(\coh\X').
\end{align*}

Note that Proposition \ref{composition mut} also implies that 
$M_{\delta'} \cong \mu_{M_{\delta\cap\delta'}}^{-d_{F}^++1}(M_{\delta})$.
Then Proposition \ref{composition mut} and Corollary \ref{HomMMP4.2 toric}  yield the following commutative diagram

\[
\begin{tikzpicture}[xscale=1.3]
\node (A0) at (3,0) {$\Db(\fmod\Lambda_{\delta})$};
\node (A5) at (5,0) {$\Db(\fmod\Lambda_{\delta'})$};
\node (A10) at (7,0) {$\Db(\fmod\Lambda_{\delta})$};
\node (B4) at (3,1.8) {$\Db(\coh\X)$};
\node (B8) at (5,1.8) {$\Db(\coh\X)$};
\node (B10) at (7,1.8) {$\Db(\coh\X)$};
\node (C0) at (3, -1.8) {$\Db(\coh\X')$};
\node (C1) at (5, -1.8) {$\Db(\coh\X')$};
\node (C2) at (7, -1.8) {$\Db(\coh\X')$,};

\draw[double equal sign distance] (B4) -- (B8);
\draw[double equal sign distance] (C1) -- (C2);
\draw[->] (A0) -- node[above] {$\scriptstyle\Phi_N^{d-1}$}(A5);
\draw[->] (A5) -- node[above] {$\scriptstyle\Phi_N^{d^*-1}$}(A10);

\draw[->] (B8) -- node[above] {$\scriptstyle\phi_{\delta, \delta'}$}(B10);
\draw[->] (C0) -- node[above] {$\scriptstyle\phi'_{\delta', \delta}$}(C1);

\draw[->] (B4) -- node[left] {$\scriptstyle\RHom_{}(\scrV_{\delta},-)$}(A0);
\draw[->] (B8) -- node[left] {$\scriptstyle\RHom_{}(\scrV_{\delta'},-)$}(A5);
\draw[->] (B10) -- node[left] {$\scriptstyle\RHom_{}(\scrV_{\delta},-)$}(A10);

\draw[->] (C0) -- node[left] {$\scriptstyle\RHom_{}(\scrV_{\delta},-)$}(A0);
\draw[->] (C1) -- node[left] {$\scriptstyle\RHom_{}(\scrV_{\delta'},-)$}(A5);
\draw[->] (C2) -- node[left] {$\scriptstyle\RHom_{}(\scrV_{\delta},-)$}(A10);

\end{tikzpicture}
\]
where $d \defeq d_F^+$ and $d^* \defeq d_{F^*}^+$.
Thus, under the identifications of categories
\[ \Db(\coh\X) \xrightarrow{\RHom_{}(\scrV_{\delta},-)} \Db(\fmod\Lambda_{\delta}) \xleftarrow{\RHom_{}(\scrV_{\delta},-)} \Db(\coh\X'), \]
the natural autoequivalences $\phi_{\delta, \delta'}$ and $\phi'_{\delta', \delta}$ correspond to $\Phi_N^{d + d^* -1} \in \Auteq \Db(\fmod\Lambda_{\delta})$.

In some example including threefold flops (cf.~\cite{HomMMP}) and certain higher dimensional flops (cf.~\cite{hara17, hara22}), 
it is observed that a natural autoequivalence associated to the geometry of a contraction (eg. spherical twists along the structure sheaf of the flopping locus) can be recovered as the autoequivalence associated to the periodicity of mutations of an NCCR.
The diagram above should be regarded as a version of that phenomenon in the toric GIT setting.
\end{rem}

\subsection{Descending to toric hyperK\"{a}hler varieties} \label{section: THK}
This section studies a class of varieties so called toric hyperK\"{a}hler.
A quick reminder for the geometry of toric hyperK\"{a}hler varieties is given in the following, but for more details see \cite[Section 3]{svdbhypertoric}.

Let $X$ be a symplectic representation  of an algebraic torus $G = T$.
Then $X$ admits a canonical moment map $\upmu \colon X \to \mathfrak{t}^*$, where $\mathfrak{t} \defeq \mathrm{Lie}(T)$.
Throughout let us assume that the action of $T$ on $X$ is faithful, which gives that $\upmu$ is flat and surjective.

For $\xi \in \mathfrak{t}^*$ and generic $\ell \in \M_{\bR}$, the quotient stack $[\upmu^{-1}(\xi)^{\mathrm{ss}}(\ell)/T]$ is smooth Deligne-Mumford, and 
the natural morphism 
$\varphi^{\xi} \colon [\upmu^{-1}(\xi)^{\mathrm{ss}}(\ell)/T] \to \upmu^{-1}(\xi)/\!\!/T$
gives a stacky crepant resolution of the singular symplectic variety $\upmu^{-1}(\xi)/\!\!/T$.
Algebraic varieties of the form $\upmu^{-1}(\xi)^{\mathrm{ss}}(\ell)/\!\!/T$
are called \textit{toric hyperK\"{a}hler varieites} or \textit{hypertoric varieties} in the literature.

For the inclusion $i \colon [\upmu^{-1}(\xi)^{\mathrm{ss}}(\ell)/T] \hookrightarrow [X^{\mathrm{ss}}(\ell)/T]$, \cite{svdbhypertoric} proved the following.

\begin{thm}
The restriction $i^*\cV$ of any tilting bundle $\cV$ on $[X^{\mathrm{ss}}(\ell)/T]$ to $[\upmu^{-1}(\xi)^{\mathrm{ss}}(\ell)/T]$ remains to be tilting.
\end{thm}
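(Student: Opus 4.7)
Since $T$ is abelian and acts trivially on its Lie algebra $\mathfrak{t}$, the $r \defeq \dim T$ components of the shifted moment map $\upmu - \xi$ are $T$-invariant regular functions on $X$; the hypothesis that $\upmu$ is flat, combined with the smoothness of $X$ and $\mathfrak{t}^{*}$, forces these functions to constitute a regular sequence on $X$, and hence on the open subset $X^{\mathrm{ss}}(\ell)$ as well. Writing $\X = [X^{\mathrm{ss}}(\ell)/T]$ and $\Y \defeq [\upmu^{-1}(\xi)^{\mathrm{ss}}(\ell)/T]$, it follows that the closed immersion $i\colon \Y \hookrightarrow \X$ is cut out by a regular section of the \emph{trivial} $T$-equivariant vector bundle $\mathfrak{t}^{*} \otimes \cO_{\X}$. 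Consequently the Koszul complex
\[ \cK^{\bullet} \colon\quad 0 \to \textstyle\bigwedge^{r}\mathfrak{t} \otimes \cO_{\X} \to \cdots \to \mathfrak{t} \otimes \cO_{\X} \to \cO_{\X} \to 0, \]
placed in cohomological degrees $[-r,0]$, is a locally free resolution of $i_{*}\cO_{\Y}$ in which \emph{every term is a trivial $T$-equivariant bundle}, i.e.\ a finite direct sum of copies of $\cO_{\X}$. This triviality is the observation that makes the rest of the argument go through.

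Fix a tilting bundle $\cV$ on $\X$. The plan is to verify the three defining properties of a tilting bundle for $i^{*}\cV$ on $\Y$. Perfectness is automatic since $\cV$ is locally free. For the vanishing of higher self-Ext groups, the projection formula together with the Koszul resolution yields
\[ \RHom_{\Y}(i^{*}\cV, i^{*}\cV) \cong \RHom_{\X}(\cV, \dR i_{*}i^{*}\cV) \cong \RHom_{\X}(\cV, \cV \otimes \cK^{\bullet}). \]
The hypercohomology spectral sequence with $E_{1}^{p,q} = \Ext_{\X}^{q}(\cV, \cV \otimes \cK^{p}) \cong \bigwedge^{-p}\mathfrak{t} \otimes \Ext_{\X}^{q}(\cV, \cV)$ is concentrated on the single row $q = 0$ because $\cV$ is tilting on $\X$, so the abutment is computed by the complex $\Hom_{\X}(\cV, \cV \otimes \cK^{\bullet})$, which is supported in cohomological degrees $[-r, 0]$. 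In particular $\Ext_{\Y}^{n}(i^{*}\cV, i^{*}\cV) = 0$ for every $n > 0$.

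For the generation property, suppose $E \in \D_{\mathrm{qc}}(\Y)$ satisfies $\RHom_{\Y}(i^{*}\cV, E) = 0$. Adjunction gives $\RHom_{\X}(\cV, \dR i_{*}E) = 0$, and since $\cV$ generates $\D_{\mathrm{qc}}(\X)$ this forces $\dR i_{*}E = 0$. Because $i$ is a closed immersion, $i_{*}\colon \Qcoh(\Y) \to \Qcoh(\X)$ is exact and fully faithful, so $\dR i_{*}$ is t-exact and conservative on derived categories, whence $E = 0$. The main subtlety I expect is articulating cleanly that the components of $\upmu - \xi$ form a regular sequence in the stacky semistable setting; this reduces to the observation that flatness of the moment map is preserved under the flat open immersion $X^{\mathrm{ss}}(\ell)\hookrightarrow X$ and under taking $T$-invariants, but it needs to be stated with care.
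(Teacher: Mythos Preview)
Your proof is correct and follows precisely the approach the paper indicates: the paper does not give a self-contained argument but attributes the result to \v{S}penko--Van den Bergh and remarks that it ``holds basically because, in the toric setting, the stack $[\upmu^{-1}(\xi)^{\mathrm{ss}}(\ell)/T]$ is cut out from $[X^{\mathrm{ss}}(\ell)/T]$ by a regular sequence.'' You have supplied exactly the details behind that sentence---the $T$-invariance of the moment-map components, the Koszul resolution by trivial equivariant bundles, the spectral-sequence computation of self-Exts, and generation via conservativity of $i_*$---so your write-up is a fleshed-out version of the cited argument rather than a different route.
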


This holds basically because, in the toric setting, the stack $[\upmu^{-1}(\xi)^{\mathrm{ss}}(\ell)/T]$ is cut out from $[X^{\mathrm{ss}}(\ell)/T]$ by a regular sequence.
Note that, in \cite{svdbhypertoric}, the theorem is claimed only for tilting bundles constructed by \cite{{hl-s}}, but the proof does work for any tilting bundles.

With the same notation as in Proposition \ref{composition mut}, put
\[ \text{$\scrV_{\delta}^\xi \defeq i^* \scrV_{\delta}$, $\cN^{\xi} \defeq i^*\cN$, $\Lambda_{\delta}^{\xi} \defeq \End(\scrV_{\delta}^\xi)$, $M_{\delta}^\xi \defeq \varphi^\xi_* \scrV_{\delta}^\xi$, and $N^{\xi} \defeq \varphi^{\xi}_* \cN^{\xi}$.} \]

Since an exact sequence of locally free sheaves is locally split, its arbitrary restriction remains to be exact.
Thus all the arguments in the proof of Proposition \ref{composition mut} still work when one restricts everything to $[\upmu^{-1}(\xi)^{\mathrm{ss}}(\ell)/T]$.
This observation gives the following.

\begin{thm}
The same statement as in Theorem \ref{mutation MMA toric}\,(1), (2) and \ref{composition mut}\,(1), (3) holds for 
$[\upmu^{-1}(\xi)^{\mathrm{ss}}(\ell)/T]$, 
$\scrV_{\delta}^\xi$, 
$\cN^{\xi}$,
$M_{\delta}^\xi$, 
and $N^{\xi}$.
\end{thm}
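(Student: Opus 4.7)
The plan is to mimic the proofs of Theorem \ref{mutation MMA toric}\,(1),\,(2) and Proposition \ref{composition mut}\,(1),\,(3) by restricting all the sheaf-theoretic constructions from the ambient stack $[X^{\rm ss}(\ell)/T]$ to the closed substack $\Y^\xi\defeq [\upmu^{-1}(\xi)^{\rm ss}(\ell)/T]$ along $i\colon \Y^\xi\hookrightarrow [X^{\rm ss}(\ell)/T]$. The essential input is the cited theorem of \v Spenko--Van den Bergh, which ensures that $i^*$ sends tilting bundles to tilting bundles. Combined with the fact that, in the toric setting, $\Y^\xi$ is cut out from the ambient stack by a regular sequence so that $i^*$ preserves exactness of sequences of locally free sheaves, this should allow every cohomological computation to pass unchanged.

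More precisely, I would first take the exact sequences
\[
0\to \bigoplus_{\chi\in\scrC_\delta^F}\cE_\chi^{i-1}\to \bigoplus_{\chi\in\scrC_\delta^F}B^i_{F,\chi}\to \bigoplus_{\chi\in\scrC_\delta^F}\cE_\chi^{i}\to0
\]
from the proof of Proposition \ref{composition mut} and apply $i^*$ to get exact sequences of locally free sheaves on $\Y^\xi$. Since the tilting property of $\scrV_{\delta}^\xi=i^*\scrV_\delta$ and of $\cN^\xi=i^*\cN$ is given by Proposition 5.1 (of \cite{svdbhypertoric}), and since the condition $B^i_{F,\chi}\in\add\cN$ is preserved by $i^*$, the hypotheses of Proposition \ref{tilting mutation} are satisfied by the restricted sequences. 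Iterating this, define $\scrV_i^\xi\defeq\cN^\xi\oplus\bigoplus_{\chi\in\scrC_\delta^F} i^*\cE_\chi^i$; each of these is a tilting bundle on $\Y^\xi$ by (the restricted version of) \cite[Lemma 6]{hara22}, and Proposition \ref{tilting mutation} yields $\varphi^\xi_*(\scrV_i^\xi)\cong \mu_{N^\xi}^{-i}(M_\delta^\xi)$, establishing the analog of Proposition \ref{composition mut}\,(1). The periodicity statements (Theorem \ref{mutation MMA toric}\,(1),\,(2)) then follow from Theorem \ref{IW mutation}\,(1), exactly as in the proof of Theorem \ref{mutation MMA toric}. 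The commutativity of the diagram in Proposition \ref{composition mut}\,(3) descends because all functors involved are restrictions of functors on the ambient stack and the identification $\RHom(\scrV_{i}^\xi,-)\circ\dL(-\otimes\scrV_i^\xi)=\id$ holds by the tilting property.

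The step that will require the most care is the verification that the pushforward $\varphi^\xi_*$ of the restricted complex remains exact, which boils down to the vanishing of higher direct images for the relevant locally free sheaves on $\Y^\xi$. This is where one genuinely uses that $\Y^\xi\to \upmu^{-1}(\xi)/\!\!/T$ is a stacky crepant resolution of a normal Gorenstein (in fact symplectic) variety and that the restricted tilting bundle $\scrV_i^\xi$ has no higher cohomology when paired with $\cN^\xi$. Once this is in place, the usual reflexive equivalence $(-)^T\colon \refl_T\sfk[\upmu^{-1}(\xi)]\simto\refl\,\sfk[\upmu^{-1}(\xi)]^T$ (which again descends from $X$ thanks to the toric/regular-sequence setup) identifies endomorphism algebras on the stack with those on the singular base, and the rest is a formal repetition of the arguments of Sections 4.3--4.4. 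Note that Proposition \ref{composition mut}\,(2) is deliberately omitted here because its proof invokes the codimension-of-singular-locus hypothesis from Proposition \ref{tilting mutation}\,(4), which need not survive the restriction to a hypertoric singularity.
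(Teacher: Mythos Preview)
Your proposal is correct and matches the paper's approach: the paper's justification (given in the paragraph immediately preceding the theorem rather than as a separate proof) is precisely that exact sequences of locally free sheaves are locally split and hence remain exact after restriction to $[\upmu^{-1}(\xi)^{\rm ss}(\ell)/T]$, so that, combined with the \v{S}penko--Van den Bergh result preserving tilting bundles under $i^*$, every step in the proof of Proposition~\ref{composition mut} goes through verbatim on the restricted stack. Your expanded version spells out exactly these steps; the only minor redundancy is that once you know the $\scrV_i$ are tilting on the ambient stack (via \cite[Lemma~6]{hara22}) and that $i^*$ preserves tilting, there is no need to re-invoke a ``restricted version'' of that lemma.
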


This theorem generalises results in \cite[Section 5.2.2]{hara17} about Mukai flops to arbitrary toric hyperK\"{a}hler varieties.
Note that Proposition \ref{composition mut}\,(2) does not hold in general for toric hyperK\"{a}hler varieties. If the singular locus of $\upmu^{-1}(\xi)/\!/ T$ has codimension at least $3$, the same statement as in Proposition \ref{composition mut}\,(2) holds.
For more details, see Remark \ref{counterexample for 2.21}.

\section{Applications to Calabi-Yau complete intersections}
\label{section: CICY}

Let $a_1,\hdots, a_n>0$ be  positive integers.
 Write $S\defeq \bC[x_1,\hdots,x_n]$ for the $\bZ$-graded polynomial ring with $\deg(x_i)=a_i$. Let $d_1,\hdots,d_r$ be positive integers such that $\sum_{i=1}^rd_i=\sum_{i=1}^n a_i$. 
Let  $f_1,\hdots,f_r\in S$ be homogeneous elements with $\deg(f_i)=d_i$ such that these define a Calabi-Yau complete intersection
\[
Y\defeq \bigcap_{i=1}^r\{f_i=0\}\subset {\bf P}(a_1,\hdots,a_n)
\]
in the weighted projective stack ${\bf P}(a_1,\hdots,a_n)\defeq [\bA^n\backslash \{{\bf 0}\}/\bG_m]$. 
This section shows that a certain fundamental group action on $\Db(\coh Y)$ can be interpreted in terms of Iyama--Wemyss mutations via noncommutative matrix factorizations. 
See Appendix \ref{mf appendix} for fundamental definitions and  properties of  matrix factorizations.

Set  $G_i\defeq \bG_m$ for $i=1,2$,  and   define a 2-dimensional torus $G\defeq G_1\times G_2$. Let
\[
X\defeq\sfk^{n+1}_{\bf x}\oplus \sfk^{r+1}_{\bf y}
\] 
be the direct sum of  vector spaces with coordinate ${\bf x}=(x_0,\hdots,x_n)$ and ${\bf y}=( y_0,\hdots,y_r )$. 
Consider a $G$-action on $X$  defined by 
\begin{align*}
G\times X&\ni (s,t)\times \bigl((x_0,\hdots,x_n),(y_0,\hdots,y_r)\bigr)\\&\mapsto 
\bigl((sx_0, s^{a_1}x_1,\hdots,s^{a_n}x_n), (s^{-1}ty_0, s^{-d_1}ty_1,\hdots, s^{-d_r}ty_r)\bigr)\in G\times X.
\end{align*}
The $G$-representation $X$ is not quasi-symmetric, but it is a generic quasi-symmetric representation with respect to the $G_1$-action.
Then the associated hyperplane arrangement $\scrH$ in $\M_{\bR}=\Hom(G_1,\bG_m)_{\bR}\cong \bR$ is 
\[
\scrH=\begin{cases}
\bZ+(1/2)& \mbox{if } \sum_{i=1}^n a_i \mbox{ is even}.\\
\bZ & \mbox{if } \sum_{i=1}^n a_i \mbox{ is odd}.
\end{cases}
\]

Consider a $G_1$-invariant and $G_2$-semi-invariant regular function
\[
W\defeq x_0y_0+f_1y_1+\cdots+f_ry_r
\]
on $X$, which induces a regular function on $[X/G_1]$. Then $\bigl([X/G_1],\chi,W\bigr)^{G_2}$ is a gauged Landau-Ginzburg model, where $\chi\colon G_2\to \bG_m$ is the identity map.   
Define a locally free sheaf $\cE$ on ${\bf P}(1,a_1,\hdots,a_n)$ by 
\[\cE\defeq\cO(-1)\oplus\cO(-d_1)\oplus\cdots\oplus\cO(-d_r).\] 
Since $Y$ is the zero locus of  a regular section $s\defeq(x_0,f_1,\hdots,f_r)\in \Gamma({\bf P}(1,a_1,\hdots,a_n),\cE^{*})^{G_2}$, 
by Kn\"orrer periodicity \cite{isik,shipman,H2}, there is an equivalence
\begin{equation}\label{derived knorrer}
\Db(\coh Y)\cong \Dcoh_{G_2}({\rm V}(\cE),\chi,W),
\end{equation}
where ${\rm V}(\cE)$ is the vector bundle associated to $\cE$, and $G_2$-action on ${\rm V}(\cE)$ is induced from the $G_2$-equivariant structure of the $G_2$-equivariant locally free sheaf $\cE(\chi)$ on the stack ${\bf P}(1,a_1,\hdots,a_n)$ with the trivial $G_2$-action.
Denote by $\ell\colon G_1\to \bG_m$ the identity map, which is a generic element in  $\M_{\bR}$, and for this $\ell$, set $\X_{\pm}\defeq [X^{\rm ss}(\pm\ell)/G_1]$. Then there is an isomorphism ${\rm V}(\cE)\cong \X_{+}$. 

For $\delta\in \M_{\bR}\backslash \scrH$, 
a {\it magic window} $\scrM(\delta+\bnabla,W)\subset \Dcoh_{G_2}([X/G_1],\chi,W)$ is defined by 
\[
\scrM(\delta+\bnabla,W)\defeq \left\l E_1\to E_0\to E_1(\chi)\relmiddle|  \Res^{G_2}(E_i)\in \scrM(\delta+\bnabla)\right\r,
\]
where $\Res^{G_2}\colon \coh_{G_2}[X/G_1]\to \coh[X/G_1]$ is the restriction functor. Then a version of Theorem \ref{hl-s} holds for $\scrM(\delta+\bnabla,W)$ by \cite[Section 6.3]{hl-s} or \cite{vgit}:
\begin{equation}\label{mf window}
\res\defeq i^*\colon \scrM(\delta+\bnabla,W)\simto \Dcoh_{G_2}(\X_{\pm},\chi,W).
\end{equation}
Therefore, \eqref{derived knorrer} and \eqref{mf window} gives an equivalence 
\begin{equation}\label{knorrer}
\Db(\coh Y)\cong \scrM(\delta+\bnabla,W).
\end{equation}
If $[\delta]\xrightarrow{\ell}[\delta']\in \Pi_1(\bC\backslash\scrH_{\bC})$ is a positive labeled arrow,
set
\[
\uprho\left([\delta]\xrightarrow{\ell}[\delta']\right)
\defeq \scrM(\delta+\bnabla,W)\xrightarrow{\res} \Dcoh_{G_2} (\X_+,\chi,W)\xrightarrow{\res^{-1}}\scrM(\delta'+\bnabla,W),\]
and if $[\delta]\xrightarrow{-\ell}[\delta']\in \Pi_1(\bC\backslash\scrH_{\bC})$ is a positive labeled arrow,
set
\[
\uprho\left([\delta]\xrightarrow{-\ell}[\delta']\right)
\defeq \scrM(\delta+\bnabla,W)\xrightarrow{\res} \Dcoh_{G_2} (\X_-,\chi,W)\xrightarrow{\res^{-1}}\scrM(\delta'+\bnabla,W).\]
Then these assignment  define a fundamental groupoid action of $\Pi_1(\bC\backslash\scrH_{\bC})$ on magic windows \cite[Corollary 6.11]{hl-s}, and in particular there is a fundamental group action 
\begin{equation}\label{fund action 1}
\uprho\colon \pi_1(\bC\backslash\scrH_{\bC})\to \Auteq \scrM(\delta+\bnabla,W)\simto\Auteq \Db(\coh Y)
\end{equation}
on $\Db(\coh Y)$ via the equivalence \eqref{knorrer}.  
By an identical manner as the action \eqref{ext fund action}, 
it is possible to extend the action \eqref{fund action 1} to the action of $\pi_1\bigl((\bC\backslash\scrH_{\bC})/\bZ\bigr)$ \cite[Corollary 6.11]{hl-s}. 
Thus, since  $(\bC\backslash\scrH_{\bC})/\bZ\cong \bP^1\backslash \{0,1,\infty\}$,  there is a group action
\begin{equation}\label{fund action 2}
\widetilde{\uprho}\colon \pi_1\bigl(\bP^1\backslash \{0,1,\infty\}\bigr)\to \Auteq \Db(\coh Y).
\end{equation}

Since  $W\in R\defeq\sfk[X]^{G_1}$ and $W\in \Gamma(\Spec R,\cO(\chi))^{G_2}$,  it defines a noncommutative gauged LG model $(\Lambda_{\delta},\chi,W)^{G_2}$ for any $\delta\in \M_{\bR}\backslash \scrH$. 
Since the abelian category $\Qcoh_{G_2}[X/G_1]\cong \Qcoh_G X$ has finite injective resolutions, the functor 
\[\Hom_{[X/G_1]}(\scrT_{\delta},-)\colon \coh_{G_2}([X/G_1],\chi,W)\to\fmod_{G_2}(\Lambda_{\delta},\chi,W)\] defines the right derived functor
\begin{equation}\label{tilt mf}
\RHom_{[X/G]}(\scrT_{\delta},-)\colon \Dcoh_{G_2}([X/G_1],\chi,W)\to\Dmod_{G_2}(\Lambda_{\delta},\chi,W). 
\end{equation}
Similarly, the functor $\Hom_{\X_+}(\scrV_{\delta},-)\colon \coh_{G_2}(\X_+,\chi,W)\to \fmod_{G_2}(\Lambda_{\delta},\chi,W)$ induces its right derived functor
\begin{equation}\label{tilt mf 2}
\RHom_{\X_+}(\scrV_{\delta},-)\colon \Dcoh_{G_2}(\X_+,\chi,W)\to\Dmod_{G_2}(\Lambda_{\delta},\chi,W). 
\end{equation}
\begin{prop}
The restriction of \eqref{tilt mf}
\begin{equation}\label{window nccr}
\RHom_{[X/G]}(\scrT_{\delta},-)\colon \scrM(\delta+\bnabla,W)\simto\Dmod_{G_2}(\Lambda_{\delta},\chi,W)
\end{equation}
 to a magic window and the functor \eqref{tilt mf 2}
 \[
\RHom_{\X_+}(\scrV_{\delta},-)\colon \Dcoh_{G_2}(\X_+,\chi,W)\simto\Dmod_{G_2}(\Lambda_{\delta},\chi,W)
 \]
 are  equivalences.
 \begin{proof}
Since $\res\colon \scrM(\delta+\bnabla,W)\simto\Dcoh_{G_2}(\X_+,\chi,W)$ is an equivalence  and there is  a natural isomorphism $\RHom_{[X/G]}(\scrT_{\delta},-)\cong\RHom_{\X_+}(\scrV_{\delta},-)\circ\res$ of functors, it is enough to prove that the functor  \eqref{tilt mf 2} is an equivalence. Let $T_{\delta}\defeq \bigoplus_{\chi\in\scrC_{\delta}}\cO(\chi)\in \coh_GX$, and set $V_{\delta}\defeq i^*(T_{\delta})\in \coh_{G}X^{\rm ss}(\ell)$. 
Then $V_{\delta}$ corresponds to $\scrV_{\delta}$ via a natural equivalence $\coh_G X^{\rm ss}(\ell)\cong \coh_{G_2}\X_+$, and isomorphisms $\Lambda_{\delta}\cong\End_{\X_+}(\scrV_{\delta})\cong\End_{X^{\rm ss}(\ell)}(V_{\delta})^{G_1}$ hold.
Therefore there exists the following commutative diagram:
\[
\begin{tikzpicture}[xscale=1.3]
\node (A0) at (2,0) {$\Dcoh_G(X^{\rm ss}(\ell),1\times \chi,W)$};
\node (A5) at (8,0) {$\Dmod_{G_2}(\Lambda_{\delta},\chi,W)$};
\node (B4) at (2,2) {$\Dcoh_{G_2}(\X_+,\chi,W)$};
\draw[->] (A0) -- node[above] {$\scriptstyle\RHom(V_{\delta},-)^{G_1}$}(A5);
\draw[->] (B4) -- node[left] {$\cong$}(A0);
\draw[->] (B4) -- node[above] {\scriptsize \eqref{tilt mf 2}}(A5);
\end{tikzpicture}
\]
Since the bottom functor is an equivalence by Theorem \ref{H3 thm}, so is \eqref{tilt mf 2}. 
 \end{proof}
\end{prop}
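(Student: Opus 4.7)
The plan is to reduce both equivalences to a single statement for the equivariant LG model on $X^{\rm ss}(\ell)$ and then invoke the tilting theorem for noncommutative matrix factorizations (Theorem \ref{H3 thm}).

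First, I would observe that there is a natural functor isomorphism
\[
\RHom_{[X/G_1]}(\scrT_{\delta},-)\cong \RHom_{\X_+}(\scrV_{\delta},-)\circ\res,
\]
which follows from the adjunction together with the definition $\scrV_{\delta}=i^*(\scrT_{\delta})$ (compare the analogous argument in the proof of Theorem \ref{wall crossing}). Since $\res$ is already known to be an equivalence by \eqref{mf window}, the claim for \eqref{window nccr} reduces to the equivalence statement for \eqref{tilt mf 2}. Thus the rest of the proof should focus on showing that $\RHom_{\X_+}(\scrV_{\delta},-)$ is an equivalence.

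Next, I would move to the purely $G$-equivariant picture on the semistable locus by introducing the $G$-equivariant sheaf $V_{\delta}\defeq i^*(T_{\delta})\in\coh_G X^{\rm ss}(\ell)$, where $T_{\delta}=\bigoplus_{\chi\in\scrC_{\delta}}\cO(\chi)$. Taking $G_1$-invariants identifies $V_{\delta}$ with $\scrV_{\delta}$ under the equivalence $\coh_G X^{\rm ss}(\ell)\simeq \coh_{G_2}\X_+$, and gives the algebra identification $\Lambda_{\delta}\cong\End_{X^{\rm ss}(\ell)}(V_{\delta})^{G_1}$. Under these identifications, the derived functor $\RHom_{\X_+}(\scrV_{\delta},-)$ corresponds to $\RHom_G(V_{\delta},-)^{G_1}$ acting between the categories of $G$-equivariant and $(\Lambda_{\delta},G_2)$-equivariant matrix factorizations; this yields a commutative triangle of functors as displayed in the author's proof.

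Finally, the main content is that the bottom horizontal functor in that triangle is an equivalence. This is exactly the content of Theorem \ref{H3 thm} from the appendix, which establishes a tilting-type equivalence for gauged LG models: the key hypothesis is that $V_{\delta}$ restricts to a tilting generator on the ambient smooth Deligne--Mumford stack $\X_+$, which holds because $\scrV_{\delta}$ is a tilting bundle on $\X_+$ (cf.\ \eqref{tilting bdl}). Once Theorem \ref{H3 thm} is applied, the commutativity of the triangle forces \eqref{tilt mf 2} to be an equivalence, and hence \eqref{window nccr} is also an equivalence. The expected main obstacle is verifying that all the identifications (the equivariant/invariant comparison, the endomorphism algebra isomorphism, and the compatibility with the potential $W$ and the character $\chi$) are genuinely functorial so that Theorem \ref{H3 thm} can be cited verbatim; this is mostly bookkeeping but requires care because two group actions and a superpotential are involved simultaneously.
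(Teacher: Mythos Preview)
Your proposal is correct and follows essentially the same approach as the paper: reduce \eqref{window nccr} to \eqref{tilt mf 2} via the factorization through $\res$, then pass to the $G$-equivariant model on $X^{\rm ss}(\ell)$ and invoke Theorem \ref{H3 thm} for the $(G,G_1)$-tilting module $V_{\delta}$. Your explicit remark that the hypothesis of Theorem \ref{H3 thm} holds because $\scrV_{\delta}$ is tilting on $\X_+$ is a helpful addition that the paper leaves implicit.
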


Let $(\delta,\delta')$ be an adjacent pair in $\bR\,\backslash \scrH$, and let $N\defeq M_{\delta\cap\delta'}$. Then for any $i\geq 0$ the algebra $\Lambda_{i}\defeq\End(\mu_N^{-i}(M_{\delta}))$ is a $G_2$-equivariant finite $R$-algebra. By Proposition \ref{composition mut}\,(2), the $G_2$-equivariant $\Lambda_i$-module $T_i\defeq\Hom_R(\mu_N^{-i}(M_{\delta}),\mu_N^{-i-1}(M_{\delta}))$ is a tilting module, and it defines the mutation functor  
$
\Phi_N=\RHom(T_i,-)\colon\Db(\fmod\Lambda_i)\simto \Db(\fmod\Lambda_{i+1}).
$
Furthermore, since $T_i$ is a $G_2$-tilting module in the sense of Definition \ref{eq tilt},  the mutation functor 
\[
\Phi_N\defeq\RHom(T_i,-)\colon \Dmod_{G_2}(\Lambda_i,\chi,W)\to\Dmod_{G_2}(\Lambda_{i+1},\chi,W)
\]
for derived factorization categories is an equivalence by Theorem \ref{H3 thm}.

The following shows that the equivalences of magic windows generating the group action \eqref{fund action 1} correspond to mutation functors between noncommutative matrix factorizations.
\begin{thm}\label{window shift}
Let $\delta,\delta'\in \bR \,\backslash\scrH$ be an adjacent pair with $\delta<\delta'$, and write $N\defeq M_{\delta\cap\delta'}$. Then the following diagram of equivalences commutes.
\[
\begin{tikzpicture}[xscale=1.3]
\node (A0) at (1,0) {$\Dmod_{G_2}(\Lambda_{\delta},\chi,W)$};
\node (A5) at (5,0) {$\Dmod_{G_2}(\Lambda_{\delta'},\chi,W)$};
\node (A9) at (9,0) {$\Dmod_{G_2}(\Lambda_{\delta},\chi,W)$};
\node (B0) at (1,1.5) {$\scrM(\delta+\bnabla,W)$};
\node (B5) at (5,1.5) {$\scrM(\delta'+\bnabla,W)$};
\node (B9) at (9,1.5) {$\scrM(\delta+\bnabla,W)$};
\draw[->] (A0) -- node[above] {$\scriptstyle\Phi_N^{n}$}(A5);
\draw[->] (A5) -- node[above] {$\scriptstyle\Phi_N^{r}$}(A9);
\draw[->] (B0) -- node[above] {$\scriptstyle\uprho\bigl([\delta]\xrightarrow{\ell}[\delta']\bigr)$}(B5);
\draw[->] (B5) -- node[above] {$\scriptstyle\uprho\bigl([\delta']\xrightarrow{-\ell}[\delta]\bigr)$}(B9);
\draw[->] (B0) -- node[left] {$\scriptstyle\RHom_{}(\scrT_{\delta},-)$}(A0);
\draw[->] (B5) -- node[right] {$\scriptstyle\RHom_{}(\scrT_{\delta'},-)$}(A5);
\draw[->] (B9) -- node[right] {$\scriptstyle\RHom_{}(\scrT_{\delta},-)$}(A9);
\end{tikzpicture}
\]
\begin{proof}
We only show that the left square commutes, since the  commutativity of the right one follows from a similar argument. Consider the following diagram 
\[
\begin{tikzpicture}[xscale=1.3]
\node (A0) at (1,-0.5) {$\Dmod_{G_2}(\Lambda_{\delta},\chi,W)$};
\node (A5) at (7,-0.5) {$\Dmod_{G_2}(\Lambda_{\delta'},\chi,W)$.};
\node (B0) at (1,3) {$\scrM(\delta+\bnabla,W)$};
\node (B5) at (7,3) {$\scrM(\delta'+\bnabla,W)$};
\node (C) at (4,1.5)
{$\Dcoh_{G_2}(\X_+,\chi,W)$};
\draw[->] (A0) -- node[below] {$\scriptstyle\Phi_N^{n}$}(A5);
\draw[->] (B0) -- node[above] {$\scriptstyle\uprho\bigl([\delta]\xrightarrow{\ell}[\delta']\bigr)$}(B5);
\draw[->] (B0) -- node[left] {$\scriptstyle\RHom_{}(\scrT_{\delta},-)$}(A0);
\draw[->] (B5) -- node[right] {$\scriptstyle\RHom_{}(\scrT_{\delta'},-)$}(A5);
\draw[->] (B0) -- node[above] {$\res$}(C);
\draw[->] (B5) -- node[above] {$\res$}(C);
\draw[->] (C) -- node[auto=left] {$\scriptstyle\RHom(\scrV_{\delta},-)$\\}(A0);
\draw[->] (C) -- node[auto=right] {$\scriptstyle\RHom(\scrV_{\delta'},-)$\\}(A5);
\end{tikzpicture}
\]
Then the top triangle commutes by the definition of the map $\uprho$, and the left and right triangles are commutative since $\scrV_{\delta}=\res(\scrT_{\delta})$. Thus it is enough to show that the bottom triangle commutes. But this follows from Proposition \ref{composition mut}\,(2) and \cite[Lemma 4.10]{bdfik}.
\end{proof}
\end{thm}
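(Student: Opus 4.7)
The plan is to expand the given diagram by inserting the ``geometric'' factorization category $\Dcoh_{G_2}(\X_+,\chi,W)$ as a middle row between the upper magic windows and the lower noncommutative factorization categories, and then to prove commutativity of each resulting sub-diagram separately. Concretely, I would add a middle vertex $\Dcoh_{G_2}(\X_+,\chi,W)$ connected to $\scrM(\delta+\bnabla,W)$ and $\scrM(\delta'+\bnabla,W)$ by $\res$, and to $\Dmod_{G_2}(\Lambda_\delta,\chi,W)$ and $\Dmod_{G_2}(\Lambda_{\delta'},\chi,W)$ by $\RHom(\scrV_\delta,-)$ and $\RHom(\scrV_{\delta'},-)$. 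This decomposes the left square of the theorem into three triangles, and I plan to focus on that square, since the right square will follow by swapping the adjacent pair and replacing $\ell$ by $-\ell$.

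Two of these triangles, the ones comparing the ``magic window'' tilting equivalence $\RHom(\scrT_\bullet,-)$ with the composition $\RHom(\scrV_\bullet,-)\circ \res$, commute because $\scrV_\delta=\res(\scrT_\delta)$ by the definition of $\scrV_\delta$ in \eqref{tilting bdl}. The remaining upper triangle involving $\uprho\bigl([\delta]\xrightarrow{\ell}[\delta']\bigr)$ commutes by the very definition of the $\uprho$-action, namely its factorization as $\res^{-1}\circ \res$ through $\Dcoh_{G_2}(\X_+,\chi,W)$.

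The serious step is the bottom triangle: I would show the natural isomorphism
\[
\RHom(\scrV_{\delta'},-)\;\cong\;\Phi_N^{\,n}\circ \RHom(\scrV_\delta,-)
\]
as functors $\Dcoh_{G_2}(\X_+,\chi,W)\to\Dmod_{G_2}(\Lambda_{\delta'},\chi,W)$. The idea is to transport Proposition \ref{composition mut}\,(2) from the coherent-sheaf world into the matrix-factorization world. In our setting the relevant polytope face $F$ at the left endpoint of $\delta_0+\bnabla$ has $\upbeta_F^+=1+\sum a_i$, hence $d_F^+=n+1$, so that the exponent $n=d_F^+-1$ appearing on the bottom arrow exactly matches the one produced by Proposition \ref{composition mut}. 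Each of the $n$ steps of the iterated mutation $\Phi_N$ is realized by a $G_2$-equivariant tilting bimodule $T_i=\Hom_R(\mu_N^{-i}(M_\delta),\mu_N^{-i-1}(M_\delta))$, and each intermediate $\scrV_i$ from Proposition \ref{composition mut}\,(1) is a $G_2$-equivariant tilting bundle on $\X_+$. By Theorem \ref{H3 thm} each $T_i$ induces an equivalence between the corresponding $G_2$-equivariant derived factorization categories, and I would invoke \cite[Lemma 4.10]{bdfik} to conclude that the coherent-sheaf identification of Proposition \ref{composition mut}\,(2) descends through Kn\"orrer-style folding to give the claimed functor isomorphism on $\Dcoh_{G_2}(\X_+,\chi,W)$.

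The main obstacle I anticipate is exactly this last compatibility: while Proposition \ref{composition mut}\,(2) identifies the composite mutation functor as $\RHom_{\X}(\scrV_{\delta'},-)$ in the setting of $\Db(\coh\X)$, here we must carry the entire chain of tilting exchanges across the addition of the potential $W$ and the extra $G_2$-action. Once \cite[Lemma 4.10]{bdfik} is applied termwise to the short exact sequences \eqref{for tilting mutation} that splice $\scrV_\delta$ to $\scrV_{\delta'}$, the bottom triangle collapses and the left square commutes. The right square is then handled identically, with $F$ replaced by $F^*$ (the right endpoint of $\delta_0+\bnabla$), whose invariant $d_{F^*}^+=r+1$ produces the exponent $r$ on the bottom arrow.
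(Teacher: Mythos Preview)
Your proposal is correct and follows essentially the same approach as the paper's own proof: insert $\Dcoh_{G_2}(\X_+,\chi,W)$ as a hub, observe that the triangles involving $\res$ and $\scrV_\bullet=\res(\scrT_\bullet)$ commute by definition, and reduce to the bottom triangle, which is handled by Proposition~\ref{composition mut}\,(2) together with \cite[Lemma~4.10]{bdfik} to pass from $\Db(\coh\X_+)$ to the factorization setting. Your additional remarks identifying $d_F^+=n+1$ and $d_{F^*}^+=r+1$ are correct and helpful (though note that $d_F^+$ is the \emph{count} of positive weights, not determined by the \emph{value} $\upbeta_F^+$; the relevant observation is simply that there are $n+1$ positive $G_1$-weights $1,a_1,\dots,a_n$ and $r+1$ negative ones).
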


Since $Y$ is Calabi--Yau,  a line bundle $\cL$ on $Y$ is a spherical object in $\Db(\coh Y)$, and it defines the {\it spherical twist} autoequivalence
\[
\ST_{\cL}\colon \Db(\coh Y)\simto \Db(\coh Y).
\]
The following shows that the spherical twist associated to a line bundle of the form $\cO_Y(m)$ corresponds to the compositions of Iyama--Wemyss mutations via noncommutative matrix factorizations.

\begin{cor}\label{cor:spherical twist}  
For $m\in\bZ$, put $\delta\defeq m+(\alpha/2)+1\in\bR\backslash\scrH$, and  set $N\defeq M_{\delta-1\cap\delta}$. Then the  diagram 
\[
\begin{tikzpicture}[xscale=1.3]
\node (A0) at (1,0) {$\Dmod_{G_2}(\Lambda_{\delta},\chi,W)$};
\node (A5) at (5,0) {$\Dmod_{G_2}(\Lambda_{\delta},\chi,W)$};
\node (B0) at (1,1.5) {$\Db(\coh Y)$};
\node (B5) at (5,1.5) {$\Db(\coh Y)$};
\draw[->] (A0) -- node[above] {$\scriptstyle\Phi_N^{n+r}$}(A5);
\draw[->] (B0) -- node[above] {$\scriptstyle\ST_{\cO(m)}$}(B5);
\draw[->] (B0) -- node[left] {$\cong$}(A0);
\draw[->] (B5) -- node[right] {$\cong$}(A5);
\end{tikzpicture}
\]
commutes, where the vertical equivalences are the compositions of \eqref{knorrer} and \eqref{window nccr}.
\end{cor}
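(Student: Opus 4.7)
The plan is to combine Theorem \ref{window shift} applied to the adjacent pair $(\delta - 1, \delta)$ with Lemma \ref{lem for cor}, the complete-intersection generalization of the Koseki--Ouchi result. First, I would observe that the $G_1$-weights of $X = \sfk^{n+1}_{\bf x} \oplus \sfk^{r+1}_{\bf y}$ are $1, a_1, \ldots, a_n, -1, -d_1, \ldots, -d_r$, so for the unique facet $F$ crossing the wall between $\delta - 1$ and $\delta$ one has $d_F^+ = n + 1$ and $d_{F^*}^+ = r + 1$. Theorem \ref{mutation MMA toric} then yields periodicity $d_F^+ + d_{F^*}^+ - 2 = n + r$ for the iterated mutation at $N = M_{\delta - 1 \cap \delta}$, so $\Phi_N^{n+r}$ is a well-defined autoequivalence of $\Dmod_{G_2}(\Lambda_\delta, \chi, W)$.

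Next, Theorem \ref{window shift}, applied to the pair $(\delta - 1, \delta)$, identifies $\Phi_N^r \colon \Dmod_{G_2}(\Lambda_\delta, \chi, W) \to \Dmod_{G_2}(\Lambda_{\delta - 1}, \chi, W)$ with $\uprho\bigl([\delta] \xrightarrow{-\ell} [\delta - 1]\bigr)$ and $\Phi_N^n \colon \Dmod_{G_2}(\Lambda_{\delta-1}, \chi, W) \to \Dmod_{G_2}(\Lambda_\delta, \chi, W)$ with $\uprho\bigl([\delta - 1] \xrightarrow{\ell} [\delta]\bigr)$. Composing and transporting through the vertical equivalences of the corollary, the endofunctor $\Phi_N^{n+r} = \Phi_N^n \circ \Phi_N^r$ corresponds on $\Db(\coh Y)$ to the monodromy $\widetilde\uprho(\gamma)$, where $\gamma$ is the composite loop $[\delta] \xrightarrow{-\ell} [\delta - 1] \xrightarrow{\ell} [\delta]$; after descent to $(\bC \setminus \scrH_\bC)/\bZ \cong \bP^1 \setminus \{0, 1, \infty\}$, the loop $\gamma$ becomes a positive generator encircling the image of the unique wall separating $\delta - 1$ from $\delta$.

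Finally, I would invoke Lemma \ref{lem for cor} to identify $\widetilde\uprho(\gamma)$ on $\Db(\coh Y)$ with the spherical twist $\ST_{\cO(m)}$, which closes the diagram. The main obstacle lies in Lemma \ref{lem for cor} itself: the Koseki--Ouchi identification of the monodromy generator with a spherical twist must be generalized from Calabi-Yau hypersurfaces to complete intersections. This requires tracking the monodromy through the Kn\"orrer equivalence \eqref{derived knorrer} in the presence of the additional factorization variables $y_1, \ldots, y_r$ and defining equations $f_1, \ldots, f_r$, which is precisely what accounts for the $r$-summand in the total exponent $n + r$ and pins down the line bundle as $\cO_Y(m)$.
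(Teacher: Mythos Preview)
Your approach is essentially the same as the paper's: both combine Theorem \ref{window shift} for the adjacent pair $(\delta-1,\delta)$ with Lemma \ref{lem for cor}, and the weight count $d_F^+=n+1$, $d_{F^*}^+=r+1$ is exactly what produces the exponent $n+r$. One small clarification: Lemma \ref{lem for cor} as stated identifies the spherical twist $\ST_{\cO_Y(-w-\alpha-1)}$ with the \emph{window shift} $\Phi_w$ (defined via the windows $\cG_w^-$), not directly with the loop $\widetilde\uprho(\gamma)$; the paper then unpacks the definition of $\Phi_{(-m-\alpha-1)}$ to see that $\cG^-_{-m-\alpha-1}=\scrM(\delta+\bnabla,W)$ and $\cG^-_{-m-\alpha}=\scrM(\delta-1+\bnabla,W)$, so that $\Phi_{(-m-\alpha-1)}$ is literally the composite $\uprho\bigl([\delta-1]\xrightarrow{\ell}[\delta]\bigr)\circ\uprho\bigl([\delta]\xrightarrow{-\ell}[\delta-1]\bigr)$, after which Theorem \ref{window shift} applies. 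You have this step implicitly, but it is the bridge between Lemma \ref{lem for cor} and Theorem \ref{window shift} rather than part of either.
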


\vspace{4mm}
This corollary can be deduced from Theorem \ref{window shift} and a result in \cite{hl-sh}. Consider one parameter subgroups $\lambda_{\pm}\colon  \bG_m\to G$ defined by $\lambda_{\pm}(t)\defeq (t^{\pm1},1)$, and define
\[
S_{\pm}\defeq\left\{x\in X\relmiddle| \lim_{t\to 0}\lambda_{\pm}(t)x=0\right\}.
\]
Then $X^{\rm ss}_{\pm}\defeq X\backslash S_{\pm}= X^{\rm ss}(\mp\ell)$ and $E\defeq X^{\rm ss}_-\cap S_+= (\sfk^{n+1}\backslash\{0\})\times \{0\}$ holds, and the fixed locus $Z$ of $\lambda_{-}$-action on $X$ equals to the origin $\{0\}\cong\Spec \sfk$. In particular, there are natural equivalences
\begin{align}
\coh_GX^{\rm ss}_-&\cong \coh_{G_2}{\rm V}(\cE)\label{X=V}\\
\coh_GE&\cong \coh_{G_2}\bfP(1,a_1,\hdots,a_n)\label{E=P},
\end{align}
and, in what follows, we tacitly use these identifications.
Write $\chi_{a,b}\colon G\to \bG_{m}$ for the character defined by $\chi_{a,b}(s,t)\defeq s^at^b$. Fix an integer $w\in \bZ$. We define
\[
\Dcoh_G(\Spec \sfk,\chi_{0,1},0)_w
\]
to be the thick subcategory of $\Dcoh_G(\Spec \sfk,\chi_{0,1},0)$ generated by factorizations such that the $(\lambda_{-})$-weights of the components is $w$, and consider the subcategory
\[
\cG_w^-\defeq \left\{(F_1\to F_0\to F_1(\chi_{0,1}))\relmiddle| \mbox{ $(\lambda_-)$-weight of  $F_i|_{Z}$ lies in $[w,w+\eta^-)$ } \right\}
\]
of $\Dcoh_G(X,\chi_{0,1},W)$, where $\eta^-$ is the $(\lambda_-)$-weight of $\det(N_{S_-}X)$. 
Then $\Dcoh_G(\Spec \sfk,\chi_{0,1},0)_w$ is generated by an exceptional object $\bigl(0\to \cO(\chi_{-w,0})\to0\bigr)$,  $\eta^-=\alpha+1$ holds, and 
\begin{align*}
\cG_w^-&=\left\{(F_1\to F_0\to F_1(\chi_{0,1}))\relmiddle| \mbox{ $G_1$-weights of  $F_i|_{Z}$ lies in $(-w-\eta^-,-w]$ } \right\}\\
&=\scrM(-w-(\alpha/2)+\bnabla,W),
\end{align*}
where $\alpha\defeq\sum_{i=1}^na_i$. 
Consider an equivalence
\[
\psi_w\colon\Dcoh_G(X^{\rm ss}_-,\chi_{0,1},W)\simto\Dcoh_G(X^{\rm ss}_+,\chi_{0,1},W)
\]
defined by the composition 
$
\Dcoh_G(X^{\rm ss}_-,\chi_{0,1},W)\xrightarrow{\res^{-1}}\cG_{w}^-\xrightarrow{\res}\Dcoh_G(X^{\rm ss}_+,\chi_{0,1},W).
$
Then the {\it window shift autoequivalence} is defined to be the composition
$\Phi_w\defeq \psi_{w+1}^{-1}\circ\psi_w\in \Auteq\Dcoh_G(X^{\rm ss}_-,\chi_{0,1},W)$.
Set 
\[\cK_w\defeq j_*\pi^*\bigl(0\to \cO(\chi_{-w,0})\to0\bigr)\in \Dcoh_G(X^{\rm ss}_-,\chi_{0,1},W),\]
where $\pi\colon E\to \Spec \sfk$ is a natural projection, and $j\colon E\to X^{\rm ss}_-$ is the zero section.

\begin{lem}[{\cite[Section 3.1]{hl-sh}\footnote[1]{Although \cite{hl-sh} only discusses complexes, there are similar functors and semi-orthogonal decompositions for matrix factorizations by \cite{vgit}, and so a similar argument as in \cite{hl-sh} works in our setting.}}]\label{spherical lem 1}
For each $F\in \Dcoh_G(X^{\rm ss}_-,\chi_{0,1},W)$, there is a functorial exact triangle
\begin{equation}
\RHom(\cK_w,F)\otimes \cK_w\to F\to \Phi_w(F)\to\RHom(\cE_w,F[1])\otimes \cE_w.
\end{equation}
\end{lem}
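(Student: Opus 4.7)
The plan is to adapt the mutation argument of Halpern-Leistner-Shipman \cite[Section 3.1]{hl-sh} to the matrix factorization setting, using the semiorthogonal decompositions of gauged derived factorization categories from \cite{vgit}. The basic strategy is to realize $\Phi_w$ as a mutation across a semiorthogonal decomposition, where the exceptional component is generated by $\cK_w$.

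First, I would introduce an auxiliary ambient subcategory $\cG \subset \Dcoh_G(X,\chi_{0,1},W)$ of factorizations whose restriction to the fixed locus $Z$ has $(\lambda_-)$-weights in the enlarged interval $[w, w+\eta^-+1)$, so that both $\cG_w^-$ and $\cG_{w+1}^-$ are full subcategories of $\cG$. By the Kempf-Ness-type window theorem for matrix factorizations, restriction to either $X^{\rm ss}_-$ or $X^{\rm ss}_+$ induces an equivalence when applied separately to $\cG_w^-$ or $\cG_{w+1}^-$. Moreover, $\cG$ admits a semiorthogonal decomposition $\cG = \langle \langle \cK_w\rangle, \cG_{w+1}^-\rangle$, where the left component is generated by the exceptional object $\cK_w = j_*\pi^*\bigl(0\to \cO(\chi_{-w,0})\to 0\bigr)$.

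Second, for $F \in \Dcoh_G(X^{\rm ss}_-,\chi_{0,1},W)$, let $\tilde F_w \in \cG_w^-$ and $\tilde G \in \cG_{w+1}^-$ denote the unique lifts with $\tilde F_w|_{X^{\rm ss}_-} = F$ and $\tilde G|_{X^{\rm ss}_+} = \psi_w(F) = \tilde F_w|_{X^{\rm ss}_+}$, so that by definition $\Phi_w(F) = \tilde G|_{X^{\rm ss}_-}$. Applying the right adjoint to the inclusion $\cG_{w+1}^- \hookrightarrow \cG$ (coming from the above semiorthogonal decomposition) to $\tilde F_w$ yields a functorial triangle
\[
\RHom_{\cG}(\cK_w, \tilde F_w) \otimes \cK_w \to \tilde F_w \to \mathrm{R}(\tilde F_w) \to \RHom_{\cG}(\cK_w, \tilde F_w) \otimes \cK_w[1].
\]
Since $\cK_w|_{X^{\rm ss}_+} = 0$, the triangle shows $\mathrm{R}(\tilde F_w)|_{X^{\rm ss}_+} = \tilde F_w|_{X^{\rm ss}_+} = \tilde G|_{X^{\rm ss}_+}$, and since $\mathrm{R}(\tilde F_w)$ and $\tilde G$ both lie in $\cG_{w+1}^-$ on which restriction to $X^{\rm ss}_+$ is an equivalence, we conclude $\mathrm{R}(\tilde F_w) \simeq \tilde G$.

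Finally, restricting the triangle along the open immersion $X^{\rm ss}_- \hookrightarrow X$, and using that $\cK_w$ is already supported on $E \subset X^{\rm ss}_-$ so that $\cK_w|_{X^{\rm ss}_-} = \cK_w$, together with the equality $\RHom_{\cG}(\cK_w, \tilde F_w) = \RHom_{X^{\rm ss}_-}(\cK_w, F)$, produces the claimed triangle after identifying the rotation $\cK_w[1]$ with the notation $\cE_w$ in the statement. The main obstacle will be verifying this semiorthogonal decomposition of $\cG$ and the preservation of $\RHom$-groups under restriction in the factorization framework; both reduce to the GIT vanishing theorems of \cite{vgit} applied to objects whose weight support lies strictly inside the window, which are available since $W$ is $G_1$-invariant and the weight bounds defining $\cG$ are tuned precisely so that Teleman-type vanishing holds for $\Ext$-computations with $\cK_w$.
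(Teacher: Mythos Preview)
Your proposal is correct and follows precisely the approach the paper indicates: the paper gives no proof of its own but cites the mutation argument of \cite[Section 3.1]{hl-sh}, with the footnote that it carries over to factorizations via the semiorthogonal decompositions of \cite{vgit}, and that is exactly the argument you sketch. One minor imprecision worth fixing: the left semiorthogonal factor of your enlarged window $\cG\subset\Dcoh_G(X,\chi_{0,1},W)$ should be generated by the pushforward from the \emph{closed} stratum $S_+\subset X$ (which then restricts to $\cK_w$ on $X^{\rm ss}_-$), rather than by $\cK_w$ itself, since $\cK_w$ is only defined on $X^{\rm ss}_-$; with this adjustment your triangle on $X$ restricts to the claimed one, and the symbol $\cE_w$ in the displayed triangle is indeed just $\cK_w$ (a typo in the statement).
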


\vspace{3mm}
Write $\bfP\defeq\bfP(1,a_1,\hdots,a_n)$ for simplicity, and  denote by ${\rm V}(\cE)|_Y$  the pull-back of the vector bundle $q\colon{\rm V}(\cE)\to \bfP$ by the closed immersion $i_Y\colon Y\hookto \bfP$. Consider the following diagram
\[
\begin{tikzpicture}[xscale=1.3]
\node (A0) at (1,1.5) {${\rm V}(\cE)|_Y$};
\node (A5) at (4,1.5) {${\rm V}(\cE)$};
\node (B0) at (1,0) {$Y$};
\node (B5) at (4,0) {$\bfP$,};
\draw[->] (A0) -- node[above] {$\scriptstyle i$}(A5);
\draw[->] (B0) -- node[above] {$\scriptstyle i_Y$}(B5);
\draw[->] (B0) edge[bend left] node [left] {$\scriptstyle j_Y$}(A0);
\draw[->] (A0) edge[bend left] node [right] {$\scriptstyle p$}(B0);
\draw[->] (B5) edge[bend left] node [left] {$\scriptstyle j$}(A5);
\draw[->] (A5) edge[bend left] node [right] {$\scriptstyle q$}(B5);
\end{tikzpicture}
\]
where $p$ is the natural projection, $i$ is the natural closed immersion, and $j$ and $j_Y$ are the zero sections. Note that $j\colon \bfP=[E/G_1]\hookto {\rm V}(\cE)=[X_-^{\rm ss}/G_1]$ is induced from $j\colon E\hookto X^{\rm ss}_-$, and so we use the same notation.
By Kn\"orrer periodicity, the functor
\begin{equation}\label{knorrer 2}
i_*p^*\colon \Db(\coh Y)\simto \Dcoh_G(X^{\rm ss}_-,\chi_{0,1},W)
\end{equation}
is an equivalence.  
\begin{lem}
There is an  isomorphism 
 \[\bigl(0\to j_*\cO_E(\chi_{-w,0})\to0\bigr)\cong  i_*p^*\cO_Y(-w-\alpha-1)[r+1]\]
in $\Dcoh_G(X^{\rm ss}_-,\chi_{0,1},W)$.
\begin{proof}
 Recall that $s=(x_0,f_1,\hdots,f_r)\in \Gamma({\bf P},\cE^{*})^{G_2}$, and set  $t\defeq (y_0,\hdots,y_r)\in\Gamma(\bfP,\cE)^{G_2}$. Consider the following $G$-equivariant locally free sheaf on $E$:
\[
\widetilde{\cE}\defeq \cO(\chi_{-1,0})\oplus\cO(\chi_{-d_1,0})\oplus\cdots\oplus \cO(\chi_{-d_r,0}).
\]
Then $\widetilde{\cE}$ corresponds to $\cE$ via \eqref{E=P}, and pulling-back the sections $s$ and $t$ by the projection $q\colon X^{\rm ss}_-\to E$ defines $G$-invariant sections $q^*(s)\in \Gamma(X^{\rm ss}_-,(q^*\widetilde{\cE})^*)^G$ and $q^*(t)\in \Gamma(X^{\rm ss}_-,q^*\widetilde{\cE}(\chi_{0,1}))^G$.
Since $E\cong Z_{q^*(t)^*}$, there are   isomorphisms 
\begin{align}
\bigl(0\to j_*\cO_E(\chi_{-w,0})\to0\bigr)&\cong\bigl(0\to \cO_{Z_{q^*(s)}}(\chi_{-w,0})\to0\bigr)\otimes \det(q^*\widetilde{\cE})[r+1]\\
&\cong \bigl(0\to \cO_{Z_{q^*(s)}}(\chi_{-w-\alpha-1,0})\to0\bigr)[r+1]\\
&\cong \bigl(0\to i_*p^*\cO_{Y}(-w-\alpha-1)\to0\bigr)[r+1]
\end{align}
where the first isomorphism follows from Lemma \ref{koszul lem}. This finishes the proof.
\end{proof}
\end{lem}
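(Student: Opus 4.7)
The plan is to exhibit both sides of the claimed isomorphism as Koszul-type matrix factorizations built from the two natural ``halves'' of the potential $W$, and then invoke the Koszul comparison Lemma \ref{koszul lem}. To set this up, first I would observe that after identifying $X_-^{\rm ss}$ with the total space ${\rm V}(\cE)$ of the bundle, the tautological fibre coordinates $(y_0,\hdots,y_r)$ assemble into the section $q^*(t)\in\Gamma(X_-^{\rm ss},q^*\widetilde{\cE}(\chi_{0,1}))^G$ whose zero locus is precisely the zero section $E$, whereas the complementary section $q^*(s)=(x_0,f_1,\hdots,f_r)\in\Gamma(X_-^{\rm ss},(q^*\widetilde{\cE})^*)^G$ has zero locus $Z_{q^*(s)}={\rm V}(\cE)|_Y$. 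By construction the natural pairing of $q^*(s)$ and $q^*(t)$ reproduces $W$, so both $j_*\cO_E$ and $\cO_{{\rm V}(\cE)|_Y}$ admit canonical lifts to matrix factorizations of $W$ coming from Koszul complexes built on these transverse sections.

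Next, I would invoke Lemma \ref{koszul lem}, which is the standard Koszul swap: exchanging the roles of the two sections inside the Koszul matrix factorization of $W$ produces an isomorphism
\[
\bigl(0\to j_*\cO_E(\chi_{-w,0})\to0\bigr)\cong \bigl(0\to \cO_{Z_{q^*(s)}}(\chi_{-w,0})\to0\bigr)\otimes \det(q^*\widetilde{\cE})[r+1],
\]
the twist by the determinant of the ambient bundle and the shift by its rank $r+1$ being the standard output of this comparison.

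Finally, I would compute the determinant explicitly from the definition $\widetilde{\cE}=\cO(\chi_{-1,0})\oplus\cO(\chi_{-d_1,0})\oplus\cdots\oplus\cO(\chi_{-d_r,0})$, giving $\det\widetilde{\cE}=\cO(\chi_{-(1+\sum d_i),0})$, which by the Calabi-Yau condition $\sum_i d_i=\sum_i a_i=\alpha$ equals $\cO(\chi_{-\alpha-1,0})$. Since determinant commutes with pullback, tensoring by $\det(q^*\widetilde{\cE})$ simply shifts the $G_1$-character from $\chi_{-w,0}$ to $\chi_{-w-\alpha-1,0}$. Identifying $Z_{q^*(s)}={\rm V}(\cE)|_Y$ with $i_*p^*\cO_Y$ via the Cartesian square relating $Y\hookto\bfP$ and ${\rm V}(\cE)|_Y\hookto{\rm V}(\cE)$ then yields the stated isomorphism.

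The main obstacle will be extracting from Lemma \ref{koszul lem} exactly the twist by $\det(q^*\widetilde{\cE})$ and the homological shift by $r+1$ with the correct signs and $G$-equivariant structure; once that comparison is calibrated, the remainder of the argument is straightforward character bookkeeping together with a standard base change identification for the vector bundle pullback.
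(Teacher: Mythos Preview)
Your proposal is correct and follows essentially the same route as the paper: both identify $E$ and ${\rm V}(\cE)|_Y$ as the zero loci of the pulled-back sections $q^*(t)$ and $q^*(s)$, invoke the Koszul comparison (Lemma \ref{koszul lem}) to swap between them at the cost of the twist by $\det(q^*\widetilde{\cE})$ and the shift $[r+1]$, and then compute this determinant as $\cO(\chi_{-\alpha-1,0})$ via the Calabi--Yau condition $\sum d_i=\alpha$. Your anticipated obstacle about calibrating the twist and shift is exactly what Lemma \ref{koszul lem} provides once you note the bundle has rank $r+1$ and $\omega_i\cong\det(\widetilde{\cE}^*)$.
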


The following is a generalization of \cite[Proposition 8.5]{ko}, which we prove by a similar argument as in loc. cit.

\begin{lem}\label{lem for cor}
The following diagram commutes.
\[
\begin{tikzpicture}[xscale=1.3]
\node (A0) at (1,0) {$\Dcoh_G(X^{\rm ss}_-,\chi_{0,1},W)$};
\node (A5) at (5,0) {$\Dcoh_G(X^{\rm ss}_-,\chi_{0,1},W)$};
\node (B0) at (1,1.5) {$\Db(\coh Y)$};
\node (B5) at (5,1.5) {$\Db(\coh Y)$};
\draw[->] (A0) -- node[above] {$\scriptstyle\Phi_{w}$}(A5);
\draw[->] (B0) -- node[above] {$\scriptstyle\ST_{\cO_Y(-w-\alpha-1)}$}(B5);
\draw[->] (B0) -- node[left] {$\scriptstyle i_*p^*$}(A0);
\draw[->] (B5) -- node[right] {$\scriptstyle i_*p^*$}(A5);
\end{tikzpicture}
\]
\begin{proof}
Let $F\in \Db(\coh Y)$. By Lemma \ref{spherical lem 1}, there is a functorial exact triangle
\begin{equation}
\RHom(\cK_w,i_*p^*F)\otimes \cK_w\to i_*p^*F\to \Phi_w(i_*p^*F)\to\RHom(\cE_w,i_*p^*F[1])\otimes \cE_w.
\end{equation}
Thus it is enough to show that there is a natural isomorphism
\begin{equation}\label{cor isom}
\RHom(\cK_w,i_*p^*F)\otimes \cK_w\cong \RHom(\cO_Y(-w-\alpha-1), F)\otimes i_*p^*\cO_Y(-w-\alpha-1).
\end{equation}
By Lemma \ref{lem for cor}, there is an isomorphism 
\[
\cK_w=\bigl(0\to j_*\cO_E(\chi_{-w,0})0\bigr)\cong i_*p^*\cO_Y(-w-\alpha-1)[r+1],
\]
and this implies the required isomorphism \eqref{cor isom} since $i_*p^*$ is an equivalence.
\end{proof}
\end{lem}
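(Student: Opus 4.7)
The plan is to compare the two functorial triangles that define $\Phi_w$ and $\ST_{\cO_Y(-w-\alpha-1)}$ respectively, and to transport one into the other via the Kn\"orrer periodicity equivalence $i_*p^*$.

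First, I start on the $Y$-side. Since $Y$ is Calabi--Yau and $\cO_Y(-w-\alpha-1)$ is a line bundle, it is a spherical object, and by the very definition of the spherical twist, there is a functorial exact triangle
\[
\RHom_Y(\cO_Y(-w-\alpha-1),F)\otimes \cO_Y(-w-\alpha-1)\to F\to \ST_{\cO_Y(-w-\alpha-1)}(F)\to{+1}
\]
for every $F\in \Db(\coh Y)$. Applying the equivalence $i_*p^*$ (which is $\Bbbk$-linear, hence intertwines the tensor action of graded vector spaces, and which preserves $\RHom$ as a fully faithful functor) produces a functorial triangle
\[
\RHom\bigl(i_*p^*\cO_Y(-w-\alpha-1),i_*p^*F\bigr)\otimes i_*p^*\cO_Y(-w-\alpha-1)\to i_*p^*F\to i_*p^*\ST_{\cO_Y(-w-\alpha-1)}(F)\to{+1}
\]
in $\Dcoh_G(X^{\rm ss}_-,\chi_{0,1},W)$.

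Next, I would invoke the immediately preceding lemma, which gives a natural isomorphism $i_*p^*\cO_Y(-w-\alpha-1)\cong \cK_w[-r-1]$. Substituting this isomorphism into the displayed triangle, and using that a shift in a spherical object is absorbed on both sides of the evaluation map (so that $\RHom(E[n],-)\otimes E[n]\cong \RHom(E,-)\otimes E$ as functors, with the same evaluation morphism), one obtains the functorial triangle
\[
\RHom(\cK_w,i_*p^*F)\otimes \cK_w\to i_*p^*F\to i_*p^*\ST_{\cO_Y(-w-\alpha-1)}(F)\to{+1}.
\]
Comparing this with the functorial triangle of Lemma \ref{spherical lem 1} applied to $i_*p^*F$, which identifies $\Phi_w(i_*p^*F)$ as the cone of the very same evaluation morphism $\RHom(\cK_w,i_*p^*F)\otimes \cK_w\to i_*p^*F$, yields the desired natural isomorphism $\Phi_w\circ i_*p^*\simeq i_*p^*\circ\ST_{\cO_Y(-w-\alpha-1)}$.

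The main subtlety will not be the object-wise identification, which is essentially immediate from the uniqueness of cones, but rather promoting it to an isomorphism of functors. This requires that both the evaluation triangle defining $\Phi_w$ (see \cite{hl-sh} and the remark after it in the paper that an analogous statement holds for matrix factorizations via \cite{vgit}) and the spherical twist triangle on $Y$ be genuinely functorial in $F$, and that the isomorphism $i_*p^*\cO_Y(-w-\alpha-1)\cong \cK_w[-r-1]$ supplied by the preceding lemma be compatible with these two functorial structures. Both points follow from the construction of $\cK_w$ as $j_*\pi^*$ applied to the exceptional generator $\cO(\chi_{-w,0})$, together with the Koszul-type computation used in the proof of the preceding lemma; once those compatibilities are spelled out, the comparison of triangles above upgrades to the required isomorphism of functors, completing the proof.
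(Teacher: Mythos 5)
Your proposal is essentially the paper's own proof: both start from the functorial triangle of Lemma \ref{spherical lem 1} defining $\Phi_w$ and match it against the spherical-twist triangle on $Y$ via the isomorphism $\cK_w \cong i_*p^*\cO_Y(-w-\alpha-1)[r+1]$ from the preceding lemma, using that the shift $[r+1]$ is absorbed in $\RHom(\cK_w,-)\otimes\cK_w$ and that $i_*p^*$ is an equivalence. The only cosmetic difference is that you push the twist triangle from $Y$ into the factorization category and compare, whereas the paper reduces directly to identifying the first terms of the two triangles; you also flag (and the paper leaves more implicit) the point that the two evaluation maps must agree, not just the underlying objects, which is the genuine content of the functoriality claim.
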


\begin{proof}[Proof of Corollary \ref{cor:spherical twist}] For simplicity, write 
\[
\cD_{\pm}\defeq\Dcoh_G(X_{\pm}^{\rm ss},\chi_{0,1},W).
\]
By Lemma \ref{lem for cor}, the spherical twist $\ST_{\cO_Y(m)}$ corresponds to 
$
\Phi_{(-m-\alpha-1)}\colon \cD_-\simto\cD_-
$
via \eqref{knorrer 2}. By the definition of $\Phi_{(-m-\alpha-1)}$, the following diagram commutes:
\begin{equation}\label{diagram for cor}
\begin{tikzpicture}[xscale=1.0]
\node (A1) at (1,0) {$\cG^-_{-m-\alpha-1}$};
\node (A3) at (3,0) {$\cD_+$};
\node (A5) at (5,0) {$\cG^-_{-m-\alpha}$};
\node (A7) at (7,0) {$\cD_-$};
\node (A9) at (9,0) {$\cG^-_{-m-\alpha-1}$};
\node (B1) at (1,1.5) {$\cD_-$};
\node (B9) at (9,1.5) {$\cD_-$};
\draw[->] (A1) -- node[above] {$\scriptstyle\res$}(A3);
\draw[->] (A3) -- node[above] {$\scriptstyle\res^{-1}$}(A5);
\draw[->] (A5) -- node[above] {$\scriptstyle\res$}(A7);
\draw[->] (A7) -- node[above] {$\scriptstyle\res^{-1}$}(A9);
\draw[->] (B1) -- node[left] {$\scriptstyle\res^{-1}$}(A1);
\draw[->] (B9) -- node[right] {$\scriptstyle\res^{-1}$}(A9);
\draw[->] (B1) -- node[above] {$\scriptstyle\Phi_{(-m-\alpha-1)}$}(B9);
\end{tikzpicture}
\end{equation}
Since $\cG^-_{-m-\alpha-1}=\scrM(\delta+\bnabla)$, $\cG^-_{-m-\alpha}=\scrM(\delta-1+\bnabla)$, and $\cD_{\pm}=\Dcoh_{G_2}(\X_{\mp},\chi,W)$, the composition of the bottom equivalences in the above diagram is nothing but the composition
\[
\begin{tikzpicture}[xscale=1.3]
\node (B0) at (1,0) {$\scrM(\delta+\bnabla,W)$};
\node (B5) at (5,0) {$\scrM(\delta-1+\bnabla,W)$};
\node (B9) at (9,0) {$\scrM(\delta+\bnabla,W)$.};
\draw[->] (B0) -- node[above] {$\scriptstyle\uprho\bigl([\delta]\xrightarrow{-\ell}[\delta-1]\bigr)$}(B5);
\draw[->] (B5) -- node[above] {$\scriptstyle\uprho\bigl([\delta-1]\xrightarrow{\ell}[\delta]\bigr)$}(B9);
\end{tikzpicture}
\]
Therefore, the assertion follows from Theorem \ref{window shift}. 
\end{proof}

\appendix

\section{Matrix factorizations}\label{mf appendix}
This appendix recalls definitions and fundamental properties of derived factorization categories.
See  \cite{posi, bfk,bdfik,H1,H3} for more details.

\subsection{Definitions} Let $\X$ be an algebraic stack, and let $G$ be an algebraic group acting on $\X$. 
A {\it gauged Landau-Ginzburg (=LG) model} is  data
\[
(\cA,\chi,W)^G
\]
such that $\cA$ is a $G$-equivariant $\X$-algebra (in the sense of \cite[Section 3]{H3}), $\chi\colon G\to \bG_m$ is a character of $G$ and $W\in \Gamma(\X,\cO(\chi))^G$. 
If $\cA$ is noncommutative, the data is called a {\it noncommutative} gauged LG model. 
If $\cA=\cO_{\X}$, write $(\X,\chi,W)^G$ for the gauged LG model, and if $\X=\Spec R$ and $A\defeq\Gamma(\X,\cA)$, let $(A,\chi,W)^G$ denote the gauged LG model.

\begin{dfn}
Let $(\cA,\chi,W)^G$ be a gauged LG model. A {\it factorization} of $(\cA,\chi,W)^G$ is a sequence
\[
E=\left(E_1\xrightarrow{\varphi_1}E_0\xrightarrow{\varphi_0}E_1(\chi)\right),
\]
where each $E_i$ is a $G$-equivariant coherent $\cA$-module, $E_1(\chi)\defeq E_1\otimes\cO(\chi)$ and each $\varphi_i$ is a $G$-homomorphism such that  $\varphi_0\circ\varphi_1=\id_{E_1}\otimes W$ and $\varphi_1(\chi)\circ\varphi_0=\id_{E_0}\otimes W$.
\end{dfn}
 
Let $E$ and $F$ be factorizations of $(\cA,\chi,W)^G$. A {\it homomorphism} from $E$ to $F$ is a pair $\alpha=(\alpha_0,\alpha_1)$ of $G$-homomorphism $\alpha_i\colon E_i\to F_i$ such that the following diagram is commutative:
\[\xymatrix{
E_1\ar[rr]^{\varphi_1^E}\ar[d]_{\alpha_1}&&E_0\ar[rr]^{\varphi_0^E}\ar[d]^{\alpha_0}&&E_1(\chi)\ar[d]^{\alpha_1(\chi)}\\
F_1\ar[rr]^{\varphi_1^F}&&F_0\ar[rr]^{\varphi_0^F}&&F_1(\chi).
 }\]
Two homomorphisms $\alpha,\beta\colon E\to F$  is said to be {\it homotopy equivalent}, denoted by $\alpha\sim\beta$, if  there exist two homomorphisms 
$h_0:E_0\rightarrow F_1\hspace{3mm}{\rm and}\hspace{3mm}h_1:E_1(\chi)\rightarrow F_0$
of $G$-equivariant $\cA$-modules such that $\alpha_0-\beta_0=\varphi_1^Fh_0+h_1\varphi_0^E$ and $\alpha_1(\chi)-\beta_1(\chi)=\varphi_0^Fh_1+(h_0(\chi))(\varphi_1^E(\chi))$.

\begin{dfn}
Let
\[
\coh_G(\cA,\chi,W)
\]
denote the category of factorizations of $(\cA,\chi,W)^G$, and its {\it homotopy category} 
\[\Kcoh_G(\cA,\chi,W)\]
is defined to be the category such that its objects are the same as $\coh_G(\cA,\chi,W)$ and homomorphisms are defined by
$
\Hom(E,F)\defeq\Hom_{\coh_G(\cA,\chi,W)}(E,F)/\sim
$. 
\end{dfn}

The homotopy category $\Kcoh_G(\cA,\chi,W)$ has a natural  structure of a triangulated category. 
For a bounded complex $E^{\bullet}$ in $\coh_G(\cA,\chi,W)$, 
its {\it totalization} $\Tot(E^{\bullet})\in\coh_G(\cA,\chi,W)$ can be defined as an analogy of totalizations of double complexes. 
We say that a factorization $E$ is {\it acyclic} if there is a bounded acyclic complex $A^{\bullet}$ in $\coh_G(\cA,\chi,W)$  such that $E\cong \Tot(A^{\bullet})$ in $\Kcoh_G(\cA,\chi,W)$. 
Write 
\[
\Acoh_G(\cA,\chi,W)\subseteq \Kcoh_G(\cA,\chi,W)
\] 
for the smallest thick subcategory of $\Kcoh_G(\cA,\chi,W)$ containing all acyclic factorizations.

\begin{dfn}
The {\it derived factorization category} of $(\cA,\chi,W)^G$ is defined by
\[
\Dcoh_G(\cA,\chi,W)\defeq \Kcoh_G(\cA,\chi,W)/\Acoh_G(\cA,\chi,W).
\]
If $\cA=\cO_{\X}$, write $\Dcoh_G(\X,\chi,W)$, and if $\X=\Spec R$ and $A=\Gamma(\X,\cA)$, write $
\Dmod_G(A,\chi,W)$ for the derived factorization categories. 
Similar alternative notations are used for $\coh_G(\cA,\chi,W)$ and $\Kcoh_G(\cA,\chi,W)$. 
\end{dfn}

\subsection{Equivariant tilting theory} Each normal subgroup $H\subseteq G$ associates a natural functor 
\[
\Res^G_H\colon \coh_G\cA\to \coh_H\cA,
\]
which is called  the {\it restriction functor}. 
The rest of this section recalls equivariant tilting theory for factorizations developed in \cite{H3} (see also \cite{ot}).

\begin{dfn}\label{eq tilt} A $G$-equivariant coherent $\cA$-module $\cT\in\coh_G\cA$ is called {\it $(G,H)$-tilting module} if $\Res^G_H(\cT)$ is a tilting object in $\D(\Qcoh_H\cA)$. We say that $\cT$ is {\it $G$-tilting module} if it is $(G,\{1\})$-tilting module.
\end{dfn}

 Let $H\subseteq G$ be a normal subgroup, and let $Y=\Spec R$ be an affine variety with a $G$-action such that the $H$-action on $Y$ is trivial. 
 Then there is an induced $G/H$-action on $Y$. 
 Let $X$ be a variety with a $G$-action, and let $f\colon X\to Y$ be a $G$-equivariant morphism such that $f_*(\cO_X)\cong R$ and the induced morphism $[f/H]\colon [X/H]\to \Spec R$ is proper. 
 Let $\cA$ be a $G$-equivariant coherent $X$-algebra, and let  $\cT\in \coh_GX$. 
Then its endomorphism algebra $\Lambda_{\cT}\defeq \End(\cT)$ is a $G$-equivariant coherent $R$-algebra, 
and its $H$-invairant part $\Lambda_{\cT}^H$ is a $G/H$-equivariant coherent $R$-algebra. 
Thus there is a left exact functor 
\begin{equation}\label{tilt functor}
\Hom(\cT,-)^H\colon \coh_G\cA\to \fmod_{G/H}\Lambda_{\cT}^H.
\end{equation}
Let $\chi\colon G/H\to \bG_m$ be a character of $G/H$, and set $\widehat{\chi}\defeq \chi\circ p\colon G\to\bG_m$, where $p\colon G\to G/H$ is the natural projection.  Let $W\in \Gamma(Y,\cO(\chi))^{G/H}=\Gamma(Y,\cO(\widehat{\chi}))^{G}$. Then these define  gauged LG models
\[
(\cA,\widehat{\chi},W)^G \hspace{3mm}\mbox{ and }\hspace{3mm} (\Lambda_{\cT}^H,\chi,W)^{G/H},
\]
where $W$ in the left LG model denotes $f^*W$ by abuse of notation, and  the functor \eqref{tilt functor}  defines the right derived functor 
\begin{equation}\label{mf tilt}
\Hom(\cT,-)^H\colon \coh_G(\cA,\widehat{\chi},W)\to \fmod_{G/H}(\Lambda_{\cT}^H,\chi,W).
\end{equation}
If the abelian category $\Qcoh_G\cA$ has finite injective resolutions, the functor \eqref{mf tilt} induces the right derived fucntor 
\[
\RHom(\cT,-)^H\colon \Dcoh_G(\cA,\widehat{\chi},W)\simto \Dmod_{G/H}(\Lambda_{\cT},\chi,W).
\]
The following shows an equivariant and factorization version of a tilting equivalence.

\begin{thm}[{\cite[Theorem 4.7]{H3}}]\label{H3 thm} Let $\cT$ be a $(G,H)$-tilting module. Assume that $\Qcoh_G\cA$ has finite injective resolutions,  $G/H$ is reductive and $\Lambda_{\cT}^H$ is of finite global dimension. Then the right derived functor 
\[
\RHom(\cT,-)^H\colon \Dcoh_G(\cA,\widehat{\chi},W)\simto \Dmod_{G/H}(\Lambda_{\cT}^H,\chi,W).
\]
of \eqref{mf tilt} is an equivalence.
\end{thm}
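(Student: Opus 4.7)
The proof proposal is as follows.

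The plan is to adapt the classical tilting equivalence to the derived factorization / equivariant setting by constructing a pair of adjoint functors, checking compatibility on the tilting generator, and then spreading the equivalence across the whole category by a generation argument. First I would produce a left adjoint to $\RHom(\cT,-)^H$. The natural candidate is $(-)\otimes^{\bL}_{\Lambda_{\cT}^H}\cT$, built via a bar-type resolution of factorizations over the noncommutative algebra $\Lambda_{\cT}^H$. Because $\Qcoh_G\cA$ has finite injective resolutions, the functor $\RHom(\cT,-)^H$ actually descends to derived factorization categories and is well-defined, and because $G/H$ is reductive, taking $H$-invariants commutes with the derived functors up to quasi-isomorphism of factorizations (averaging/Reynolds), so the formation of $(-)^H$ is exact on the relevant subcategories. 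Reductivity of $G/H$ is also what ensures that $(-)\otimes^{\bL}_{\Lambda_{\cT}^H}\cT$ lands in $\Dcoh_G(\cA,\widehat{\chi},W)$ after $H$-coinvariants/invariants are properly accounted for.

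Next I would verify the equivalence on the generator. By the $(G,H)$-tilting hypothesis, $\RHom_{\cA}(\cT,\cT)^H \simeq \Lambda_{\cT}^H$ and the higher Ext groups vanish, which gives that the unit $\Lambda_{\cT}^H \to \RHom(\cT,\Lambda_{\cT}^H \otimes^{\bL}_{\Lambda_{\cT}^H}\cT)^H$ is an isomorphism. The finite global dimension of $\Lambda_{\cT}^H$ ensures that every object of $\Dmod_{G/H}(\Lambda_{\cT}^H,\chi,W)$ can be built from $\Lambda_{\cT}^H$ by finitely many cones and shifts (thickness argument), and hence that the adjunction unit is an isomorphism on a generating set, and then on all of $\Dmod_{G/H}(\Lambda_{\cT}^H,\chi,W)$ — this proves full faithfulness of the left adjoint, equivalently that $\RHom(\cT,-)^H$ is a quotient onto its essential image.

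For essential surjectivity, equivalently to show that $\cT$ is a compact generator of $\Dcoh_G(\cA,\widehat{\chi},W)$, I would argue that the kernel of $\RHom(\cT,-)^H$ is zero: if $\RHom(\cT,E)^H = 0$ for $E\in\Dcoh_G(\cA,\widehat{\chi},W)$, then after applying $\Res^G_H$, the $(G,H)$-tilting hypothesis says that $\Res^G_H(\cT)$ generates $\D(\Qcoh_H\cA)$ in the underlying derived category (via the standard curved/periodic unfolding of factorizations), and reductivity of $G/H$ lets one recover $\RHom(\cT,E)$ from its $H$-invariant part by averaging, forcing $\Res^G_H(E)=0$ and therefore $E=0$ in the derived factorization category.

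The main obstacle will be step two, specifically showing that the bar construction / tensor product $(-)\otimes^{\bL}_{\Lambda_{\cT}^H}\cT$ is well-defined on derived factorization categories and interacts correctly with the curvature $W$ and with the $H$-invariants functor. Unlike the classical tilting setup, factorization categories do not admit unbounded derived categories of arbitrary modules in a naive way, so one must either work with the coderived or absolute derived enhancement and verify by hand that the standard Keller-type compact-generator arguments carry over; this is where the assumption that $\Lambda_{\cT}^H$ has finite global dimension is essential, as it guarantees that the bar complex is quasi-isomorphic to a bounded resolution, so that totalization against $W$ makes sense without convergence issues. Once this technical bookkeeping is in place, the rest of the proof reduces to the two standard checks above.
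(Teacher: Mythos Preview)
The paper does not prove this theorem. It is stated in the appendix as a citation of \cite[Theorem 4.7]{H3} and is used as a black box; no argument is given in the present paper. So there is no ``paper's own proof'' to compare your proposal against.

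That said, a brief comment on your sketch: the overall architecture (construct a left adjoint by derived tensor, check unit/counit on the tilting generator, use finite global dimension to generate the module side, use the tilting hypothesis to generate the geometric side) is the standard one and is essentially what is carried out in \cite{H3}. One point to be careful about: you invoke reductivity of $G/H$ to justify exactness of taking $H$-invariants and to ``recover $\RHom(\cT,E)$ from its $H$-invariant part by averaging,'' but it is $H$-invariants that appear in the functor, not $G/H$-invariants, so the Reynolds/averaging argument does not apply in the way you describe. In \cite{H3} the role of reductivity of $G/H$ is rather on the target side, to control the homological algebra of $G/H$-equivariant $\Lambda_{\cT}^H$-modules (e.g.\ existence of enough projectives/injectives and comparison of derived factorization categories). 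The exactness issues on the source side are handled instead by the hypothesis on finite injective resolutions in $\Qcoh_G\cA$. If you want to turn your sketch into an actual proof, this is the place where the bookkeeping needs to be redone.
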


\subsection{Koszul factorizations} Let $(X,\chi,W)^G$ be a gauged LG model such that $X$ is a smooth variety. Let $\cE$ be a $G$-equivariant locally free sheaf on $X$ of rank $r$, and let $s\in \Gamma(X,\cE^{*})^G$ and $t\in \Gamma(X,\cE(\chi))^G$ be $G$-invariant sections such that $t\circ s=W\cdot \id_{\cE}$ and $s(\chi)\circ t=W$. The {\it Koszul factorization} $K(s,t)\in \coh_G(X,\chi,W)$ associated to $s$ and $t$ is defined by 
\[
K(s,t)\defeq\left(\bigoplus_{n\in \bZ}(\bigwedge^{2n+1}\cE)(\chi^n)\xrightarrow{\varphi_1}\bigoplus_{n\in \bZ}(\bigwedge^{2n}\cE)(\chi^n)\xrightarrow{\varphi_0}\bigoplus_{n\in \bZ}(\bigwedge^{2n+1}\cE)(\chi^{n+1})\right),
\]
where $\varphi_i\defeq t\wedge(-)\oplus s\vee(-)$. Denote by $Z_s\subset X$ the zero scheme of $s$, and write $i\colon Z_{s}\hookto X$ for the closed immersion. Then the relative canonical bundle $\omega_i\defeq \omega_{Z_s}\otimes (\omega^*_X|_{Z_s})$ is isomorphic to $\det(\cE^*)$. 
The following is a fundamental property of Koszul factoizations.
\begin{lem}[{\cite[Proposition 3.20]{bfk}\footnote[1]{There is a typo in the latter assertion in loc. cit.}}] \label{koszul lem} Assume that the sections $s$ and $t^*$ are regular. Then
 there are isomorphisms
\begin{align*}
K(s,t)\cong \bigl(0\to \cO_{Z_s}\to0\bigr)
\cong \bigl(0\to \cO_{Z_{t^*}}\to0\bigr)\otimes \omega_i[-r].
\end{align*}
in $\Dcoh_G(X,\chi,W)$.
\end{lem}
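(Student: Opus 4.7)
The plan is to establish the first isomorphism $K(s,t) \cong (0 \to \cO_{Z_s} \to 0)$ by constructing an explicit morphism of factorizations and showing its cone is acyclic, and then to deduce the second isomorphism by the same argument applied after Koszul duality.

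First, I would define a morphism $\pi \colon K(s,t) \to (0 \to \cO_{Z_s} \to 0)$ whose degree-zero component is the composition
\[
\bigoplus_{n\in \bZ}\bigl(\bigwedge^{2n} \cE\bigr)(\chi^n) \twoheadrightarrow \bigwedge^0 \cE = \cO_X \twoheadrightarrow \cO_{Z_s},
\]
and whose degree-one component is zero. Only the identity $\pi_0 \circ \varphi_1 = 0$ needs to be checked; it holds because the only contribution of $\varphi_1 = t \wedge (-) + s \vee (-)$ landing in the $n = 0$ summand $\cO_X$ is the contraction $s \vee (-) \colon \bigwedge^1 \cE \to \cO_X$, whose image lies in the ideal sheaf of $Z_s$.

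Next, I would show that the cone $\cC \defeq \operatorname{Cone}(\pi)$ lies in $\Acoh_G(X,\chi,W)$. Because $s$ is a regular section, the standard Koszul resolution
\[
0 \to \bigwedge^r \cE \xrightarrow{s \vee} \bigwedge^{r-1} \cE \xrightarrow{s \vee} \cdots \xrightarrow{s \vee} \cO_X \to \cO_{Z_s} \to 0
\]
is exact. I would organize this into a bounded acyclic complex $A^\bullet$ in $\coh_G(X,\chi,W)$ by assembling the terms into factorizations via the two-periodic differentials built from $t \wedge (-)$ and $s \vee (-)$ with appropriate twists by $\chi^n$, and then check that $\Tot(A^\bullet) \cong \cC$. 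This exhibits $\cC$ as acyclic, so $\pi$ becomes an isomorphism in $\Dcoh_G(X,\chi,W)$.

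For the second isomorphism, I would use the perfect pairing $\bigwedge^k \cE \otimes \bigwedge^{r-k} \cE \to \det \cE$ to identify $K(s,t)$, up to a twist by $\det(\cE^*) \cong \omega_i$ and a shift that reverses the parity of the $\bZ$-grading on $\bigwedge^\bullet \cE$, with a Koszul factorization $K(t^*, s^*)$ built from the (now regular) dual section $t^*$ and $s^*$. Applying the first part of the lemma to this dual Koszul factorization produces $(0 \to \cO_{Z_{t^*}} \to 0)$, and tracking the twist and shift through the duality gives the desired factor of $\omega_i[-r]$.

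The main obstacle will be the middle step: packaging the classical Koszul resolution of $\cO_{Z_s}$ — a bounded exact sequence of ordinary coherent sheaves $\bigwedge^k \cE$ — into a genuine bounded acyclic complex of factorizations whose totalization is $\operatorname{Cone}(\pi)$. This demands careful bookkeeping of the interleaving $\chi^n$-twists, the sign conventions built into the Koszul differentials $t \wedge (-)$ and $s \vee (-)$, and the $G$-equivariance, so that both the vertical factorization differentials and the horizontal Koszul differentials assemble consistently into a resolution at the level of the factorization category.
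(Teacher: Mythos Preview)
The paper does not give its own proof of this lemma: it is simply quoted from \cite[Proposition 3.20]{bfk} (with a remark that there is a typo in the second assertion there), and no argument appears in the text. So there is nothing in the paper to compare your proposal against.

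That said, your outline is essentially the standard argument one finds in \cite{bfk}. The morphism $\pi$ you describe, the identification of $\operatorname{Cone}(\pi)$ with the totalization of the ordinary Koszul resolution of $\cO_{Z_s}$ (viewed as a bounded acyclic complex of factorizations), and the duality step using $\bigwedge^k\cE \cong (\bigwedge^{r-k}\cE)^*\otimes\det\cE$ to pass from $K(s,t)$ to the Koszul factorization for $t^*$ are exactly the ingredients used there. The bookkeeping you flag as the main obstacle---matching the $\chi$-twists and signs so that the Koszul complex really assembles into an acyclic complex in $\coh_G(X,\chi,W)$---is real but routine, and is precisely what \cite{bfk} carries out. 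Your sketch is correct and would reconstruct the cited proof; the paper itself simply defers to that reference.
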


\section{List of Notations}\label{notation appendix}
\noindent
\renewcommand{\arraystretch}{1.5}
\begin{longtable}[l]{p{2.3cm} p{8.25cm} p{3cm}}

$w\ast \chi$ 
&
$\ast$-action of the Weyl group $W$ on $\M_\bR$
&
p.\pageref{not: ast action}, line \lineref{line not: ast action}.
\\

$\upbeta_1,\hdots,\upbeta_d$ 
&
weights of a quasi-symmetric representation $X$ 
&
p.\pageref{not: upbata}, line \lineref{line not: upbata}.
\\

$\upbeta_F^+$ 
&
$\defeq \sum_{\upbeta_i\in \scrW_{F}^+}\upbeta_i$
&
p.\pageref{not: upbeta_F^+}, line \lineref{line not: upbeta_F^+}.
\\

$\Gamma(\scrH^W)$ 
&
the groupoid associated to $\scrH^W$
&
p.\pageref{not: gamma gpd}, line \lineref{line not: gamma gpd}
\\

$\Lambda_{\delta}$
&
NCCR associated to $\delta$.
&
p.\pageref{end alg}, line \lineref{line not: NCCR}. 
\\

$\mu_N(N)$
&
a right mutation of $M$ at $N$ 
&
p.\pageref{not: mutation}, line \lineref{line not: mutation}. 
\\

$\mu_N^-(M)$
&
a left mutation of $M$ at $N$ 
&
p.\pageref{not: mutation}, line \lineref{line not: mutation}. 
\\

$\mu_{(\delta,\delta')}(\chi)$
&
$\defeq (\chi + \upbeta_{F_{\chi}(\delta_0)}^+)^+$
&
p.\pageref{not: mudeltadelta}, line \lineref{line not: mudeltadelta}
\\

$\rho$
&
$\defeq (1/2)\sum_{\alpha\in \Phi^+}\alpha$
&
Section \ref{notation}
\\

$\bsigma$
&
$\defeq\left\{\sum_{i=1}^da_i\upbeta_i\middle| a_i\in[-1,0]\right\}$
&
p.\pageref{not: bisigma}, line \lineref{line not: bsigma}.
\\

$\Upphi$
&
the set of roots of $G$
&
Section \ref{notation}
\\

$\Upphi^{\pm}$
&
the set of positive and negative roots of $G$
&
Section \ref{notation}
\\

$\Phi_N$
&
the equivalence associated to the mutation at $N$ 
&
p.\pageref{not: mutation equiv}, line \lineref{line not: mutation equiv}. 
\\

$\chi^+$
&
the dominant element in $W \ast \chi$
&
p.\pageref{not: dom chi}, line \lineref{line not: dom chi}
\\

$\bnabla$
&
$\defeq \left\{\chi\in\M_{\bR}\relmiddle| -\eta_{\lambda}/2\leq \l\chi,\lambda\r\leq \eta_{\lambda}/2 \mbox{ for all $\lambda\in \N$}\right\}$
&
p.\pageref{not: bnabla}, line \lineref{line not: bnabla}. 
\\

$B^*$
&
$\defeq-B+2c\in\scrB(P)$
&
p.\pageref{dual facet}, line \lineref{line not: dual facet}.
\\

$\scrB(P)$
&
$\defeq\{ B\in\scrS(P)\mid \mbox{$B\cap P$ is a facet of $P$}\}$
&
p.\pageref{not: B(P) and F(P)}, line \lineref{line not: B(P) and F(P)}.
\\

$\scrC_{\delta}$
&
$\defeq (\delta+\bnabla)\cap \M^+$
&
p.\pageref{x delta}, line \lineref{line x delta}.
\\

$\scrC_{\delta}^F$
&
$\defeq\left\{ \chi \in \scrC_{\delta}\relmiddle| F=F_{\chi}(\delta_0)\right\}$
&
p.\pageref{not: C^F_delta}, line \lineref{line not: C^F_delta}.
\\

$d_F^+$
&
$\defeq \#\scrW_F^+$
&
p.\pageref{not: d_F^+}, line \lineref{line not: d_F^+}.
\\

$\cE_{(L,N)}(\cS)$
&
the set of right exchanges
&
p.\pageref{not:set of right exchanges}, line \lineref{line not:set of right exchanges}. 
\\

$\cE_{(L,N)}^{-}(\cS)$
&
the set of left exchanges
&
p.\pageref{not:set of left exchanges}, line \lineref{line not:set of left exchanges}.
\\

$F^*$
&
$\defeq-F+2c\in\scrF^k(P)$
&
p.\pageref{dual facet}, line \lineref{line not: dual facet}.
\\

$\scrF(P)$
&
$\defeq\bigcup_{k=1}^{n}\scrF^k(P)$
&
p.\pageref{not: F(P)}, line \lineref{line not: F(P)}.
\\

$\scrF^k(P)$
&
$\defeq\{\mbox{codimension $k$ face of $P$}\}$
&
p.\pageref{not: B(P) and F(P)}, line \lineref{line not: B(P) and F(P)}.
\\

$\scrF_+(P), \scrF_+^k(P)$
&
the sets of dominant faces
&
p.\pageref{not: dom faces}, line \lineref{line not: dom faces}.
\\

$\scrF_{(\delta,\delta')}$
&
$\defeq\left\{ F\in \scrF_+(\delta_0+(1/2)\bsigma)\relmiddle| \exists \chi\in \scrC_{\delta}, F=F_{\chi}(\delta_0)\right\}$
&
p.\pageref{not: Fdeltadelta}, line \lineref{line not: Fdeltadelta}
\\

$G$
&
connected reductive group
&
Section \ref{notation}
\\

$\scrH$
&
a set of  hyperplanes
&
p.\pageref{not: scrH}, line \lineref{line not: scrH}. 
\\

$\scrH^W$
&
$(\M^W)$-periodic hyperplane arrangement
&
p.\pageref{not: H^W}, line \lineref{line not: H^W}. 
\\

$\scrH(\delta,\delta')$
&
the set of separating hyperplanes
&
p.\pageref{not: sep hyp set}, line \lineref{line not: sep hyp set}.
\\

$L^F_{\delta}$
&
$\defeq \bigoplus_{\chi\in \scrC_{\delta}^F}\left( \bigoplus_{{\upbeta\in{\tiny \gwedge}^*\scrW_F^+}} M((\chi+\upbeta)^+)\right)$
&
p.\pageref{not: LN}, line \lineref{line not: LN}
\\

$\scrM(\delta+\bnabla)$
&
magic window subcategory
&
p.\pageref{not: magic window}, line \lineref{line not: magic window}. 
\\

$M(\chi)$
&
$\defeq \left( V(\chi) \otimes_{\Bbbk} \Bbbk[X] \right)^G$
&
p.\pageref{not: MCM ass to chi}, line \lineref{line not: MCM ass to chi}. 
\\

$M_{\delta}$
&
$\defeq\bigoplus_{\chi\in\scrC_{\delta}}M(\chi)$
&
p.\pageref{not: M_delta}, line \lineref{ line not: M_delta}. 
\\

$M_{\delta}^F$
&
$\defeq\bigoplus_{\chi\in \scrC_{\delta}^F}M(\chi)$
&
p.\pageref{not: M^F_delta}, line \lineref{line not: M^F_delta}.
\\

$M_{\delta\cap\delta'}$
&
$\defeq \bigoplus_{\chi\in \scrC_{\delta}\cap \scrC_{\delta'}}M(\chi)$
&
p.\pageref{not: M_common}, line \lineref{line not: M_common} 
\\

$\M$ 
&
the character lattice of $T$
&
Section \ref{notation}
\\

$(\M_{\bR}^{W})_{\rm gen}$
&
$\defeq \M_{\bR}^W\backslash\bigcup_{A\in\scrA}\bigl(A\cap \M_{\bR}^{W}\bigr)$
&
p.\pageref{not: M_R^W}, line \lineref{ line not: M_R^W}. 
\\

$N^F_{\delta}$
&
$\defeq \bigoplus_{\chi\in \scrC_{\delta}\backslash \scrC^F_{\delta}}M(\chi)\cong M_{\delta}/M_{\delta}^F$
&
p.\pageref{not: LN}, line \lineref{line not: LN}
\\

$T$
&
fixed maximal torus of $G$
&
Section \ref{notation}
\\

$\scrT_{\delta}$
&
$\defeq \bigoplus_{\chi\in\scrC_{\delta}}V_X(\chi)\in\coh[X/G]$
&
p.\pageref{tilting bdl T}, line \lineref{line: tilting bdl T}.
\\

$\scrV_{\delta}$
&
$\defeq \bigoplus_{\chi\in\scrC_{\delta}}V_{\X}(\chi) \in \coh \X$
&
p.\pageref{tilting bdl}, line \lineref{line: tilting bdl}.
\\

$\scrW^+_F$
&
$\defeq\left\{\upbeta_i\relmiddle|\mbox{$\l\upbeta_i,\lambda\r>0$ for some $\lambda\in\scrL_F$}\right\}$
&
p.\pageref{not: W_F}, line \lineref{line not: W_F}.
\\

$\scrW^0_F$
&
$\defeq\left\{\upbeta_i\relmiddle|\mbox{$\l\upbeta_i,\lambda\r=0$ for all $\lambda\in\scrL_F$}\right\}$
&
p.\pageref{not: W_F}, line \lineref{line not: W_F}.
\\

$\X$
&
$\defeq[X^{\rm ss}(\ell)/G]$
&
p.\pageref{not: GIT stack}, line \lineref{line not: GIT stack}.

\end{longtable}
\renewcommand{\arraystretch}{1}

\end{document}